\documentclass[11pt, a4paper]{amsart}
\voffset=-.5cm
\textwidth=15cm
\textheight=23cm
\oddsidemargin=1cm
\evensidemargin=1cm
\usepackage{amsmath, enumerate}
\usepackage{amsxtra}
\usepackage{amscd}
\usepackage{amsthm}
\usepackage{amsfonts}
\usepackage{amssymb}
\usepackage {pstricks}
\usepackage{pstricks,pst-node}
\usepackage[all]{xy}

\usepackage{srcltx}

\usepackage[UKenglish]{babel}


\newtheorem{theorem}{Theorem}[section]
\newtheorem*{theorem*}{Theorem}
\newtheorem{lemma}[theorem]{Lemma}
\newtheorem{proposition}[theorem]{Proposition}
\newtheorem*{proposition*}{Proposition}
\newtheorem{corollary}[theorem]{Corollary}
\newtheorem{definition}[theorem]{Definition}
\newtheorem{conjecture}[theorem]{Conjecture}

\theoremstyle{definition}
\newtheorem{remark}[theorem]{Remark}

\numberwithin{equation}{section}


\def\1{\hbox{1\kern-.35em\hbox{1}}}

\newcommand{\N}{{\mathbb N}}
\newcommand{\Z}{{\mathbb Z}}
\newcommand{\mZ}{{\mathbb Z}}

\newcommand{\R}{{\mathbb R}}
\newcommand{\C}{{\mathbb C}}
\newcommand{\mC}{{\mathbb C}}

\newcommand{\tto}{\twoheadrightarrow}

\newcommand{\fd}{\rm fin.dim}
\newcommand{\pd}{{\rm pd}}

\newcommand{\len}{\mathtt{l}}

\newcommand{\fb}{{\mathfrak b}}

\newcommand{\fg}{{\mathfrak g}}
\newcommand{\fh}{{\mathfrak h}}

\newcommand{\fs}{{\mathfrak s}}
\newcommand{\fl}{{\mathfrak l}}
\newcommand{\fn}{{\mathfrak n}}

\newcommand{\ad}{{\rm ad}}
\newcommand{\id}{{\rm id}}
\newcommand{\End}{{\rm End}}
\newcommand{\Hom}{{\rm Hom}}
\newcommand{\ind}{{\rm Ind}}
\newcommand{\coind}{{\rm Coind}}
\newcommand{\Ind}{{\rm Ind}^{\fg}_{\fg_{\oa}}}

\newcommand{\Ext}{{\rm Ext}}

\newcommand{\im}{{\rm im}}
\newcommand{\res}{{\rm Res}}

\newcommand{\Res}{{\rm Res}^{\fg}_{\fg_{\oa}}}

\newcommand{\cL}{{\mathcal L}}

\newcommand{\cA}{{\mathcal A}}

\newcommand{\cF}{{\mathcal F}}
\newcommand{\cS}{{\mathcal S}}

\newcommand{\cO}{{\mathcal O}}

\newcommand{\oa}{\bar{0}}
\newcommand{\ob}{\bar{1}}



\begin{document}

\title[Category~$\cO$ for $\mathfrak{gl}(m|n)$]{Homological invariants in category~$\cO$ for the general linear superalgebra}

\author{Kevin Coulembier}
\author{Vera Serganova}
\maketitle
{\center{\small{Department of Mathematical Analysis\\ Ghent University \\Krijgslaan 281, Gent, Belgium\\}}}

\vspace{0.5mm}

{\center{\small{Department of Mathematics\\ University of California at Berkeley \\Berkeley, CA 94720, USA\\}}}

\date{}

\vspace{-2mm}

\begin{abstract}
We study three related homological properties of modules in the BGG category~$\cO$ for basic classical Lie superalgebras, with specific focus on the general linear superalgebra. These are the projective dimension, associated variety and complexity. We demonstrate connections between projective dimension and singularity of modules and blocks. Similarly we investigate the connection between complexity and atypicality. This creates concrete tools to describe singularity and atypicality as homological, and hence categorical, properties of a block. However, we also demonstrate how two integral blocks in category~$\cO$ with identical global categorical characteristics of singularity and atypicality will generally still be inequivalent. This principle implies that category~$\cO$ for $\mathfrak{gl}(m|n)$ can contain infinitely many non-equivalent blocks, which we work out explicitly for $\mathfrak{gl}(3|1)$. All of this is in sharp contrast with category~$\cO$ for Lie algebras, but also with the category of finite dimensional modules for superalgebras. Furthermore we characterise modules with finite projective dimension to be those with trivial associated variety. We also study the associated variety of Verma modules. To do this, we also classify the orbits in the cone of self-commuting odd elements under the action of an even Borel subgroup.
\end{abstract}

\noindent
\textbf{MSC 2010 :} 17B10, 16E30, 17B55   \\
\noindent
\textbf{Keywords :} Category~$\cO$, general linear superalgebra, projective dimension, equivalence of blocks, associated variety, complexity, Kazhdan-Lusztig theory

\section{Introduction}

The Bernstein-Gelfand-Gelfand category~$\cO$ associated to a triangular decomposition of a finite dimensional contragredient Lie (super)algebra is an important and intensively studied object in modern representation theory, see e.g. \cite{MR2428237}. Category~$\cO$ for basic classical Lie superalgebras $\fg$ is not yet as well understood as for semisimple Lie algebras and exhibits many novel features. However, for the particular case of the general linear superalgebra $\mathfrak{gl}(m|n)$, category~$\cO$ has a Kazhdan-Lusztig (KL) type theory, introduced by Brundan in~\cite{Brundan} and proved to be correct by Cheng, Lam and Wang in~\cite{CLW}. This determines the characters of simple modules algorithmically. Moreover, in this case, category~$\cO$ is Koszul, as proved by Brundan, Losev and Webster in~\cite{Brundan3, BLW}. In the current paper, we note that this theory is also an `abstract KL theory' in the sense of~\cite{CPS1, CPS2}. Observe that also for $\mathfrak{osp}(2m+1|2n)$, a KL type theory has been introduced and established by Bao and Wang in~\cite{BW}.

Our main focus in the current paper is the study of three homological invariants in category~$\cO$, specifically for $\mathfrak{gl}(m|n)$, and their applications to the open question concerning the classification of non-equivalent blocks.

The {\em associated variety} of a module $M\in\cO$ is the set of self-commuting odd elements $x$ of the Lie superalgebras for which $M$ has non-trivial $\C[x]$-homology, see \cite{Duflo, Vera}. Since the associated variety of a module in $\cO$ consists of orbits of the even Borel subgroup, we classify such orbits. Then we investigate the associated variety of Verma modules for $\mathfrak{gl}(m|n)$ with distinguished Borel subalgebra, leading in particular to a complete description for the cases $\mathfrak{gl}(1|n)$, $\mathfrak{gl}(m|1)$ and $\mathfrak{gl}(2|2)$.

The {\em projective dimension} of an object in an abelian category with enough projective objects is the length of a minimal projective resolution. Contrary to category~$\cO$ for semisimple Lie algebras, category~$\cO$ for $\fg$ contains modules with infinite projective dimension. We obtain two characterisations for the abelian subcategory of modules having finite projective dimension. The first one (valid for $\mathfrak{gl}(m|n)$) is as the category of modules having trivial associated variety. The second one (valid for arbitrary~$\fg$) is as an abelian category generated by the modules induced from the underling Lie algebra. Then we determine the projective dimension of injective modules for $\mathfrak{gl}(m|n)$ and use this to obtain the finitistic global homological dimension of the blocks, which builds on and extends some results of Mazorchuk in~\cite{CouMaz2, MR2366357, preprint}. Concretely, we show that this global categorical invariant of the blocks is determined by the singularity of the core of the central character. The results also provide the means to describe the level of `dominance' of a simple module in terms of the projective dimension of its injective envelope. The relevance of this categorical description of the dominance lies in the fact that, contrary to the Lie algebra case, one cannot use the projective dimension of the simple module itself for this, as the latter dimension will be infinite as soon as the module is atypical.

To deal with modules with infinite projective dimension we define the {\em complexity} of a module as the polynomial growth rate of a minimal projective resolution. The concrete motivation is to obtain a tool to homologically and categorically describe atypicality, similar to the description of singularity and dominance by projective dimension in the previous paragraph. We prove that our notion of complexity is well defined on category~$\cO$ for any basic classical Lie superalgebra, meaning that complexity of all modules is finite. Then we study the complexity of Verma (for the distinguished Borel subalgebra) and simple modules for $\mathfrak{gl}(m|n)$ and the relation with the degree of atypicality. Similar results for the category of finite dimensional weight modules of $\mathfrak{gl}(m|n)$ have been obtained by Boe, Kujawa and Nakano in~\cite{BKN1, BKN2}.

Integral blocks in category~$\cO$ for a Lie algebra are equivalent if they have the same singularity, see \cite{SoergelD}. Similarly, the blocks of the category of finite dimensional modules of a basic classical Lie superalgebra depend (almost) only on the degree of atypicality, see \cite{MR2734963, Lilit}. The {\em classification of non-equivalent blocks} in category~$\cO$ for a Lie superalgebra is an open question. Our main result here is the fact that a combination of the two aforementioned global categorical characteristics does not suffice to separate between non-equivalent blocks.  More precisely, we use our results on projective dimensions to materialise subtle local differences in blocks with similar global properties into categorical invariants. We do this explicitly for regular atypical blocks for $\mathfrak{sl}(3|1)$, resulting in the fact that {\em all} such blocks are non-equivalent (while they have the same singularity and degree of atypicality). In particular this implies that, contrary to category~$\cO$ for Lie algebras and the category of finite dimensional weight modules for Lie superalgebras, category~$\cO$ for Lie superalgebras can contain infinitely many non-equivalent blocks. Furthermore the results demonstrate that, in general, equivalences between integral blocks will be very rare. This is summarised in Figure \ref{tabb}, where~$\cF$ represents the category of finite dimensional weight modules and $\fg_{\oa}$ a reductive Lie algebra.
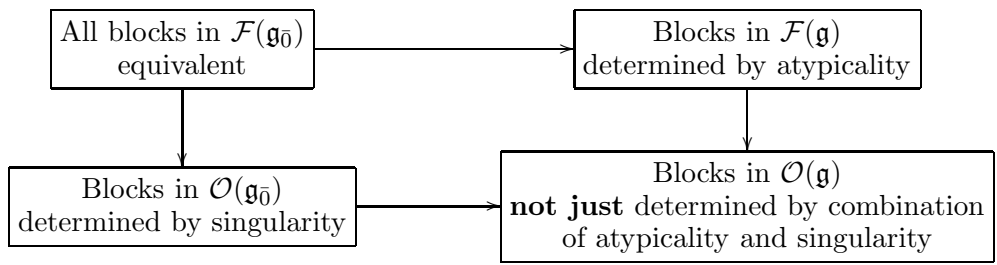
\begin{figure}\label{tabb}
\caption{Equivalence of blocks}
\[
\xymatrix{
 *+[F]\txt{All blocks in $\cF(\fg_{\oa})$ \\ equivalent}  \ar[d]\ar[rr]&& *+[F]\txt{Blocks in $\cF(\fg)$ \\ determined by atypicality}  \ar[d]\\
*+[F]\txt{Blocks in $\cO(\fg_{\oa})$\\determined by singularity}\ar[rr]&&*+[F]\txt{Blocks in $\cO(\fg)$\\ {\bf not just} determined by combination\\ of atypicality and singularity}
}
\]
\end{figure}

We hope that our results can be applied in the quest to obtain a full classification of non-equivalent blocks in category~$\cO$ for $\mathfrak{gl}(m|n)$. Furthermore, in further work we aim to strengthen the equivalence between trivial associative variety and zero complexity for a module to a more general link between complexity and the associated variety and in particular to determine the complexity of simple modules.

The paper is structured as follows. In Section~\ref{secpre} we recall some preliminary results. In Section~\ref{secext} we study extensions between Verma and simple modules, in connection with Brundan's Kazhdan-Lusztig theory. In Section~\ref{secfpd} we obtain the characterisations for the modules with finite projective dimension. In Section~\ref{secassvar} we study the self-commuting cone and associated varieties in relation with category~$\cO$. In Section~\ref{secblocks} we study projective dimensions and obtain the result on the non-equivalence of blocks. In Section~\ref{seccomp} we introduce and study the notion of complexity. Finally, in the appendices we illustrate certain results for the example of $\mathfrak{gl}(2|1)$ and carry out some technicalities.

\section{Preliminaries}
\label{secpre}

In this paper we work with basic classical Lie superalgebras. We refer to Chapters 1-4 in~\cite{bookMusson} for concrete definitions. We denote a basic classical Lie superalgebra by $\fg$ and its even and odd part by $\fg_{\oa}\oplus\fg_{\ob}=\fg$. A Borel subalgebra will be denoted by $\fb$, a Cartan subalgebra by $\fh$ (which is the same as a Cartan subalgebra of $\fg_{\oa}$) and the set of positive roots by $\Delta^+$. The even and odd positive roots are then denoted by $\Delta^+_{\oa}$ and $\Delta^+_{\ob}$. The set of integral weights is denoted by $P_0\subset\fh^\ast$. The Weyl group $W=W(\fg:\fh)$ is the same as the Weyl group $W(\fg_{\oa}:\fh)$. We fix a $W$-invariant form $(\cdot,\cdot)$ on $\fh^\ast$ as in Theorem 5.4.1 of~\cite{bookMusson}. We define $\rho=\frac{1}{2}(\sum_{\alpha\in\Delta^+_{\oa}}\alpha)-\frac{1}{2}(\sum_{\gamma\in\Delta_{\ob}^+}\gamma)$. The dot action is then given by
$$w\cdot\lambda=w(\lambda+\rho)-\rho.$$

\subsection{Basic classical Lie superalgebras of type~$A$}

Mostly $\fg$ will be one of the following
\begin{equation}\label{listA}\fg=\begin{cases}\mathfrak{gl}(m|n)\\ \mathfrak{sl}(m|n)\,\mbox{ with }\, m\not=n\\\mathfrak{pgl}(n|n)\simeq \mathfrak{gl}(n|n)/\mathfrak{z}(\mathfrak{gl}(n|n)).\end{cases}\end{equation}
Then we use the standard $\mZ$-grading $\fg=\fg_{-1}\oplus\fg_0\oplus\fg_1,$
with $\fg_{\oa}=\fg_0$ and $\fg_{\ob}=\fg_{-1}\oplus \fg_{1}$. We fix an element in the centre of $\fg_0$ which we denote by $z\in\mathfrak{z}(\fg_0)$ and which satisfies
\begin{equation}\label{gradel}
[z,X]=X,\quad\forall \,X\in\fg_1\quad\mbox{and}\qquad [z,Y]=-Y,\quad\forall\, Y\in \fg_{-1}. 
\end{equation}
The necessity of such an element is the reason we can not always include the other classical Lie superalgebras of type~$A$, {\it viz.} $\mathfrak{sl}(n|n)$ and $\mathfrak{psl}(n|n)$. Note that technically $\mathfrak{sl}(n|n)$ and $\mathfrak{pgl}(n|n)$ are not basic. In Remark \ref{error} we provide an example of properties concerning the associated variety that fail for $\mathfrak{sl}(n|n)$.

We choose the standard basis $\{\varepsilon_i,i=1,\cdots,m\}\cup \{\delta_j,j=1,\cdots,n\}$ of $\fh^\ast$ for $\mathfrak{gl}(m|n)$. Unless stated otherwise, the positive roots of any algebra in~\eqref{listA} are chosen to be
\begin{equation}\label{roots}
 \Delta^+=\left\{
\begin{array}{ll}
\varepsilon_{i} - \varepsilon_{j} &\hbox{for $1\le i< j\le m$,}\\
\varepsilon_{i} -\delta_j &\hbox{for
$1\le i\le m$
 and  $1\le  j\le n$,}\\
\delta_{i} - \delta_{j} &\hbox{for $1\le i< j\le n$.}
\end{array}
\right.
\end{equation}
This choice corresponds to the so-called distinguished system of positive roots, or the distinguished Borel subalgebra.

The Lie superalgebra generated by the positive root vectors is denoted by $\fn$, so we have $\fb=\fh\oplus\fn$, $\fg_1=\fn\cap \fg_{\ob}$ and set $\fn_0:=\fn\cap\fg_0$.
We define $\rho_0=\frac{1}{2}(\sum_{\alpha\in\Delta^+_0}\alpha)$. Note that $\rho_1=\rho-\rho_0$ is orthogonal to all even roots, so $w\cdot\lambda=w(\lambda+\rho_0)-\rho_0$ for $\lambda\in\fh^\ast$ and $w\in W$. 

For the remainder of this subsection we restrict to $\fg=\mathfrak{gl}(m|n)$. We will use
\[\delta=-\sum_{i=1}^m i\varepsilon_{i} +\sum_{i=1}^n (m-i+1)\delta_i\;\,\in \fh^\ast, \]
rather than $\rho$, because the coefficients of $\delta$ are integers. The difference $\rho-\delta$ is orthogonal to all roots, so
$$w\cdot\lambda=w(\lambda+\delta)-\delta\qquad\mbox{and}\qquad (\lambda+\rho,\gamma)=(\lambda+\delta,\gamma),$$
for $w\in W$ and $\gamma\in \Delta_{\ob}^+$. We fix a bijection between integral weights $P_0\subset\fh^\ast$ and $\mZ^{m|n}$, by
\begin{equation} \label{hat}P_0\leftrightarrow \mZ^{m|n},\quad\lambda\mapsto \mu^\lambda\quad \mbox{with}\quad \mu^\lambda_i=(\lambda+\delta,\varepsilon_i)\quad\mbox{and}\quad \mu^\lambda_{m+j}=(\lambda+\delta,\delta_j).\end{equation}
As in~\cite{BLW} we use the notation $\Lambda:=\Z^{m|n}$. Note that, by the above $\mu^{w\cdot\lambda}=w(\mu^\lambda)$, where the latter term uses the regular action of  $W\simeq S_m\times S_n$ on $\Lambda=\Z^{m|n}$.

For any $\mu=(\alpha_1,\alpha_2,\cdots\alpha_m|\beta_1,\cdots\beta_n)\in \Lambda$, we refer to the integers $\alpha_1,\cdots\alpha_m$ as the labels of $\mu$ on the left side and to the integers $\beta_1,\cdots\beta_n$ as the labels of~$\mu$ on the right side.

The set of integral regular dominant weights is then described by
$$P_0^{++}=\{\lambda\in P_0\,|\, w\cdot\lambda <\lambda,\,\,\forall\, w\in W \},$$
where~$\mu\leq\lambda$ if $\lambda-\mu$ is a sum of positive roots.
We denote by $\Lambda^{++}$ the corresponding subset in $\Lambda$.
This set has the structure of a poset, which describes the highest weight structure 
of the category of finite dimensional weight modules $\cF$, see e.g. \cite{Brundan, MR1443186}.

Another convenient choice of Borel subalgebra is the anti-distinguished Borel subalgebra $\fb=\fb_{\oa}\oplus \fg_{-1}$. Note that the anti-distinguished Borel subalgebra of $\mathfrak{gl}(m|n)$ is mapped to the distinguished Borel subalgebra of $\mathfrak{gl}(n|m)$ under the isomorphism $\mathfrak{gl}(m|n)\simeq \mathfrak{gl}(n|m)$.

\subsection{BGG category~$\cO$}
\label{prelO}

Category~$s\cO$ for a basic classical Lie superalgebra $\fg$ with Borel subalgebra $\fb$ is defined as the full subcategory of all $\fg$-modules, where the objects are finitely generated; $\fh$-semisimple and locally $U(\fb)$-finite. Note that this definition does not depend on the actual choice of Borel subalgebra, only the even part $\fb_{\oa}:=\fb\cap\fg_{\oa}.$ For each Borel subalgebra $\fb$ (with $\fb\cap \fg_{\oa}=\fb_{\oa}$), this category has a (different) structure of a highest weight category. An alternative definition of $s\cO$ is as the full subcategory of all $\fg$-modules, where the objects $M$ satisfy~$\Res M\in\cO^0$, after neglecting parity, where~$\cO^0$ is the corresponding category for $\fg_{\oa}$. In what follows we will work with the full Serre subcategory~$\cO$ of $s\cO$, defined similarly as in Section 2 of \cite{Brundan3}. This means that to each weight one attaches a parity (in a consistent way) and only modules in which the corresponding weight spaces appear in said parity are allowed. Then one has $s\cO\simeq \cO\oplus\cO$, abstractly as categories, see Lemma 2.2 of \cite{Brundan3}. We denote the Serre subcategory of $\cO$ generated by modules admitting the central character $\chi$ by $\cO_{\chi}$. We denote the central character corresponding to $L(\lambda)$ by~$\chi_\lambda$.

Now we turn to this category for $\mathfrak{gl}(m|n)$. For an overview of the current knowledge we refer to the survey \cite{Brundan3}. The indecomposable blocks of category~$\cO$ for $\mathfrak{gl}(m|n)$ have been determined by Cheng, Mazorchuk and Wang in Theorem 3.12 of \cite{CMW}. They also proved, in Theorem 3.10 of 
\cite{CMW}, that every non-integral block is equivalent to an integral block in the category $\cO$ for a direct sum of several general linear superalgebras. 
Therefore, we can restrict to integral blocks for several of our purposes. 

The Serre subcategory of $\cO$ of modules with integral weight spaces is denoted by $\cO_{\mZ}$. The blocks of category~$\cO_\mZ$ can be described by linkage classes. The linkage class $\xi$ generated by $\mu\in P_0$ is
$$\xi=[\mu]=\{\lambda\in P_0\,|\,\chi_{\mu}=\chi_{\lambda}\}.$$
The indecomposable block $\cO_\xi$ is then defined as the full Serre subcategory of $\cO$ generated by the set of simple modules $\{L(\lambda)\,|\,\lambda\in\xi\}.$ The degree of atypicality of the weights in the linkage class is denoted by $\sharp \xi$. Furthermore we denote the central character corresponding to the block by $\chi_\xi$. The  Bruhat order $\preceq$ of \cite{Brundan, BLW, CMW} on $P_0$ is the minimal partial order satisfying
\begin{itemize}
\item if $s\cdot\lambda \le \lambda $ for a reflection $s\in W$ and $\lambda\in P_0$, we have $s\cdot\lambda \preceq \lambda $;
\item if $(\lambda+\rho,\gamma)=0$ for $\lambda\in P_0$ and $\gamma\in \Delta^+_1$, we have $\lambda-\gamma\preceq \lambda$.
\end{itemize}
The set $\Lambda\simeq P_0$ equipped with this Bruhat order is the poset for $\cO_{\Z}$ as a highest weight category. 
Note that the connected components of the Bruhat order are precisely the linkage classes.

As in~\cite{MR2734963} we define the core of $\chi_\xi$, which we denote by $\chi'_\xi$. This is the typical central character of $\mathfrak{gl}(m-\sharp\xi|n-\sharp\xi)$, corresponding to a weight in $\xi$ where~$\sharp\xi$ labels on each side are removed in order to create a typical weight. We also fix $w_0^\xi\in W$ as the longest element in the subgroup of the Weyl group of $\mathfrak{gl}(m-k|n-k)$ which stabilises a dominant weight corresponding to~$\chi'_\xi$.

We will use the translation functors on $\cO$ introduced in~\cite{Brundan} and studied further in~\cite{BLW, Ku}. Denote by $U=\mC^{m|n}$ the tautological module and let $F$ (resp. $E$) be the exact endofunctor of $\cO_{\mZ}$ defined by tensoring with $U$ (resp. $U^\ast$). Then $\{F_i\,|\,i\in\mZ\}$ and $\{E_i\,|\,i\in\mZ\}$ are the subfunctors of $F$ and $E$ corresponding to projection on certain blocks. According to Theorem 3.10 of~\cite{BLW}, this defines an $\mathfrak{sl}(\infty)$ tensor product categorification on $\cO_{\mZ}$.

Finally we introduce notation for some structural modules in category~$\cO$. For each~$\lambda\in\fh^\ast$ we denote the Verma module (the $\fg$-module induced from the one dimensional $\fb$-module on which $\fh$ acts through $\lambda$) by $M(\lambda)$. Its simple top is denoted by $L(\lambda)$. The indecomposable projective cover of $L(\lambda)$ is denoted by $P(\lambda)$ and the indecomposable injective hull by~$I(\lambda)$. The corresponding modules in $\cO^0$ are denoted by $L_0(\lambda),M_0(\lambda),P_0(\lambda), I_0(\lambda)$. We denote the Kac modules by $K(\lambda)=U(\fg)\otimes_{U(\fg_0\oplus\fg_1)}L_0(\lambda)$ and the dual Kac modules by $\overline{K}(\lambda)=U(\fg)\otimes_{U(\fg_0\oplus\fg_{-1})}L_0(\lambda)$. These modules are also co-induced, e.g.
\begin{equation}\label{indcoind}U(\fg)\otimes_{U(\fg_0\oplus\fg_{1})}L_0(\lambda)\simeq\Hom_{U(\fg_0+\fg_1)}(U(\fg),L_0(\lambda-2\rho_1)).\end{equation}

Note that for structural modules we will sometimes write $L(\mu^\lambda)$ to denote $L(\lambda)$ when~$\lambda\in P_0$, with slight abuse of notation.

By Theorem 25$(i)$ and Corollary 14 of~\cite{CouMaz2}, for any $\lambda\in\fh^\ast$ and $N\in\cO$ we have
\begin{equation}
\label{VerH}
\Ext^i_{\cO}(M(\lambda),N)\simeq \Hom_{\fh}(\C_{\lambda},H^i(\fn,N)),
\end{equation}
with $H^i(\fn,-)\simeq \Ext^i_{\fn}(\C,-)$, the Lie superalgebra cohomology of $\fn$.

\subsection{Brundan-Kazhdan-Lusztig theory} We review a few items of~\cite{Brundan, BLW, CLW}, to which we refer for details, see also the survey \cite{Brundan3}.

Let $\mathbb V$ be the natural $\mathfrak{sl}(\infty)$ module and $\mathbb W$ its restricted dual. The Lie algebra $\mathfrak{sl}(\infty)$ is generated by the  Chevalley generators $\{e_i,f_i\,|\,i\in\mZ\}$. Then $\Lambda=\mZ^{m|n}$ naturally parametrises a monomial basis of $\mathbb V^{\otimes m}\otimes \mathbb W^{\otimes n}$.  We denote such a monomial basis by $\{v_\lambda \,|\,\lambda\in \Lambda\}$. Identifying $[M(\lambda)]\in K(\cO_{\mZ})$ with $v_\lambda$ then leads to an isomorphism of vector spaces \begin{equation}K(\cO^{\Delta}_{\mZ})\leftrightarrow \mathbb V^{\otimes m}\otimes \mathbb W^{\otimes n},\label{GrothTens}
\end{equation}
with $K(\cO^{\Delta}_{\mZ})$ the Grothendieck group of the exact full subcategory~$\cO^{\Delta}_{\mZ}$ of $\cO_{\mZ}$, whose objects are
the modules admitting a Verma flag. It follows immediately from the standard filtration of $M(\lambda)\otimes U$ that we can define an $\mathfrak{sl}(\infty)$-action on $K(\cO_{\mZ})$ by $e_i [M]:= [E_i M]$ and $f_i[M]=[F_i M]$. The isomorphism \eqref{GrothTens} then becomes an $\mathfrak{sl}(\infty)$-module isomorphism.

For the quantised enveloping algebra $U_q(\mathfrak{sl}(\infty))$ we denote the corresponding module by $\dot{\mathbb V}^{\otimes m}\otimes \dot{\mathbb W}^{\otimes n}$. It turns out that $\dot{\mathbb V}^{\otimes m}\otimes \dot{\mathbb W}^{\otimes n}$ admits the Lusztig canonical basis. This basis is denoted by $\{\dot{b}_\mu,\, \mu\in \Lambda\}$ and the monomial basis by $\{\dot{v}_\lambda,\, \lambda\in\Lambda\}$. Then the polynomials $d_{\mu,\lambda}(q)$ and the inverse matrix, the KL polynomials $p_{\lambda,\nu}(-q)$, are defined as
$$\dot{b}_\mu=\sum_{\lambda\in\Lambda}d_{\mu,\lambda}(q)\dot{v}_{\lambda}\quad\mbox{and}\quad\dot{v}_{\lambda}=\sum_{\nu\in\Lambda}p_{\lambda,\nu}(-q)\dot{b}_\nu.$$
In \cite{Brundan}, Brundan conjectured and in~\cite{CLW} Cheng, Lam and Wang proved that
\begin{equation}\label{resCheng}(P(\mu): M(\lambda))=d_{\mu,\lambda}(1)=[M(\lambda):L(\mu)].\end{equation}
Furthermore Brundan, Losev and Webster proved in Theorem A of~\cite{BLW} that $\cO$ (has a graded lift which) is standard Koszul. This implies that the KL polynomials can be interpreted as a minimal projective resolution of the Verma module, or
\begin{equation}\label{defp}p_{\lambda,\nu}(q)=\sum_{k\ge 0}q^k\dim\Ext^k_{\cO}(M(\lambda), L(\nu)),\end{equation}
see also Section 5.9 of~\cite{BLW}. For $q=-1$, equation~\eqref{defp} is a direct consequence of equation~\eqref{resCheng} and the Euler-Poincar\'e principle.

Take an interval $I\subset \mZ$. Let $\mathfrak{sl}(I)\simeq \mathfrak{sl}(|I|+1)$ denote the Lie subalgebra of $\mathfrak{sl}(\infty)$ generated by the 
Chevalley generators $\{e_i,f_i\,|\,i\in I\}$. We set $I_+:=I\cup (I+1)$. Then $\Lambda_I$ is the sub-poset of $\Lambda$, consisiting of all vectors in 
$\mathbb Z^{m|n}$ with labels in the interval $I_+$. It is clear that $\Lambda_I$ is in bijection with the monomial basis of the $\mathfrak{sl}(I)$-module
${\mathbb V_I}^{\otimes m}\otimes {\mathbb W_I}^{\otimes n}.$
Since $\mathbb W_I$ is isomorphic to $\Lambda^{|I|}\mathbb V_I$ as a $\mathfrak{sl}(I)$-module, $\Lambda_I$ also corresponds to a poset of another highest weight 
category. 
The relevant category is a subcategory of the parabolic category~$\cO$ for $\mathfrak{gl}(m+|I|n)$, where the parabolic subalgebra has Levi part
$\mathfrak{gl}(1)^{\oplus m}\oplus\mathfrak{gl}(|I|)^{\oplus n}$, generated by simple modules with suitable restrictions on highest weights, 
see Definition~3.13 in~\cite{LW}. 
We denote this category by $\cO_I'$ and the bijection of the weights in $\Lambda_I$ with the set $\Lambda_I'$ of highest weights of the simple modules in 
$\cO'_I$ by $\phi_I:\Lambda_I\to\Lambda_{I}'$.

In Section 2.8 in~\cite{BLW}, two ideals $\Lambda_{\le I}$ and $\Lambda_{< I}$ in the poset are constructed with the property 
$\Lambda_I=\Lambda_{\le I}\backslash \Lambda_{< I}$. Then we have Serre (highest weight) subcategories $\cO_{< I}$ and $\cO_{\le I}$ in 
$\cO_{\mZ}$ and the quotient category~$\cO_I=\cO_{\le I}/\cO_{< I}$. This is a highest weight category with poset $\Lambda_I$. By the general theory of such 
subquotients of highest weight categories, it follows that 
\begin{equation}\label{extsubquo}
\Ext^\bullet_{\cO}(M(\mu), L(\lambda))\simeq \Ext^\bullet_{\cO_I}(M(\mu), L(\lambda))
\end{equation} if $\lambda,\mu\in\Lambda_I$, see Section 2.5 in~\cite{BLW}.

 It is proved in~\cite{BLW} via uniqueness of tensor product categorifications in~\cite{LW}, that the categories $\cO_I$ and $\cO_I'$ are equivalent.

\section{Extensions and Kazhdan-Lusztig theory}
\label{secext}
In this entire section we consider $\fg=\mathfrak{gl}(m|n)$ with, unless specified otherwise, distinguished Borel subalgebra. The results extend to $\mathfrak{sl}(m|n)$ if $m\not=n$.

\subsection{Length function and abstract Kazhdan-Lusztig theories}
\label{seclen}

We define a length function $l:\Lambda\times \Lambda \to\N$ on a poset $(\Lambda,\preceq)$ to be a function with domain $\{(\lambda,\mu)\,|\,\mu \preceq \lambda\}$ which satisfies $l(\lambda,\mu)=l(\lambda,\kappa)+l(\kappa,\mu)$ if $\mu\preceq\kappa\preceq\lambda$, with $l(\lambda,\mu)=0$ if and only if $\lambda=\mu$. Note that, in principal, a length function should be a function $l':\Lambda\to\Z$ such that $l(\lambda,\mu):=l'(\lambda)-l'(\mu)$ satisfies the above properties. However, in our case, it is possible to construct such an $l'$ from our $l$ by the procedure in Section 3-g in~\cite{Brundan}. As we will only need the difference in length between two comparable weights, we ignore this technicality.

Before going to $\mathfrak{gl}(m|n)$, we review the length function for any integral (possibly singular) block in (possibly) parabolic category~$\cO$ for some $\mathfrak{gl}(d)$ with $d\in\N$. In this case we can assume $\lambda$ and $\mu$ to belong to the same (integral) Weyl group orbit. For an integral block of category~$\cO$ we set $l(\lambda,\mu)=l(\lambda)-l(\mu)$, with $l(\lambda),l(\mu)$ as defined in Theorem 3.8.1 in~\cite{CPS1}. There is a unique dominant element in the orbit of $\lambda$ and $\mu$, which we denote by $\kappa$. Then there are unique elements of the Weyl group $w_1,w_2\in W$ such that these are the longest element satisfying $\mu=w_1\cdot\kappa$ and $\lambda=w_2\cdot\kappa$. Then we have $l(\lambda,\mu)=l(w_1)-l(w_2)$, where~$l:W\to\N$ is the length function on $W$ as a Coxeter group. For an integral block in parabolic category~$\cO$, we just have the same length function as for the corresponding block in the original category~$\cO$, but restricted to the poset of weights dominant for the Levi subalgebra. Also the Bruhat order is the restriction of the Bruhat order in the non-parabolic case.

Now we can define a length function for category~$\cO_{\mZ}$ for $\mathfrak{gl}(m|n)$ and distinguished Borel subalgebra. For the cases $\fg=\mathfrak{gl}(2|1)$ and $\fg=\mathfrak{gl}(1|2)$, this will be made explicit in Appendix \ref{gl21}.
\begin{lemma}\label{defL}
For any two $\lambda,\mu\in \Lambda$ with $\mu  \preceq\lambda $ and any interval $I$ such that $\lambda,\mu\in\Lambda_I$, set $l_I(\lambda,\mu)=l(\phi_I(\lambda),\phi_I(\mu))$. 

This value $l_I(\lambda,\mu)$ does not depend on the particular interval $I$. This leads to a well-defined length function $\mathtt{l}$ on $\Lambda\times \Lambda$ 
defined by
$$\len(\lambda,\mu)=l_I(\lambda,\mu)\qquad\mbox{for any $I$ such that }\lambda,\mu\in \Lambda_I.$$
\end{lemma}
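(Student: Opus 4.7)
The plan is to reduce the independence claim to two nested intervals and then exploit a combinatorial invariance of the parabolic length function. Once independence is established, the length-function axioms for $\len$ transfer automatically via $\phi_I$.

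First I would observe that any two intervals $I_1,I_2$ both containing the labels of $\lambda,\mu$ sit inside the interval $I_1\cup I_2$, so after iteration it suffices to compare $l_I$ with $l_{I'}$ whenever $I'=I\cup\{k\}$ with $k$ adjacent to $I$. Unwinding the isomorphism $\mathbb{W}_I\simeq\Lambda^{|I|}\mathbb{V}_I$ used to define $\phi_I$, the passage from $\phi_I(\lambda)$ to $\phi_{I'}(\lambda)$ is purely mechanical: the $m$ singletons recording the $\varepsilon$-labels of $\lambda$ remain untouched, while the fixed integer $k$ is inserted at a prescribed position in each of the $n$ size-$|I|$ subsets recording the dualised $\delta$-labels, producing $n$ subsets of size $|I|+1$.

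Next I would make the parabolic length function explicit. Fix a dominant weight $\kappa_I$ of the integral block of $\phi_I(\lambda)$ in $\cO_I'$; by definition $l(\phi_I(\lambda))$ is the length of the longest $(S_1^m\times S_{|I|}^n)$-coset representative in $S_{m+|I|n}$ taking $\kappa_I$ to $\phi_I(\lambda)$, and similarly for $\mu$. Passing to $I'$ enlarges the ambient Weyl group to $S_{m+|I'|n}$ and the parabolic subgroup to $S_1^m\times S_{|I|+1}^n$. The key observation is that inserting the same label $k$ in the same relative position in every weight of the block shifts the length of each by a common integer, depending only on $k$ and $\kappa_I$; hence
$$l\bigl(\phi_{I'}(\lambda)\bigr)-l\bigl(\phi_{I'}(\mu)\bigr)=l\bigl(\phi_I(\lambda)\bigr)-l\bigl(\phi_I(\mu)\bigr),$$
which is the required independence.

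Finally, additivity $\len(\lambda,\mu)=\len(\lambda,\kappa)+\len(\kappa,\mu)$ for $\mu\preceq\kappa\preceq\lambda$ follows from the corresponding property of $l$ on $\Lambda_I'$, once one checks that $\phi_I$ is compatible with the restricted Bruhat order; and $\len(\lambda,\mu)=0\iff\lambda=\mu$ transfers via the injectivity of $\phi_I$. The main obstacle will be the combinatorial invariance step: one must carefully identify a canonical dominant representative $\kappa_I$ of the block and verify that the length shift induced by inserting the common label $k$ is genuinely constant on the whole block, so that it cancels cleanly in the difference.
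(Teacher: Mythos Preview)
Your strategy coincides with the paper's: reduce to enlarging the interval by one adjacent element and show that the resulting shift in length is constant across the block. The paper carries out your ``main obstacle'' explicitly: it fixes the dominant weight $D_{a,b}$ of the block, takes $w_\mu^{a,b}$ to be the longest element with $\phi_{[a,b]}(\mu)=w_\mu^{a,b}D_{a,b}$, and then exhibits concrete elements $x,y\in S_{m+n(b-a+2)}$, independent of $\mu$, such that $w_\mu^{a-1,b}=x\,w_\mu^{a,b}\,y$ under the natural embedding $S_{m+n(b-a+1)}\hookrightarrow S_{m+n(b-a+2)}$. One minor point: the paper works with the \emph{non-parabolic} length function throughout (the parabolic one being defined as its restriction), so you need not speak of coset representatives; the longest element sending the dominant weight to $\phi_I(\lambda)$ is what is compared directly.
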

\begin{proof}
Take $\mu=(\alpha_1,\cdots,\alpha_m|\beta_1,\cdots,\beta_n)\in \Lambda$. We choose some $b\in\mZ$ greater than the maximal value of the labels of $\mu$ and an 
$a\in\mZ$ smaller than the minimum of the labels of~$\mu$. Take the interval $I=[a,b]$. By the description in Section 2.2 in~\cite{BLW} and Section~3.5 
in~\cite{LW} we have
\begin{eqnarray*}&&\phi_{[a,b]}(\alpha_1,\cdots,\alpha_m|\beta_1,\cdots,\beta_n)\\
&=&(\alpha_1,\cdots,\alpha_m,b+1,b,\cdots^{\hat{\beta_1}},a,b+1,b,\cdots^{\hat{\beta_2}},a,\cdots,b+1,b,\cdots^{\hat{\beta_n}},a),
\end{eqnarray*}
where~$\hat{c}$ implies that the value $c$ is left out in the sequence of numbers which otherwise descend by $1$ in each step.

We have to prove that $l_{[a,b]}(\lambda,\mu)=l_{[a',b']}(\lambda,\mu)$ for $\lambda,\mu\in \Lambda_{[a,b]}$ and any $a'\le a$ and $b'\ge b$. By construction, $\phi_{[a,b]}(\lambda)$ and $\phi_{[a,b]}(\mu)$ are in the same orbit of $S_{m+n(b-a+1)}$. Denote the unique dominant weight in the orbit by $D_{a,b}$. Assume now that $w_\mu^{a,b}$ is the longest element in $S_{m+n(b-a+1)}$ such that 
$$\phi_{[a,b]}(\mu)= w_\mu^{a,b} D_{a,b}.$$
We now embed $S_{m+n(b-a+1)}$ into $S_{m+n(b-a+2)}$ by identifying it with $S_{m+n(b-a+1)}\times S_1^n$. It follows that  
$$w_{\mu}^{a-1,b} = x \, w_\mu^{a,b} y\, \quad \in S_{m+n(b-a+2)},$$
where~$y$ is the longest element of the subgroup $S_1^{m+n(b-a+1)}\times S_n$ and $x$ is given by
$$(s_{m+1}s_{m+2}\cdots s_{m+nd})(s_{m+d+2}s_{m+n+2}\cdots s_{m+nd+1})\cdots (s_{m+(n-1)(d+1)+1}\cdots s_{p-1}),$$
where we set $d=b-a+1$ and $p=m+n(b-a+2)=m+nd+n$. As $x$ and $y$ clearly do not depend on $\mu$, we find that indeed 
$$l(w_\mu^{a,b})-l(w_\lambda^{a,b})=l(w_\mu^{a-1,b})-l(w_\lambda^{a-1,b}).$$
A similar reasoning for $b\mapsto b+1$ concludes the proof.
\end{proof}

This definition has the following immediate consequence.
\begin{theorem}
\label{KLtheory}
For $\lambda,\mu\in\Lambda$, we have
$$\Ext^i_{\cO}(M(\mu),L(\lambda))\not=0\quad\Rightarrow \quad\mu\preceq\lambda\,\mbox{ with }\,i\le \len (\lambda,\mu)\,\mbox{ and }\,i\equiv \len (\lambda,\mu)\,({\rm mod} 2).$$
\end{theorem}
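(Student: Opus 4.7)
My plan is to transfer the statement from $\cO$ for $\mathfrak{gl}(m|n)$ to a parabolic category~$\cO$ for a general linear Lie algebra, where classical Kazhdan--Lusztig theory applies directly. Given $\lambda, \mu \in \Lambda$, I pick any finite interval $I \subset \mZ$ whose associated $I_+$ contains all labels of both weights, so that $\lambda,\mu \in \Lambda_I$. The subquotient isomorphism~\eqref{extsubquo} gives
$$\Ext^i_{\cO}(M(\mu), L(\lambda)) \simeq \Ext^i_{\cO_I}(M(\mu), L(\lambda)),$$
and the equivalence $\cO_I \simeq \cO_I'$ established in~\cite{BLW} via the uniqueness of tensor product categorifications~\cite{LW} further identifies this with $\Ext^i_{\cO_I'}(M(\phi_I(\mu)), L(\phi_I(\lambda)))$, since equivalences of highest weight categories match standards with standards and simples with simples.

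The category $\cO_I'$ is a block of the parabolic category~$\cO$ for $\mathfrak{gl}(m+n|I_+|)$, and for any integral block of such a parabolic category the required property is classical: non-vanishing of $\Ext^i(M(\eta), L(\kappa))$ forces $\eta \le \kappa$ in the parabolic Bruhat order, $i \le l(\kappa,\eta)$, and $i \equiv l(\kappa,\eta) \pmod 2$, where $l$ is the Coxeter length difference. This can be extracted either from the degree and parity properties of parabolic Kazhdan--Lusztig polynomials, or directly by combining the Koszul property with the interpretation~\eqref{defp} of Ext groups as KL polynomial coefficients. By the very definition in Lemma~\ref{defL}, one has $\len(\lambda,\mu) = l(\phi_I(\lambda), \phi_I(\mu))$, so the length and parity bounds on $i$ transfer verbatim, and the bound does not depend on the choice of $I$.

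The only remaining step is to confirm that $\phi_I$ identifies the order $\preceq$ on $\Lambda_I$ with the parabolic Bruhat order on $\Lambda_I'$. I expect this poset compatibility to be the main obstacle. In principle it must follow from the fact that $\cO_I \simeq \cO_I'$ is an equivalence of highest weight categories, so the two intrinsic orders agree; but a direct verification requires checking each generator of $\preceq$ separately, namely an even reflection $\lambda\mapsto s\cdot\lambda$ (which clearly maps to a standard Coxeter covering) and an odd-reflection step $\lambda\mapsto\lambda-\gamma$ with $(\lambda+\rho,\gamma)=0$ (which, after applying $\phi_I$, should correspond to a chain of standard Coxeter covering relations in $\Lambda_I'$, computable from the explicit formula for $\phi_I$ given in the proof of Lemma~\ref{defL}). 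Once this compatibility is in hand, combining the three reductions yields all three conclusions of the theorem simultaneously.
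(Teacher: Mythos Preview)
Your proposal is correct and follows essentially the same route as the paper: reduce via~\eqref{extsubquo} and the equivalence $\cO_I\simeq\cO_I'$ of~\cite{BLW} to the corresponding statement for (singular) parabolic category~$\cO$ for a general linear Lie algebra, then invoke the classical result there and pull back the length via Lemma~\ref{defL}. Your worry about poset compatibility is unnecessary---as you yourself note, it is part of what it means for $\cO_I\simeq\cO_I'$ to be an equivalence of \emph{highest weight} categories, so no direct combinatorial verification of the odd-reflection step is needed (and incidentally the ambient Lie algebra is $\mathfrak{gl}(m+n|I|)$, not $\mathfrak{gl}(m+n|I_+|)$).
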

\begin{proof}
Through equations \eqref{defp}, \eqref{extsubquo} and the fact that KL polynomials of $\cO_I$ correspond to those in $\cO_I'$, this can be reduced to the corresponding statement on extensions between standard and simple modules in singular blocks of parabolic category~$\cO$ for Lie algebras, by Lemma~\ref{defL}.

This result is known in these categories. We give a sketch of a proof for completeness. For blocks in non-parabolic category~$\cO$ this is Theorem~3.8.1 in~\cite{CPS1}. To prove it in parabolic category~$\cO$, we consider the Koszul dual statement, concerning the radical filtration of standard modules, see \cite{Backelin}. Then, by the above, the statement is correct for regular blocks in parabolic category~$\cO$ for Lie algebras.
 The full (dual) result follows immediately from graded translation to the wall, see \cite{Stroppel}.
\end{proof}

By applying the work of Cline, Parshall and Scott this leads to the following corollary.
\begin{corollary}
\label{corsim}
The highest weight category~$\cO_{\mZ}$ with length function $\len$ has an abstract Kazhdan-Lusztig theory, according to Definition 2.1 in~\cite{CPS2}. Consequently, we have
$$\sum_{k\ge 0} q^k \dim\Ext_{\cO}^k(L(\lambda),L(\mu))=\sum_{\nu\in\Lambda} p_{\nu,\lambda}(q)p_{\nu,\mu}(q).$$
\end{corollary}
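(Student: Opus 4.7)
The strategy is to verify that $(\cO_{\mZ},\len)$ satisfies the hypotheses of Cline-Parshall-Scott's Definition 2.1 in~\cite{CPS2}, and then invoke their general theorems to extract the quadratic formula. Most of the hard work is already completed in Theorem~\ref{KLtheory}; what remains is essentially a duality argument together with a direct appeal to~\cite{CPS2}.

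An abstract Kazhdan-Lusztig theory in the sense of~\cite{CPS2} requires the parity/vanishing condition for $\Ext^i_\cO(M(\mu),L(\lambda))$, which is precisely Theorem~\ref{KLtheory}, together with its costandard counterpart
\[
\Ext^i_{\cO}(L(\lambda), M(\mu)^\vee) \ne 0 \;\Rightarrow\; \mu\preceq \lambda,\; i\le \len(\lambda,\mu),\; i\equiv\len(\lambda,\mu)\pmod 2,
\]
where $M(\mu)^\vee$ denotes the dual Verma module. I would obtain this by applying the standard simple-preserving contravariant duality $d$ on $\cO$, which fixes each $L(\lambda)$ up to isomorphism and interchanges $M(\mu)$ with $M(\mu)^\vee$; this yields isomorphisms
\[
\Ext^i_{\cO}(L(\lambda), M(\mu)^\vee) \;\simeq\; \Ext^i_{\cO}(M(\mu), L(\lambda)),
\]
so the parity constraint transfers directly. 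The only subtlety, which is routine, is to verify that this duality is well-defined on the super version of $\cO$ with the parity conventions of Section~\ref{prelO}.

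With the abstract KL property in place, the formula follows from the general machinery of~\cite{CPS2}. The mechanism is the Euler--Poincar\'e computation of the alternating sum $\sum_k(-1)^k\dim\Ext^k_\cO(L(\lambda),L(\mu))$ using the standard filtrations of projective covers (via equation~\eqref{resCheng}), combined with the inverse relationship between the matrices $\left(d_{\mu,\lambda}(q)\right)$ and $\left(p_{\nu,\lambda}(-q)\right)$ encoded in the canonical and dual-canonical bases of $\dot{\mathbb V}^{\otimes m}\otimes\dot{\mathbb W}^{\otimes n}$; the parity vanishing furnished by Theorem~\ref{KLtheory} rules out cancellation and thereby lifts the $q=-1$ identity to an identity of polynomials in~$q$. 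The only genuine obstacle in the argument is handling the duality step carefully in the super setting; the CPS framework then delivers the rest without further difficulty.
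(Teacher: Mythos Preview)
Your proposal is correct and follows essentially the same approach as the paper: the paper's proof simply notes that this is a special case of Corollary~3.9 in~\cite{CPS2}, using the duality on~$\cO$, which is precisely what you spell out---Theorem~\ref{KLtheory} gives the standard-side parity condition, the simple-preserving duality transfers it to the costandard side, and the CPS machinery then yields the formula. Your additional description of the Euler--Poincar\'e mechanism is an unpacking of what happens inside the cited CPS result rather than a different argument.
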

\begin{proof}
This is a special case of Corollary 3.9 in~\cite{CPS2}, using the duality on $\cO$.
\end{proof}

Comparison with equation~\eqref{defp} yields
\begin{equation}
\label{extsimple}
\dim \Ext^j_{\cO}(L(\lambda),L(\mu))=\sum_{i=0}^j\sum_{\nu\in\Lambda}\dim\Ext^i_{\cO}(M(\nu),L(\lambda))\,\dim\Ext_{\cO}^{j-i}(M(\nu),L(\mu)).
\end{equation}
This formula follows also by standard methods from the observation that category~$\cO$ for~$\mathfrak{gl}(m|n)$ is standard Koszul, see \cite{BLW}, the fact that there are no extensions from Verma modules to dual Verma modules and use of the duality functor. 

\begin{remark}
The analogue of equation~\eqref{extsubquo} does not hold for
$$\Ext^\bullet_{\cO}(L(\mu), L(\lambda))\leftrightarrow \Ext^\bullet_{\cO_I}(L(\mu), L(\lambda)),$$
which is confirmed by equation~\eqref{extsimple}, as the summation over $\nu$ goes out of $\Lambda_I$. However, using the subsequent Lemma~\ref{vancentre} it is possible to show that for each~$\lambda,\mu\in \Lambda_I$ and a fixed degree $j$, there is an interval $J_{\lambda,\mu,j}$, such that
$$\dim\Ext^j_{\cO}(L(\mu), L(\lambda))\simeq\dim \Ext^j_{\cO_{J_{\lambda,\mu,j}}}(L(\mu), L(\lambda)).$$
\end{remark}

\begin{remark}
\label{lengthnotsame}
\begin{enumerate}
\item The length function $\mathtt{l}$ of Lemma~\ref{defL} does not reduce to the length function for $\mathfrak{gl}(m)\oplus\mathfrak{gl}(n)$, when restricted to one Weyl group orbit.
\item The restriction of $\mathtt{l}$ to $\Lambda^{++}$ does not correspond to the known length function, as defined by Brundan in Section $3$-$g.$ of~\cite{Brundan}. It is impossible to find a length function on $\Lambda$ with such a restriction.
\end{enumerate}
Both properties are illustrated in Appendix \ref{gl21}.
\end{remark}

The Brundan KL theory for $\cF$ is also an abstract KL theory, as proved in Theorem~4.51(i) of \cite{Brundan}. There are connections between the KL polynomials for both categories.
\begin{lemma}
\label{lemOF}
The restriction of the polynomials $p_{\lambda,\nu}(q)$ in equation~\eqref{defp} to $\lambda,\nu\in \Lambda^{++}$ gives the KL polynomials of~\cite{Brundan, MR1443186} for the category~$\cF$. Namely,
$$\dim\Ext_{\cO}^j(M(\lambda),L(\nu))=\dim\Ext^j_{\cF}(K(\lambda),L(\nu)).$$
\end{lemma}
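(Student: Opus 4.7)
The plan is to exhibit both sides of the claimed equality as the coefficient of $q^j$ in one and the same polynomial, namely $p_{\lambda,\nu}(q)$. For the left-hand side this is immediate from~\eqref{defp}, which is valid for arbitrary $\lambda, \nu \in \Lambda$, so in particular on $\Lambda^{++}$.

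For the right-hand side, I would apply Theorem~4.51(i) of~\cite{Brundan}, recalled just before the lemma, which establishes an abstract Kazhdan--Lusztig theory for $\cF$ in the sense of~\cite{CPS2}. Invoking Corollary~3.9 of~\cite{CPS2} for $\cF$ (using the natural duality on $\cF$, exactly as in the proof of Corollary~\ref{corsim} for $\cO_{\mZ}$) gives that $\dim\Ext^j_{\cF}(K(\lambda),L(\nu))$ is the coefficient of $q^j$ in Brundan's KL polynomial $p^{\cF}_{\lambda,\nu}(q)$, as defined in~\cite{Brundan, MR1443186}.

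It then suffices to identify $p^{\cF}_{\lambda,\nu}(q) = p_{\lambda,\nu}(q)$ for $\lambda, \nu \in \Lambda^{++}$. Both systems are defined as expansion coefficients of canonical basis elements in the monomial basis of the $U_q(\mathfrak{sl}(\infty))$-module $\dot{\mathbb V}^{\otimes m} \otimes \dot{\mathbb W}^{\otimes n}$: Brundan's original construction produces these coefficients for parameters in $\Lambda^{++}$, while the polynomials $p_{\lambda,\nu}(q)$ used in~\eqref{defp} extend them to all of $\Lambda$ via the Fock space framework of~\cite{BLW}. Granted this combinatorial matching, the two expressions for $\dim\Ext^j_{\cO}(M(\lambda),L(\nu))$ and $\dim\Ext^j_{\cF}(K(\lambda),L(\nu))$ agree term by term, which is the statement of the lemma.

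The main obstacle is precisely this last identification between Brundan's original canonical-basis construction for parameters in $\Lambda^{++}$ and the BLW Fock-space realisation used in this paper; in particular one has to verify that the canonical basis element $\dot b_\nu$ for $\nu\in\Lambda^{++}$ really is the one underlying Brundan's polynomial. Once this compatibility is established the proof of the lemma is essentially formal.
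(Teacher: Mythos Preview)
Your argument is essentially correct, but it takes a genuinely different route from the paper's proof, and a couple of points deserve comment.

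The paper does \emph{not} pass through the combinatorial identification of polynomials. Instead, it gives a direct cohomological proof: using the isomorphism~\eqref{VerH}, the Hochschild--Serre spectral sequence for $\fg_1\lhd\fn$, and Kostant's theorem applied to the finite-dimensional $\fg_0$-module $H^q(\fg_1,L(\nu))$, one shows that for dominant $\lambda$ the spectral sequence collapses and both sides of the claimed equality compute to $\dim\Hom_{\fg_0}(L_0(\lambda),H^j(\fg_1,L(\nu)))$. This argument is self-contained and, crucially, does \emph{not} use the Koszulity result of~\cite{BLW} underlying~\eqref{defp}; the statement about the polynomials is then a corollary of the Ext equality rather than an input to it.

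Your approach reverses the logic and works, but two small points need adjusting. First, your ``main obstacle'' is not one: the polynomials $p_{\lambda,\nu}(q)$ in~\cite{Brundan} are already defined for \emph{all} $\lambda,\nu\in\Lambda$ via the canonical basis of the full tensor space $\dot{\mathbb V}^{\otimes m}\otimes\dot{\mathbb W}^{\otimes n}$; there is no separate $\cF$-construction to be matched with the~\cite{BLW} framework, and the paper's $p_{\lambda,\nu}$ are literally Brundan's polynomials. What differs between~\cite{Brundan} and~\cite{BLW,CLW} is only \emph{which} Ext interpretation is proved. Second, Corollary~3.9 of~\cite{CPS2} concerns $\Ext^\bullet(L,L)$, not $\Ext^\bullet(\Delta,L)$; the identification of $\dim\Ext^j_{\cF}(K(\lambda),L(\nu))$ with the $q^j$-coefficient of $p_{\lambda,\nu}$ is more elementary and is already contained in Theorem~4.51 of~\cite{Brundan} (parity vanishing plus the character formula).

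In summary: your route is valid but imports the heavy machinery of~\cite{BLW}, while the paper's spectral-sequence argument is elementary and independent of Koszulity.
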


In the proof we will use the Hochschild-Serre (HS) spectral sequences, see e.g. Section 16.6 in~\cite{bookMusson}. Concretely we will apply the HS 
spectral sequence for the ideal $\fg_1\subset \fn$:
\begin{equation} 
\label{HSss}
H^p(\fn_0,H^q(\fg_1,L(\mu)))\,\Rightarrow\, H^{p+q}(\fn,L(\mu)).
\end{equation}

\begin{proof}
Equation~\eqref{VerH} allows us to use the spectral sequence \eqref{HSss}. The fact that $H^q(\fg_1,L(\nu))$ is a finite dimensional $\fg_0$-module and Kostant 
cohomology (see Theorem 5.14 of \cite{Kostant}) imply that for any $\lambda\in\Lambda^{++}$
$$\Hom_{\fh}( \C_\lambda,H^p(\fn_0,H^q(\fg_1,L(\mu)))=0 \quad\text{for}\quad p>0.$$
Hence the spectral sequence collapses and we obtain
$$\Hom_{\fh}( \C_\lambda,H^j(\fn, L(\nu)))=\Hom_{\fb_0}(\C_\lambda,H^j(\fg_1,L(\nu)))=\Hom_{\fg_0}(L_0(\lambda),H^j(\fg_1,L(\nu))),$$
since $H^j(\fg_1,L(\nu))$ is a semisimple $\fg_0$-module.
By the analogue of equation~\eqref{VerH} in the category~$\cF$ we have
$$\dim\Ext_{\cO}^j(K(\lambda),L(\nu))=\dim\Hom_{\fg_0}(\L_0(\lambda),H^j(\fg_1,L(\nu))).$$
Hence the statement follows.
\end{proof}

\subsection{Further vanishing properties of extensions}
In this subsection we continue to consider $\fg=\mathfrak{gl}(m|n)$. We assume that the Borel subalgebra is the distinguished or anti-distinguished one. The essential (and characterising) property of these choices is that every positive root which is simple in $\Delta^+_{\oa}$ is also simple in $\Delta^+$.

\begin{lemma} 
\label{lemtwist}
Consider $\lambda,\mu\in\fh^\ast$ and a simple reflection $s\in W$.
\begin{enumerate}[$($i$)$]
\item If $s\cdot\lambda=\lambda$ and $s\cdot\mu <\mu$, we have $\Ext_{\cO}^{\bullet}(M(\lambda),L(\mu))=0$.
\item If $s\cdot\lambda<\lambda$ and $s\cdot\mu<\mu$, we have $\Ext^j_{\cO}(M(s\cdot\lambda),L(\mu))\simeq \Ext^{j-1}_{\cO}(M(\lambda),L(\mu))$.
\end{enumerate}
\end{lemma}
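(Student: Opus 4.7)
The plan is to invoke (2.3), $\Ext^i_\cO(M(\nu), L(\mu)) \simeq \Hom_\fh(\C_\nu, H^i(\fn, L(\mu)))$, and compute the right-hand side via the Hochschild--Serre spectral sequence attached to the decomposition $\fn = \C e_\alpha \oplus \fu$, where $\fu = \fn_\alpha$ is the ideal of positive root vectors other than $e_\alpha$. The assumption on the Borel is used precisely here: every even simple reflection corresponds to an $\alpha$ which is simple in $\Delta^+$, so $\fu$ really is an ideal of $\fn$. Since $\C e_\alpha$ is one-dimensional and even, $H^p(\C e_\alpha, -) = 0$ for $p \geq 2$, and the spectral sequence degenerates at $E_2$, yielding short exact sequences
\[
0 \to H^1(\C e_\alpha, H^{j-1}(\fu, L(\mu))) \to H^j(\fn, L(\mu)) \to H^0(\C e_\alpha, H^j(\fu, L(\mu))) \to 0
\]
of $\fh$-modules for each $j$.

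The hypothesis $s \cdot \mu < \mu$ forces $L(\mu)$ to be locally $\mathfrak{sl}_2(\alpha)$-finite, where $\mathfrak{sl}_2(\alpha)$ denotes the subalgebra generated by $e_\alpha, f_\alpha$. Because $\fu$ is stable under the Levi subalgebra of $\fp_\alpha$, the $\mathfrak{sl}_2(\alpha)$-structure propagates through the Chevalley--Eilenberg complex to $H^\bullet(\fu, L(\mu))$, which is therefore again locally $\mathfrak{sl}_2(\alpha)$-finite. On each $\mathfrak{sl}_2(\alpha)$-isotypic component, $H^0(\C e_\alpha, -)$ selects the highest-weight vectors and $H^1(\C e_\alpha, -)$ selects the lowest-weight vectors, the latter appearing with an intrinsic $-\alpha$ weight shift coming from the Chevalley--Eilenberg differential. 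Consequently, the $\eta$-weight space $H^j(\fn, L(\mu))_\eta$ receives a contribution from the $H^0$-term only when $\langle \eta, \alpha^\vee \rangle \geq 0$, and from the $H^1$-term (applied to $H^{j-1}(\fu, L(\mu))$) only when $\langle \eta, \alpha^\vee \rangle \leq -2$.

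Because $s_\alpha$ permutes $\Delta_\ob^+$ for both admissible Borels, $\rho_\ob$ is orthogonal to every simple even $\alpha$, and $\langle \rho, \alpha^\vee \rangle = \langle \rho_\oa, \alpha^\vee \rangle = 1$. For part~(i), $s \cdot \lambda = \lambda$ gives $\langle \lambda, \alpha^\vee \rangle = -1$, which falls strictly between the two admissible ranges: neither $E_2$-term contributes and $H^j(\fn, L(\mu))_\lambda = 0$ for all $j$. For part~(ii), $\langle \lambda, \alpha^\vee \rangle = n \geq 0$ forces only the $H^0$ contribution at weight $\lambda$ in degree $j - 1$, equal to the multiplicity of the $\mathfrak{sl}_2(\alpha)$-irreducible of highest weight $\lambda$ inside $H^{j-1}(\fu, L(\mu))$; while $\langle s \cdot \lambda, \alpha^\vee \rangle = -n - 2$ forces only the $H^1$ contribution at weight $s \cdot \lambda$ in degree $j$, which -- after unwinding the $-\alpha$ shift and identifying lowest-weight with highest-weight vectors via $s_\alpha$ -- equals the same multiplicity. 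This yields the isomorphism $\Ext^j_\cO(M(s\cdot\lambda), L(\mu)) \simeq \Ext^{j-1}_\cO(M(\lambda), L(\mu))$.

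The principal technical step is verifying that local $\mathfrak{sl}_2(\alpha)$-finiteness of $L(\mu)$ passes through the Chevalley--Eilenberg resolution in an $\fl$-equivariant fashion; beyond this routine point the proof is a clean weight-counting argument, with no super-specific input required beyond the hypothesis on the Borel.
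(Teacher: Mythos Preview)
Your proof is correct and follows precisely the alternative route that the paper itself mentions in passing immediately after its own argument: ``An alternative proof of Lemma~\ref{lemtwist} follows from equation~\eqref{VerH} and a HS spectral sequence reducing the statement to an $\mathfrak{sl}(2)$ property.'' The paper's chosen proof is instead via the right exact twisting functors $T_s$ and their left derived functors $\cL_iT_s$ from~\cite{CouMaz}: one has $\cL_iT_s M(\lambda)\simeq\delta_{i,0}M(s\cdot\lambda)$ and, under the hypothesis $s\cdot\mu<\mu$, also $\cL_iT_s L(\mu)\simeq\delta_{i,1}L(\mu)$; a general compatibility of $\Ext$ with derived equivalences (Proposition~5.11 in~\cite{CouMaz}) then yields $\Ext^j_\cO(M(s\cdot\lambda),L(\mu))\simeq\Ext^{j-1}_\cO(M(\lambda),L(\mu))$ directly, and part~(i) follows by iterating this when $s\cdot\lambda=\lambda$ until one reaches $\Hom_\cO(M(\lambda),L(\mu))=0$.

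Your spectral-sequence argument is more elementary and self-contained --- it avoids importing the twisting-functor machinery and its derived-category compatibilities --- at the cost of the routine verifications that $\fu$ is $\fl_\alpha$-stable and that local $\mathfrak{sl}_2(\alpha)$-finiteness of $L(\mu)$ survives passage to $H^\bullet(\fu,L(\mu))$. The twisting-functor proof, conversely, is shorter once those references are in hand and makes the functorial origin of the degree shift transparent. Both approaches use the hypothesis on the Borel in exactly the same way (that every even simple $\alpha$ is simple in~$\Delta^+$, so that $\fu$ is genuinely an ideal of~$\fn$, respectively so that $T_s$ behaves as in the reductive case).
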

\begin{proof}
We prove these results using the right exact twisting functors $T_s$ and the left derived functors $\cL_iT_s$ on category~$\cO$, as studied in~\cite{CMW, CouMaz}. For both $(i)$ and $(ii)$ we have $$\cL_iT_s\left(M(\lambda)\right)\simeq\delta_{i,0}M(s\cdot\lambda)\qquad\mbox{and}\qquad \cL_iT_s( L(\mu))\simeq\delta_{i,1}L(\mu),$$ see Lemmata 5.4 and 5.7 and Theorem 5.12$(i)$ in~\cite{CouMaz}. The combination of the displayed equations with Proposition 5.11 in~\cite{CouMaz}, leads to
$$\Ext^j_{\cO}(M(s\cdot\lambda),L(\mu))\simeq \Ext^{j-1}_{\cO}(M(\lambda),L(\mu)).$$
A step-by-step explanation of this procedure can be found in the proof of Proposition 3 in~\cite{MR2366357}. This yields $(ii)$. In case $s\cdot\lambda=\lambda$, we obtain by iteration that $$\Ext^j_{\cO}(M(\lambda),L(\mu))=\Hom_{\cO}(M(\lambda),L(\mu))=0,$$ proving $(i)$.
\end{proof}
An alternative proof of Lemma~\ref{lemtwist} follows from equation~\eqref{VerH} and a HS spectral sequence reducing the statement to an $\mathfrak{sl}(2)$ property.

\begin{remark}
The combination of Lemmata \ref{lemtwist} and \ref{lemOF} completely determines the KL polynomials $p_{\lambda,\nu}$ in case 
$\nu\in\Lambda^{++}$ in terms of the KL polynomials for $\cF$. 
\end{remark}

Next we prove that some KL polynomials can be obtained from the ones for the underlying Lie algebra.
\begin{lemma}\label{KLorbit}
For $\mu,\lambda\in \fh^\ast$ in the same orbit of the Weyl group we have
\begin{eqnarray*}\dim\Ext^j_{\cO}(M(\mu),L(\lambda))&=&\dim\Ext^j_{\cO^0}(M_0(\mu),L_0(\lambda))\qquad\mbox{and}\\
\dim\Ext^1_{\cO}(L(\mu),L(\lambda))&=&\dim\Ext^1_{\cO^0}(L_0(\mu),L_0(\lambda)).
\end{eqnarray*}
\end{lemma}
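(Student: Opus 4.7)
The plan is to prove the first equality via the Hochschild--Serre spectral sequence for the abelian ideal $\fg_1\trianglelefteq\fn$ applied to $L(\lambda)$, and then to deduce the second from it using~\eqref{extsimple}. Combining~\eqref{VerH}, the spectral sequence~\eqref{HSss}, and the $\cO^0$-analogue of~\eqref{VerH}, one obtains
\[
E_2^{p,q}=\Ext^p_{\cO^0}(M_0(\mu),\,H^q(\fg_1,L(\lambda)))\;\Longrightarrow\;\Ext^{p+q}_\cO(M(\mu),L(\lambda)).
\]
Since $\rho_1$ is $W$-invariant, $\mu$ and $\lambda$ share the same $\fg_0$-central character $\chi^0_\lambda$, so only the $\chi^0_\lambda$-isotypic summand of $H^q(\fg_1,L(\lambda))$ contributes to $E_2^{p,q}$.

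The core step is to show that this summand is concentrated in $q=0$ and is isomorphic to $L_0(\lambda)$. Since $\ad(\fg_0)\fg_1\subset\fg_1$, the annihilation $\fg_1\cdot v_\lambda=0$ propagates by induction to all of $L_0(\lambda)=U(\fg_0)v_\lambda$, providing the inclusion $L_0(\lambda)\hookrightarrow L(\lambda)^{\fg_1}=H^0(\fg_1,L(\lambda))$. Conversely, a nonzero morphism $L_0(\kappa)\to L(\lambda)^{\fg_1}$ with $\kappa\in W\cdot\lambda$ would map the $\fg_0$-highest weight vector $v_\kappa$ to a nonzero $\fn$-invariant of $L(\lambda)$ of weight $\kappa$; the simplicity of $L(\lambda)$ together with $L(\lambda)^\fn=\C v_\lambda$ then forces $\kappa=\lambda$, and the embedding is unique up to scalar. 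The remaining, more delicate task is to rule out further $\fg_0$-composition factors $L_0(\kappa)$ with $\kappa\in W\cdot\lambda$ in higher Loewy layers of $L(\lambda)^{\fg_1}/L_0(\lambda)$ and in $H^q(\fg_1,L(\lambda))$ for $q\ge 1$. I would argue this by a weight and Casimir analysis: the $\fh$-weights of $H^q(\fg_1,L(\lambda))$ are of the form $\lambda-\beta-\sigma$ with $\beta$ a nonnegative combination of positive roots (from being a weight of $L(\lambda)$) and $\sigma$ a sum of $q$ positive odd roots counted with multiplicity, and the Weyl-invariant constraint $\|\kappa+\rho\|^2=\|\lambda+\rho\|^2$ together with the fixed eigenvalue of the $\fg$-Casimir on $L(\lambda)$ should force all candidates $\kappa$ back to $\lambda$ with $q=0$.

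For the second equality, specialise~\eqref{extsimple} to $j=1$. Since $\Ext^0_\cO(M(\nu),L(\xi))=\delta_{\nu,\xi}\C$, the sum over $\nu$ collapses to the two terms $\nu=\mu$ and $\nu=\lambda$, giving
\[
\dim\Ext^1_\cO(L(\mu),L(\lambda))=\dim\Ext^1_\cO(M(\mu),L(\lambda))+\dim\Ext^1_\cO(M(\lambda),L(\mu)).
\]
The identical formula holds in $\cO^0$ by the classical analogue of~\eqref{extsimple}, valid because $\cO^0$ is standard Koszul. Applying the first equality of the lemma to the two Verma--simple terms on the right then yields the second.

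The main obstacle is the degeneration of the spectral sequence on the $\chi^0_\lambda$-component. While the socle-level rigidity follows cleanly from the simplicity of $L(\lambda)$ via $L(\lambda)^\fn=\C v_\lambda$, ruling out additional $\fg_0$-composition factors $L_0(\kappa)$ with $\kappa\in W\cdot\lambda$ in higher Loewy layers of $H^0(\fg_1,L(\lambda))$ and in $H^q(\fg_1,L(\lambda))$ for $q\ge 1$ requires the careful weight and Casimir analysis sketched above, and will be the most substantial step of the proof.
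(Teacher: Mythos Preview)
Your overall framework matches the paper's: reduce the first equality to the Hochschild--Serre spectral sequence~\eqref{HSss} for $\fg_1\trianglelefteq\fn$, and deduce the second from~\eqref{extsimple}. The second part of your argument is fine and is exactly what the paper does.

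The gap is in the ``core step.'' Your proposed Casimir analysis does not close it. The constraint $\|\kappa+\rho\|^2=\|\lambda+\rho\|^2$ is automatically satisfied for every $\kappa\in W\cdot\lambda$, so it cannot exclude such $\kappa$ from appearing as a weight of $H^q(\fg_1,L(\lambda))$ for $q>0$; and the $\fg$-Casimir does not act on $H^q(\fg_1,L(\lambda))$ at all, so invoking its eigenvalue on $L(\lambda)$ gives no further restriction. Likewise, your socle argument for $H^0$ only controls the $\fg_0$-socle of $L(\lambda)^{\fg_1}$ and says nothing about higher Loewy layers.

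The missing idea is much simpler than a Casimir computation: use the grading element $z\in\mathfrak{z}(\fg_0)$ from~\eqref{gradel}. Since $z$ is central in $\fg_0$, it is $W$-fixed, so $\mu(z)=\lambda(z)$ for any $\mu\in W\cdot\lambda$. On the other hand, $H^q(\fg_1,L(\lambda))$ is a $\fg_0$-subquotient of $S^q\fg_{-1}\otimes L(\lambda)$, and every weight $\nu$ occurring there satisfies $\nu(z)\le\lambda(z)-q$. Hence for $q>0$ no weight in $W\cdot\lambda$ can occur, which kills $\Hom_\fh(\C_\mu,H^p(\fn_0,H^q(\fg_1,L(\lambda))))$ for all $q>0$ in one stroke. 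The same $z$-grading shows that the only part of $H^0(\fg_1,L(\lambda))=L(\lambda)^{\fg_1}$ visible to $\C_\mu$ is its top $z$-component, which is exactly $L_0(\lambda)$; this disposes of your worry about higher Loewy layers without any further work. With this observation your spectral sequence argument goes through immediately.
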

\begin{proof}
The second equality follows from the first and equation~\eqref{extsimple}.

Now we prove the first equality. We use the element $z\in \fh$, defined in equation~\eqref{gradel}. Note that $\fh$ and therefore $z$ act on 
$H^{p+q}(\fn,L(\lambda)$ as well as on the spectral sequence term $H^p(\fn_0,H^q(\fg_1,L(\lambda)))$. Recall that $H^q(\fg_1,L(\lambda)))$ is a
$\fg_0$-subquotient of $S^q\fg_{-1}\otimes\Lambda\fg_{-1}\otimes L_0(\lambda)$. Therefore, if $q>0$, then
any weight $\nu$ of $H^q(\fg_1,L(\lambda)))$ satisfies
the condition $\nu(z)<\lambda(z)$.
Next we notice that $\lambda(z)=\mu(z)$. Therefore,
$$\Hom_{\fh}(\C_\mu,H^p(\fn_0,H^q(\fg_1,L(\lambda))))=0\qquad\mbox{if}\quad q>0.$$
This yields
$$\Hom_{\fh}(\C_\mu,H^j(\fn,L(\lambda)))\simeq\Hom_{\fh}(\C_\mu,H^j(\fn_0,L(\lambda)^{\fg_1}))\simeq \Ext_{\cO^0}^j(M_0(\mu),L_0(\lambda)), $$
since $L(\lambda)^{\fg_1}\simeq L_0(\lambda)$, concluding the proof. An alternative proof of the first equality follows from the algorithm to calculate the canonical basis in Section 3 of~\cite{Brundan3}.
\end{proof}

The following vanishing lemma will be useful in Section \ref{secfpd}.
\begin{lemma}
\label{vancentre}
Consider $\lambda,\mu\in\fh^\ast$, then we have
$$\Ext^j_{\cO}(M(\lambda),L(\mu))\not=0\quad\Rightarrow \quad \max\{j-l(w_0),0\}\, \le \,\mu(z)-\lambda(z)\, \le\, j+\dim\fg_1,$$
with $z\in\mathfrak{z}(\fg_0)\subset\fh$ as in equation~\eqref{gradel} and $w_0$ standing for the longest element in $W$.
\end{lemma}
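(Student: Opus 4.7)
The plan is to imitate the method of Lemma~\ref{KLorbit}, combining equation~\eqref{VerH} with the Hochschild--Serre spectral sequence~\eqref{HSss} and tracking the action of the grading element $z$. By \eqref{VerH}, non-vanishing of $\Ext^j_{\cO}(M(\lambda),L(\mu))$ means that $\lambda$ occurs as a weight of $H^j(\fn,L(\mu))$. Convergence of the spectral sequence~\eqref{HSss} then forces the existence of $p,q\ge 0$ with $p+q=j$ such that $\lambda$ is a weight of $E_2^{p,q}=H^p(\fn_0,H^q(\fg_1,L(\mu)))$. Since $\dim\fn_0=|\Delta^+_{\oa}|=l(w_0)$, we automatically have $p\le l(w_0)$, pinning $q$ into the interval $\max\{j-l(w_0),0\}\le q\le j$.

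Next I would track how $z$ sits on each page of the spectral sequence. The Chevalley--Eilenberg complex computing $H^p(\fn_0,-)$ uses $\Lambda^p\fn_0^{\ast}$, which is $z$-neutral since every even root is orthogonal to $z$; hence $H^p(\fn_0,-)$ preserves the $z$-grading. On the other hand $\fg_1$ is abelian and purely odd, so $H^q(\fg_1,L(\mu))$ is a subquotient of $S^q(\fg_1^{\ast})\otimes L(\mu)$, and $z$ acts with a $-q$ shift since each factor of $\fg_1^{\ast}$ has $z$-weight $-1$. Consequently the existence of a $\lambda$-weight vector in $E_2^{p,q}$ produces a weight $\nu$ of $L(\mu)$ with $\nu(z)=\lambda(z)+q$.

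It then suffices to bound $\mu(z)-\nu(z)$ for a weight $\nu$ of $L(\mu)$. The lower bound $\nu(z)\le \mu(z)$ is immediate: $\mu-\nu$ is a non-negative sum of positive roots and every positive root has $z$-weight $0$ or $1$. Combined with $q\ge \max\{j-l(w_0),0\}$ this yields $\mu(z)-\lambda(z)\ge \max\{j-l(w_0),0\}$. For the upper bound, $L(\mu)$ is a quotient of $M(\mu)\simeq U(\ofn)\otimes\C_{\mu}$, and the super PBW theorem decomposes $U(\ofn)\simeq U(\fn_0^-)\otimes\Lambda(\fg_{-1})$, so each odd positive root occurs with multiplicity at most one in $\mu-\nu$. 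This yields $\mu(z)-\nu(z)\le |\Delta^+_{\ob}|=\dim\fg_1$, and combined with $q\le j$ gives $\mu(z)-\lambda(z)\le j+\dim\fg_1$.

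I do not anticipate any deep obstacle. The only slightly subtle point is that the upper bound relies specifically on the \emph{super} PBW theorem rather than its classical analogue, because this is what caps the $z$-weight range of weights occurring in $M(\mu)$ at $\dim\fg_1$, irrespective of how many positive even roots may enter $\mu-\nu$.
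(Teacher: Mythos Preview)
Your proof is correct and follows essentially the same approach as the paper: both combine \eqref{VerH} with the Hochschild--Serre spectral sequence \eqref{HSss}, use $p\le \dim\fn_0=l(w_0)$ to bound $q$, and track the $z$-weight through the complex $S^q(\fg_1^\ast)\otimes L(\mu)$ together with the bound $0\le \mu(z)-\nu(z)\le \dim\fg_1$ for weights $\nu$ of $L(\mu)$. The paper phrases the last step via the $\fg_0$-subquotient $S^q(\fg_{-1})\otimes\Lambda(\fg_{-1})\otimes L_0(\mu)$ and Kostant's description of $\fn_0$-cohomology, while you argue directly with $z$-weights and the super PBW theorem; these are equivalent for the purpose at hand.
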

\begin{proof}
We use the reformulation~\eqref{VerH} and the spectral sequence \eqref{HSss}. This implies that the extension vanishes unless $\lambda=w\cdot\nu$,
for some $w\in W$ and a highest weight $\nu$ of a simple $\fg_0$-subquotient of $H^q(\fg_1,L(\mu))$. Moreover,
$j-q\le l(w_0)=\dim \fn_0$.
In particular we have
$$[S^q(\fg_{-1})\otimes \Lambda(\fg_{-1})\otimes L_0(\mu):L_0(w^{-1}\cdot\lambda)]\not=0.$$
Hence $q\leq \mu(z)-\lambda(z)$. This implies 
$$j-l(w_0)\, \le\, q\,\le \,\mu(z)-\lambda(z)$$
and 
$$j+\dim\fg_1\geq q+\dim\fg_1\geq  \mu(z)-\lambda(z).$$
The proposed inequalities thus follow.
\end{proof}

\subsection{Socle of the tensor space}
\begin{theorem}
The socle of the $\mathfrak{sl}(\infty)$-module $\mathbb V^{\otimes m}\otimes \mathbb W^{\otimes n}$ contains $\{b_{\mu}\,|\,\mu\in \Lambda\}$.

Furthermore, under the $\mathfrak{sl}(\infty)$-module morphism $\mathbb V^{\otimes m}\otimes \mathbb W^{\otimes n}\simeq K(\cO^{\Delta}_{\mZ})$, this socle corresponds to the subgroup of $K(\cO^{\Delta}_{\mZ})$ generated by the projective modules.
\end{theorem}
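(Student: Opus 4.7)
The second assertion follows from the first: by \eqref{resCheng} the expansion $b_\mu = \sum_\lambda d_{\mu,\lambda}(1)\,v_\lambda$ corresponds under the identification \eqref{GrothTens} to $[P(\mu)] = \sum_\lambda (P(\mu):M(\lambda))\,[M(\lambda)]$, so the subgroup of $K(\cO_{\mZ}^\Delta)$ generated by the indecomposable projectives equals the $\mZ$-span of $\{b_\mu \mid \mu\in\Lambda\}$. Hence once the first assertion is established, both parts of the statement follow.

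To prove the socle claim I would first verify that $\cP:=\mZ\langle b_\mu\mid\mu\in\Lambda\rangle$ is an $\mathfrak{sl}(\infty)$-submodule of $\mathbb V^{\otimes m}\otimes\mathbb W^{\otimes n}$. Because $U=\C^{m|n}$ is finite-dimensional, tensoring with $U$ or $U^*$ preserves projectivity in $\cO$, and block projection does too; hence $E_iP(\mu)$ and $F_iP(\mu)$ are projective, so $e_ib_\mu=[E_iP(\mu)]$ and $f_ib_\mu=[F_iP(\mu)]$ again lie in $\cP$. To place $b_\mu$ itself in the socle I would produce an explicit simple $\mathfrak{sl}(\infty)$-submodule containing it as follows. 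Fix an interval $I\subset\mZ$ containing all labels of $\mu$ (so $\mu\in\Lambda_I$). The $\mathfrak{sl}(I)$-module $\mathbb V_I^{\otimes m}\otimes\mathbb W_I^{\otimes n}$ is finite-dimensional, hence completely reducible, and $b_\mu$ lies in a uniquely determined simple $\mathfrak{sl}(I)$-summand $V^I_\mu$; on the categorical side this summand is visible through the equivalence $\cO_I\simeq \cO'_I$ of Section~\ref{prelO} and the projective cover $P(\mu)$ restricted to $\cO_I$, using~\eqref{extsubquo}. One then argues that the summands $V^I_\mu$ are compatible under interval enlargement $I\subset I'$, so that the direct limit $V_\mu:=\varinjlim_I V^I_\mu$ is a simple integrable $\mathfrak{sl}(\infty)$-submodule of $\mathbb V^{\otimes m}\otimes\mathbb W^{\otimes n}$ containing $b_\mu$; this gives $b_\mu\in \Soc(\mathbb V^{\otimes m}\otimes \mathbb W^{\otimes n})$.

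The main obstacle is the compatibility step: one must show that $V^I_\mu$ embeds as a simple $\mathfrak{sl}(I)$-direct summand of the restriction of $V^{I'}_\mu$ to $\mathfrak{sl}(I)$ for every $I\subset I'$. The natural route is categorical: if $\nu\in\Lambda_{I'}\cap\Lambda_I$ indexes a composition factor of a Verma module appearing in the standard filtration of $P(\mu)$, then under the Serre quotient $\cO_I=\cO_{\le I}/\cO_{<I}$ the class $[P(\mu)]$ decomposes consistently, and the desired compatibility follows from the uniqueness of tensor product categorifications from~\cite{LW} together with the truncation properties of Lusztig's canonical basis used in~\cite{BLW}. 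An alternative, more direct, route would bypass the colimit entirely by establishing via \eqref{extsimple} and \eqref{defp} that $\Ext^1_\cO(L(\nu),P(\mu))=0$ for all $\nu$ in the $\mathfrak{sl}(\infty)$-orbit of $\mu$ on $\cP$; such vanishing of first-order obstructions is precisely the semisimplicity of $\cP$ as an $\mathfrak{sl}(\infty)$-submodule and thereby places $\cP$ in the socle.
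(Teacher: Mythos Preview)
Your opening paragraph contains a genuine gap: the second assertion does \emph{not} follow from the first. Identifying $[P(\mu)]$ with $b_\mu$ gives you that the subgroup generated by projectives is the $\mZ$-span $\cP$ of $\{b_\mu\}$, and the first assertion then yields $\cP\subset\Soc$. But the theorem asserts equality, so you still need the reverse inclusion $\Soc\subset\cP$. Nothing in your proposal addresses this; your argument that $\cP$ is an $\mathfrak{sl}(\infty)$-submodule (which is correct and useful) only reinforces the inclusion you already have.

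The paper's proof of the first assertion is entirely different from, and much shorter than, your direct-limit construction. It invokes the Penkov--Styrkas description of $\Soc(\mathbb V^{\otimes m}\otimes\mathbb W^{\otimes n})$ as the common kernel of all contraction maps $\mathbb V^{\otimes m}\otimes\mathbb W^{\otimes n}\tto\mathbb V^{\otimes m-1}\otimes\mathbb W^{\otimes n-1}$; this makes it immediate that $v_\lambda\in\Soc$ precisely when $\lambda$ is typical. The canonical basis algorithm (Section~3 of \cite{Brundan3}) then expresses every $b_\nu$ as a finite $U(\mathfrak{sl}(\infty))$-combination of typical $v_\lambda$'s, so $b_\nu\in\Soc$ since the socle is a submodule. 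For the reverse inclusion the paper again uses Penkov--Styrkas: the socle is generated as a module by (typical) highest weight vectors, hence lies inside the submodule generated by $\{b_\mu\}$, which is $\cP$ by your own observation. Your interval/colimit argument would in principle work but, as you admit, the compatibility step is unproved; and your alternative via $\Ext^1_\cO(L(\nu),P(\mu))$ conflates extensions in $\cO$ with semisimplicity of $\cP$ as an $\mathfrak{sl}(\infty)$-module, which are unrelated notions.
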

\begin{proof}
We denote the socle by $\mathbb V^{\{m,n\}}$. It is proved in Theorem 2.2 of~\cite{Penkov} that $\mathbb V^{\{m,n\}}$ corresponds to the 
intersection of the kernels of all contraction maps $$ \mathbb V^{\otimes m}\otimes \mathbb W^{\otimes n}\tto \mathbb V^{\otimes m-1}\otimes \mathbb W^{\otimes n-1}.$$ 
So ${v}_\lambda$ is in $\mathbb V^{\{m,n\}}$ if and only if $\lambda$ is typical. 

According to the finiteness discussion of the algorithm to compute Lusztig's canonical basis in Section 3 of~\cite{Brundan3} it follows that for any $\nu\in\Lambda$,
$$\dot{b}_\nu=\sum_{\lambda} f_\lambda(q) A_\lambda \dot{v}_\lambda,$$
for a finite sum of typical (dominant) $\lambda\in\Lambda$, certain $A_\lambda\in U_q(\mathfrak{sl}(\infty))$ and $f_{\lambda}(q)\in\mZ[q,q^{-1}]$. Evaluating this in $q=1$ yields the first part. 

To prove the second statement we consider the description of the socle in Theorem~2.1 of~\cite{Penkov}. This implies that the socle is the direct sum of highest weight modules (with respect to the system of positive roots introduced there), where the highest weight vectors are linear combinations of $v_\lambda$ for typical $\lambda\in\Lambda$. This implies that the socle is inside the submodule generated by the basis $\{b_{\mu}\,|\,\mu\in \Lambda\}$.
\end{proof}

\section{Finiteness of homological dimension and the associated variety}
\label{secfpd}
In this section (except Lemma~\ref{propind}) $\fg$ is in the list \eqref{listA}. In Section 4.4 of~\cite{preprint}, Mazorchuk proved that the finitistic global dimension of (parabolic) category~$\cO$ is finite for classical Lie superalgebras. In this section, we relate the finiteness of projective dimensions with the associated variety of \cite{Duflo, Vera}. For any $M\in\fg$-mod we define its associated variety $X_M$ as
\begin{equation}\label{defasva}
\begin{aligned}
&X=\{x\in\fg_{\ob}\,\,\mbox{with}\,\, [x,x]=0 \},\\
&X_M=\{x\in X\,\,\mbox{with}\,\,  xM\not=\ker_xM\}.
\end{aligned}\end{equation}

Let  $\cA_0$ denote the additive and Karoubian closure of the category of modules of the form $\Ind N_0$ with $N\in \cO^0$. 
This category does not need to be abelian. By iteration we define $\cA_j$ as the full subcategory of $\cO$, containing the modules in $\cA_{j-1}$ as 
well as modules in $\cO$ which can be written as a kernel or cokernel of an injective or surjective morphism between two modules in $\cA_{j-1}$ or an 
extension of two modules in $\cA_{j-1}$. By taking the full subcategory of $\cO$ consisting of modules in some $\cA_j$ for $j\in\N$, we obtain an abelian 
full Serre subcategory which we denote by~${}^{(\fg,\fg_{\oa})}\cO$. The equivalence between $(iii)$ and $(iv)$ in the following theorem 
actually implies firstly that ${}^{(\fg,\fg_{\oa})}\cO\simeq \cA_{2l(w_0)}$ and secondly that this category can also be obtained by a similar procedure using only cokernels.

\begin{theorem}
\label{finpdind}
Let $\fg$ be a Lie superalgebra from the list ~\eqref{listA} and $M\in\cO$. Denote by $\pd_{\cO}$ the projective dimension in $\cO$. The following conditions are 
equivalent:
\begin{eqnarray*}
(i)\quad \pd_{\cO}M<\infty;\quad&(iii)&  M\in {}^{(\fg,\fg_{\oa})}\cO;\\
(ii)\;\,\quad X_M=\{0\};\quad&(iv)& \pd_{\cO}M\le 2l(w_0). 
\end{eqnarray*}
Consequently we have $\fd\cO=2l(w_0)$.
\end{theorem}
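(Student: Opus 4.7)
The natural approach is the circle of implications $(iv)\Rightarrow(i)\Rightarrow(ii)\Rightarrow(iii)\Rightarrow(iv)$, the trivial link being $(iv)\Rightarrow(i)$. For $(i)\Rightarrow(ii)$, every projective $P\in \cO$ is a direct summand of some $\Ind P_0$ with $P_0\in \cO^0$ projective, and the PBW isomorphism $\Ind P_0\simeq \Lambda(\fg_{\ob})\otimes P_0$ makes the action of any $x\in\fg_{\ob}$ with $[x,x]=0$ a Koszul-type differential, which is acyclic whenever $x\neq 0$. Hence $X_P=\{0\}$ for every projective $P\in\cO$. The two-periodic long exact sequence in $x$-cohomology attached to a short exact sequence shows that triviality of the associated variety is a two-out-of-three property, so iterating through the syzygies of a finite projective resolution propagates $X=\{0\}$ from the projectives $P_i$ to $M$.

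The implication $(iii)\Rightarrow(ii)$ is then proved by induction on $j$ in the filtration $\cA_0\subseteq \cA_1\subseteq\cdots$ building up ${}^{(\fg,\fg_{\oa})}\cO$: the base case combines the Koszul observation above with the stability of $X=\{0\}$ under direct summands, and every step to $\cA_j$ completes a short exact sequence to which the same LES argument applies. For $(iv)\Rightarrow(iii)$ I note that every projective in $\cO$ already lies in $\cA_0$, so reading a projective resolution $0\to P_d\to\cdots\to P_0\to M\to 0$ with $d\le 2l(w_0)$ as $d$ successive cokernels of injections starting from $\cA_0$ places $M$ in $\cA_{d}\subseteq \cA_{2l(w_0)}$. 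This simultaneously yields the two addenda announced before the theorem, namely ${}^{(\fg,\fg_{\oa})}\cO\simeq \cA_{2l(w_0)}$ and that cokernels alone suffice.

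The hard step is $(ii)\Rightarrow(iv)$, which upgrades the pointwise $x$-acyclicity encoded in $X_M=\{0\}$ to a uniform bound on $\pd_\cO M$. My plan is to exploit the $\Z$-grading $\fg=\fg_{-1}\oplus \fg_0\oplus \fg_1$: since $\fg_{\pm 1}$ are odd and abelian, $U(\fg_{\pm 1})\simeq \Lambda\fg_{\pm 1}$, and over such a finite-dimensional exterior algebra a module is projective (equivalently, free) exactly when every odd $x$ acts acyclically on it. The hypothesis $X_M=\{0\}$ therefore renders $M$ free over each of $\Lambda\fg_{\pm 1}$. Combining this freeness with the Koszul complex coming from the PBW splitting $U(\fg)\simeq \Lambda(\fg_{\ob})\otimes U(\fg_{\oa})$, and with $\gd \cO^0 =2l(w_0)$, should produce a resolution of $M$ by induced modules. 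The main obstacle is quantitative: extracting the sharp bound $2l(w_0)$, rather than mere finiteness of $\pd_\cO M$, requires careful bookkeeping of the two contributions from $\fg_{-1}$ and $\fg_{1}$ against the global dimension of $\cO^0$.

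Finally, $\fd\cO=2l(w_0)$ follows: the upper bound is $(iv)$, and for the lower bound I choose $N_0\in \cO^0$ with $\pd_{\cO^0}N_0=2l(w_0)$ and use the adjunction $\Ext^\bullet_\cO(\Ind N_0,-)\simeq \Ext^\bullet_{\cO^0}(N_0,\Res(-))$, valid because $\Ind$ and $\Res$ are both exact and $\Res\Ind P_0\simeq \Lambda(\fg_{\ob})\otimes P_0$ is projective in $\cO^0$, to realise $\pd_\cO(\Ind N_0)=2l(w_0)$.
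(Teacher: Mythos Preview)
Your handling of the easy implications is sound and close to the paper's: $(i)\Rightarrow(ii)$ via $\C[x]$-freeness of projectives and two-out-of-three, $(iv)\Rightarrow(iii)$ by reading a finite projective resolution as iterated cokernels starting from $\cA_0$, and the lower bound $\fd\cO\ge 2l(w_0)$ via $\Ind N_0$ are all essentially Lemma~\ref{propind} and the argument given in the paper's proof. One minor point: the PBW identification $\Ind P_0\simeq\Lambda\fg_{\ob}\otimes P_0$ is only $\fg_{\oa}$-linear, so the action of $x\in X$ is not literally $x\wedge(-)$; the clean statement is that $U(\fg)$, hence $\Ind P_0$, is $\C[x]$-free by PBW.

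The genuine gap is in $(ii)\Rightarrow(iv)$. Your key assertion, that over a finite-dimensional exterior algebra $\Lambda\fl$ a module is free exactly when every nonzero $x\in\fl$ acts acyclically, is a Dade-type statement that holds for \emph{finite-dimensional} modules but fails in infinite dimension. Over $\Lambda(\C^2)=\C\langle x,y\rangle$ take $M=\C(t)a\oplus\C(t)b$ with $xa=b$, $ya=tb$, $xb=yb=0$; then $(\alpha x+\beta y)a=(\alpha+\beta t)b$ with $\alpha+\beta t\in\C(t)^\times$ for every $(\alpha,\beta)\in\C^2\setminus\{0\}$, so $M_{\alpha x+\beta y}=0$, yet $xyM=0$ shows $M$ is not free. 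Since objects of $\cO$ are infinite-dimensional, you cannot invoke this principle unadorned. The paper repairs precisely this step by exploiting the weight structure: Lemma~\ref{auxass1} shows that freeness over two complementary odd abelian pieces implies freeness over their sum \emph{provided} the module carries a compatible $\Z$-grading bounded above, and Lemma~\ref{auxass2} feeds this into an induction on $m$ and $n$ using gradings coming from~$\fh$.

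Even after $\fg_{\pm 1}$-freeness is secured, your route to a bound on $\pd_{\cO}M$ is only a sketch, and the paper's actual argument is quite different from a Koszul-resolution strategy. It first converts $\fg_{-1}$-freeness and $\fg_{1}$-freeness into a Kac flag and a dual Kac flag (Proposition~\ref{propflag}), which genuinely requires the grading element $z\in\mathfrak z(\fg_0)$; this is why the theorem fails for $\mathfrak{sl}(n|n)$ (Remark~\ref{error}). Lemma~\ref{finpdflag} then shows that a module with both flags has finite injective (equivalently projective) dimension, by bounding $\Ext^\bullet_{\cO}(K(\mu),\overline K(\lambda))$ via Frobenius reciprocity against $\Ext^\bullet_{\cO^0}$ and propagating vanishing through $z$-eigenvalues. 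The sharp bound $2l(w_0)$ is not extracted from this chain at all: the paper proves $(ii)\Rightarrow(i)$ as above and obtains $(i)\Leftrightarrow(iv)$ separately from Mazorchuk's result $\fd\cO=2l(w_0)$ (Lemma~\ref{propind}, citing Theorem~3 of \cite{preprint}).
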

The statement on the finitistic global dimension will be improved upon, by determining it each for each indecomposable block individually in Theorem \ref{fdblock}.
\begin{remark}
As the proof of Theorem \ref{finpdind} reveals, for arbitrary basic classical Lie superalgebra we still have the property $$(i)\Leftrightarrow (iii)\Leftrightarrow (iv) \Rightarrow (ii).$$
However, already for $\mathfrak{sl}(1|1)$ we have $(ii)\not\Rightarrow (i)$, by Remark \ref{error}.
\end{remark}

The remainder of this section is devoted to proving this theorem.

\begin{lemma}
\label{propind}
Let $\fg$ be a classical Lie superalgebra, $M\in\cO$ and $N_0\in \cO^0$. We have
\begin{enumerate}[$($i$)$]
\item $\pd_{\cO}M\ge \pd_{\cO^0}\Res M$;
\item If $M$ is a direct summand of $\Ind N_0$, then $\pd_{\cO}M\le \pd_{\cO^0}N_0$;
\item $\pd_{\cO}\Ind N_0=\pd_{\cO^0}N_0$.
\end{enumerate}
Consequently we have $\fd\cO=2l(w_0)$.
\end{lemma}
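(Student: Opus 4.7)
The plan is to exploit the adjoint pair $(\Ind,\Res)$ together with the PBW theorem. I would first establish four preliminary facts: (a) $\Res$ is exact; (b) $\Ind$ is exact, since PBW gives a right $U(\fg_{\oa})$-module decomposition $U(\fg)\cong \Lambda(\fg_{\ob})\otimes U(\fg_{\oa})$, so $U(\fg)$ is $U(\fg_{\oa})$-free; (c) $\Ind$ preserves projectives, being the left adjoint of the exact functor $\Res$; and (d) $\Res$ also preserves projectives. For (d), every indecomposable projective $P(\lambda)\in\cO$ is a summand of $\Ind P_0(\lambda)$ (lift the canonical surjection $\Ind P_0(\lambda)\tto L(\lambda)$ through the projective cover), so any projective $P\in\cO$ is a summand of some $\Ind P_0$ with $P_0\in\cO^0$ projective. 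Then $\Res P$ is a summand of $\Res\Ind P_0\cong \Lambda(\fg_{\ob})\otimes P_0$ (as $\fg_{\oa}$-modules, with diagonal action via the adjoint on $\Lambda$), and tensoring by the finite-dimensional module $\Lambda(\fg_{\ob})$ preserves projectivity in $\cO^0$ (standard projective-functor fact, the right adjoint being exact).

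Given (a)--(d), both (i) and (ii) reduce to applying the appropriate exact functor to a minimal projective resolution. For (i), apply $\Res$ to a projective resolution of $M$ of length $\pd_{\cO}M$ in $\cO$; by (d) the result is a projective resolution of $\Res M$ of the same length in $\cO^0$, giving $\pd_{\cO^0}\Res M\le \pd_{\cO}M$. For (ii), apply $\Ind$ to a projective resolution of $N_0$ of length $\pd_{\cO^0}N_0$ in $\cO^0$; by (c) this is a projective resolution of $\Ind N_0$ of the same length in $\cO$, and a direct summand can only have smaller projective dimension.

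For (iii), (ii) already gives $\pd_{\cO}\Ind N_0\le \pd_{\cO^0}N_0$, while (i) applied to $\Ind N_0$ yields $\pd_{\cO}\Ind N_0\ge \pd_{\cO^0}\Res\Ind N_0$, so the remaining task is $\pd_{\cO^0}N_0\le \pd_{\cO^0}\Res\Ind N_0$. This follows once we observe that $N_0$ is a $\fg_{\oa}$-direct summand of $\Res\Ind N_0$: under the isomorphism $\Res\Ind N_0\cong \Lambda(\fg_{\ob})\otimes N_0$ from (d), the exterior grading $\Lambda(\fg_{\ob})=\bigoplus_{k\ge 0}\Lambda^k(\fg_{\ob})$ is preserved by the adjoint action of $\fg_{\oa}$, so $\Res\Ind N_0$ splits as a $\fg_{\oa}$-module into $\bigoplus_k \Lambda^k(\fg_{\ob})\otimes N_0$ with $k=0$ summand equal to $N_0$. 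The concluding equality $\fd\cO=2l(w_0)$ then combines (iii) applied to some $N_0\in\cO^0$ with $\pd_{\cO^0}N_0=2l(w_0)$ (available since the global dimension of $\cO^0$ for reductive $\fg_{\oa}$ equals $2l(w_0)$) for the lower bound, with the upper bound from Theorem~\ref{finpdind}(iv). The main obstacle is the concrete identification of $\Res\Ind(-)$ as $\Lambda(\fg_{\ob})\otimes(-)$ as $\fg_{\oa}$-modules, and the verification that the exterior grading gives a $\fg_{\oa}$-stable splitting; once this structural fact is in place, everything else is formal manipulation of adjoints and exact functors.
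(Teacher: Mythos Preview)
Your argument for parts (i)--(iii) is correct and follows essentially the same route as the paper: both use that $\Ind$ and $\Res$ are exact and send projectives to projectives, then apply these functors to minimal projective resolutions; for (iii) both exhibit $N_0$ as a direct summand of $\Res\Ind N_0\cong \Lambda\fg_{\ob}\otimes N_0$ (your degree-zero piece of the exterior grading is exactly the paper's observation that $\mathbb{C}$ is a summand of $\Lambda\fg_{\ob}$). Your justification that $\Res$ preserves projectives is more explicit than the paper's, which simply asserts it.

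There is, however, a circularity in your final paragraph. You invoke Theorem~\ref{finpdind}(iv) for the upper bound $\fd\cO\le 2l(w_0)$, but in the paper the implication (i)$\Rightarrow$(iv) of Theorem~\ref{finpdind} is deduced \emph{from} Lemma~\ref{propind}, and in particular from the very ``consequently'' statement you are proving. The paper breaks this circle by citing the external result Theorem~3 of \cite{preprint} for $\fd\cO\le 2l(w_0)$; you should do the same (or supply an independent argument) rather than appeal to Theorem~\ref{finpdind}.
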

\begin{proof}
The functors $\Res$ and $\Ind$ are exact and map projective modules to projective modules, this implies $(i)$ and $(ii)$.

Claim $(iii)$ follows from combining $(i)$ and $(ii)$ since the $\fg_{\oa}$-module $\mC$ is a direct summand of $\Lambda\fg_{\ob}$, so $N_0$ is a direct summand $\Res \Ind N_0\simeq \Lambda\fg_{\ob}\otimes N_0$.

Property $(iii)$ implies $\fd\cO\ge 2l(w_0)$ and the reversed inequality is Theorem~3 of~\cite{preprint}.
\end{proof}

\begin{proposition}
\label{propflag}
Take $\fg$ to be in~\eqref{listA}. Assume that $M\in\cO$, is $\fg_{-1}$-free (resp. $\fg_{1}$-free), then $M$ has a Kac flag (resp. dual Kac flag).
\end{proposition}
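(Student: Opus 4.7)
My plan is to prove the $\fg_{-1}$-free case by induction on the $\fg_0$-length of the quotient $V:=M/\fg_{-1}M$, and to obtain the dual case by the symmetric argument.

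First, I will verify $V\in\cO^0$. The $\fh$-semisimplicity and local $U(\fb_0)$-finiteness are inherited from $M$ via the quotient map. For finite generation over $\fg_0$, if $m_1,\dots,m_r$ generate $M$ over $\fg$ then by PBW every element of $M$ is a sum of terms $u_{-1}u_0u_1m_i$ with $u_j\in U(\fg_j)$, and modulo $\fg_{-1}M$ these reduce to $u_0u_1m_i$; since $U(\fg_1)$ is finite-dimensional, the image of $\sum_i U(\fg_1)m_i$ generates $V$ as a $\fg_0$-module. The classical finite-length property of category~$\cO^0$ for the reductive Lie algebra $\fg_0$ then gives $V$ a finite length $\ell$, on which I induct; the base case $\ell=0$ forces $V=0$ and hence $M\cong U(\fg_{-1})\otimes V=0$.

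For the inductive step I use the grading element $z$ of~\eqref{gradel}. Because $\fg_{-1}$ strictly lowers $z$-weight, the quotient $M\twoheadrightarrow V$ restricts to an isomorphism on top $z$-weight spaces $M_{(k)}\xrightarrow{\sim}V_{(k)}$, where $k$ is the top $z$-weight of $V$. Maximality of $k$ forces $\fg_1 M_{(k)}=0$, so $M_{(k)}$ is a $(\fg_0\oplus\fg_1)$-submodule of $M$ with trivial $\fg_1$-action; I set $K:=U(\fg)M_{(k)}=U(\fg_{-1})M_{(k)}$, a $\fg$-submodule of $M$. The key step is to establish the isomorphism
$$K\;\cong\;U(\fg)\otimes_{U(\fg_0\oplus\fg_1)}M_{(k)}.$$
The natural $\fg$-map from the right to $K$ is surjective by construction; for injectivity I pick an $\fh$-equivariant section of $M\twoheadrightarrow V$ (available by $\fh$-semisimplicity), which provides a $U(\fg_{-1})$-isomorphism $M\cong U(\fg_{-1})\otimes V$ under which $K$ is identified with the free submodule $U(\fg_{-1})\otimes V_{(k)}$. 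Since $V_{(k)}\in\cO^0$ has finite $\fg_0$-length, inducing any $\fg_0$-composition series of $V_{(k)}$ through the exact induction functor yields a Kac flag on $K$ with subquotients $K(\mu_i)$.

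To close the induction, $M/K\cong U(\fg_{-1})\otimes(V/V_{(k)})$ is again $\fg_{-1}$-free and lies in $\cO$, and its quotient by $\fg_{-1}$ is $V/V_{(k)}$, of $\fg_0$-length strictly smaller than $\ell$; the induction hypothesis supplies a Kac flag on $M/K$ which splices with the one on $K$ to produce a Kac flag on $M$. The dual statement is obtained by replacing $\fg_{-1}$ with $\fg_1$ and $V$ with $V':=M/\fg_1 M$, working with the \emph{minimum} $z$-weight (well-defined because $\fg_1$-freeness together with $M\in\cO$ confines the $z$-weights of $M\cong U(\fg_1)\otimes V'$ to a bounded range) and using the dual Kac modules $\overline{K}(\mu_i)=U(\fg)\otimes_{U(\fg_0\oplus\fg_{-1})}L_0(\mu_i)$ in place of Kac modules. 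The principal obstacle I anticipate is the identification $K\cong U(\fg)\otimes_{U(\fg_0\oplus\fg_1)}M_{(k)}$: this is precisely where the $U(\fg_{-1})$-freeness of $M$ enters essentially, via the $\fh$-equivariant section.
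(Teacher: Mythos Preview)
Your proof is correct and follows essentially the same strategy as the paper: take the top $z$-weight component $V_{(k)}\cong M_{(k)}$ of $V=M/\fg_{-1}M$, use $\fg_{-1}$-freeness to show that the induced module $U(\fg)\otimes_{U(\fg_0\oplus\fg_1)}M_{(k)}$ embeds in $M$, observe it carries a Kac flag, and then iterate on the cokernel. The paper inducts implicitly on the range of $z$-weights of $V$ rather than on its $\fg_0$-length, but this is an inessential difference; your argument is in fact more explicit at the key step, since the paper simply asserts the embedding $U(\fg)\otimes_{U(\fg_0\oplus\fg_1)}N_{\alpha_0}\hookrightarrow M$ from freeness, whereas you justify its injectivity via the $\fh$-equivariant section identifying $M\cong U(\fg_{-1})\otimes V$.
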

\begin{proof}
Take $M$ to be $\fg_{-1}$-free. The $\fg_0$-module $N:=M/\fg_{-1}M$ decomposes according to the eigenvalues of $z$, in equation~\eqref{gradel}, as 
$N=\bigoplus_{\alpha\in\R}N_\alpha.$ Since $M$ is finitely generated there is only a finite amount of $\alpha$ for which $N_\alpha\not=0$. We take $\alpha_0$ to be the highest of these. Then $N_{\alpha_{0}}$ is isomorphic to a $\fg_0\oplus\fg_1$-submodule of $\res^{\fg}_{\fg_0\oplus\fg_1}M$. Since $M$ is $\fg_{-1}$-free, we find 
$$U(\fg)\otimes_{U(\fg_0\oplus \fg_{1})}N_{\alpha_0}\hookrightarrow M.$$
Since $U(\fg)\otimes_{U(\fg_0\oplus \fg_{1})}N_{\alpha_0}$ clearly has a filtration by Kac modules, the proof can be completed iteratively by considering the cokernel of 
the above morphism.

The proof for a $\fg_1$-free $M$ is identical.
\end{proof}

\begin{remark}\label{error}
For $\fg=\mathfrak{sl}(n|n)$, the element $z\in\mathfrak{z}(\fg_0)$ does not exist, which leads to counterexamples of Proposition \ref{propflag} and Theorem \ref{finpdind}. Consider $\mathfrak{sl}(1|1)=\langle x,y,e\rangle$ with 
$$[x,x]=0=[y,y],\,\quad [x,y]=e,\,\quad [e,x]=0=[e,y].$$ The two dimensional module $V=\langle v_1,v_2\rangle$ with action $xv_1=v_2=yv_1$ 
and with trivial action of $e$ satisfies $X_V=\{0\}$ but has no (dual) Kac flag and is not projective in $\cF=\cO$.
\end{remark}

\begin{lemma}
\label{finpdflag}
If $M\in\cO$ admits a Kac flag and a dual Kac flag, then $\pd_{\cO}M< \infty$.
\end{lemma}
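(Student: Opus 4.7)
By Proposition~\ref{propflag}, the hypothesis amounts to $M$ being both $\fg_{-1}$-free and $\fg_1$-free. My plan is to bound $\pd_{\cO} M$ by $2l(w_0)$, by reducing via Lemma~\ref{propind} to the finite global dimension of $\cO^0$.

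The pivotal computation I would establish first is the following Ext-collapse: for any $\fg_{-1}$-free $N\in\cO$ and any $\mu\in\fh^\ast$,
\[
\Ext^i_{\cO}(N,\overline{K}(\mu))\;\simeq\;\Ext^i_{\cO^0}(N/\fg_{-1}N,\,L_0(\mu+2\rho_1)),
\]
which vanishes for $i>2l(w_0)=\gd\cO^0$. The two ingredients are (a) derived Frobenius reciprocity $\Ext^i_{\cO}(N,\overline{K}(\mu))\simeq \Ext^i_{\fg_0+\fg_{-1}}(N,L_0(\mu+2\rho_1))$, using the coinduction description of $\overline{K}$ analogous to~\eqref{indcoind} (which makes $\overline{K}$ send derived $\Hom$ into the lower parabolic); and (b) the Hochschild--Serre spectral sequence for the ideal $\fg_{-1}\trianglelefteq\fg_0+\fg_{-1}$, which collapses to the row $q=0$ because $\Lambda\fg_{-1}$-flatness of $N$ forces $H_q(\fg_{-1},N)=0$ for $q>0$. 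The symmetric statement, with $\fg_1$ and Kac modules in place of $\fg_{-1}$ and dual Kac modules, gives $\Ext^i_{\cO}(N,K(\lambda))=0$ for $i>2l(w_0)$ whenever $N$ is $\fg_1$-free. Applied to our $M$, both vanishings hold simultaneously for $i>2l(w_0)$ and all $\lambda,\mu$.

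To promote this vanishing from Kac and dual-Kac targets to arbitrary simple targets $L(\nu)$, I would use the short exact sequences arising from the canonical inclusion of $L(\nu)$ into $\overline{K}(\nu)$ and the canonical surjection from $K(\nu)$ onto $L(\nu)$; the kernel and cokernel each have finite composition length uniformly bounded by $2^{\dim\fg_1}$, with composition factors $L(\mu)$ lying in the linkage class of $\nu$ and strictly above (resp.\ below) $\nu$ in the Bruhat order. The corresponding long exact sequences dimension-shift $\Ext^i_{\cO}(M,L(\nu))$ against $\Ext$-groups of these finite-length modules, and iterating along the finite set of weights reachable from $\nu$ by subsets of $\Delta_1^+$ within the linkage class, terminating at weights where the (dual) Kac module is already simple, converts the Ext-vanishing against $K$ and $\overline{K}$ into a finite upper bound on $i$ with $\Ext^i_{\cO}(M,L(\nu))\neq 0$. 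This yields $\pd_{\cO}M<\infty$.

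The main obstacle I expect is this final filtration argument: one must verify the recursion terminates with a uniformly bounded Ext-degree, which requires careful control of the Bruhat poset restricted to the linkage class. An alternative route worth pursuing in parallel is to show directly that $M$ is a direct summand of $\Ind\Res M$, via a $\fg$-equivariant section of the counit $\Ind\Res M\to M$ whose obstruction to $\fg$-equivariance is expected to vanish precisely under the joint $\fg_{\pm 1}$-freeness (tracing back, via the PBW identification $\Res\Ind\Res M\simeq\Lambda\fg_{\ob}\otimes\Res M$, to the collapse of the same Koszul-type obstruction); Lemma~\ref{propind}(ii) would then immediately deliver $\pd_{\cO}M\le \pd_{\cO^0}\Res M\le 2l(w_0)$.
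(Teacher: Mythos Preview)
Your overall strategy matches the paper's: establish $\Ext^i$-vanishing of $M$ against all Kac and dual Kac modules for $i>2l(w_0)$ via Frobenius reciprocity (the paper does this in the dual form, bounding $\id_\cO M$ rather than $\pd_\cO M$, but the computation is the same), then bootstrap to simple modules by dimension-shifting through the short exact sequences relating $L(\nu)$ to $K(\nu)$ and $\overline K(\nu')$. The first step is fine; the gap is in your termination argument for the second.

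In an atypical block---the only nontrivial case---no weight $\mu$ in the linkage class has $K(\mu)$ or $\overline K(\mu)$ simple, so the recursion never reaches the base case you describe. Nor are the weights encountered confined to the finite set $\{\nu-\sum_{\gamma\in S}\gamma : S\subset\Delta_1^+\}$: each step replaces $\nu$ by some composition factor $L(\mu)$ of the radical of $K(\nu)$ (or its dual analogue), and the next step then works relative to $\mu$, not $\nu$; iterating drives $\mu(z)$ arbitrarily far from $\nu(z)$ inside the infinite linkage class. Neither finiteness mechanism you invoke applies, and as stated the iteration does not yield a uniform bound on $i$.

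The paper closes this gap by a quantitative argument using the grading element $z$ of~\eqref{gradel}. Since $M$ is finitely generated, the $z$-eigenvalues on $M$ lie in an interval of some finite length $p$. Starting from a hypothetical $\Ext^j(L(\alpha),M)\neq 0$, one shifts down via Kac sequences to produce $\nu$ with $\nu(z)\le\alpha(z)-(j-2l(w_0))$ and $\Ext^{2l(w_0)}(L(\nu),M)\neq 0$, and simultaneously shifts up via dual Kac sequences to produce $\kappa$ with $\kappa(z)\ge\alpha(z)+(j-2l(w_0))$ and $\Ext^{2l(w_0)}(L(\kappa),M)\neq 0$. But Lemma~\ref{vancentre} together with equation~\eqref{extsimple} forces $|\mu(z)-\mu'(z)|\le\dim\fg_1+i$ whenever $\Ext^i(L(\mu),L(\mu'))\neq 0$; combined with the $z$-range $p$ of $M$, this bounds $\kappa(z)-\nu(z)$ by $p+2(\dim\fg_1+2l(w_0))$, contradicting $\kappa(z)-\nu(z)\ge 2(j-2l(w_0))$ once $j>p+\dim\fg_1+4l(w_0)$. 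The same device transplants directly to your $\Ext^i(M,L(\nu))$ formulation; the missing ingredient is precisely this use of the bounded $z$-spectrum of $M$ against Lemma~\ref{vancentre}.

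Your alternative route---showing $M$ is a direct summand of $\Ind\Res M$---would indeed finish immediately via Lemma~\ref{propind}(ii), but joint $\fg_{\pm 1}$-freeness does not obviously produce a $\fg$-equivariant splitting of the counit, and the paper does not attempt this.
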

\begin{proof}
We will prove $\id_{\cO}M<\infty$, for the injective dimension, which is equivalent by Section 3 in~\cite{preprint}. 
The proof could also be done immediately for projective dimension using an unconventional definition of the Kac modules. For any $\lambda,\mu\in\fh^\ast$, we prove
$$\Ext^j_{\cO}(K(\mu),\overline{K}(\lambda))=0=\Ext^j_{\cO}(\overline{K}(\lambda), K(\mu))\qquad\mbox{for}\quad j>2l(w_0).$$
Indeed, applying Frobenius reciprocity twice (using Theorem~25$(i)$ in~\cite{CouMaz2} and equation~\eqref{indcoind}) yields
$$\Ext^j_{\cO}(\overline{K}(\lambda), K(\mu))=\Ext^j_{\cO^0}(L_0(\lambda),L_0(\mu-2\rho_1)),$$
and a similar argument holds for the other equality.

 In particular this implies that for any $\lambda,\mu\in\fh^\ast$ we have
\begin{equation}\label{vanishM}\Ext^j_{\cO}(K(\mu),M)=0=\Ext^j_{\cO}(\overline{K}(\lambda), M)\qquad\mbox{for}\quad j>2l(w_0).\end{equation}
Since $M$ is finitely generated, the element $z\in\mathfrak{z}(\fg_0)$ in equation~\eqref{gradel} has eigenvalues in an interval of finite length $p$. We prove that for any $\alpha\in\fh^\ast$,
$$\Ext^j_{\cO}(L(\alpha),M)=0\qquad \mbox{if}\qquad j>p+\dim\fg_1+4l(w_0).$$
Assume that the above extension would not be zero. The short exact sequence $N\hookrightarrow K(\alpha)\tto L(\alpha)$ and the vanishing properties in~\eqref{vanishM} imply that there must be a $\beta\in\fh^\ast$ for which $L(\beta)$ is a subquotient of $N$ (and therefore satisfies $\beta(z)\le \alpha(z)-1$) such that
$$\Ext^{j-1}_{\cO}(L(\beta),M)\not=0.$$
This procedure and the dual one using dual Kac modules can be repeated so that we come to the conclusion that there must exist $\kappa,\nu\in \fh^\ast$ with $\kappa(z)\ge\alpha(z)+j-2l(w_0)$ and $\nu(z)\le \alpha(z)-j+2l(w_0)$ such that
\begin{equation}\label{ext2l}\Ext^{2l(w_0)}_{\cO}(L(\kappa),M)\not=0\qquad\mbox{and}\qquad \Ext^{2l(w_0)}_{\cO}(L(\nu),M)\not=0.\end{equation}
The combination of equation~\eqref{extsimple} and Lemma~\ref{vancentre} yields that for any two $\mu,\mu'$ we have
\begin{equation*}\Ext^i_{\cO}(L(\mu),L(\mu'))=0\qquad \mbox{unless}\qquad |\mu(z)-\mu'(z)|\le \dim\fg_1+i.\end{equation*}

Equation~\eqref{ext2l} therefore implies that both $\kappa(z)$ and $\nu(z)$ must lie in an interval of length $p+2(\dim\fg_1+2l(w_0))$. However, the construction above implies that
$$\kappa(z)-\nu(z)\ge 2j-4l(w_0),\quad \mbox{with} \quad j>p+\dim\fg_1+4l(w_0).$$
This means we have proved that 
$$\id_{\cO}M < p+\dim\fg_1+4l(w_0),$$
for some finite $p\in\N$.
\end{proof}

\begin{lemma}\label{auxass1} Let $\mathfrak l=\fl_{\ob}$ be a finite dimensional abelian Lie superalgebra with trivial even part, equipped with a $\mZ$-grading 
$\mathfrak l=\mathfrak l_0\oplus\mathfrak l_1$. Let $M$ be a graded $\mathfrak l$-module (may be infinite dimensional) and assume there exists $k\in\mZ$ such that $M_j=0$ for 
$j\geq k$. If $M$ is free over $\mathfrak l_0$ and $\mathfrak l_1$, then it is also free over $\mathfrak l$.
\end{lemma}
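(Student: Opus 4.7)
The plan is to recast the claim in terms of the associated variety
$$X_M \;=\; \{v \in \fl : vM \neq \ker(v|_M)\} \;\subset\; \fl,$$
and to exploit the $\C^\ast$-action coming from the $\Z$-grading. Since $\fl$ is abelian and purely odd, $U(\fl)$ is the exterior algebra $\Lambda(\fl)$, and a Dade-type characterisation, valid for bounded-above graded $\Lambda(\fl)$-modules (provable inductively via Hochschild--Serre spectral sequences and graded Nakayama), asserts that $M$ is $\Lambda(\fl)$-free if and only if $X_M = \{0\}$. The argument then consists in showing $X_M = \{0\}$.

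First I would dispatch the ``coordinate'' vanishings: for any $0 \neq y \in \fl_0$, $\Lambda(\fl_0)$ is free as a $\Lambda(\langle y\rangle)$-module (pick a linear complement and tensor), so $\Lambda(\fl_0)$-freeness of $M$ upgrades to $\Lambda(\langle y\rangle)$-freeness, whence $X_M \cap \fl_0 = \{0\}$. The same argument applied to any $0 \neq x \in \fl_1$ gives $X_M \cap \fl_1 = \{0\}$.

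The key step is to use the $\Z$-grading to produce compatible $\C^\ast$-actions: $t \cdot v = t^{\deg v} v$ on $\fl$ (fixing $\fl_0$ pointwise, scaling $\fl_1$) and $t \cdot m = t^{\deg m} m$ on homogeneous $m \in M$. Equivariance $t \cdot (vm) = (t \cdot v)(t \cdot m)$ makes the rescaling $m \mapsto t\cdot m$ an isomorphism of $\Lambda(\langle v\rangle)$-modules between $M$ and $M$ equipped with the $\Lambda(\langle t \cdot v\rangle)$-structure, so $X_M$ is $\C^\ast$-stable. Combined with the fact that $X_M$ is Zariski-closed in $\fl$ (from lower semicontinuity of the rank conditions on the graded pieces of the two-step complex $M \xrightarrow{v} M \xrightarrow{v} M$), any $v = v_0 + v_1 \in X_M$ (with $v_i \in \fl_i$) produces an entire orbit $\{v_0 + t v_1 : t \in \C^\ast\} \subset X_M$, whose limit as $t \to 0$ is $v_0 \in X_M \cap \fl_0 = \{0\}$. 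Hence $v = v_1 \in X_M \cap \fl_1 = \{0\}$, so $X_M = \{0\}$ and $M$ is free over $\Lambda(\fl)$.

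The main obstacle is the Dade-type characterisation of freeness used at the outset: standard for finite-dimensional modules over exterior algebras, in the bounded-above graded (possibly infinite-dimensional) setting it requires care. The cleanest route is to show by induction on $\dim \fl$, using the Hochschild--Serre spectral sequence for an ideal $\fl' \subset \fl$, that the vanishing of $\Tor^{\Lambda(\langle v\rangle)}_\ast(\C, M)$ for every $0 \neq v \in \fl$ propagates to vanishing of $\Tor^{\Lambda(\fl)}_\ast(\C, M)$; freeness then follows from graded Nakayama using the boundedness hypothesis $M_j = 0$ for $j \geq k$.
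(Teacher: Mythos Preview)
Your approach via the associated variety and $\C^\ast$-action is genuinely different from the paper's, but has two real gaps.

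The more serious one is the claim that $X_M$ is Zariski-closed. Your justification via ``rank conditions on the graded pieces'' does not apply: a general $v = v_0 + v_1$ is not homogeneous for the $\mZ$-grading, so the complex $M \xrightarrow{v} M \xrightarrow{v} M$ does not split into graded pieces, and the $M_j$ are not assumed finite-dimensional anyway. The paper in fact exhibits (in the remark following Theorem~\ref{verma}) a module in $\cO$ whose associated variety is not closed. What your argument actually needs is only the one-sided implication $v_0 \notin X_M \Rightarrow v_0 + v_1 \notin X_M$, and this \emph{is} true and provable by a filtered-complex or perturbation argument using that $v_1$ strictly raises degree and $M$ is bounded above --- but that step carries essentially the full content of the lemma and must be written out, not waved at. The Dade-type equivalence is the second gap; your Hochschild--Serre sketch is plausible in outline, but the inductive step requires checking that the quotient $M/\fl' M$ again has trivial associated variety over $\fl/\fl'$, which is not automatic from rank-one vanishing alone.

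The paper's proof avoids all of this. Writing $v = v_1 \cdots v_p$ and $u = u_1 \cdots u_q$ for the top elements of $\Lambda(\fl_0)$ and $\Lambda(\fl_1)$, one splits off the free summand $N$ generated by a graded complement to $\ker(vu)$; the complement $M''$ inherits the hypotheses and satisfies $vuM'' = 0$. If $M'' \neq 0$, take $m$ in its top nonzero graded piece with $vm \neq 0$ (possible by $\fl_0$-freeness); then $\fl_1 m = 0$ forces $m = u m'$ (by $\fl_1$-freeness), so $vu m' = vm \neq 0$, a contradiction.
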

\begin{proof} Let $\{v_1,\dots,v_p\}$ be a basis of $\mathfrak l_0$ and $\{u_1,\dots,u_q\}$ a basis of $\mathfrak l_1$, set
$v=v_1\dots v_p$ and  $u=u_1\dots u_q$, both are elements in $U(\mathfrak l)=\Lambda(\mathfrak l)$. 

Let $M'\subset M$ be a maximal graded subspace such that $M'\xrightarrow{vu}vu M$ is an isomorphism and $N$ be the submodule generated by $M'$. Then $N$ is 
free over $\mathfrak l$, therefore $N$ is both projective and injective. Let $M''=M/N$, then $M\simeq M''\oplus N$. We claim that $M''=0$.
Assume the opposite. Note that $M''$ satisfies all the assumptions of the lemma and $vuM''=0$. 
Pick up the maximal $j$ such that
$M''_j\neq 0$. Then $M''_j$ is a free $\mathfrak l_0$-module and one can find $m\in M''_j$ such that $vm\neq 0$. On the other hand, $\mathfrak l_1 m=0$. Since
$M''$ is free over $\mathfrak l_1$ we obtain that $m=u m'$ for some $m'\in M''$. This implies that $v u m'\neq 0$, which is a contradiction.
\end{proof}

\begin{lemma}\label{auxass2} Consider $M\in \cO$ which satisfies $X\cap \fg_{\pm1}=\{0\}$. Then $M$ is $\fg_{\pm1}$ free.
\end{lemma}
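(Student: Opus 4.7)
The plan is to prove by induction on $\dim V$ the stronger claim that for every $\fh$-stable subspace $V\subseteq\fg_1$ satisfying $xM=\ker_xM$ for all nonzero $x\in V$, the module $M$ is free over $\Lambda V$; taking $V=\fg_1$ yields the $+$ case, and the $-$ case is symmetric (e.g.\ by passing to the anti-distinguished Borel, where the roles of $\fg_1$ and $\fg_{-1}$ are swapped). The base case $\dim V=1$ is immediate: if $V=\mC x$ for a weight vector $x$, then $x^2=0$ together with $xM=\ker_xM$ forces $x$ to restrict to a bijection from any linear complement $W$ of $\ker_xM$ in $M$ onto $\ker_xM=xM$, whence $M=W\oplus xW$ is a free $\Lambda V$-module.

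For the inductive step $\dim V\ge 2$, the strategy is to split $V=V_0\oplus V_+$ as $\fh$-stable subspaces of strictly smaller dimension and then apply Lemma~\ref{auxass1}. Concretely, I will produce $h\in\fh$ that is dominant for $\Delta^+_{\oa}$, has $\ad h$-eigenvalues on $V$ lying in $\mZ_{\ge 0}$, and has both its zero-eigenspace $V_0$ and the sum $V_+$ of strictly-positive-eigenvalue eigenspaces non-zero. Dominance guarantees that the $h$-eigenvalue grading on $M\in\cO$ is bounded above, and the inductive hypothesis applied to $V_0$ and $V_+$ provides that $M$ is free over each of $\Lambda V_0$ and $\Lambda V_+$. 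With these ingredients Lemma~\ref{auxass1} delivers freeness of $M$ over $\Lambda V$.

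The main obstacle is the combinatorial construction of such an $h$. For $\fg=\mathfrak{gl}(m|n)$, I write $h=\sum_ia_i\varepsilon_i^*+\sum_jb_j\delta_j^*$, so that dominance becomes $a_1\ge\cdots\ge a_m$ and $b_1\ge\cdots\ge b_n$, and the eigenvalue on the weight space $\varepsilon_i-\delta_j$ is $a_i-b_j$. Letting $S\subseteq\{(i,j)\}$ index the weight spaces of $V$, two cases arise. If $S$ contains entries with two distinct $\varepsilon$-indices $i_1<i_2$, I take $a_1=\cdots=a_{i_1}=1$, $a_{i_1+1}=\cdots=a_m=0$ and all $b_j=0$, which places the weight at $(i_1,\cdot)$ into $V_+$ and the weight at $(i_2,\cdot)$ into $V_0$. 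If all entries of $S$ share a single $\varepsilon$-index, I pick two indices $j_1<j_2$ in the $\delta$-side projection of $S$ and take all $a_i=0$, $b_1=\cdots=b_{j_2-1}=0$, $b_{j_2}=\cdots=b_n=-1$; dominance is preserved and the required separation is achieved. Analogous choices handle the remaining algebras in~\eqref{listA}. Once $h$ is in hand, the induction closes without further work.
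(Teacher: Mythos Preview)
Your proof is correct and rests on the same key tool as the paper—an inductive application of Lemma~\ref{auxass1}—but the induction is organised differently. The paper inducts on $(m,n)$: it grades $\fg_1$ and $M$ by the $\varepsilon_1$-component, so that $\fg_1$ splits as $\mathfrak{gl}(m-1|n)_1\oplus\mathfrak{gl}(1|n)_1$, then observes that the graded pieces of $M$ lie in category $\cO$ for these smaller subalgebras and invokes the inductive hypothesis there. You instead strengthen the statement to arbitrary $\fh$-stable $V\subseteq\fg_1$, induct on $\dim V$, and remain inside $\cO$ for the fixed $\fg$ throughout. The paper's grading element (pairing with $\varepsilon_1$) is exactly your Case~1 choice with $i_1=1$, so the two arguments are close cousins. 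The advantage of your packaging is that it bypasses the verification that graded components of $M$ belong to category $\cO$ for the smaller $\mathfrak{gl}$'s; everything is handled in the ambient algebra. Two minor remarks: your concrete choices of $h$ in fact give eigenvalues only in $\{0,1\}$ on $V$, so the hypothesis of Lemma~\ref{auxass1} (grading concentrated in degrees $0$ and $1$) is met on the nose, even though your prose allows arbitrary non-negative eigenvalues; and the $\Z$-grading on $M$ from the $h$-eigenvalues is only well-defined after a shift per block, which is harmless since freeness can be checked blockwise.
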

\begin{proof}
We will prove the statement if $X\cap \fg_{1}=\{0\}$. The case $X\cap \fg_{-1}=\{0\}$ is similar. Consider $\fg=\mathfrak{gl}(m|n)$ with $m>1$. 
The proof is by induction on $m$, using the previous 
lemma. 
Define the grading on $\fg_1$ and on $M$ by setting the degree of the root space $\fg_{\alpha}$ to be $(\alpha,\varepsilon_1)$ and the degree of the weight space
$M_\lambda$ to be $(\lambda,\varepsilon_1)$. Then $\mathfrak l:=\fg_1$ and $M$ satisfy all the conditions of Lemma \ref{auxass1}. 

Now $\fl_0\subset\mathfrak{gl}(m-1|n)$, with $\fl_0=\mathfrak{gl}(m-1|n)_1$. As a $\mathfrak{gl}(m-1|n)$-module, every graded component of $M$, with respect to 
the above grading, is a direct summand of $M$ and an object in category $\cO$ for $\mathfrak{gl}(m-1|n)$. By the induction hypothesis, $M$ is thus free over $\fl_0$.

Similarly, $\fl_1=\mathfrak{gl}(1|n)_1$. Again we find that $M$, now regarded as a $\mathfrak{gl}(1|n)$-module, 
decomposes into modules belonging to category $\cO$. 
For this one could consider the graded components with respect to the grading given by 
$\lambda\mapsto -\sum_{i=2}^m(\lambda,i\varepsilon_i)$. 
Again by the induction assumption $M$ is free over $\fg_1$. 

Lemma \ref{auxass1} then implies that $M$ is free over $\fg_1$. The base case $m=1$ can be covered by similar induction on $n$.
\end{proof}

\begin{proof}[Proof of Theorem \ref{finpdind}]
The equivalence of $(i)\Leftrightarrow (iv)$ follows from Lemma~\ref{propind}.

Assume  that $M\in\cO$ is a direct summand of a module induced from one in~$\cO^0$. Lemma~\ref{propind}$(ii)$ therefore yields $\pd_{\cO}M<\infty$. Recall the categories $\cA_j$ from the beginning of this section. Lemma~6.9 in~\cite{MR2428237} or Section 2.3 in~\cite{CouMaz2} shows that if every module in $\cA_{j-1}$ has finite projective dimension in $\cO$, then so has every module in $\cA_j$. Hence we find $(iii)\Rightarrow (i)$.

Now assume that $\pd_{\cO}M<\infty$ holds. Since $M$ has a finite resolution by projective modules and projective modules are direct summands of modules induced from projective modules in $\cO^0$ we obtain $M\in{}^{(\fg,\fg_0)}\cO$. This proves $(i)\Rightarrow (iii)$. 

By Lemma \ref{auxass2}, $X_M\cap\fg_{\pm 1}=\{0\}$ implies that $M$ is $\fg_{ \pm 1}$-free, property $(ii)\Rightarrow (i)$ follows from Proposition \ref{propflag} and Lemma~\ref{finpdflag}. Since every projective module is a direct summand in a module induced from $\fg_0$, it has trivial associated variety, see Lemma 2.2(1) in~\cite{Duflo}. If $M$ has a finite resolution by projective modules, it has finite projective dimension as a $\mC [x]$-module for any $x\in\fg_{\ob}$ with $[x,x]=0$, so it is $\C [x]$-free. Thus $X_M=\{0\}$ and we obtain $(i)\Rightarrow (ii)$.

The last statement is a special case of Lemma~\ref{propind}.
\end{proof}


\section{$B_{\oa}$-orbits in the self-commuting cone and some results on associated variety in category~$\cO$}
\label{secassvar}

The associated variety $X_M$ in \eqref{defasva} of an object $M$ in $\cO$ is intrinsically not a categorical invariant, contrary to projective dimension and complexity, although results as Theorem~\ref{finpdind} indicate interesting links with categorical invariants. However, it is thus not possible to use the equivalences of categories in~\cite{CMW} to reduce to the integral case. Therefore, throughout the entire section, we consider weights in $\fh^\ast$, not just in $P_0$.

In the first two subsections we will consider the associated variety of modules in category~$\cO$ for $\fg$ a basic classical Lie superalgebra in the list
\begin{equation}\label{list}
\mathfrak{sl}(m|n), m\neq n;\,\,\mathfrak{gl}(m|n);\,\,  \mathfrak{osp}(m|2n);\,\, D(2,1,\alpha);\,\, G(3);\,\, F(4),\end{equation}
with arbitrary Borel subalgebra. In the last three subsections we will focus on $\fg=\mathfrak{gl}(m|n)$ with the distinguished Borel subalgebra.

\subsection{$B_{\oa}$-orbits} Let $B_{\oa}$ be the Borel subgroup of the algebraic group $G_{\oa}$ 
with Lie algebra~$\fb_{\oa}$. The group $B_{\oa}$ acts on $X$ of \eqref{defasva} by adjoint action. If $M$ is in category~$\cO$, then the simply connected cover of 
$B_{\oa}$ acts on $M$. Thus the associated variety $X_M$, as defined in equation~\eqref{defasva}, is a $B_{\oa}$-invariant subvariety of $X$. Therefore it is 
important to study $B_{\oa}$-orbits in~$X$. It is proven in~\cite{Duflo} that $X$ has finitely many $G_{\oa}$-orbits. We will show in this subsection that the same is true for $B_{\oa}$-orbits.

Let $S=\{\alpha_1,\dots,\alpha_k\}$ be a set of mutually orthogonal linearly independent isotropic roots and $x_1,\dots, x_k$ be some non-zero elements in the
root subspaces $\fg_{\alpha_1},\dots,\fg_{\alpha_k}$ respectively. Then $x_S:=x_1+\dots+x_k\in X$. For such an $x_S\in X$, we say that its rank is $k=|S|$. Let $\cS$ denote the set of all subsets $S$ of mutually orthogonal
linearly independent isotropic roots and $X/B_{\oa}$ denote the set of $B_{\oa}$-orbits in~$X$. In Theorem 4.2 of \cite{Duflo} it was proved that $X/G_{\oa}\,\simeq\, \cS/{W}$, now we derive an analogous description of $X/B_{\oa}$.

Define the map 
$$\Phi:\cS\to X/B_{\oa};\,\quad \Phi(S):=B_{\oa} x_S\\,\, \mbox{ for all }S\in\cS.$$
We assume that $\Phi(\emptyset)=0$. Note that $\Phi$ does not depend on a choice of $x_1,\dots,x_k$, since any two such elements are conjugate under the action of a maximal torus in $G_{\oa}$.

\begin{theorem}\label{mainass}
Consider $\fg$ in the list \eqref{list}, the map $\Phi$ is a bijection, so $ X/B_{\oa}\,\simeq\,\cS$.
\end{theorem}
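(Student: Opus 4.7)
The plan is to prove that $\Phi$ is a bijection by establishing surjectivity and injectivity separately, taking as input the Duflo--Serganova theorem $X/G_{\oa}\simeq \cS/W$ (Theorem 4.2 of \cite{Duflo}) together with the Bruhat decomposition $G_{\oa} = \bigcup_{w\in W} B_{\oa} n_w B_{\oa}$ to bridge the $G_{\oa}$- and $B_{\oa}$-orbit pictures. A key structural observation used throughout is that $W$ preserves orthogonality, linear independence and isotropy of roots, so that $w(S)\in\cS$ whenever $S\in\cS$, and that $n_w\cdot e_{\alpha_i}\in \C^\times e_{w(\alpha_i)}$, so $n_w\cdot x_S$ differs from $x_{w(S)}$ only by scaling absorbed in the maximal torus $T\subseteq B_{\oa}$.

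For surjectivity, I would start with arbitrary $x\in X$ and invoke Duflo--Serganova to write $x\in G_{\oa}\cdot x_S$ for some $S\in\cS$. The goal is the $B_{\oa}$-orbit decomposition $G_{\oa}\cdot x_S=\bigcup_{w\in W/W_S} B_{\oa}\cdot x_{w(S)}$, where $W_S$ denotes the setwise stabilizer of $S$ in $W$; once this is established, $x\in \Phi(w(S))$ follows for some $w\in W$. The decomposition is proved by analyzing $\mathrm{Stab}_{G_{\oa}}(x_S)$---in particular its Weyl-type component modulo the torus, which is precisely $W_S$---and matching the double cosets $B_{\oa}\backslash G_{\oa}/\mathrm{Stab}_{G_{\oa}}(x_S)$ with Bruhat cells.

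For injectivity, suppose $\Phi(S)=\Phi(S')$, so there exists $b\in B_{\oa}$ with $b\cdot x_S=x_{S'}$. Since $\Phi$ factors through $X/G_{\oa}$, already $S'=w(S)$ for some $w\in W$, reducing the task to showing $w(S)=S$. Writing $b=tu$ with $t\in T$ and $u$ in the unipotent radical of $B_{\oa}$, and absorbing $t$ into the coefficient freedom of $x_S$, one reduces further to $u\cdot x_S=x_{w(S)}$. The crucial structural input is that $\exp(e_\gamma)\cdot e_\alpha\in e_\alpha+\bigoplus_{\beta\succ\alpha}\fg_\beta$ for each positive even root $\gamma$ and each root $\alpha$, where $\succeq$ denotes the partial order on $\fh^\ast$ generated by the positive even roots. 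Hence $\mathrm{supp}(u\cdot x_S)\subseteq S^\uparrow:=\{\beta : \beta\succeq\alpha\text{ for some }\alpha\in S\}$, forcing $w(S)\subseteq S^\uparrow$; the symmetric argument applied to $b^{-1}$ gives $S\subseteq w(S)^\uparrow$.

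The main obstacle is deducing $w(S)=S$ from these two containments. When $S$ is a $\succeq$-antichain the conclusion is immediate, but in general $S$ may contain $\succeq$-comparable elements and the unipotent action can superimpose their coefficients, so one cannot simply read off each coefficient of $e_\alpha$ in $u\cdot x_S$. The resolution is a combinatorial chain argument in the finite poset $(S\cup w(S),\succeq)$: alternating the two containments produces a descending chain in $S$ that must terminate at an element of $S\cap w(S)$, showing $\min_\preceq(S)\subseteq S\cap w(S)$; iterating, together with the identity $\sum_{\alpha\in w(S)}\alpha=w(\sum_{\alpha\in S}\alpha)$ and the linear independence of elements of $\cS$, rules out any slack and forces $w(S)=S$. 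Keeping track of all coefficient cancellations in this step, and verifying that the argument applies uniformly across the algebras in the list \eqref{list}, is where the bulk of the technical work will lie.
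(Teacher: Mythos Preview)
Your proposal diverges from the paper's proof in both halves, and in each case the paper's argument is substantially cleaner; moreover, your surjectivity sketch has a real gap.

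\textbf{Surjectivity.} You propose to prove $G_{\oa}\cdot x_S=\bigcup_{w\in W/W_S}B_{\oa}\cdot x_{w(S)}$ by ``analyzing $\mathrm{Stab}_{G_{\oa}}(x_S)$ and matching the double cosets $B_{\oa}\backslash G_{\oa}/\mathrm{Stab}_{G_{\oa}}(x_S)$ with Bruhat cells.'' This is not a method: the stabiliser of $x_S$ is not a parabolic subgroup (nor any standard subgroup for which Bruhat-type double-coset decompositions are known), so there is no off-the-shelf bijection with $W/W_S$. What actually needs to be shown is that for every $w\in W$ the set $B_{\oa}n_wB_{\oa}\cdot x_S$ is contained in $\bigcup_{S'\in\cS}B_{\oa}\cdot x_{S'}$, and your sketch does not address this. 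The paper proceeds by induction on the length of $w$, reducing to a simple reflection $r_\alpha$; then it analyses $G_\alpha\cdot x_S$ for the $SL_2$-subgroup $G_\alpha$, using three structural facts about odd $\alpha$-chains in the root systems of the list~\eqref{list} to split into the cases where $r_\alpha$ fixes all, all but one, or all but two elements of $S$. Each case is handled by explicit $SL_2$-representation theory (the last reducing to a direct check in $\mathfrak{sl}(2|2)$). This inductive $SL_2$-reduction is the genuine content of the surjectivity, and your proposal does not contain it.

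\textbf{Injectivity.} Your support/poset argument may be salvageable, but you correctly flag the cancellation problem as the ``bulk of the technical work,'' and the iteration you describe is not clearly well-founded: once you peel off $\min_\preceq(S)\subseteq S\cap w(S)$, the coefficient of the next-lowest $e_\alpha$ in $u\cdot x_S$ can receive contributions from the removed minimal roots, so $\min_\preceq(S\setminus\min_\preceq S)\subseteq w(S)$ is not automatic. The paper bypasses all of this with a one-line spectral argument: for $g\in B_{\oa}$ and $h\in\fh$ one has $\mathrm{Ad}_g h-h\in\fn_{\oa}^+$, which acts nilpotently on $\fg_{\ob}$; hence $h$ and $\mathrm{Ad}_g h$ have the same spectrum on $\mathfrak{m}'=\bigoplus_{\alpha\in S'}\fg_\alpha$. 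Since $\mathfrak m'$ is simultaneously the minimal $\fh$-module and the minimal $\fh'=\mathrm{Ad}_g(\fh)$-module containing $x_{S'}$, these spectra are $\{\alpha(h):\alpha\in S'\}$ and $\{\alpha(h):\alpha\in S\}$ respectively, so $S=S'$ for all $h$. Note that the paper locates the hard work in surjectivity and dispatches injectivity quickly---the opposite of your expectation.
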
  
\begin{proof} First, we will prove that $\Phi$ is surjective, {\it i.e.} that every~$B_{\oa}$-orbit contains an $x_S$ for some $S\in \cS$. 
Recall that Theorem 4.2 in~\cite{Duflo} implies that every~$G_{\oa}$-orbit contains an $x_S$ for some $S\in\cS$. Due to the Bruhat decomposition
$$G_{\oa}=\bigsqcup_{w\in W}B_{\oa}wB_{\oa},$$
it suffices to prove that for every~$S\in\cS$ and $w\in W$, $B_{\oa}wB_{\oa}x_S$ is a union of $B_{\oa}x_{S'}$ for some $S'\in\cS$.
Moreover, using induction on the length of $w$ (and $B_{\oa}swB_{\oa}x_S\subset B_{\oa}s B_{\oa} w B_{\oa}x_S$ for a simple reflection $s$), it is sufficient to prove the latter statement only in the case when~$w=r_{\alpha}$ is a simple reflection.

Let $G_\alpha$ be the $SL_2$-subgroup in $G_{\oa}$ associated with the root $\alpha$ and $B_\alpha:=G_\alpha\cap B_{\oa}$. Since 
$$B_{\oa}r_{\alpha}B_{\oa}\subset B_{\oa}G_\alpha,$$
we have to show that $G_\alpha x_S$ lies in a union of $B_{\oa}x_{S'}$ for some $S'\in\cS$.

We need the following well-known facts about root system of the superalgebras in \eqref{list}. By odd $\alpha$-chain we mean the maximal subset of odd roots of the form $\beta+p\alpha$ with $p\in\Z$ for some $\beta\in\Delta$.
\begin{enumerate}
\item The length of any odd $\alpha$-chain consisting of odd roots is at most $3$;
\item If $\beta$ is an isotropic root then either $\beta+\alpha$ is not a root or $\beta-\alpha$ is not a root;
\item If $S\in \cS$, then at most $2$ roots of $S$  are not preserved by $r_{\alpha}$.
\end{enumerate}

We consider the three cases allowed by statement $(3)$ individually. If all roots of $S$ are preserved by $r_\alpha$, then $G_\alpha x_S=x_S$ and the statement is trivial.

Assume that there exists exactly one root $\alpha_i\in S$, which is not preserved by $r_{\alpha}$. Consider the $G_{\alpha}$-submodule $V_i\subset \fg_{\ob}$ generated 
by $x_i$.
By (2) $x_i$ and $r_{\alpha}(x_i)$ are the lowest and the highest weight vectors in $V_i$ and from representation theory of $SL_2$ we have
$$G_{\alpha}x_i=B_{\alpha}x_i\cup B_{\alpha}r_{\alpha}(x_i).$$ Since for any $\alpha_j\in S$ with $j\neq i$ we have $G_{\alpha}x_j=x_j$, we obtain
$$G_{\alpha}x_S=B_{\alpha}x_S\cup B_{\alpha}x_{r_{\alpha}(S)}\subset B_{\oa} x_S\cup B_{\oa} x_{r_\alpha(S)}.$$ Hence the statement is proved in this case.

Finally, assume that there are two distinct roots $\alpha_i,\alpha_j\in S$ which are not preserved by $r_{\alpha}$. Since this case is only possible for 
Lie superalgebras  of defect greater than $1$, we may assume that $\fg$ is either general linear or orthosymplectic. In this case 
$\alpha_i=\pm\varepsilon_a\pm\delta_b$ and $\alpha_j=\pm\varepsilon_c\pm\delta_d$ for some $a\neq c$ and $b\neq d$. That implies that $\alpha_i,\alpha_j$ and
$\alpha$ are roots of some root subalgebra $\fg'$ isomorphic to $\mathfrak{sl}(2|2)$. Furthermore, the corresponding subgroup $G'_{\oa}$ preserves $x_l$ for
all $l\neq i,j$. Therefore it suffices to check the analogous statement for $G'$. It can be done by direct 
computation and we leave it to the reader.  

Now we will prove that $\Phi$ is injective, {\it i.e.} $x_{S'}\in B_{\oa}x_S$ implies $S=S'$. Assume that $x_{S'}=\operatorname{Ad}_g x_S$ for some $g\in B_{\oa}$. 
Note $|S|=|S'|$ by the property $X/G_{\oa}\simeq\cS/{W} $ of~\cite{Duflo}. Let
$$\mathfrak m=\bigoplus_{\alpha\in S}\fg_\alpha,\quad \mathfrak m'=\bigoplus_{\alpha\in S'}\fg_\alpha,\quad \fh'=\operatorname{Ad}_g(\fh).$$
Then $\mathfrak m$ (resp. $\mathfrak m'$) is the minimal $\fh$-submodule of $\fg$ containing $x_S$ (resp. $x_{S'}$). On the other hand, $\mathfrak m'$ is also the minimal
$\fh'$-submodule containing $x_{S'}$. Furthermore $\operatorname{Ad}_gh-h\in \fn^+_{\oa}$ for all $h\in \fh$, which implies that the spectra of $h$ and of
$\operatorname{Ad}_gh$ in $\mathfrak m'$ coincide. Since the former is $\{\alpha(h)\,|\,\alpha\in S\}$ and the latter is $\{\alpha(h)\,|\,\alpha\in S'\}$,
we obtain $S=S'$.
\end{proof}

For any $\fg$-module $M\in\cO$, we introduce the notation
$$\cS(M)=\{S\in\cS | \Phi(S)\subset X_M\}.$$
In particular, Theorem \ref{mainass} implies that for $M,N\in\cO$ we have $\cS(M)=\cS(N)$ if and only if $X_M=X_N$.

\subsection {General properties of the associated variety for modules in category~$\cO$} Recall from Lemma~6.2 of \cite{Duflo} 
that if $x\in X$ and $M$ is a $\fg$-module, then 
$M_x:=\ker x/\im\, x$ is $\fg_x$-module, where~$\fg_x:=\ker\ad x/\im\,\ad x$. If $\fg$ is a basic classical superalgebra, then  
$\fg_x$ is also a basic classical superalgebra.
For example, if $\fg=\mathfrak{gl}(m|n)$ and the rank of $x$ is $k$,
then $\fg_x$ is isomorphic to $\mathfrak{gl}(m-k|n-k)$. If $x=x_S$ we identify~$\fg_x$ with the root subalgebra in $\fg$, whose roots are orthogonal 
but not proportional to the roots from $S$.

For a Lie superalgebra $\fl$ we denote by $Z(\fl)$ the center of the universal enveloping algebra~$U(\fl)$ and by
$\check{Z}(\fl)$ the set of central characters. In Section 6 of \cite{Duflo}, a map $\eta:Z(\fg)\to Z(\fg_x)$ was introduced. In Theorem 6.11 of  {\it op. cit.}, it was proved that all fibres of the
dual map $\check\eta:\check{Z}(\fg_x)\to \check{Z}(\fg)$ are finite, more precisely any fibre consists of at most two points. If $M$ admits a 
generalised central character $\chi$ then $M_x$ is a direct sum of $\fg_x$-modules admitting generalised central characters from $\check\eta^{-1}(\chi)$. The degree of atypicality of the central characters in $\check\eta^{-1}(\chi)$ is equal to the degree of atypicality of $\chi$ minus the rank of $x$, see e.g. equation (3) in~\cite{Vera}. In the case $\fg=\mathfrak{gl}(m|n)$, the map $\check\eta$ is injective and it maps a central character $\chi'$ of $\fg_x$ to the central character $\chi$ of
$\fg$ with the same core. 

Below we summarise the general properties of the functor $\fg$-mod to $\fg_x$-mod, which sends $M$ to $M_x$.
\begin{lemma}\label{general}Consider $\fg$ in the list \eqref{list}.
\begin{enumerate}
\item For any $\fg$-modules $M$ and $N$ we have $(M\otimes N)_x\simeq M_x\otimes N_x$.
\item Let $M$ be a $\fg$-module from the category~$\cO$ which admits a generalised central character with atypicality degree $k$ and $S\in\cS$. If $S\in \cS(M)$, then
$|S|\leq k$.
\item Let $M$ be from the category~$\cO$, $x=x_S$ for some $S\in\cS$ and $\fb_x:=\fb\cap\fg_x$, then $\fb_x$ acts locally finitely on $M_x$ and $M_x$ is a weight module.  
\end{enumerate}
\end{lemma}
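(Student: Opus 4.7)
The plan is to address the three statements in order; they share no real technical common core, so each uses a different tool.

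For part (1), I would view $M$ and $N$ as $\mZ/2$-graded complexes with differential given by the action of $x$, which squares to zero since $[x,x]=0$. The tensor product $M\otimes N$ is then the usual tensor product of $\mZ/2$-graded complexes, with differential $x\otimes 1+1\otimes x$ (which is exactly the action of $x$ on $M\otimes N$ via the coproduct of $U(\fg)$). Over the field $\mC$ the Künneth formula for complexes of vector spaces gives a natural isomorphism $H(M\otimes N)\simeq H(M)\otimes H(N)$, i.e.\ $(M\otimes N)_x\simeq M_x\otimes N_x$.

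For part (2), the hypothesis $S\in\cS(M)$ says precisely that $x_S\in X_M$, i.e.\ $M_{x_S}\neq 0$ as a $\fg_{x_S}$-module. By the discussion preceding the lemma, $M_{x_S}$ decomposes as a direct sum of modules admitting generalised central characters lying in $\check\eta^{-1}(\chi)$, and any such character has atypicality degree $k-|S|$. Since atypicality is nonnegative and the fibre $\check\eta^{-1}(\chi)$ must be nonempty to accommodate the nonzero module $M_{x_S}$, we conclude $k-|S|\geq 0$.

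For part (3), the idea is to transfer local finiteness from $M$ to $M_x$ via lifts. Writing $x=x_S=\sum_i x_i$ with $x_i\in\fg_{\alpha_i}$, one checks that $\fh_x:=\fh\cap\fg_x$ consists of those $h\in\fh$ satisfying $\alpha_i(h)=0$ for all $i$, so elements of $\fh_x$ commute with $x$ and hence preserve $\ker x$ and $\im x$, acting on $M_x$. Regrouping $\fh$-weight spaces of $M$ according to their restriction to $\fh_x$ produces a decomposition $M=\bigoplus_\nu M^\nu$ preserved by $x$ (since the roots $\alpha_i$ restrict trivially to $\fh_x$), so $M_x=\bigoplus_\nu (M^\nu)_x$ is an $\fh_x$-weight module. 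For the local $\fb_x$-finiteness, pick a basis $b_1,\dots,b_r$ of $\fb_x$ and lift each $b_j$ to $\tilde b_j\in\fb\cap\ker\ad x$. A short graded-commutator calculation shows that an element of $\im\ad x$ acts by zero on $M_x$, so the resulting action of $U(\fb_x)$ on $M_x$ is independent of the chosen lifts. For any $v\in\ker x$, PBW monomials in the $\tilde b_j$ lie in $U(\fb)$, and local $U(\fb)$-finiteness of $M$ gives $\dim U(\fb)v<\infty$, so the image $\dim U(\fb_x)[v]<\infty$ in $M_x$.

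The genuinely delicate step is the very last one, verifying that the chosen lifts indeed assemble into a well-defined action of $U(\fb_x)$ modulo $\im\ad x$; the other two parts are essentially formal consequences of the Künneth formula and of properties of $\check\eta$ already recorded in the excerpt.
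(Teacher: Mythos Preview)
Your arguments for parts (1) and (2) are correct. For (1) you invoke the K\"unneth formula for $\mZ/2$-graded complexes, whereas the paper argues more directly by decomposing $M$ and $N$ as $\C[x]$-modules into a free summand plus the homology $M_x$, $N_x$; both routes are standard and equivalent. Your part (2) matches the paper's proof.

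For part (3) your argument works, but you have made it substantially harder than necessary by treating $\fb_x$ as a quotient requiring lifts. Recall the sentence just before the lemma: for $x=x_S$ the paper \emph{identifies} $\fg_x$ with the root subalgebra of $\fg$ whose roots are orthogonal but not proportional to the roots in $S$. Under this identification $\fb_x=\fb\cap\fg_x$ is literally a Lie subalgebra of $\fb$, already sitting inside $\ker\ad x$; there is nothing to lift. Since $\fb_x$ commutes with $x$, both $\ker x$ and $\im x$ are $\fb_x$-submodules of $M$, so $M_x$ is a $\fb_x$-subquotient of $M$. Local finiteness of $\fb$ on $M$ restricts to local finiteness of $\fb_x$ on $M$, and this passes to subquotients; likewise $\fh$-semisimplicity restricts to $\fh_x$-semisimplicity and passes to subquotients. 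This is the paper's one-line proof. The ``delicate step'' you flag---checking that arbitrary lifts assemble into a well-defined $U(\fb_x)$-action---evaporates once you use the canonical subalgebra realisation rather than the abstract quotient description.
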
 
\begin{proof} To prove (1) note that we have the obvious homomorphism $M_x\otimes N_x\to (M\otimes N)_x$ of $\fg_x$-modules. To check that it is an isomorphism
consider $M$ and $N$ as $\mathbb C[x]$-modules. Then
$$M\simeq M^f\oplus M_x,\quad N\simeq N^f\oplus N_x,$$ 
where~$M^f$ and $N^f$ are free $\mathbb C[x]$-modules. Since $M^f\otimes N$ and $M\otimes N^f$ are free we obtain an isomorphism
 $M_x\otimes N_x\simeq (M\otimes N)_x$.

To show (2) use the map $\check\eta$. If $|S|>k$, then $\check\eta^{-1}(\chi)$ is empty and therefore $M_x=0$. 

Finally, (3) is trivial since $M_x$ is a subquotient of $M$, and $\fb_x$ acts locally finitely and $\fh\cap \fg_x$ diagonally on $M$.
\end{proof}
\begin{remark} We believe that (3) can be strengthened. Namely, if $M$ lies in the category~$\cO$, then $M_x$ belongs to the category~$\cO$ for $\fg_x$, {\it i.e.} $M_x$
is finitely generated. But we do not have a proof of this at the moment.
\end{remark}

\subsection{On associated variety of Verma modules for $\mathfrak{gl}(m|n)$} 
In this subsection we assume $\fg=\mathfrak{gl}(m|n)$ and consider the distinguished Borel subalgebra
$\fb=\fb_{\oa}\oplus \fg_1$. We set $\cS(\lambda):=\cS(M(\lambda))$.
We start with the following technical lemma.

\begin{lemma}\label{technical} Let $\fs$ be $(1|2)$ dimensional superalgebra with odd generators $\xi,\eta$ and even generator $u$ satisfying
$[\xi,\eta]=0$, $[u,\xi]=\xi$ and $[u,\eta]=-\eta$. Assume that $M$ is an $\fs$-module semisimple over $\mC u$ and such that the spectrum of $u$ in $M$ is bounded
from above, {\it i.e.} there exists $\gamma_0$ such that $\operatorname{Re}\gamma<\gamma_0$ for any eigenvalue $\gamma$ of $u$. Then
$M_{\eta}=0$ implies $M_{\eta+\xi}=0$.
\end{lemma}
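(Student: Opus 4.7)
The plan is to exhibit a contracting homotopy for $\eta$ and use it to witness, via a terminating geometric series, that every $(\eta+\xi)$-cocycle is an $(\eta+\xi)$-coboundary.

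First I would observe that for $\eta,\xi$ to lie in the self-commuting cone one has $\eta^2=\xi^2=0$, and combined with $\xi\eta+\eta\xi=0$ this gives $(\eta+\xi)^2=0$, so $M_{\eta+\xi}$ is well-defined. The hypothesis $M_\eta=0$ means $\ker\eta=\im\eta$ as subspaces of $M$. Since $[u,\eta]=-\eta$, both subspaces are $u$-stable and $u$-graded, and $\eta$ restricted to any $u$-graded complement $W$ of $\ker\eta$ in $M$ is a graded isomorphism $W\xrightarrow{\sim}\ker\eta$ of $u$-degree $-1$. I would fix such a $W$ and let $h:M\to M$ be the operator defined by $h|_W=0$ and $h|_{\ker\eta}$ equal to the inverse of $\eta|_W$. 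Then $h$ has $u$-degree $+1$ and satisfies $\eta h+h\eta=\mathrm{id}_M$.

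Next, for $m\in\ker(\eta+\xi)$, so that $\eta m=-\xi m$, a direct computation using $\eta h+h\eta=1$ yields
\[
(\eta+\xi)(hm)=(1-h\eta)m+\xi hm=m+h\xi m+\xi hm=m+Km,
\]
where $K:=h\xi+\xi h$. Because $(\eta+\xi)^2=0$, the left-hand side lies in $\ker(\eta+\xi)$, so $Km\in\ker(\eta+\xi)$; by induction $K^jm\in\ker(\eta+\xi)$ for every $j\ge0$. Applying the same identity to each iterate gives $K^jm=(\eta+\xi)(hK^jm)-K^{j+1}m$, which telescopes into
\[
m=(\eta+\xi)\!\left(h\sum_{j=0}^{N-1}(-K)^j m\right)+(-1)^N K^N m\qquad\text{for every }N\ge 1.
\]

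The decisive input is the spectral hypothesis: the operator $K$ raises the $u$-eigenvalue by $2$, so on a $u$-eigenvector of eigenvalue $\gamma$ the iterate $K^Nm$ lives in $u$-eigenvalue $\gamma+2N$, and hence $K^Nm=0$ whenever $\mathrm{Re}(\gamma)+2N\ge\gamma_0$. Since every $m\in M$ is a finite sum of $u$-eigenvectors, $K^Nm=0$ for all sufficiently large $N$; substitution into the iterated identity exhibits $m$ as an $(\eta+\xi)$-image, so $\ker(\eta+\xi)\subseteq\im(\eta+\xi)$ and $M_{\eta+\xi}=0$. The main obstacle I anticipate is ensuring that the homotopy $h$ is simultaneously $u$-graded of degree $+1$ (so that $K$ has definite positive $u$-degree and the series terminates) and a genuine contracting homotopy for $\eta$; this forces the $u$-graded splitting above and is the only point where the hypothesis $M_\eta=0$ is used.
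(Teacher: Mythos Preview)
Your argument is correct. The contracting-homotopy identity $\eta h+h\eta=\mathrm{id}_M$ is verified cleanly on both summands $W$ and $\ker\eta$, the telescoping identity is right, and the crucial point that $K=h\xi+\xi h$ has $u$-degree $+2$ (hence is locally nilpotent by the spectral bound) is exactly what makes the series terminate.

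Your route differs from the paper's. There the authors first split $M=P\oplus L$ with $P$ projective (hence trivially $(\eta+\xi)$-acyclic) and $\xi\eta L=0$; this extra condition forces $\xi L'\subset L''$ in the decomposition $L=L'\oplus L''$, so that the partial inverse $\zeta\colon L''\to L'$ of $\eta$ satisfies $\zeta(\xi+\eta)=\mathrm{Id}_{L'}+n'$ and $(\xi+\eta)\zeta=\mathrm{Id}_{L''}+n''$ with $n',n''$ of $u$-degree $2$, hence locally nilpotent. Both proofs pivot on the same mechanism---an exact inverse for $\eta$ becomes an inverse for $\eta+\xi$ up to a degree-raising, locally nilpotent error---but you package it as a homological perturbation argument on all of $M$, whereas the paper first peels off a projective summand so that $\xi$ respects the chosen splitting. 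Your version avoids that preliminary reduction and is a bit more streamlined; the paper's version makes the structure of $L$ more explicit.
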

\begin{proof} Note that any $u$-eigenvector $v\in M$ such that $\xi\eta v\neq 0$ generates  a projective $\fs$-submodule 
(in the category of $\fs$-modules semisimple over $u$). Hence we have a decomposition $M=P\oplus L$ for some projective $P$ and $L$ such that
$\xi\eta L=0$. Obviously, $P_{\xi+\eta}=0$ and we have to check only that $L_{\xi+\eta}=0$.  Since $L_\eta=0$, it follows that $L$ is free over $\eta$, hence
we have a $u$-invariant decomposition $L=L'\oplus L''$ such that $\xi L'\subset L''$, $\eta L'=L''$, 
$\xi L''=\eta L''=0$ and $\eta:L'\to L''$ is an isomorphism. Let $\zeta: L''\to L'$ be the inverse of $\eta$. Then there are $n'\in \End(L')$ and $n''\in\End(L'')$ such that
$$\zeta(\xi+\eta)=\operatorname{Id}_{L'}+n',\quad (\xi+\eta)\zeta=\operatorname{Id}_{L"}+n'',$$ 
where~$[u,n']=2n'$ and $[u,n'']=2n''$. The latter condition and the assumption on the spectrum of $u$ imply that
$n'$ and $n''$ are locally nilpotent and hence $\operatorname{Id}_{L'}+n'$ and $\operatorname{Id}_{L"}+n''$ are both invertible.
Hence we find $\operatorname{Ker}(\xi+\eta)=L''$ and  $\operatorname{Im}(\xi+\eta)=L''$. That implies  $M_{\eta+\xi}=0$.
\end{proof}

\begin{lemma}\label{free} If $M\in\cO$ is free over $U(\fg_{-1})$, then $X_M\subset\fg_1$ and therefore $S\in\cS(M)$ implies $S\subset \Delta_{\ob}^+$.
\end{lemma}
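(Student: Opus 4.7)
The plan is to exploit the three-term grading $\fg=\fg_{-1}\oplus\fg_0\oplus\fg_1$ together with Lemma \ref{technical} and the freeness hypothesis. Write an arbitrary $x\in X$ as $x=y+z$ with $y\in\fg_{-1}$ and $z\in\fg_1$. Since $\fg_{\pm 2}=0$ in this grading, the brackets $[y,y]$ and $[z,z]$ both vanish automatically, so the self-commuting condition $[x,x]=0$ reduces to $[y,z]=0$. The goal is therefore to prove that whenever $y\neq 0$ one has $M_x=0$; this forces $X_M\subset\fg_1$, and since under the distinguished Borel one has $\fg_1=\bigoplus_{\gamma\in\Delta_{\ob}^+}\fg_\gamma$, the second conclusion $S\subset\Delta_{\ob}^+$ then follows immediately from $x_S\in X_M$.

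First I would check that $M_y=0$. Extending $y$ to a homogeneous basis of $\fg_{-1}$ gives a decomposition $\Lambda(\fg_{-1})\simeq\Lambda(\mC y)\otimes\Lambda(V)$ with $V$ a complement, so $U(\fg_{-1})=\Lambda(\fg_{-1})$ is free over $\Lambda(\mC y)=\mC[y]/(y^2)$. Hence $M$, being free over $U(\fg_{-1})$ by hypothesis, is free over $\mC[y]/(y^2)$ as well, whence $yM=\ker_y M$ and therefore $M_y=0$.

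Next I would invoke Lemma \ref{technical} with $\eta=y$, $\xi=z$ and $u=z_0$ the grading element of \eqref{gradel}. The required relations $[u,z]=z$, $[u,y]=-y$ and $[\xi,\eta]=[y,z]=0$ have already been verified, so $M$ becomes an $\fs$-module in the sense of that lemma. It is semisimple over $\mC u$ because $z_0\in\fh$ and $M\in\cO$ is $\fh$-semisimple. Its $u$-spectrum is bounded above: any weight of $M$ has the form $\mu=\lambda_i-\sum_{\alpha\in\Delta^+}n_\alpha\alpha$ for some finite set $\{\lambda_i\}$ of top weights coming from a finite generating set of $M$, and since $z_0\in\mathfrak z(\fg_0)$ satisfies $\alpha(z_0)=0$ for even $\alpha$ while $\gamma(z_0)=1$ for every $\gamma\in\Delta_1^+$, one obtains $\mu(z_0)\le\max_i\lambda_i(z_0)$. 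Lemma \ref{technical} therefore applies and yields $M_x=M_{y+z}=0$, completing the argument.

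The only delicate point is the verification of the hypotheses of Lemma \ref{technical}, in particular the upper-boundedness of the $z_0$-spectrum on an object of $\cO$; the rest consists of a direct decomposition into $\fg_{-1}$ and $\fg_1$ components and an elementary manipulation with the freeness assumption using $\fg_{\pm 2}=0$.
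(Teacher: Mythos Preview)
Your proof is correct and follows essentially the same approach as the paper: decompose $x$ into its $\fg_{\pm1}$-components, use freeness over $U(\fg_{-1})$ to get $M_\eta=0$ for the negative component, and then apply Lemma~\ref{technical} with the grading element. You supply more detail than the paper does (the explicit freeness argument for $\mathbb C[y]$ and the verification that the $u$-spectrum is bounded above), which is fine; just be careful with notation, since you use $z$ for the $\fg_1$-component while the paper has already reserved $z$ for the grading element in~\eqref{gradel}.
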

\begin{proof} If $x\in \fg_{-1}$, then $M_x=0$ since $M$ is free over $\mathbb C[x]$. Let $x\in X$. Then $x$ can be written uniquely as $x^++x^-$ with 
$x^{\pm}\in\fg_{\pm 1}$. We claim that if $x^-\neq 0$, then $M_x=0$. Indeed, we apply Lemma~\ref{technical} with $u=z$, where~$z$ is introduced in equation~\eqref{gradel}, $\xi=x^+$ and $\eta=x^-$ and use the fact
that $M_{\eta}=0$.  
\end{proof}

If $\alpha$ is a root of $\fg$, we denote by $X_{\alpha}$ some non-zero element from the root space $\fg_\alpha$ and set $H_{\alpha}:=[X_\alpha,X_{-\alpha}]$.

\begin{lemma}\label{hereditary} Let $M$ be from the category~$\cO$, and $S\in\cS(M)$. Let $S$ be a disjoint union of two subsets $S_{1}$ and $S_{-1}$ and 
$h\in\fh^*$ be an element of the Cartan subalgebra, non-negative on all even positive roots. Assume that $\alpha(h)=i$ for all $\alpha\in S_i$ where~$i=\pm 1$.
Then $S_{-1}\in\cS(M)$.
\end{lemma}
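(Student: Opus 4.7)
The plan is to reduce the statement to Lemma~\ref{technical} applied to the super subalgebra spanned by $h$ and the two odd vectors $x^+:=x_{S_1}$, $x^-:=x_{S_{-1}}$. Write $x_S=x^++x^-$; the hypothesis $S\in\cS(M)$ says $M_{x_S}\neq 0$, and we want to conclude $M_{x^-}\neq 0$, which is the same as $x_{S_{-1}}\in X_M$, hence $\Phi(S_{-1})\subseteq X_M$ by $B_{\oa}$-stability of $X_M$.

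First I would check that $(h,x^+,x^-)$ realises the algebra $\fs$ of Lemma~\ref{technical}. The hypothesis $\alpha(h)=\pm 1$ on $S_{\pm 1}$ gives $[h,x^\pm]=\pm x^\pm$. From $[x_S,x_S]=0$ one reads off $[x^+,x^+]+[x^-,x^-]+2[x^+,x^-]=0$; since these three brackets lie in the pairwise distinct $h$-eigenspaces of eigenvalues $+2,-2,0$, each vanishes separately, and in particular $[x^+,x^-]=0$. Any $M\in\cO$ then restricts to an $\fs$-module, semisimple over $\C h$ thanks to $\fh$-semisimplicity.

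Next I would verify the crucial spectral condition: the $h$-spectrum of $M$ is bounded from above. Finite generation together with local $U(\fb)$-finiteness of $M$ implies that $M$ is generated by finitely many $\fn$-annihilated weight vectors of weights $\lambda_1,\ldots,\lambda_r$. The super PBW decomposition $U(\ofn)=U(\ofn_{\oa})\otimes \Lambda(\ofn_{\ob})$ implies that every weight of $M$ has the form
\[
\mu=\lambda_i-\sum_{\alpha\in\Delta^+_{\oa}} n_\alpha\alpha-\sum_{\gamma\in\Delta^+_{\ob}} m_\gamma\gamma,\qquad n_\alpha\in\Z_{\geq 0},\quad m_\gamma\in\{0,1\}.
\]
Since $\alpha(h)\geq 0$ for $\alpha\in\Delta^+_{\oa}$, the even contribution only lowers $\mu(h)$, while the odd contribution ranges over a finite set (as $\Delta^+_{\ob}$ is finite and $m_\gamma\in\{0,1\}$), so it is bounded below by an absolute constant $-C$. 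Consequently $\mu(h)\leq \max_i\lambda_i(h)+C$, as required.

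With these two verifications in place, the contrapositive of Lemma~\ref{technical} (taking $u=h$, $\xi=x^+$, $\eta=x^-$) states that $M_{x^++x^-}\neq 0$ forces $M_{x^-}\neq 0$, which is exactly what we need. The only delicate point is the spectral bound: it rests essentially on the fact that each negative odd root appears at most once in a PBW monomial, so odd roots with possibly negative $h$-value cannot drive the $h$-spectrum of $M$ to $+\infty$.
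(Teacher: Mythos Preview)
Your argument is correct and is precisely the paper's approach—apply Lemma~\ref{technical} with $u=h$, $\xi=x_{S_1}$, $\eta=x_{S_{-1}}$—with the implicit verifications (the commutation relations via the $h$-eigenspace decomposition of $[x_S,x_S]=0$, and the spectral bound via super-PBW) spelled out. One small correction: it is not true in general that $M\in\cO$ is generated by $\fn$-annihilated weight vectors (a dual Verma module already gives a counterexample), but all you actually need is that $M=U(\ofn)V$ for a finite-dimensional $\fh$-stable subspace $V$, which follows at once by taking $V=U(\fb)\cdot\{\text{finite generating set}\}$; with the $\lambda_i$ taken to be the weights of $V$, your PBW bound on the $h$-spectrum goes through unchanged.
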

\begin{proof} Follows again from Lemma~\ref{technical}. We write $X=X^++X^-$, where~$X^\pm =\sum_{\alpha\in S_{\pm 1}}X_\alpha$, set $\xi=X^+,\eta=X^-, u=h$.
\end{proof}

\begin{remark} More generally, it seems plausible that if $S\in\cS(M)$, then any subset $S'\subset S$ is also in $\cS(M)$. 
\end{remark}

\begin{lemma}
\label{reducegl}
Let $S=\{\varepsilon_{i_s}-\delta_{j_s}\,|\, s\in[1,k]\}$ be a set of $k$ mutually orthogonal positive odd roots. Set $a:=\min \{i_s\,|\, s\in[1,k]\}$ and $b:=\max\{ j_s\,|\, s\in [1,k]\}$ and let $\fg'\simeq\mathfrak{gl}(m-a+1|b)$ be the subalgebra of $\fg\simeq\mathfrak{gl}(m|n)$ generated by 
$X_{\pm(\varepsilon_i-\delta_j)}$ with $i\ge a$ and $j\le b$. Let $\lambda'$ be the restriction of $\lambda$ to the Cartan subalgebra of $\fg'$ and $\cS'$ be the set of subsets of mutually orthogonal linearly independent odd roots in $\fg'$ (clearly, $\cS'\subset\cS$). Then we have
$\cS(\lambda')=\cS(\lambda)\cap \cS'$.
\end{lemma}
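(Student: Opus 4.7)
The plan is to establish a $\fg'$-equivariant vector space isomorphism $M_\fg(\lambda)\simeq U(\fq^-)\otimes_\C M_{\fg'}(\lambda')$ for a suitable $\fg'$-submodule $\fq^-\subset\fg$, and then combine a K\"unneth-type argument with a weight count on $U(\fq^-)$. Write $\fh=\fh'\oplus\fh''$, with $\fh''$ spanned by $\{H_{\varepsilon_i}:i<a\}\cup\{H_{\delta_j}:j>b\}$, and call a root of $\fg$ \emph{bad} if it is not a root of $\fg'$. Let $\fq^\pm$ be the sum of the positive, resp.\ negative, bad root spaces. In the distinguished positive system each bad variable appears with a consistent sign: $\varepsilon_i$ with $i<a$ always with coefficient $+1$ in positive roots, $\delta_j$ with $j>b$ always with coefficient $-1$. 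Consequently bad parts cannot cancel in a sum of two bad roots of the same sign, so $\fq^\pm$ are subalgebras, $\fp:=\fg'\oplus\fh''\oplus\fq^+$ is a parabolic subalgebra containing $\fb$, and $[\fg',\fq^-]\subset\fq^-$, making $\fq^-$ into a $\fg'$-module via the adjoint action.

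Next, consider the parabolic Verma $M^{\mathrm{par}}(\lambda):=U(\fp)\otimes_{U(\fb)}\C_\lambda$ with highest weight vector $v$. Since $\fq^+v=0$, $\fh''v=\lambda|_{\fh''}v$ and $\fh''\oplus\fq^+$ is an ideal of $\fp=\fg'\ltimes(\fh''\oplus\fq^+)$, a standard PBW argument yields $M^{\mathrm{par}}(\lambda)=U(\fg')v$; the universal property of Verma modules and a character count then identify it with $M_{\fg'}(\lambda')$ as a $\fg'$-module. PBW for $\fg=\fq^-\oplus\fp$ now produces a vector space isomorphism $\Phi\colon M_\fg(\lambda)\xrightarrow{\sim}U(\fq^-)\otimes M^{\mathrm{par}}(\lambda)\simeq U(\fq^-)\otimes M_{\fg'}(\lambda')$. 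Its $\fg'$-equivariance (with $\fg'$ acting on $U(\fq^-)$ by the adjoint action and on the tensor product by super-Leibniz) follows from the identity $xu=\ad_x(u)+(-1)^{|x||u|}ux$ in $U(\fg)$, valid for $x\in\fg'$ and $u\in U(\fq^-)$, whose first term stays in $U(\fq^-)$ by the closure $[\fg',\fq^-]\subset\fq^-$.

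Setting $x:=x_{S'}\in\fg'_1\subset\fg_1$ we have $x^2=0$, so Lemma~\ref{general}(1) (whose proof is essentially the K\"unneth formula for two-term complexes over the field $\C$) applies to $\fg'$-module tensor products and gives $M_\fg(\lambda)_x\simeq U(\fq^-)_x\otimes M_{\fg'}(\lambda')_x$. The lemma is thereby reduced to showing $U(\fq^-)_x\neq0$. Define $h^{*}\in\fh''$ by $\varepsilon_i(h^{*})=a-i$ for $i<a$ and $\delta_j(h^{*})=-(j-b)$ for $j>b$. A short case analysis over the seven possible types of bad negative roots shows $\beta(h^{*})<0$ for every such $\beta$; hence any non-trivial positive combination of bad negative roots has strictly negative $h^{*}$-value, and the $h^{*}$-weight zero subspace of $U(\fq^-)$ is exactly $\C\cdot 1$. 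Since $\gamma(h^{*})=0$ for every root $\gamma$ of $\fg'$, $x$ itself has $h^{*}$-weight zero, so $\ad_x$ preserves the $h^{*}$-grading and acts trivially on $\C\cdot 1$; therefore $1\in\ker\ad_x\setminus\im\ad_x$ and $[1]\neq 0$ in $U(\fq^-)_x$. Combined with the K\"unneth isomorphism this yields $M_\fg(\lambda)_x\neq 0\Leftrightarrow M_{\fg'}(\lambda')_x\neq 0$, i.e.\ $\cS(\lambda)\cap\cS'=\cS(\lambda')$. The main technical points, both relying on the distinguished Borel, are the closure properties $[\fq^\pm,\fq^\pm]\subset\fq^\pm$ and $[\fg',\fq^-]\subset\fq^-$, and the existence of the strict-sign functional $h^{*}$; in non-distinguished positive systems, mixed-sign occurrences of bad variables would allow cancellations that invalidate both.
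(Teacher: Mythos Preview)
Your proof is correct and follows essentially the same route as the paper's: both establish that $M_\fg(\lambda)\simeq M_{\fg'}(\lambda')\otimes S(\fg/(\fg'+\fb))$ as $\fg'$-modules (your $U(\fq^-)$ is exactly $S(\fg/(\fg'+\fb))$ under super-PBW, since $\fq^-\simeq\fg/(\fg'+\fb)$), and both then invoke the K\"unneth-type Lemma~\ref{general}(1). The paper dispatches the non-vanishing of the auxiliary factor more economically, simply observing that $M_{\fg'}(\lambda')$ sits inside $M_\fg(\lambda)$ as a $\C[x]$-direct summand (the degree-zero piece of the tensor factor), which immediately gives $M_{\fg'}(\lambda')_x\neq 0\Rightarrow M_\fg(\lambda)_x\neq 0$; your $h^*$-grading argument establishes the same thing with more machinery.

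One small point of exposition: your claim that ``$\varepsilon_i$ with $i<a$ always appears with coefficient $+1$ in positive roots'' is literally false (e.g.\ $\varepsilon_1-\varepsilon_2$ with $a>2$ has $\varepsilon_2$ with coefficient $-1$). The conclusion you draw is nevertheless correct, and the cleanest justification is simply that $\fp=\fg'\oplus\fh''\oplus\fq^+$ is a \emph{standard parabolic} subalgebra of $\fg$ (its Levi factor $\fg'\oplus\fh''$ corresponds to the subset of simple roots $\{\varepsilon_a-\varepsilon_{a+1},\dots,\delta_{b-1}-\delta_b\}$), so that $\fq^\pm$ are its nilradical and opposite nilradical; then $[\fg',\fq^-]\subset\fq^-$ and the subalgebra property are automatic.
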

\begin{proof} Let $S\in\cS'$ and $x=x_S$.
The Verma module $M(\lambda)$ is isomorphic to $M(\lambda')\otimes S(\fg/(\fg'+\fb))$ as a $\fg'$-module and therefore as a $\mathbb C[x]$-module, where we considered adjoint 
action on $\fg/(\fg'+\fb)$. In particular $M(\lambda')$ is a direct summand in $M(\lambda)$. Therefore $M(\lambda')_x\neq 0$ implies $M(\lambda)_x\neq 0$.
On the other hand, if $M(\lambda')_x=0$, then $M(\lambda)_x= 0$ by Lemma~\ref{general}(1). 
\end{proof}

\begin{corollary}\label{reductyp}
Consider the set $\{\varepsilon_{i_s}-\delta_{j_s}\}$ of all positive atypical roots for $\lambda\in\fh^\ast$. 
For a set $S$ of mutually orthogonal odd positive roots of the form $\varepsilon_i-\delta_j$ with $i> i_s$ and $j< j_s$ for every~$s$, we have $S\not\in \cS(\lambda)$.
\end{corollary}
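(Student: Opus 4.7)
The plan is to reduce the problem to a typical situation on a smaller subalgebra via Lemma~\ref{reducegl}, and then apply the atypicality bound from Lemma~\ref{general}(2). The statement is only nontrivial for $S \neq \emptyset$, so I will treat that case.

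First, I would set $a := \min\{i \mid \varepsilon_i - \delta_j \in S\}$ and $b := \max\{j \mid \varepsilon_i - \delta_j \in S\}$. The hypothesis that $i > i_s$ and $j < j_s$ for every atypical root $\varepsilon_{i_s} - \delta_{j_s}$ and every $\varepsilon_i-\delta_j\in S$ then yields $a > i_s$ and $b < j_s$ for each index $s$. Let $\fg' \simeq \mathfrak{gl}(m-a+1|b)$ be the subalgebra of Lemma~\ref{reducegl}, namely the one generated by $X_{\pm(\varepsilon_i - \delta_j)}$ for $i \geq a$ and $j \leq b$. By the choice of $a$ and $b$, every root in $S$ is a positive odd root of $\fg'$, so $S \in \cS'$.

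Next I would check that the restriction $\lambda'$ of $\lambda$ to the Cartan subalgebra of $\fg'$ is \emph{typical} in $\fg'$. A direct comparison of the Weyl-type vector $\delta$ of $\fg$ and the analogous vector $\delta'$ of $\fg'$ shows that the two relative shifts on $\varepsilon_i$ and on $\delta_j$ cancel, yielding
\[
(\lambda' + \delta', \varepsilon_i - \delta_j) \;=\; (\lambda + \delta, \varepsilon_i - \delta_j)
\]
for all $i \in [a,m]$ and $j \in [1,b]$. Consequently the atypical positive odd roots of $\lambda'$ in $\fg'$ are exactly those $\varepsilon_{i_s}-\delta_{j_s}$ with $i_s \geq a$ and $j_s \leq b$; by our choice of $a,b$ there are none, so $\lambda'$ is typical.

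Finally, if $S \in \cS(\lambda)$, then $S \in \cS(\lambda) \cap \cS' = \cS(\lambda')$ by Lemma~\ref{reducegl}. Since $\lambda'$ has atypicality degree $0$, Lemma~\ref{general}(2) applied to $M(\lambda')$ forces $|S| \leq 0$, contradicting $|S| \geq 1$. Hence $S \notin \cS(\lambda)$. The only real obstacle is the Weyl-vector bookkeeping in the second step: one must identify $\lambda'$ together with the intrinsic shift $\delta'$ of the smaller superalgebra and verify that the two shifts cancel, so that atypicality in $\fg'$ coincides with atypicality in $\fg$ for the roots common to both. Once this is established, the reduction is a clean combination of Lemmas~\ref{reducegl} and~\ref{general}.
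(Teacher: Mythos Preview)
Your proposal is correct and follows essentially the same approach as the paper's proof, which consists of the single sentence ``This follows from Lemma~\ref{reducegl}, since $\lambda'$ is a typical weight.'' You have simply filled in the details the paper leaves implicit: the verification that the shifts $\delta$ and $\delta'$ agree on the relevant odd roots (so that typicality of $\lambda'$ in $\fg'$ really does follow from the absence of atypical roots of $\lambda$ among the roots of $\fg'$), and the appeal to Lemma~\ref{general}(2) to conclude that $\cS(\lambda')=\{\emptyset\}$ from typicality.
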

\begin{proof}
This follows from Lemma~\ref{reducegl}, since $\lambda'$ is a typical weight.
\end{proof}

\begin{lemma}
\label{nointeg}
Consider $S=\{\alpha_1,\cdots,\alpha_k\}\in \cS$.  If $S\in \cS(\lambda)$, then  $( \lambda,\alpha_i)\in\mathbb Z$ for all $i=1,\dots,k$.
\end{lemma}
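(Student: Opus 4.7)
The plan is to prove the contrapositive: assume $(\lambda,\alpha_i)\notin\mZ$ for some $i\in\{1,\ldots,k\}$, and construct an explicit contracting homotopy for the complex $(M(\lambda),x_S)$, so that $M(\lambda)_{x_S}=0$ and hence $S\notin\cS(\lambda)$. This is in the spirit of Lemma~\ref{technical} and Lemma~\ref{free}.

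By Lemma~\ref{free} we may assume all $\alpha_j$ are positive, so write $\alpha_j=\varepsilon_{p_j}-\delta_{q_j}$, where orthogonality and isotropy of the $\alpha_j$ force the $p_j$ to be pairwise distinct and the $q_j$ to be pairwise distinct. Fix the index $i$ with $(\lambda,\alpha_i)\notin\mZ$ and pick the specific root vectors $X_{\alpha_j}=E_{p_j,m+q_j}$ and $X_{-\alpha_i}=E_{m+q_i,p_i}$, so that $H_i:=[X_{\alpha_i},X_{-\alpha_i}]=E_{p_i,p_i}+E_{m+q_i,m+q_i}$. From the matrix identity $E_{ab}E_{cd}=\delta_{bc}E_{ad}$ together with distinctness of the $p_j$'s and the $q_j$'s, one checks:
\begin{enumerate}
\item[(a)] $\alpha_j(H_i)=\delta_{p_i,p_j}-\delta_{q_i,q_j}=0$ for every $j$, so $[H_i,x_S]=0=[H_i,X_{-\alpha_i}]$;
\item[(b)] $\{X_{\alpha_j},X_{-\alpha_i}\}=\delta_{ij}\,H_i$, and therefore $\{x_S,X_{-\alpha_i}\}=H_i$.
\end{enumerate}

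On a weight space $M(\lambda)_\mu$, $H_i$ acts by the scalar $\mu(H_i)=(\mu,\alpha_i)$. Since the weights of $M(\lambda)$ lie in $\lambda-\sum_{\beta\in\Delta^+}\N\beta$ and $(\beta,\alpha_i)\in\mZ$ for every root $\beta$, we have $(\mu,\alpha_i)\in(\lambda,\alpha_i)+\mZ$. The hypothesis $(\lambda,\alpha_i)\notin\mZ$ then rules out $\mu(H_i)=0$, so $H_i$ acts invertibly on $M(\lambda)$, with $H_i^{-1}$ a well-defined operator acting diagonally on each finite-dimensional weight space. Setting $h:=H_i^{-1}X_{-\alpha_i}$, and using (a) and (b), we obtain
\[
\{x_S,h\}\;=\;H_i^{-1}\{x_S,X_{-\alpha_i}\}\;=\;H_i^{-1}H_i\;=\;\id_{M(\lambda)}.
\]
Thus $h$ is a contracting homotopy for the differential $x_S$, the complex is acyclic, and $M(\lambda)_{x_S}=\ker x_S/\im x_S=0$, contradicting $S\in\cS(\lambda)$.

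The argument presents no serious obstacle once the right homotopy is guessed; the only care required is in tracking commutation relations (to make sure $H_i^{-1}$ can be moved past $x_S$ and $X_{-\alpha_i}$) and in justifying the invertibility of $H_i$ from the weight-space decomposition of $M(\lambda)$. The crucial structural input is the combinatorial observation in (a), which in turn rests on the mutual orthogonality and isotropy of the roots in $S$.
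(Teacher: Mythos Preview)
Your proof is correct. The core idea coincides with the paper's: both exploit an $\mathfrak{sl}(1|1)$-type relation $\{x_S,y\}=h$ together with invertibility of $h$ on $M(\lambda)$ to force $M(\lambda)_{x_S}=0$.

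The execution differs slightly. You fix one index $i$ with $(\lambda,\alpha_i)\notin\mZ$, take $y=X_{-\alpha_i}$ and $h=H_i$, and build an explicit contracting homotopy $H_i^{-1}X_{-\alpha_i}$. The paper instead forms the triple $\{x_S,y,h\}$ with $h=\sum_j t_j h_j$ for generic parameters $t_j\in\C^\times$, observes that $S\in\cS(\lambda)$ forces some weight $\mu$ of $M(\lambda)$ to satisfy $\mu(h)=0$, and then uses genericity of the $t_j$ to conclude $(\lambda,\alpha_j)\in\mZ$ for \emph{all} $j$ at once. Your version is more hands-on and self-contained (no appeal to the structure theory of $\mathfrak{sl}(1|1)$-modules), while the paper's generic-parameter device is slicker in that it handles all indices simultaneously rather than via the contrapositive. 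Either way the verification of (a) and (b), resting on mutual orthogonality of the $\alpha_j$, is the essential computation in both arguments.
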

\begin{proof}
Let $h_i\in [\fg_{\alpha_i},\fg_{-\alpha_i}]$, such that $\beta(h_i)=(\beta,\alpha_i)$ for any weight $\beta$. For any $t_1,\dots,t_k\in \mathbb C\setminus 0$ consider
the  $\mathfrak{sl}(1|1)$-triple $\{x_S,y,h\}$, where~$h=t_1h_1+\dots+t_kh_k$. By assumtion, $S\in \cS(\lambda)$, so $M(\lambda)$ cannot be a typical module for the $\mathfrak{sl}(1|1)$-triple and hence there exists a weight $\mu$
such that $\mu(h)=0$. Since $\lambda-\mu$ is an integral linear combination of roots, we have  
$$\lambda(h)=\sum_{i=1}^kt_i(\lambda,\alpha_i)\in\sum_{i=1}^k\mathbb Z t_i.$$ 
For generic choice of $t_1,\dots,t_k$ this implies $(\lambda,\alpha_i)\in\mathbb Z$ for all $i=1,\dots,k$.
\end{proof}

\begin{lemma}\label{dominant}  Let $\fg=\mathfrak{gl}(n|n)$ and $\lambda\in P_0^{++}$ of degree of atypicality $n$. 
Then $X_{M(\lambda)}=\fg_1$.
\end{lemma}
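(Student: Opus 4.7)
The plan is to establish $X_{M(\lambda)}=\fg_1$ by proving the two inclusions separately.

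The easy direction $X_{M(\lambda)}\subset\fg_1$ follows at once from Lemma~\ref{free}: as $\fb=\fb_{\oa}\oplus\fg_1$ is the distinguished Borel, the Verma module $M(\lambda)=U(\fg)\otimes_{U(\fb)}\C_\lambda$ is a free $U(\bar\fn)$-module, hence in particular free over $U(\fg_{-1})$.

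For the reverse inclusion, observe first that $\fg_1\subset X$ since $[\fg_1,\fg_1]\subset\fg_2=0$. The intersection $X_{M(\lambda)}\cap\fg_1$ is Zariski-closed and $B_{\oa}$-invariant, so by Theorem~\ref{mainass} it is a union of closures of orbits $B_{\oa}\cdot x_S$ for mutually orthogonal $S\subset\Delta^+_{\ob}$. The strategy is to exhibit a single $S_0\in\cS(\lambda)$ whose $B_{\oa}$-orbit closure equals $\fg_1$. The natural candidate is the set $S_\lambda$ of atypical positive odd roots of $\lambda$: using the parametrisation~\eqref{hat}, regularity and dominance of $\lambda\in P_0^{++}$ give $\mu^\lambda_1>\cdots>\mu^\lambda_n$ and $\mu^\lambda_{n+1}<\cdots<\mu^\lambda_{2n}$, and full atypicality forces these sequences to coincide as multisets through the unique anti-diagonal matching $\mu^\lambda_i=\mu^\lambda_{2n+1-i}$. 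Thus $S_\lambda=\{\varepsilon_i-\delta_{n+1-i}:1\le i\le n\}$, and under $\fg_1\cong\mathrm{Mat}_n(\C)$ (with $G_{\oa}=GL_n\times GL_n$ acting by $(g_1,g_2)\cdot A=g_1 A g_2^{-1}$) the element $x_{S_\lambda}$ is the permutation matrix of $w_0\in S_n$. Its $B_{\oa}$-orbit is the big Bruhat cell $Bw_0B\subset GL_n(\C)$, which is open and dense in $\mathrm{Mat}_n(\C)$, so $\overline{B_{\oa}\cdot x_{S_\lambda}}=\fg_1$.

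It remains to verify $S_\lambda\in\cS(\lambda)$, i.e.\ $M(\lambda)_{x_{S_\lambda}}\ne 0$. The subsuperalgebra $\fk_{S_\lambda}=\langle X_{\pm\alpha_i},H_{\alpha_i}:\alpha_i\in S_\lambda\rangle$ is isomorphic to $\mathfrak{sl}(1|1)^{\oplus n}$: the orthogonality and isotropy of the $\alpha_i$ force $\alpha_i\pm\alpha_j$ not to be a root whenever $i\ne j$, so the $\mathfrak{sl}(1|1)$-factors super-commute inside $\fg$. I would then exploit the $z$-grading from~\eqref{gradel}: the operator $x_{S_\lambda}$ raises the $z$-eigenvalue by one, and the top $z$-level of $M(\lambda)$ is the $\fg_0$-Verma subspace $M_0(\lambda)=U(\bar\fn_0)\cdot v_\lambda$, which therefore lies entirely in $\ker x_{S_\lambda}$. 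One studies the quotient $M_0(\lambda)/x_{S_\lambda}M(\lambda)^{[\lambda(z)-1]}$ by an iterated Künneth-type reduction along the $\mathfrak{sl}(1|1)_{\alpha_i}$-factors, applying Lemma~\ref{technical} at each step with a suitable $u$ from the adjacent even part to guarantee the required boundedness of its spectrum on successive quotients, and locates a non-zero class supported on the jointly atypical stratum $\{\mu:(\mu,\alpha_i)=0\ \forall i\}$. The main obstacle is precisely this last step: a direct test with $v_\lambda$ itself fails, since mixed PBW monomials through both $\bar\fn_0$- and $\fg_{-1}$-factors can hit $v_\lambda$ under $x_{S_\lambda}$—one computes for instance $x_{S_\lambda}(X_{\varepsilon_k-\varepsilon_i}X_{\delta_{n+1-i}-\varepsilon_k}v_\lambda)=c(\lambda_k-\lambda_i)v_\lambda$ with $\lambda_k-\lambda_i\ne 0$ for $k>i$—so $v_\lambda$ is typically already a coboundary. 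The correct class must be located inside the full filtration on $M_0(\lambda)$, not on its highest weight line.
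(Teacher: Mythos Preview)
Your easy inclusion $X_{M(\lambda)}\subset\fg_1$ via Lemma~\ref{free} is correct and matches the paper. The reverse inclusion, however, has two genuine gaps.

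First, your assertion that $X_{M(\lambda)}\cap\fg_1$ is Zariski-closed is not justified, and is in fact false in general for modules in~$\cO$: the paper explicitly remarks (immediately after Theorem~\ref{verma}) that associated varieties of Verma modules need not be closed, giving the example $X_{M(3\delta_2)}=\fg_1\setminus\C X_{\varepsilon_1-\delta_2}$ for $\mathfrak{gl}(1|2)$. Closedness is a special feature of finite-dimensional modules (Lemma~2.1 of~\cite{Duflo}) that does not carry over here. So even granting $x_{S_\lambda}\in X_{M(\lambda)}$ and that its $B_{\oa}$-orbit is dense in~$\fg_1$, you cannot pass to the closure and conclude $\fg_1\subset X_{M(\lambda)}$.

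Second, you never actually establish $S_\lambda\in\cS(\lambda)$: the argument terminates with an acknowledged obstruction ($v_\lambda$ is a coboundary) and an unfulfilled plan to locate a class somewhere in the filtration on~$M_0(\lambda)$.

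The paper sidesteps both issues with a much shorter argument. It shows directly that \emph{every} $x\in\fg_1$ satisfies $M(\lambda)_x\neq 0$, as follows. The top $z$-graded piece $M(\lambda)^+$ lies in $\ker x$; one must show $M(\lambda)^+\not\subset xM(\lambda)$. But the surjection $\pi:M(\lambda)\twoheadrightarrow L(\lambda)$ is $z$-graded, so $M(\lambda)^+\subset xM(\lambda)$ would force $L(\lambda)^+\subset xL(\lambda)$. The latter is known to be false for every $x\in\fg_1$ by the finite-dimensional theory (Section~10 of~\cite{Duflo}), since $\lambda\in P_0^{++}$ has maximal atypicality. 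No orbit analysis, density argument, or explicit cocycle construction is required.
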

\begin{proof} We consider the $\Z$-grading $\fg=\fg_{-1}\oplus\fg_0\oplus\fg_1$. By Lemma~\ref{free} it suffices to prove $\fg_1\subset X_{M(\lambda)}$. 
Note that every $M\in \cO$ can be equipped with the $\mathbb Z$-grading induced by the action of $z$ in equation \eqref{gradel}, let $M^+$ denote the highest degree component. 
As for any $x\in\fg_1$ we have $M(\lambda)^+\subseteq \ker x$, it would suffice to prove $M(\lambda)^+\not\subseteq xM(\lambda)$. We will prove this 
by using the property $L(\lambda)^+\not\subseteq xL(\lambda)$ for any $x\in \fg_1$, which is known to be true by Section 10 in~\cite{Duflo}.

Consider the exact sequence $M(\lambda)\xrightarrow{\pi} L(\lambda)\to 0$ of graded $\fg$-modules. Now assume that $M(\lambda)^+\subseteq xM(\lambda)$, so any 
$a\in M(\lambda)^+$ can be written as $a=xb$ for some $b\in M(\lambda)$. Then clearly $\pi(a)=x\pi(b)$ and, as the graded map $\pi$ is surjective, we find a 
contradiction with the fact that $L(\lambda)^+\subseteq xL(\lambda)$.
\end{proof}

Now we focus on the case $|S|=1$.

\begin{lemma}\label{oneroot} Let $\alpha=\varepsilon_i-\delta_j$ be a positive odd root. If $(\lambda+\rho,\alpha)=0$, then $\{\alpha\}\in\cS(\lambda)$.
\end{lemma}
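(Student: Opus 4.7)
The plan is to construct an explicit nonzero element of the cohomology $M(\lambda)_{x_\alpha} = \ker x_\alpha / \operatorname{im} x_\alpha$. First, by Lemma~\ref{reducegl} applied to $S = \{\alpha\}$, we reduce to the subalgebra $\fg' \cong \mathfrak{gl}(m-i+1\mid j)$, in which $\alpha$ becomes (after relabeling) $\varepsilon_1 - \delta_n$. A direct computation gives $(\rho,\alpha) = m+1-i-j$, and this value is the same before and after reduction, so the atypicality condition $(\lambda+\rho,\alpha)=0$ descends. Hence we may assume from the outset that $\fg = \mathfrak{gl}(m\mid n)$ with $\alpha = \varepsilon_1 - \delta_n$, and in particular $\lambda(h_\alpha) = n - m$ where $h_\alpha = e_{1,1} + e_{m+n,m+n} = [x_\alpha, x_{-\alpha}]$ satisfies $\mu(h_\alpha) = (\mu,\alpha)$.

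Writing $y_{a,b} := x_{-(\varepsilon_a - \delta_b)}$, I would then propose
\[
v :=
\begin{cases}
y_{2,n}\, y_{3,n} \cdots y_{m-n+1,\,n}\, v_\lambda & \text{if } m \ge n, \\
y_{1,1}\, y_{1,2} \cdots y_{1,\,n-m}\, v_\lambda & \text{if } m < n,
\end{cases}
\]
with the empty product giving $v = v_\lambda$ when $m = n$. A short weight computation confirms that the weight $\mu$ of $v$ satisfies $\mu(h_\alpha) = 0$. The bracket relation
\[
[x_\alpha, y_{a,b}] = \delta_{a,1}\,\delta_{b,n}\, h_\alpha + (\text{a positive even root vector}),
\]
together with the fact that these positive root vectors commute past the remaining $y$-factors and then annihilate $v_\lambda$, yields $x_\alpha v = 0$.

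The final and most delicate step is to verify $v \notin \operatorname{im} x_\alpha$. One enumerates $M(\lambda)_{\mu-\alpha}$ via the PBW decomposition $U(\ofn) = U(\ofn_0) \otimes \Lambda(\fg_{-1})$ and computes $x_\alpha$ on each relevant basis element: the most natural source candidate (obtained by prepending an extra odd negative-root factor to the product defining $v$) is mapped via $x_\alpha$ to a multiple of $v$ whose coefficient is exactly $\mu(h_\alpha) = 0$, while the remaining candidates (involving compensating even-root factors) land in a subspace transverse to $\mathbb C v$. This non-triviality verification is the main obstacle. It is especially delicate when $m = n$, because for generic $\lambda$ the naive choice $v = v_\lambda$ can lie in $\operatorname{im} x_\alpha$ already for $\mathfrak{gl}(2\mid 2)$; one must then descend to a lower-weight vector with $h_\alpha$-eigenvalue zero and exploit the structure of $M(\lambda)$ as a module for the $\mathfrak{sl}(1\mid 1)$-subalgebra generated by $x_\alpha$, $x_{-\alpha}$, and $h_\alpha$, on which $h_\alpha$ is central and vanishes on the relevant weight subspace.
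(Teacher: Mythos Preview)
Your reduction via Lemma~\ref{reducegl} to $\alpha=\varepsilon_1-\delta_n$ is fine, but the proof then has a genuine gap at the step you yourself flag as ``most delicate.'' With your vector $v$ built from only $|m-n|$ odd factors, the weight space $M(\lambda)_{\mu-\alpha}$ is far from one-dimensional (already for $\mathfrak{gl}(3|2)$ it has dimension at least six), so the claim that the ``remaining candidates land in a subspace transverse to $\mathbb C v$'' requires a real computation you have not carried out. Worse, in the case $m=n$ your choice $v=v_\lambda$ can genuinely lie in $\operatorname{im}x_\alpha$: for $\mathfrak{gl}(2|2)$ with $\alpha=\varepsilon_1-\delta_2$, one checks that $x_\alpha\bigl(X_{-(\varepsilon_1-\varepsilon_2)}X_{-(\varepsilon_2-\delta_2)}v_\lambda\bigr)$ is a nonzero multiple of $v_\lambda$ whenever $(\lambda,\varepsilon_2-\delta_2)\neq 0$. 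Your proposed fix (``descend to a lower-weight vector and exploit the $\mathfrak{sl}(1|1)$-structure'') is too vague to count as a proof.

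The paper avoids this difficulty entirely by a different choice of test vector. Instead of $|m-n|$ factors, take
\[
w=\prod_{\gamma\in\Gamma}X_{-\gamma}\,v_\lambda,\qquad
\Gamma=\{\varepsilon_p-\delta_q\mid p\ge i,\ q\le j\}\setminus\{\alpha\},
\]
i.e.\ the product over \emph{all} odd positive roots in the $(m-i+1)\times j$ rectangle except $\alpha$ itself. Then $\mu-\alpha=\lambda-\sum_{p\ge i,\,q\le j}(\varepsilon_p-\delta_q)$, and a short argument with the $z$-grading and the partial-sum criterion for the negative even root cone shows that $M(\lambda)_{\mu-\alpha}$ is \emph{one-dimensional}, spanned by $X_{-\alpha}w$. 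The verification $w\notin\operatorname{im}x_\alpha$ then reduces to the single identity $X_\alpha X_{-\alpha}w=H_\alpha w=(\mu,\alpha)w=0$, using $(\Sigma_\Gamma,\alpha)=-(\rho,\alpha)$. The moral: pick the vector so that the target weight space is as small as possible, rather than the vector itself.
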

\begin{proof} Consider the subset $\Gamma$ of odd positive roots defined by
$$\Gamma=\{\varepsilon_p-\delta_q | p>i,q\leq j,\,\,\text{or}\,\, p\geq i,q<j\}.$$
We set $\Sigma_\Gamma=\sum_{\gamma\in\Gamma}\gamma$, then we have $(\Sigma_\Gamma,\alpha)=-(\rho,\alpha)$.

Let $v\in M(\lambda)$ be a highest weight vector and
$$w:=\prod_{\gamma\in\Gamma} X_{-\gamma}v.$$
Then a quick check yields $X_{\alpha}w=0$.

We claim that $w\notin\im X_{\alpha}$. Indeed, $w$ is a weight vector of weight $\displaystyle\mu= \lambda-\Sigma_{\Gamma}$.
Assume $w=X_{\alpha}w'$. Without loss of generality we may assume that $w'$ is a weight vector of weight $\mu-\alpha$. Note that the weight space
$M(\lambda)_{\mu-\alpha}$ is one-dimensional. Therefore we may assume that $w'$ is proportional to $X_{-\alpha}w$. On the other hand
$$X_{\alpha}X_{-\alpha}w=H_{\alpha}w=0$$
since $(\mu,\alpha)=(\lambda-\Sigma_\Gamma,\alpha)=(\lambda+\rho,\alpha)=0$.

Therefore $X_\alpha\in X_{M(\lambda)}$ and the lemma is proven. 
\end{proof}

\begin{lemma}
\label{oneroot2} Let $\gamma=\varepsilon_p-\delta_q$ be a positive odd root with $(\lambda+\rho,\gamma)=0$. Then for $\alpha=\varepsilon_i-\delta_j$ with $i\le p$, $j\ge q$, $(\lambda+\rho,\varepsilon_i-\varepsilon_p)\in\Z_{\ge 0}$ and $(\lambda+\rho,\delta_q-\delta_j)\in\Z_{\le 0}$ we have $\{\alpha\}\in\cS(\lambda)$.
\end{lemma}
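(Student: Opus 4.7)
The plan is to reduce to Lemma \ref{oneroot} by passing to the twisted weight $w\cdot\lambda$ on which $\alpha$ becomes atypical. Set $u := (\lambda+\rho, \varepsilon_i-\varepsilon_p) \in \Z_{\ge 0}$, $v := -(\lambda+\rho, \delta_q-\delta_j) \in \Z_{\ge 0}$, and $w := s_{\varepsilon_i-\varepsilon_p}\,s_{\delta_q-\delta_j}\in W$ (each factor interpreted as the identity when the corresponding indices coincide). A direct computation yields $w\cdot\lambda = \lambda - u(\varepsilon_i-\varepsilon_p) - v(\delta_q-\delta_j)\preceq\lambda$. I would first show that the standard BGG embedding for $\fg_{\oa}$-Verma modules extends to an embedding $\iota: M(w\cdot\lambda)\hookrightarrow M(\lambda)$ sending the highest weight vector of $M(w\cdot\lambda)$ to $v_0 := X_{-(\varepsilon_i-\varepsilon_p)}^u X_{-(\delta_q-\delta_j)}^v v_\lambda$; the extension requires checking, via a short supercommutator calculation, that all positive odd root vectors annihilate $v_0$.

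Since $w^{-1}\alpha = \gamma$, we have $(w\cdot\lambda+\rho, \alpha) = (\lambda+\rho, \gamma) = 0$, so Lemma \ref{oneroot} applies to $M(w\cdot\lambda)$ with root $\alpha$, yielding an explicit witness $\tilde w_0 = \bigl(\prod_{\gamma'\in\Gamma}X_{-\gamma'}\bigr)v_{w\cdot\lambda}$ with $\Gamma$ as in Lemma \ref{oneroot} for $\alpha$. Its image $\tilde w := \iota(\tilde w_0) = \bigl(\prod_{\gamma'\in\Gamma}X_{-\gamma'}\bigr)v_0$ satisfies $X_\alpha\tilde w = 0$ in $M(\lambda)$ by $\fg$-linearity of $\iota$.

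It remains to show $\tilde w \notin X_\alpha M(\lambda)$. A short check using $(\Sigma_\Gamma, \alpha) = -(\rho, \alpha)$ and $(\lambda+\rho, \gamma) = 0$ gives $(\mathrm{wt}(\tilde w), \alpha) = 0$. Then, mimicking the argument of Lemma \ref{oneroot}, one shows that the weight space $M(\lambda)_{\mathrm{wt}(\tilde w) - \alpha}$ is one-dimensional (spanned by $X_{-\alpha}\tilde w$); any $y\in M(\lambda)$ with $X_\alpha y = \tilde w$ would then be proportional to $X_{-\alpha}\tilde w$, yielding $X_\alpha y \propto H_\alpha\tilde w = (\mathrm{wt}(\tilde w), \alpha)\tilde w = 0$, contradicting $\tilde w \ne 0$.

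The hard part will be the one-dimensionality claim for the weight space in $M(\lambda)$ (rather than in $M(w\cdot\lambda)$, where it would follow immediately from Lemma \ref{oneroot}). Concretely, one must show that the element $\eta_0 := u(\varepsilon_i-\varepsilon_p)+v(\delta_q-\delta_j)+\sum_{(p',q')\in[i,m]\times[1,j]}(\varepsilon_{p'}-\delta_{q'})$ admits a unique decomposition as a sum of positive roots. My plan is to argue combinatorially: vanishing of the $\varepsilon_a$-coefficients for $a < i$ and of the $\delta_b$-coefficients for $b > j$ in $\eta_0$, combined with the constraint that each odd root appears with multiplicity zero or one, iteratively forces all even-root multiplicities outside the support rectangle $[i,m]\times[1,j]$ to vanish. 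Matching the remaining coefficients then uniquely determines that every odd root in the support rectangle occurs exactly once, $\varepsilon_i-\varepsilon_p$ with multiplicity $u$, and $\delta_q-\delta_j$ with multiplicity $v$, with no other contributions.
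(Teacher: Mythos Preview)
Your overall strategy---embedding $M(w\cdot\lambda)$ into $M(\lambda)$ via Verma's theorem and transporting the witness vector from Lemma~\ref{oneroot}---is exactly the paper's approach. However, the crucial step where you claim the weight space $M(\lambda)_{\mathrm{wt}(\tilde w)-\alpha}$ is one-dimensional is \emph{false} in general, and the paper explicitly says so.

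Here is a small counterexample to your combinatorial plan. Take $m=3$, $n=1$, $i=1$, $p=3$, $j=q=1$, and $u=1$, $v=0$. Then
\[
\eta_0=(\varepsilon_1-\varepsilon_3)+(\varepsilon_1-\delta_1)+(\varepsilon_2-\delta_1)+(\varepsilon_3-\delta_1).
\]
The $\delta_1$-coefficient forces all three odd roots to appear, leaving an even residue $\varepsilon_1-\varepsilon_3$. But this even part has two decompositions: as $\varepsilon_1-\varepsilon_3$ itself, or as $(\varepsilon_1-\varepsilon_2)+(\varepsilon_2-\varepsilon_3)$. More generally, whenever $u\ge 1$ and $p-i\ge 2$ (or $v\ge 1$ and $j-q\ge 2$), the even contribution $u(\varepsilon_i-\varepsilon_p)+v(\delta_q-\delta_j)$ can be broken into shorter even roots in many ways, so the weight space has dimension strictly greater than one. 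Your iterative argument correctly pins down the odd part of any decomposition, but it cannot force uniqueness of the even part.

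The paper circumvents this by observing that, although the weight space is not one-dimensional, every vector in it has the form $X_{-\alpha}\bigl(\prod_{\gamma'\in\Gamma}X_{-\gamma'}\bigr)u'$ for some $u'\in U(\fg_{\oa})v_\lambda$ of the appropriate even weight (this is essentially your observation that the odd part is forced). One then computes $X_\alpha$ on such a vector: the commutator $[X_\alpha,X_{-\alpha}]=H_\alpha$ produces a term $H_\alpha\bigl(\prod_{\gamma'\in\Gamma}X_{-\gamma'}\bigr)u'$, which vanishes because the weight pairs to zero with $\alpha$, and the remaining term lies in $\im X_{-\alpha}$. Since $\tilde w\notin\im X_{-\alpha}$, no such vector can map to $\tilde w$ under $X_\alpha$. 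So the fix is to replace your one-dimensionality claim by this structural description of the weight space and a slightly more careful computation.
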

\begin{proof}
If $i=p$ and $j=q$, this is Lemma~\ref{oneroot}. Otherwise we set $\beta=\varepsilon_i-\varepsilon_p$, $\beta'=\delta_q-\delta_j$ and $r=r_\beta r_\beta'$ (or $r=r_\beta$ if $\beta'=0$ or $r=r_{\beta'}$ if $\beta=0$). By assumption and application of Verma's theorem in Theorem~4.6 in~\cite{MR2428237}, we have an embedding $M(r\cdot\lambda)\subset M(\lambda)$.

Since $(r\cdot\lambda+\rho,\varepsilon_i-\delta_j)=0$ we can repeat the construction in the proof of Lemma~\ref{oneroot} of a vector $w$ for the highest weight vector~$v$ of $M(r\cdot\lambda)$. We again have $X_\alpha w=0$. Suppose that $w=X_\alpha w'$. We may assume that
$w'$ has weight $\nu=\mu-\alpha$, where~$\mu$ is the weight of $w$. Although the corresponding weight space $M(\lambda)_{\nu}$ is not one dimensional, any vector
in this space is of form $$X_{-\alpha}\prod_{\gamma\in\Gamma}X_{-\gamma} u$$ for some $u\in U(\fg_{\oa})v$ of weight $r_{\beta}\cdot\lambda$. Therefore we have
$$X_{\alpha}X_{-\alpha}\prod_{\gamma\in\Gamma}X_{-\gamma} u\,\,\in\,\, H_\alpha \prod_{\gamma\in\Gamma}X_{-\gamma} u+\im X_{-\alpha}.$$   
But $(\mu,\alpha)=0$, hence $\displaystyle H_\alpha \prod_{\gamma\in\Gamma}X_{-\gamma} u=0$. Since $w\notin \im X_{-\alpha}$, we obtain that
$X_{\alpha}w'$ is never $w$. Thus, $M(\lambda)_{X_\alpha}\neq 0$. 
\end{proof}

\begin{proposition}\label{typicalverma} Let $M$ be a Verma or simple module. Then $X_M=0$ if and only if the highest weight of $M$ is typical.
\end{proposition}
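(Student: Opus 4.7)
The plan is to split into the ``if'' and ``only if'' directions, handling Verma and simple modules separately in each, with the simple-module subcase of the ``only if'' direction being the main obstacle. For the ``if'' direction I would argue uniformly: if $\lambda$ is typical then both $M(\lambda)$ and $L(\lambda)$ admit the central character $\chi_\lambda$ of atypicality degree $0$, so Lemma~\ref{general}(2) forces every $S\in\cS(M)$ to satisfy $|S|\le 0$; hence $S=\emptyset$ and $X_M=\{0\}$.

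For the ``only if'' direction with $M=M(\lambda)$, atypicality of $\lambda$ produces a positive odd root $\alpha=\varepsilon_i-\delta_j$ with $(\lambda+\rho,\alpha)=0$, and Lemma~\ref{oneroot} yields $\{\alpha\}\in\cS(M(\lambda))$ directly, so $X_\alpha$ is a nonzero element of $X_{M(\lambda)}$.

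For the ``only if'' direction with $M=L(\lambda)$, I would invoke Theorem~\ref{finpdind}, which reduces the claim $X_{L(\lambda)}\neq\{0\}$ to showing $\pd_{\cO}L(\lambda)=\infty$. By Corollary~\ref{corsim} one has
\[
\sum_{k\ge 0} q^k \dim\Ext^k_\cO(L(\lambda),L(\lambda))\,=\,\sum_{\kappa\in\Lambda} p_{\kappa,\lambda}(q)^2,
\]
and Kazhdan--Lusztig positivity (a consequence of the standard Koszulity of $\cO_{\mZ}$ proved in~\cite{BLW}) ensures that the degree of the right-hand side is at least $2\deg p_{\kappa,\lambda}(q)$ for each $\kappa$ in the linkage class of $\lambda$. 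The linkage class of an atypical weight is infinite and the length function $\len$ of Lemma~\ref{defL} is unbounded on it; via the equivalence $\cO_I\simeq\cO_I'$ and equation~\eqref{extsubquo}, together with classical Kazhdan--Lusztig theory for singular parabolic~$\cO$ of $\mathfrak{gl}(d)$, one can then produce $\kappa$ in this linkage class with $\deg p_{\kappa,\lambda}(q)$ arbitrarily large. Consequently $\Ext^k_\cO(L(\lambda),L(\lambda))\neq 0$ for arbitrarily large $k$, contradicting any finite value of $\pd_\cO L(\lambda)$. The main obstacle is precisely this last combinatorial step; an elegant alternative would be to cite the direct computation of associated varieties of simple modules from~\cite{Vera}.
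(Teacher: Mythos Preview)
Your ``if'' direction via Lemma~\ref{general}(2) is correct and arguably cleaner than the paper's route through Theorem~\ref{finpdind} (which implicitly needs $\pd_\cO L(\lambda)<\infty$ for typical $\lambda$, established later via Kazhdan--Lusztig theory). Your Verma-module ``only if'' direction via Lemma~\ref{oneroot} coincides with the paper.

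For the simple-module ``only if'' direction your approach diverges substantially from the paper, and there are two genuine gaps.

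\emph{First}, all of the machinery you invoke---Corollary~\ref{corsim}, the equivalence $\cO_I\simeq\cO_I'$, equation~\eqref{extsubquo}---is stated for $\cO_{\mZ}$, i.e.\ for integral weights only. The proposition, however, concerns arbitrary $\lambda\in\fh^\ast$, and Section~\ref{secassvar} stresses explicitly that one cannot reduce to the integral case for questions about the associated variety. You could try to rescue this by noting that projective dimension \emph{is} categorical and that the equivalences of~\cite{CMW} send non-integral blocks to integral ones for smaller $\mathfrak{gl}(m'|n')$; but you would then need to check that atypicality is preserved under that passage, and you have not done so.

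\emph{Second}, the step you flag as the main obstacle---producing $\kappa$ with $\deg p_{\kappa,\lambda}(q)$ arbitrarily large---is not merely a bookkeeping issue. Unboundedness of the length function on the linkage class does not by itself force any particular $p_{\kappa,\lambda}$ to be nonzero in high degree; one needs an argument that $H^k(\fn,L(\lambda))\neq 0$ for arbitrarily large $k$ (cf.\ equation~\eqref{VerH}), and while this is true, the tools that establish it in the paper (e.g.\ the complexity arguments in Section~\ref{seccomp}) come later and themselves rely on Proposition~\ref{typicalverma} through Theorem~\ref{pdstruct}.

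The paper avoids all of this with a short direct argument: given $(\lambda+\rho,\gamma)=0$, apply odd reflections (Section~3.5 of~\cite{bookMusson}) until $\gamma$ becomes simple in the new root system; for the highest weight vector $v$ in that system one has $X_{-\gamma}v=0$ (since $U(\fn^+)X_{-\gamma}v=0$) while $v\notin\im X_{-\gamma}$ for weight reasons, so $X_{-\gamma}\in X_{L(\lambda)}$ directly. This works uniformly for all $\lambda\in\fh^\ast$ and requires no KL combinatorics. Your suggested alternative of citing~\cite{Vera} is also viable and closer in spirit to this direct route.
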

\begin{proof}
The case where~$M$ is a Verma module is an immediate consequence of Lemma~\ref{oneroot}, so we consider $M\simeq L(\lambda)$ simple for $\lambda\in\fh^\ast$. If $\lambda$ is typical, the claim follows from Theorem~\ref{finpdind}, so we are left with $\lambda$ atypical.

 Consider $(\lambda+\rho,\gamma)=0$ for $\gamma$ an odd root. If $\gamma$ is simple the result follows immediately since $X_{-\gamma}v=0$ for a highest weight vector $v$ (since $U(\fn^+)X_{-\gamma}v=0$) while $v\not\in\im X_{-\gamma}$. If $\gamma$ is not simple we can consider a sequence of odd reflections (see Section 3.5 in~\cite{bookMusson}) to obtain a system of positive roots in which $\gamma$ is simple. If one of these odd reflections is atypical for the simple module we can take the corresponding root as $\gamma$, so we consider the situation where each odd reflection is typical. In the new system of roots (with corresponding half-sum $\widetilde{\rho}$) the simple module will have highest weight $\widetilde{\lambda}=\lambda+\rho-\widetilde{\rho}$, which thus satisfies $( \widetilde{\lambda}+\widetilde{\rho},\gamma)=0$, so we end up in the setting where~$\gamma$ is simple.
\end{proof}

\subsection{The cases $\fg=\mathfrak{gl}(1|n)$ and $\fg=\mathfrak{gl}(m|1)$}
For these cases we determine $\cS(\lambda)$, or equivalently $X_{M(\lambda)}$, for any $\lambda\in\fh^\ast$.
\begin{theorem}\label{verma} Let $\fg=\mathfrak{gl}(1|n)$ and $\lambda$ be some atypical weight. Let $p\leq n$ be such that
$(\lambda+\rho, \varepsilon_1-\delta_p)=0$ and $(\lambda+\rho,\varepsilon_1-\delta_j)\neq 0$ for all $j<p$. Then $\{\varepsilon_1-\delta_i\}\in \cS(\lambda)$
if and only if $i\geq p$ and $(\lambda+\rho,\varepsilon_1-\delta_i)\in\mathbb Z_{\leq 0}$. 
\end{theorem}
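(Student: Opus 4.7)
The ``if'' direction is immediate from Lemma~\ref{oneroot2}, applied with the atypical root $\gamma=\varepsilon_1-\delta_p$ and the target $\alpha=\varepsilon_1-\delta_i$. Since $m=1$, the $\varepsilon$-side condition $(\lambda+\rho,\varepsilon_i-\varepsilon_p)\in\mathbb Z_{\ge 0}$ is automatic, and the $\delta$-side condition $(\lambda+\rho,\delta_p-\delta_i)\in\mathbb Z_{\le 0}$ reduces to $(\lambda+\rho,\varepsilon_1-\delta_i)\in\mathbb Z_{\le 0}$ via the atypicality $(\lambda+\rho,\varepsilon_1-\delta_p)=0$, which is exactly the hypothesis.

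For the ``only if'' direction, suppose $\{\varepsilon_1-\delta_i\}\in\cS(\lambda)$. Lemma~\ref{nointeg} gives $(\lambda,\varepsilon_1-\delta_i)\in\mathbb Z$, and since $(\rho,\varepsilon_1-\delta_i)=1-i\in\mathbb Z$, also $a:=(\lambda+\rho,\varepsilon_1-\delta_i)\in\mathbb Z$. So it suffices to rule out the cases $i<p$ and $a>0$. If $i<p$, I apply Lemma~\ref{reducegl} with $S=\{\varepsilon_1-\delta_i\}$ to pass to $\fg'\simeq\mathfrak{gl}(1|i)$; by minimality of $p$ the restricted weight $\lambda'$ is typical in $\fg'$, so Proposition~\ref{typicalverma} forces $X_{M(\lambda')}=0$, and then $\{\varepsilon_1-\delta_i\}\notin\cS(\lambda')=\cS(\lambda)\cap\cS'$ contradicts the assumption.

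The case $a\in\mathbb Z_{>0}$ is the heart of the proof. A further reduction via Lemma~\ref{reducegl} lets me assume $i=n$, so $x=X_{\varepsilon_1-\delta_n}$. The even reflection $s=s_{\delta_p-\delta_n}$ satisfies $s\cdot\lambda=\lambda+a(\delta_p-\delta_n)$ by atypicality at $\varepsilon_1-\delta_p$, hence $\lambda<s\cdot\lambda$ in the dominance order; strong linkage holds with $(s\cdot\lambda+\rho,(\delta_p-\delta_n)^\vee)=a>0$, and Verma's theorem supplies an embedding $M(\lambda)\hookrightarrow M(s\cdot\lambda)$ with quotient $Q$. Since $(s\cdot\lambda+\rho,\varepsilon_1-\delta_n)=0$, the module $M(s\cdot\lambda)_x$ is controlled by the highest weight vector construction in the proof of Lemma~\ref{oneroot}. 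The idea is then either to chase the two-periodic six-term exact sequence
\[
\cdots\to Q_x\to M(\lambda)_x\to M(s\cdot\lambda)_x\to Q_x\to M(\lambda)_x\to\cdots
\]
attached to $0\to M(\lambda)\to M(s\cdot\lambda)\to Q\to 0$, or equivalently to compute $M(\lambda)_x$ directly from the PBW decomposition $M(\lambda)=\Lambda(\fg_{-1})\otimes M_0(\lambda)$: with $y_j=X_{\delta_j-\varepsilon_1}$ and $u$ a monomial in the negative even root vectors, the action $x\cdot(y_Juv_\lambda)$ for $J\subseteq\{1,\dots,n\}$ is computable in closed form, and the coefficients that appear are integer shifts of $(\lambda+\rho,\varepsilon_1-\delta_k)$. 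The hypothesis $a>0$ is precisely what keeps these nonzero at every position needed to write each candidate kernel class as an image of $x$; the main obstacle is carrying out this bookkeeping in full generality to conclude $\ker x\subseteq\im x$ and hence $M(\lambda)_x=0$.
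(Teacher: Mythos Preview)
Your ``if'' direction and the first two subcases of ``only if'' (non-integrality via Lemma~\ref{nointeg}, and $i<p$ via reduction to a typical weight) agree with the paper. The gap is the case $a=(\lambda+\rho,\alpha)\in\mathbb Z_{>0}$, which you explicitly leave unfinished.

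Your reduction to $i=n$ via Lemma~\ref{reducegl} is exactly the right move and is what the paper does. But from that point the paper finishes in one stroke, with no Verma embedding, no six-term sequence, and no PBW bookkeeping. After the reduction one simply checks that $H_\alpha$ has no zero eigenvalue on $M(\lambda)$. Concretely, in $\mathfrak{gl}(1|n)$ with $\alpha=\varepsilon_1-\delta_n$: since $(\rho,\alpha)=1-n$ one has $(\lambda,\alpha)=a+n-1\ge n$; every positive even root $\gamma=\delta_j-\delta_k$ (with $j<k\le n$) satisfies $(\gamma,\alpha)=-\delta_{kn}\le 0$; and every positive odd root $\varepsilon_1-\delta_j$ with $j<n$ satisfies $(\varepsilon_1-\delta_j,\alpha)=1$, while $(\alpha,\alpha)=0$. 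Any weight of $M(\lambda)$ is $\mu=\lambda-\sum_\gamma c_\gamma\gamma$ with $c_\gamma\ge 0$ and $c_\gamma\in\{0,1\}$ for odd $\gamma$, so
\[
(\mu,\alpha)\;\ge\;(\lambda,\alpha)-(n-1)\;\ge\;1.
\]
Hence $M(\lambda)$, viewed as a module over the $\mathfrak{sl}(1|1)$-subalgebra generated by $X_{\pm\alpha}$, is a direct sum of typical (i.e.\ $\mathbb C[X_\alpha]$-free) modules, and $M(\lambda)_{X_\alpha}=0$.

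Your alternative route through $M(\lambda)\hookrightarrow M(s\cdot\lambda)$ cannot close the argument without essentially rediscovering this inequality: the middle term $M(s\cdot\lambda)_x$ of the six-term sequence is nonzero by Lemma~\ref{oneroot} (since $(s\cdot\lambda+\rho,\alpha)=0$), so vanishing of $M(\lambda)_x$ would require independent control of $Q_x$, which you do not have. The direct weight computation above is both the missing idea and the shortcut past all the bookkeeping you anticipate.
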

\begin{proof} If $\alpha=\varepsilon_1-\delta_i$ for some $i<p$, then $\{\alpha\}\notin\cS(\lambda)$, by Corollary~\ref{reductyp}. If $\alpha=\varepsilon_1-\delta_i$ satisfies $i\geq p$ and $(\lambda+\rho,\varepsilon_1-\delta_i)\in\mathbb Z_{\leq 0}$, then $\{\alpha\}\in\cS(\lambda)$, by Lemma~\ref{oneroot2}. 

Now let us assume that $\alpha=\varepsilon_1-\delta_i$ for some $i>p$, but $(\lambda+\rho,\alpha)\notin\mathbb Z_{\leq 0}$. Then we have either
$(\lambda,\alpha)\notin\mathbb Z$ or $(\lambda,\alpha)\in\Z_{\ge i}$. The first case is covered by Lemma~\ref{nointeg}. For the second case, we use Lemma~\ref{reducegl} with $S=\{\varepsilon_1-\delta_p\}$ and hence $\fg'\simeq \mathfrak{gl}(1|p)$. It thus suffices to consider the Verma module $M(\lambda')$ of $\fg'$ and to show that
$M(\lambda')_{X_\alpha}=0$. Note that $(\beta,\alpha)\geq 0$ for any negative even root $\beta$ of $\fg'$ and $(\beta,\alpha)=-1$ 
for any negative odd root $\beta\neq -\alpha$ of $\fg'$. Therefore
$(\mu,\alpha)\neq 0$ for any weight $\mu$ of $M(\lambda')$. Therefore $M(\lambda')$ as a module over the $\mathfrak{sl}(1|1)$-subalgebra, generated by $X_{\pm \alpha}$,  
is a direct sum of typical modules. Thus, $M(\lambda')_{X_\alpha}=0$.
\end{proof}

\begin{remark} The above theorem implies that in contrast with the finite dimensional case, see Lemma 2.1 in \cite{Duflo}, there are $M\in\cO$ for which the associated variety $X_M$ is not closed. 
For example, if $\fg=\mathfrak{gl}(1|2)$ and $\lambda=3\delta_2$, then $X_{M(\lambda)}=\fg_1\setminus \mathbb C (\varepsilon_1-\delta_2)$ is not closed.
\end{remark}
The isomorphism $\mathfrak{gl}(1|n)\simeq \mathfrak{gl}(n|1)$ links the highest weight structure of category~$\cO$ with distinguished system of positive roots for $\mathfrak{gl}(1|n)$ to category~$\cO$ with anti-distinguished system of positive roots for $\mathfrak{gl}(n|1)$. The following result is therefore not identical to Theorem \ref{verma}, but can be proved similarly.
\begin{theorem}\label{verma2}
Let $\fg=\mathfrak{gl}(m|1)$ and $\lambda$ be some atypical weight. Let $p\leq m$ be such that
$(\lambda+\rho, \varepsilon_p-\delta_1)=0$ and $(\lambda+\rho,\varepsilon_j-\delta_1)\neq 0$ for all $j>p$. Then $\{\varepsilon_i-\delta_1\}\in \cS(\lambda)$
if and only if $j\le p$ and $(\lambda+\rho,\varepsilon_i-\delta_1)\in\mathbb Z_{\ge 0}$. 
\end{theorem}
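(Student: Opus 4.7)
The plan is to mirror the proof of Theorem~\ref{verma}, exploiting the analogy between $\mathfrak{gl}(1|n)$ and $\mathfrak{gl}(m|1)$ while reversing the role of $p$ (which is now the largest atypical index instead of the smallest). As a preliminary I verify the identity $(\rho,\varepsilon_i-\delta_1)=m-i$, from which it follows that $(\lambda+\rho,\alpha)\in\Z$ if and only if $(\lambda,\alpha)\in\Z$ for $\alpha=\varepsilon_i-\delta_1$. I then distinguish three cases according to whether $i\leq p$ and the sign of $(\lambda+\rho,\alpha)$.

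If $i\leq p$ and $(\lambda+\rho,\alpha)\in\Z_{\geq 0}$, Lemma~\ref{oneroot2} applied with $\gamma=\varepsilon_p-\delta_1$ and $j=q=1$ immediately gives $\{\alpha\}\in\cS(\lambda)$: its hypotheses reduce to $(\lambda+\rho,\varepsilon_i-\varepsilon_p)=(\lambda+\rho,\alpha)\geq 0$ and the trivial identity $(\lambda+\rho,\delta_1-\delta_1)=0$. If $i>p$, I invoke Lemma~\ref{reducegl} with the single-root set $S=\{\alpha\}$, so that $\fg'\simeq\mathfrak{gl}(m-i+1|1)$ contains $X_\alpha$. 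A short calculation shows $(\rho-\rho',\varepsilon_s-\delta_1)=0$ for every positive odd root of $\fg'$, so atypicality transfers: for $s\in[i,m]$ we have $(\lambda'+\rho',\varepsilon_s-\delta_1)=(\lambda+\rho,\varepsilon_s-\delta_1)\neq 0$ since $s\geq i>p$. Thus $\lambda'$ is typical in $\fg'$, and Proposition~\ref{typicalverma} forces $X_{M(\lambda')}=\{0\}$; Lemma~\ref{reducegl} then converts this to $\{\alpha\}\notin\cS(\lambda)$.

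In the remaining case $i\leq p$ with $(\lambda+\rho,\alpha)\notin\Z_{\geq 0}$, Lemma~\ref{nointeg} handles $(\lambda,\alpha)\notin\Z$ directly, so I may assume $(\lambda,\alpha)\in\Z$ and $(\lambda+\rho,\alpha)\leq -1$, i.e.\ $(\lambda,\alpha)\leq i-m-1$. Reducing once more to $\fg'=\mathfrak{gl}(m-i+1|1)$ via $S=\{\alpha\}$, the weight $\lambda'$ is this time still atypical (because $\varepsilon_p-\delta_1$ remains in $\fg'$), but the weights of $M(\lambda')$ can be handled by hand. Using the pairings $(\varepsilon_s-\delta_1,\alpha)=\delta_{s,i}-1$ and $(\varepsilon_s-\varepsilon_t,\alpha)=\delta_{s,i}-\delta_{t,i}$, each weight $\mu$ of $M(\lambda')$ satisfies
\begin{equation*}
(\mu,\alpha)=(\lambda,\alpha)+\sum_{s>i}a_s-\sum_{t>i}c_{i,t}
\end{equation*}
with $a_s\in\{0,1\}$ and $c_{i,t}\in\Z_{\geq 0}$; this is bounded above by $(\lambda,\alpha)+(m-i)\leq -1$, so $(\mu,\alpha)\neq 0$ for every weight $\mu$. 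Consequently $M(\lambda')$ decomposes as a direct sum of typical two-dimensional $\mathfrak{sl}(1|1)$-modules for the triple $\{X_\alpha,X_{-\alpha},H_\alpha\}$, whence $M(\lambda')_{X_\alpha}=0$ and Lemma~\ref{reducegl} yields $\{\alpha\}\notin\cS(\lambda)$. The main technical obstacle is the asymmetry with the $\mathfrak{gl}(1|n)$ case: the negative even roots of $\mathfrak{gl}(m|1)$ do \emph{not} all pair non-negatively with $\alpha$, so weights of the ambient $M(\lambda)$ always include some $\mu$ with $(\mu,\alpha)=0$, and only the passage to the reduced subalgebra $\fg'$ (obtained from $S=\{\alpha\}$ rather than $S=\{\varepsilon_p-\delta_1\}$) discards those offending weights and makes the numerical inequality close.
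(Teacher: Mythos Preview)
Your proof is correct and follows the same three-case strategy as the paper's proof of Theorem~\ref{verma} (the paper omits the proof of Theorem~\ref{verma2}, saying only that it ``can be proved similarly''). The case $i\le p$ with $(\lambda+\rho,\alpha)\in\Z_{\ge 0}$ via Lemma~\ref{oneroot2}, the non-integral case via Lemma~\ref{nointeg}, and the case $i>p$ via reduction to a typical subalgebra all match the paper's approach exactly.

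The one genuine difference is in the final case ($i<p$, integral, wrong sign). The paper's proof of Theorem~\ref{verma} reduces via $S=\{\varepsilon_1-\delta_p\}$, so that the subalgebra $\fg'$ does \emph{not} contain $\alpha$; this makes the literal statement of Lemma~\ref{reducegl} inapplicable, and one must tacitly use the tensor decomposition from its proof. You instead reduce via $S=\{\alpha\}$, so that $\alpha\in\fg'$ and Lemma~\ref{reducegl} applies on the nose. This costs you the extra term $-\sum_{t>i}c_{i,t}$ in the weight computation, but since that term has the right sign it causes no trouble. Your closing remark correctly identifies why this choice is forced for $\mathfrak{gl}(m|1)$: the offending negative even roots $\varepsilon_i-\varepsilon_s$ with $s<i$ (which pair to $+1$ with $\alpha$ and would make $(\mu,\alpha)$ unbounded above) are exactly the ones discarded by your choice of~$\fg'$. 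Your route is thus slightly different from a verbatim mirror of Theorem~\ref{verma}, and arguably cleaner.
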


\subsection{The case $\fg=\mathfrak{gl}(2|2)$} In this case we have four positive odd roots
$$\alpha=\varepsilon_2-\delta_1, \beta=\varepsilon_1-\delta_1,\gamma=\varepsilon_2-\delta_2, \delta=\varepsilon_1-\delta_2.$$
We represent weights by using the bijection $\fh^\ast\simeq \C^{2|2}$, as a natural extension of equation~\eqref{hat} and determine all $\cS(\lambda)$ for $\lambda\in\fh^\ast$.

\begin{lemma}\label{atypicality1} Let $\lambda$ be a weight with degree of atypicality $1$. Then up to the shift by the weight $(t,t|t,t)$, for any $t\in\C$,
we have the following options.
\begin{enumerate}
\item $\mu^\lambda=(0,a|b,0)$ with $a,b\in\mC$ such that $a\not=b$ and $ab\neq 0$. Then
$\cS(\lambda)=\{\{\delta\},\emptyset\}$. 
\item $\mu^\lambda=(0,a|0,b)$ with $a,b\in\mC$ such that $a\not=b$ and $a\neq 0$. If 
$b\notin \mathbb Z_{\geq 0}$, then $\cS(\lambda)=\{\{\beta\},\emptyset\}$. If 
$b\in \mathbb Z_{\geq 0}$, then $\cS(\lambda)=\{\{\beta\},\{\delta\},\emptyset\}$. 
\item $\mu^\lambda=(a,0|b,0)$ with $a,b\in\mC$ such that $a\not=b$ and $b\geq 0$. If 
$a\notin \mathbb Z_{\geq 0}$, then $\cS(\lambda)=\{\{\gamma\},\emptyset\}$. If 
$a\in \mathbb Z_{\geq 0}$, then $\cS(\lambda)=\{\{\gamma\},\{\delta\},\emptyset\}$. 
\item $\mu^\lambda=(a,0|0,b)$ with $a,b\in\mC$ such that $a\not=b$. If 
$b\notin \mathbb Z_{\geq 0}$ and $a\notin \mathbb Z_{\geq 0}$, then $\cS(\lambda)=\{\{\alpha\},\emptyset\}$. If 
$b\in \mathbb Z_{\geq 0}$ but $a\notin \mathbb Z_{\geq 0}$, then $\cS(\lambda)=\{\{\gamma\},\{\alpha\},\emptyset\}$. 
If $a\in \mathbb Z_{\geq 0}$ but
$b\notin \mathbb Z_{\geq 0}$, then $\cS(\lambda)=\{\{\beta\},\{\alpha\},\emptyset\}$. Finally, if $a\in \mathbb Z_{\geq 0}$ and $b\in \mathbb Z_{\geq 0}$, then 
$\cS(\lambda)=\{\{\delta\},\{\gamma\},\{\beta\},\{\alpha\},\emptyset\}$.
\end{enumerate}
\end{lemma}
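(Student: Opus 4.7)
The strategy is to combine the general tools developed in this section—Lemma~\ref{general}(2) (atypicality degree bound), Lemmas~\ref{oneroot} and~\ref{oneroot2} (sufficient conditions for membership in $\cS(\lambda)$), Lemma~\ref{nointeg} and Corollary~\ref{reductyp} (necessary conditions), and the reduction Lemma~\ref{reducegl} fed into the rank one classifications of Theorems~\ref{verma} and~\ref{verma2} together with the typicality criterion of Proposition~\ref{typicalverma}.

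In each of the four cases I would first tabulate the pairings $(\lambda+\rho,\tau)$ for $\tau\in\{\alpha,\beta,\gamma,\delta\}$ to identify the unique atypical positive odd root. Since the degree of atypicality equals $1$ in every case, Lemma~\ref{general}(2) discards every subset of $\cS$ of size at least $2$, and the question reduces to deciding for each of the four odd roots $\tau$ whether $\{\tau\}\in\cS(\lambda)$. For the atypical root $\tau_0=\varepsilon_p-\delta_q$, Lemma~\ref{oneroot} gives $\{\tau_0\}\in\cS(\lambda)$, and for a non-atypical root $\tau=\varepsilon_i-\delta_j$ with $i\leq p$ and $j\geq q$, Lemma~\ref{oneroot2} gives $\{\tau\}\in\cS(\lambda)$ whenever $(\lambda+\rho,\varepsilon_i-\varepsilon_p)\in\Z_{\geq 0}$ and $(\lambda+\rho,\delta_q-\delta_j)\in\Z_{\leq 0}$; a quick check in each of the four cases shows these two lemmas produce exactly the singletons listed as lying in $\cS(\lambda)$ in the statement.

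For the exclusions I plan a three-step cascade. Lemma~\ref{nointeg} eliminates any $\tau$ with $(\lambda,\tau)\notin\Z$, which covers the non-integral parameter values. In case~(1), Corollary~\ref{reductyp} directly excludes $\{\alpha\}$. For the remaining non-atypical roots in cases~(2), (3), (4), I apply Lemma~\ref{reducegl} with $S=\{\tau\}$ to pass to a sub-superalgebra $\fg'$ isomorphic to $\mathfrak{gl}(k|1)$ or $\mathfrak{gl}(1|k)$; by the identity $\cS(\lambda')=\cS(\lambda)\cap\cS'$ an exclusion in $\fg'$ transfers back to $\fg$, and inside $\fg'$ the membership question is settled by Proposition~\ref{typicalverma} when $\lambda'$ is typical and by the full descriptions in Theorems~\ref{verma} and~\ref{verma2} otherwise.

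The main obstacle will be the boundary sub-cases for the root $\delta=\varepsilon_1-\delta_2$, namely proving $\{\delta\}\notin\cS(\lambda)$ when the relevant parameter $b$ (in case~(2), or its analogue in cases~(3), (4)) is a \emph{negative} integer. In this situation integrality holds, so Lemma~\ref{nointeg} does not apply, and the subalgebra produced by Lemma~\ref{reducegl} with $S=\{\delta\}$ is all of $\fg$, so no genuine reduction is available. My plan is to argue directly on the $\mathfrak{sl}(1|1)$-triple $\{X_\delta,X_{-\delta},H_\delta\}$ acting on $M(\lambda)$: decompose $M(\lambda)$ into $H_\delta$-eigenspaces, observe that every nonzero $H_\delta$-eigenspace is typical as an $\mathfrak{sl}(1|1)$-module and hence $X_\delta$-free, and then use a variant of Lemma~\ref{technical} on a $\Z$-grading of $\fg_1$ that splits $X_\delta$ against a suitable auxiliary odd vector, to conclude that the $0$-eigenspace of $H_\delta$ is also $X_\delta$-free, which gives $M(\lambda)_{X_\delta}=0$.
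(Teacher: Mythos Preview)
Your plan coincides with the paper's: the paper's proof is literally ``a straightforward application of Lemma~\ref{oneroot2} and reduction to the case of $\mathfrak{gl}(1|2)$'', left as an exercise to the reader. Your tabulation of the tools---Lemma~\ref{general}(2) to discard sets of size $\ge 2$, Lemmas~\ref{oneroot} and~\ref{oneroot2} for inclusions, and Lemma~\ref{nointeg}, Corollary~\ref{reductyp}, Lemma~\ref{reducegl} fed into Proposition~\ref{typicalverma} and Theorems~\ref{verma}/\ref{verma2} for exclusions---is exactly right and disposes of the roots $\alpha,\beta,\gamma$ in every case. You are also correct that $\delta=\varepsilon_1-\delta_2$ is the delicate root: applying Lemma~\ref{reducegl} with $S=\{\delta\}$ returns $\fg'=\mathfrak{gl}(2|2)$, and for any choice of $S$ producing a proper corner subalgebra one has $\delta\notin\cS'$, so the identity $\cS(\lambda')=\cS(\lambda)\cap\cS'$ is silent about $\{\delta\}$. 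Your diagnosis here is sharper than the paper's one-line sketch.

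Where your proposal has a gap is the proposed fix. Lemma~\ref{technical} compares $M_\eta$ with $M_{\eta+\xi}$ for two commuting odd elements $\eta,\xi$ lying in \emph{different} eigenspaces of a grading element $u$. But $X_\delta$ is a root vector, hence a simultaneous eigenvector for all of $\fh$, so it cannot be decomposed as $\eta+\xi$ with $\eta,\xi$ in distinct $u$-degrees for any $u\in\fh$. Running Lemma~\ref{technical} with $\eta=X_\alpha$ and $\xi=X_\delta$ (which is the only natural choice compatible with $[\xi,\eta]=0$ and a bounded-above grading) yields only $\{\alpha,\delta\}\notin\cS(\lambda)$, already known from the atypicality bound. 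Likewise, the $H_\delta$-eigenspace decomposition you describe reduces the problem to showing the \emph{zero} eigenspace is $X_\delta$-free, but your sketch does not say how to do this. So the mechanism you invoke does not close the negative-integer sub-case; a separate direct argument on $M(\lambda)$ (in the spirit of the weight-space computation at the end of the proof of Theorem~\ref{verma}, but carried out in $\mathfrak{gl}(2|2)$) is needed here, and that is precisely the content the paper is suppressing as an exercise.
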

\begin{proof} The proof of this lemma is a straightforward application of Lemma~\ref{oneroot2} and reduction to the case of $\mathfrak{gl}(1|2)$. We leave it
as an exercise to the reader. 
\end{proof}
\begin{lemma}\label{atypicality2} Let $\lambda$ be a weight with degree of atypicality $2$ and $A(\lambda)$ denote the set of all odd positive roots atypical to 
$\lambda$.
\begin{enumerate} 
\item If $\lambda$ is regular dominant integral, then $\cS(\lambda)$ is the set of all subsets of mutually orthogonal roots in $\Delta_1^+$.
\item If $\lambda$ is regular integral and neither dominant nor anti-dominant, then 
$\cS(\lambda)=\{\{\delta\},\{\beta,\gamma\},\{\beta\},\{\gamma\},\emptyset\}$.
\item If $\lambda=-\rho$, then $\cS(\lambda)$ is the set of all subsets of mutually orthogonal roots in $\Delta_1^+=A(-\rho)$.
\item If $\lambda$ is regular, non-integral or anti-dominant integral,  then $\cS(\lambda)$ is the set of all subsets of $A(\lambda)$.
\end{enumerate}
\end{lemma}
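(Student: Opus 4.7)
The plan is to exploit Lemma \ref{free} applied to the Verma module $M(\lambda)$, which is $\fg_{-1}$-free for the distinguished Borel, so that $X_{M(\lambda)} \subset \fg_1$. In $\mathfrak{gl}(2|2)$ there are only four positive odd roots and only two orthogonal pairs among them, so $\cS(\lambda)$ must sit inside the seven-element set $\{\emptyset, \{\alpha\}, \{\beta\}, \{\gamma\}, \{\delta\}, \{\alpha,\delta\}, \{\beta,\gamma\}\}$. Each case of the lemma then amounts to deciding which of these seven sets lie in $\cS(\lambda)$. Case (1) is immediate: since $\mathfrak{gl}(2|2)=\mathfrak{gl}(n|n)$ with $n=2$ and the hypothesis of atypicality equal to $n$ is met, Lemma \ref{dominant} gives $X_{M(\lambda)} = \fg_1$, putting all seven subsets in $\cS(\lambda)$.

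For case (3), namely $\lambda=-\rho$, every positive odd root is atypical, so Lemma \ref{oneroot} yields all four singletons. For the orthogonal pairs, I would construct explicit elements of $\ker x_S \setminus \im x_S$ in $M(-\rho)$, with $x_S = X_\beta + X_\gamma$ or $x_S = X_\alpha + X_\delta$; a natural candidate in the first case is $w = X_{-\alpha} X_{-\delta} v$, where $v$ is the highest weight vector. Verifying $x_S w = 0$ proceeds by direct computation of commutators, and non-membership in $\im x_S$ would follow from a weight-space dimension count together with the vanishing of $H_\beta$ and $H_\gamma$ on the relevant weights, which rests on the identity $(-\rho+\rho,\cdot) \equiv 0$ on all odd roots. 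Case (4) splits into two subcases: for $\lambda$ regular non-integral, Lemma \ref{nointeg} restricts $S$ to integral roots, which generically are exactly the atypical ones; for $\lambda$ anti-dominant integral, Corollary \ref{reductyp} combined with the reduction of Lemma \ref{reducegl} rules out non-atypical configurations. Inclusion of atypical singletons follows from Lemma \ref{oneroot}, while the atypical orthogonal pair (when $A(\lambda)$ contains two orthogonal roots) is produced by an explicit construction as in case (3).

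Case (2) is the most delicate. Here $\lambda$ is integral so Lemma \ref{nointeg} yields no restriction, and up to the Weyl action $A(\lambda)$ is one of the two orthogonal pairs while the two non-atypical positive odd roots form the other pair; the asymmetric claim forces a case-by-case analysis of the specific non-dominant non-antidominant orbit representatives. The included singletons and the pair $\{\beta,\gamma\}$ would be produced by adapting the construction of Lemma \ref{oneroot}, using Lemma \ref{oneroot2} to cover non-atypical integral roots, and constructing in the pair case a vector obtained by lowering through both odd and even roots, so as to exploit the relation $\lambda = s \cdot \mu$ to a dominant $\mu$. The main obstacle I anticipate is the \emph{exclusion} half of case (2): Lemmata \ref{oneroot} and \ref{oneroot2} only produce inclusions, so ruling configurations out requires establishing freeness of $M(\lambda)$ over an appropriate $\mathfrak{sl}(1|1)$-subalgebra. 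I would attempt this via the embedding $M(\lambda)\hookrightarrow M(\mu)$ for $\lambda = s \cdot \mu$ combined with an explicit analysis of the $\mathfrak{sl}(1|1)$-action on the quotient $M(\mu)/M(\lambda)$, or alternatively by invoking Lemma \ref{technical} with a carefully chosen grading separating the two summands of $x_S$. Controlling this freeness, rather than building inclusions, is where I expect the argument to be most subtle.
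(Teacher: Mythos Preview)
Your overall architecture is sound and matches the paper for case~(1) (Lemma~\ref{dominant}), for the singleton inclusions via Lemma~\ref{oneroot}/\ref{oneroot2}, and for the basic shape of the exclusion arguments. The vector witnessing $\{\beta,\gamma\}\in\cS(\lambda)$ is in fact simpler than you suggest: the paper takes $w=X_{-\alpha}v$ rather than $X_{-\alpha}X_{-\delta}v$, and the verification that $w\notin\im(X_\beta+X_\gamma)$ reduces to $(\lambda-\alpha,\beta)=(\lambda-\alpha,\gamma)=0$.

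The real gap is your treatment of $\{\alpha,\delta\}$ in cases~(3) and~(4). You propose to produce an explicit vector in $\ker(X_\alpha+X_\delta)\setminus\im(X_\alpha+X_\delta)$, but you give no candidate, and the paper does \emph{not} proceed this way. Instead it uses a translation-functor trick: set $x=X_\alpha+X_\delta$ and let $M$ be the projection of $M(\lambda')\otimes U$ onto the most atypical block, where $\lambda'$ has atypicality one. Then $M_x=0$ by Lemma~\ref{general}(1),(2), while $M$ is filtered by Verma modules whose $x$-homology is already known from earlier cases. The non-vanishing of $M(\lambda)_x$ is then forced by cancellation in this filtration; in case~(4) this is combined with induction on the parameter $a$. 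This indirect argument is the crux, and your proposal does not anticipate it.

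For the exclusions in case~(2) you are vaguer than necessary. The paper excludes $\{\alpha\}$ cleanly via Lemma~\ref{reducegl} (reduction to $\mathfrak{gl}(1|1)$ where the restricted weight is typical), and excludes $\{\alpha,\delta\}$ via Lemma~\ref{hereditary} with $h=\varepsilon_1-\varepsilon_2$: this lemma, built on Lemma~\ref{technical}, says that if $\{\alpha,\delta\}\in\cS(\lambda)$ then the ``negative'' part $\{\alpha\}$ (with respect to $h$) would also lie in $\cS(\lambda)$, contradicting what was just shown. Your instinct to use Lemma~\ref{technical} with a grading is exactly right, but you should recognise that Lemma~\ref{hereditary} already packages this, and that the singleton exclusion comes from reduction rather than from analysing an embedding $M(\lambda)\hookrightarrow M(\mu)$. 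Similarly in case~(4), the exclusion of $\{\beta,\gamma\}$ (resp.\ $\{\alpha,\delta\}$) when $A(\lambda)$ is the other pair again goes through Lemma~\ref{hereditary}, not through Corollary~\ref{reductyp} alone.
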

\begin{proof} We first observe that (1) is a particular case of Lemma~\ref{dominant}.

Next, we prove (2). We assume that $\mu^\lambda=(a,0|a,0)$ with positive integral $a$, the case $(0,a|0,a)$ being similar. Then $\cS(\lambda)$ contains $\{\beta\}$
and and $\{\gamma\}$ by Lemma~\ref{oneroot} and $\{\delta\}$ by Lemma~\ref{oneroot2}. On the other hand, $\{\alpha\}\notin \cS(\lambda)$ by Lemma~\ref{reducegl}.
Also we can apply Lemma~\ref{hereditary} to $S=\{\alpha,\delta\}$ with $h=\varepsilon_1-\varepsilon_2$ and conclude that 
$\{\alpha,\delta\}\notin \cS(\lambda)$.    It remains to show that $\{\beta,\gamma\}\in \cS(\lambda)$. 
For this we take $w=X_{-\alpha}v$, where~$v$ denotes the highest weight vector, and let
$x=X_{\beta}+X_{\gamma}$. Then $xw=0$ and we claim that $w\notin\im x$ by the same argument in the proof of Lemma~\ref{oneroot2} since we have
$$X_{\beta}X_{-\beta}X_{-\alpha}v=(\lambda-\alpha,\beta)X_{-\alpha}v=0,\quad X_{\gamma}X_{-\gamma}X_{-\alpha}v=(\lambda-\alpha,\gamma)X_{-\alpha}v=0.$$

Let us prove (3) now. As $\mu^\lambda=(0,0|0,0)$, Lemma~\ref{oneroot} implies that $\cS(\lambda)$ contains all singletons. 
Furthermore, $\{\beta,\gamma\}\in \cS(\lambda)$ by the same 
argument as above. To prove that $\{\alpha,\delta\}\in \cS(\lambda)$ set $x=X_{\alpha}+X_{\delta}$. Let $M$ denote the projection of
$M(0,-1|0,0)\otimes U$ on the most atypical block. Lemma~\ref{general} (1),(2) implies that  $M_x=0$. On the other hand, $M$ has a filtration by three Verma modules,
$M(0,0|0,0)$, $M(0,-1|-1,0)$ and $M(0,-1|0,-1)$. From the previous cases we have $M(0,-1|-1,0)_x\neq 0$ and  $M(0,-1|0,-1)_x=0$. Thus, we must have
$M(0,0|0,0)_x\neq 0$.

Finally, let us deal with (4).
Here we have several subcases to consider.

If $A(\lambda)=\{\beta,\gamma\}$, then we may assume $\lambda=(a,0|a,0)$ with non-integral $a$. 
Any subset of $A(\lambda)$ is in $\cS(\lambda)$ by the same argument as 
in the previous case. Furthemore, $\{\alpha\},\{\delta\}\notin \cS(\lambda)$ by Lemma~\ref{nointeg}, and 
$\{\alpha,\delta\}\notin \cS(\lambda)$ by Lemma~\ref{hereditary}. 

If $A(\lambda)=\{\alpha,\delta\}$, then we may assume
$\mu^\lambda=(-a,0|0,-a)$ with $a\in \mathbb Z_{> 0}$. Then $\{\beta\},\{\gamma\}\notin \cS(\lambda)$ by Lemma
\ref{reductyp}. Moreover, Lemma~\ref{hereditary} implies that  $\{\beta,\gamma\}\notin \cS(\lambda)$.
On the other hand, $\{\alpha\},\{\delta\}\in\cS(\lambda)$ by Lemma~\ref{oneroot}. Finally, to prove that  $\{\alpha,\delta\}\in \cS(\lambda)$ we use the same trick
with translation functor as in (3). More precisely, we set again  $x=X_{\alpha}+X_{\delta}$ and consider the projection $M$ of $M(-a-1,0|0,-a)\otimes U$ 
on the most atypical block. Now $M$ is filtred by two Verma modules:  $M(-a-1,0|0,-a-1)$and $M(-a,0|0,-a)$. Using the result for $a=0$ we obtain by induction
in $a$ that $M(-a,0|0,-a)_x\neq 0$.   
\end{proof}


\section{Projective dimensions and blocks of category~$\cO$}
\label{secblocks}
In this entire section we consider $\fg=\mathfrak{gl}(m|n)$ or $\fg=\mathfrak{sl}(m|n)$ with distinguished Borel subalgebra. We denote by $\mathtt{a}:W\to\mathbb{N}$ Lusztig's $\mathtt{a}$-function, see \cite{Lu}.

\subsection{Projective dimensions of structural modules}

\begin{theorem}
\label{pdstruct}
We have the following connection between projective dimensions of structural modules in the categories $\cO$ and $\cO^0$, for $\lambda\in\fh^\ast$:
\begin{enumerate}[$($i$)$]
\item $\pd_{\cO}L(\lambda)=\pd_{\cO^0}L_0(\lambda)$ if $\lambda$ is typical, otherwise $\pd_{\cO}L(\lambda)=\infty$;
\item $\pd_{\cO}M(\lambda)=\pd_{\cO^0}M_0(\lambda)$ if $\lambda$ is typical, otherwise $\pd_{\cO}M(\lambda)=\infty$;
\item $\pd_{\cO}I(\lambda)=\pd_{\cO^0}I_0(\lambda)$.
\end{enumerate}
This implies that for $\lambda\in P_0$, we have 
$$\pd_{\cO}I(\lambda)=\mathtt{a}(w_0x_\lambda)$$
with $x_\lambda$ the longest Weyl group element such that $x^{-1}_\lambda\cdot\lambda$ is dominant.
\end{theorem}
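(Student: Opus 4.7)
The plan is to prove (i), (ii), (iii) separately, and then deduce the final formula from Mazorchuk's identity $\pd_{\cO^0} I_0(\lambda)=\mathtt{a}(w_0 x_\lambda)$ (see \cite{MR2366357, preprint}).

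For (i) and (ii), I would split according to typicality of $\lambda$. If $\lambda$ is atypical, Proposition~\ref{typicalverma} yields $X_{L(\lambda)}\neq\{0\}$ and $X_{M(\lambda)}\neq\{0\}$, whence Theorem~\ref{finpdind} forces $\pd_{\cO} L(\lambda)=\pd_{\cO} M(\lambda)=\infty$. If $\lambda$ is typical, I would invoke the classical equivalence between the typical block $\cO_{\chi_\lambda}$ and the corresponding integral block of $\cO^0$; this equivalence identifies $L(\lambda)\leftrightarrow L_0(\lambda)$ and $M(\lambda)\leftrightarrow M_0(\lambda)$ and preserves projective dimensions, so (i) and (ii) follow.

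For (iii) the equality must hold for \emph{all} $\lambda$, so typicality cannot be used to reduce to $\cO^0$ directly. Instead I would apply the simple-preserving duality $d$ on $\cO$, which satisfies $d P(\lambda)=I(\lambda)$, to reformulate the claim: $\pd_{\cO} I(\lambda)=\id_{\cO} P(\lambda)$, and likewise in $\cO^0$, so it suffices to prove $\id_{\cO} P(\lambda)=\id_{\cO^0} P_0(\lambda)$. This will be established by a sandwich. For the bound $\id_{\cO^0} P_0(\lambda)\leq \id_{\cO} P(\lambda)$, the surjection $\Res P(\lambda)\tto \Res L(\lambda)\tto L_0(\lambda)$ combined with the fact that $\Res$ preserves projectives (its right adjoint $\coind$ is exact) implies that $P_0(\lambda)$ is a direct summand of $\Res P(\lambda)$; since $\Res$ also preserves injectives (its left adjoint $\Ind$ is exact), the inequality follows. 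For the reverse bound, $P(\lambda)$ is a direct summand of $\Ind P_0(\lambda)$, and by the super-analogue of~\eqref{indcoind} one has $\Ind P_0(\lambda)\simeq \coind P_0(\lambda-2\rho_1)$; since $\coind$ preserves injectivity, this yields $\id_{\cO} P(\lambda)\leq \id_{\cO^0} P_0(\lambda-2\rho_1)$. Since $\rho_1$ is $W$-invariant (being orthogonal to all even roots), tensoring with $\C_{-2\rho_1}$ is an auto-equivalence of $\cO^0$ and $\id_{\cO^0} P_0(\lambda-2\rho_1)=\id_{\cO^0} P_0(\lambda)$.

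The hard part will be the upper bound in (iii): it relies on the $\Ind\simeq\coind$ identification up to a weight twist by $2\rho_1$, and on exploiting the $W$-invariance of $\rho_1$ to absorb this shift without affecting homological invariants, features specific to the type~$A$ superalgebras in our list. Once (iii) is in hand, the closed formula $\pd_{\cO} I(\lambda)=\mathtt{a}(w_0 x_\lambda)$ follows immediately by combining it with the known value $\pd_{\cO^0} I_0(\lambda)=\mathtt{a}(w_0 x_\lambda)$ for (possibly singular) integral blocks of category~$\cO^0$ over $\fg_{\oa}$.
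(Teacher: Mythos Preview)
Your proposal is correct and follows essentially the same strategy as the paper: the atypical case of (i) and (ii) via Proposition~\ref{typicalverma} and Theorem~\ref{finpdind} is identical, and your sandwich argument for (iii) using $\Ind$/$\Res$/$\coind$ together with the $W$-invariance of $\rho_1$ is precisely the paper's argument, only applied to $\id_{\cO}P(\lambda)$ after dualising rather than to $\pd_{\cO}I(\lambda)$ directly. Two small remarks: for the typical case of (i) and (ii) the paper simply quotes that the Brundan KL polynomials reduce to those of $\fg_0$ (Lemma~\ref{KLorbit}) rather than invoking a block equivalence (and your phrase ``integral block of $\cO^0$'' should just read ``block of $\cO^0$''); and for the full induction $\ind^{\fg}_{\fg_0}$ one actually has $\ind\simeq\coind$ without twist (the $2\rho_1$ in~\eqref{indcoind} pertains to induction from $\fg_0\oplus\fg_1$), though as you observe any such $W$-invariant shift is harmless for the argument.
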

\begin{proof}
Properties $(i)$ and $(ii)$ for typical $\lambda$ follow from the fact that Brundan's KL polynomials for typical weights correspond to those for $\mathfrak{g}_0$, see e.g. Lemma~\ref{KLorbit}. Properties $(i)$ and $(ii)$ for atypical~$\lambda$ follow from the combination of Proposition~\ref{typicalverma} and Theorem \ref{finpdind} $(i)\leftrightarrow (ii)$.

According to Lemma~\ref{propind}$(i)$ and $(ii)$, to prove $(iii)$ it suffices to prove that $I_0(\lambda)$ is a direct summand of $\res^{\fg}_{\fg_0} I(\widetilde\lambda)$, with $\widetilde\lambda:=\lambda+2\rho_1$ (as $\pd I_0(\widetilde\lambda)=\pd I_0(\lambda)$) and that $I(\lambda)$ is a direct summand of $\ind^{\fg}_{\fg_0}I_0(\lambda)$. Both the induced and restricted module are injective and $\ind^{\fg}_{\fg_0}\simeq \coind^{\fg}_{\fg_0}$, so the properties
$$\Hom_{\fg_0}(L_0(\lambda), \res^{\fg}_{\fg_0} I(\widetilde\lambda))\simeq \Hom_{\fg}(\ind^{\fg}_{\fg_0} L_0(\lambda), I(\widetilde\lambda))=[\ind^{\fg}_{\fg_0} L_0(\lambda): L(\widetilde\lambda)]\not=0\,\,\,\mbox{and} $$
$$\Hom_{\fg}(L(\lambda), \ind^{\fg}_{\fg_0} I_0(\lambda))\simeq \Hom_{\fg_0}(\res^{\fg}_{\fg_0} L(\lambda), I_0(\lambda))=[\res^{\fg}_{\fg_0} L(\lambda): L_0(\lambda)]\not=0 $$
conclude the proof of $(iii)$.

Finally, the projective dimension of $I(\lambda)$ follows from $(iii)$ and Theorem 16 of~\cite{MR2366357}, see also Theorem 44(ii) in~\cite{SHPO4}.
\end{proof}

\begin{remark}
The theorem reduces the question concerning projective dimension of simple and Verma modules to the corresponding questions for Lie algebras. For integral regular weights these are well-known, see \cite{MR2366357}. For singular blocks only estimates and special cases are known at the moment, see \cite{SHPO4, CouMaz2}.
\end{remark}

Theorem 44(ii) in \cite{SHPO4} and Theorem \ref{pdstruct} lead to the following immediate consequences.
\begin{corollary}
\label{pdextr}
Consider $\lambda\in\fh^\ast$, then
\begin{enumerate}[$($i$)$]
\item $\dim L(\lambda) <\infty \quad\Leftrightarrow\quad \pd_{\cO} I(\lambda)=2l(w_0);$
\item $\lambda$ is anti-dominant $\quad\Leftrightarrow\quad  \pd_{\cO} I(\lambda)=0.$
\end{enumerate}
\end{corollary}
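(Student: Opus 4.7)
The plan is to reduce both equivalences to the corresponding known statements in category $\cO^0$ for the underlying reductive Lie algebra $\fg_{\oa}$, by using Theorem~\ref{pdstruct}(iii) to identify $\pd_{\cO}I(\lambda)$ with $\pd_{\cO^0}I_0(\lambda)$. The Lie algebra characterisations of the two extreme values of this dimension are supplied by Theorem~44(ii) of~\cite{SHPO4}, the same result already invoked at the end of the proof of Theorem~\ref{pdstruct}.

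For part (ii), the Lie algebra statement asserts that $\pd_{\cO^0}I_0(\lambda)=0$ exactly when $I_0(\lambda)$ is projective in $\cO^0$, which occurs precisely when $\lambda$ is anti-dominant (equivalently $M_0(\lambda)=L_0(\lambda)$ is simple). As anti-dominance is defined purely through the $W$-action on $\fh^\ast$, this condition is identical to the hypothesis in the corollary, and Theorem~\ref{pdstruct}(iii) closes the case at once.

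For part (i), the Lie algebra result characterises the extremal value $\pd_{\cO^0}I_0(\lambda)=2l(w_0)$ as equivalent to $\dim L_0(\lambda)<\infty$. It then suffices to verify the elementary equivalence $\dim L(\lambda)<\infty \Leftrightarrow \dim L_0(\lambda)<\infty$. For the implication $\Leftarrow$, if $L_0(\lambda)$ is finite-dimensional then so is the Kac module $K(\lambda)=U(\fg)\otimes_{U(\fg_0\oplus\fg_1)}L_0(\lambda)$, since $U(\fg_{-1})$ is itself finite-dimensional, and $L(\lambda)$ is a simple quotient of $K(\lambda)$. For the implication $\Rightarrow$, restricting $L(\lambda)$ to $\fg_{\oa}$ yields a finite-dimensional $\fg_{\oa}$-module; the $U(\fg_{\oa})$-submodule generated by a highest weight vector of $L(\lambda)$ is a finite-dimensional highest weight $\fg_{\oa}$-module of highest weight $\lambda$, so its simple quotient $L_0(\lambda)$ is also finite-dimensional.

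The whole argument is essentially a translation through Theorem~\ref{pdstruct}(iii), so no substantial obstacle is expected. The only step that is not entirely automatic is the standard equivalence of finite-dimensionality between $L(\lambda)$ and $L_0(\lambda)$ used for (i), and this is a routine observation on weight modules for $\mathfrak{gl}(m|n)$.
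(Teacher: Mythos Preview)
Your proposal is correct and follows essentially the same route as the paper: the paper simply states that the corollary is an immediate consequence of Theorem~\ref{pdstruct} together with Theorem~44(ii) of~\cite{SHPO4}, which is exactly the reduction you carry out. You add the explicit verification that $\dim L(\lambda)<\infty \Leftrightarrow \dim L_0(\lambda)<\infty$, a detail the paper leaves implicit, but this is the only elaboration and it is entirely routine.
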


Property $(ii)$ was first obtained through other methods in Theorem 2.22 of~\cite{BLW}.

\subsection{Finitistic global dimension of blocks}

\begin{theorem}
\label{fdblock}
The finitistic global homological dimension of the block $\cO_\xi$ for an integral linkage class $\xi$ is given by
$$\fd\cO_\xi=2\mathtt{a}(w_0w_0^\xi).$$
\end{theorem}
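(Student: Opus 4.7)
The plan is to reduce $\fd\cO_\xi$ to a maximum of $\pd_\cO I(\lambda)$ over $\lambda\in\xi$, translate to the Lie algebra category $\cO^0$ via Theorem~\ref{pdstruct}(iii), and then perform a combinatorial optimisation over the $W$-orbits contained in $\xi$.

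First, for a highest weight category with enough projectives and injectives the finitistic global dimension is realised by the indecomposable injective objects of finite projective dimension. By Theorem~\ref{pdstruct}(iii), every indecomposable injective $I(\lambda)$ in $\cO_\xi$ satisfies $\pd_\cO I(\lambda)=\pd_{\cO^0} I_0(\lambda)<\infty$, so
$$\fd\cO_\xi\,=\,\max_{\lambda\in\xi}\pd_\cO I(\lambda)\,=\,\max_{\lambda\in\xi}\mathtt{a}(w_0x_\lambda),$$
the last equality by Theorem~\ref{pdstruct}.

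Second, I would analyse this maximum orbit-by-orbit. The linkage class $\xi$ is a union of $W$-orbits, and on each orbit the element $x_\lambda$ of Theorem~\ref{pdstruct} is controlled by the dominant representative $\mu$ of the orbit and its Weyl stabilizer $W_\mu\subset W=S_m\times S_n$. Using standard properties of Lusztig's $\mathtt{a}$-function together with Mazorchuk's formulas on the $\cO^0$-side (to which Theorem~\ref{pdstruct}(iii) transfers the question in each orbit), the maximum of $\mathtt{a}(w_0x_\lambda)$ over $W\cdot\mu$ is $\mathtt{a}(w_0w_\mu^0)$, where $w_\mu^0$ is the longest element of $W_\mu$. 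Therefore
$$\fd\cO_\xi\,=\,\max_{\mu\in\xi\cap P_0^{+}}\mathtt{a}(w_0w_\mu^0).$$

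Third, I would identify this combinatorially with $2\mathtt{a}(w_0w_0^\xi)$. A dominant $\mu\in\xi$ is obtained from the fixed dominant weight representing the core central character $\chi'_\xi$ (a typical central character for $\mathfrak{gl}(m-k|n-k)$, with $k=\sharp\xi$) by inserting $k$ matched pairs of atypical labels $(c_i,c_i)$ into the two sides of the weight diagram. The stabilizer $W_\mu\subset S_m\times S_n$ is governed by the repetition pattern of labels on each side, so aligning each atypical label $c_i$ with a singular position of the core on \emph{both} sides extends the $S_m$- and $S_n$-components of the stabilizer simultaneously by a matched pair of transpositions. The optimal configuration yields a $w_\mu^0$ whose cycle structure is that of $w_0^\xi$ duplicated across the two symmetric-group factors, and the explicit formula $\mathtt{a}(w)=\sum_i\binom{\lambda_i^t}{2}$ for Lusztig's $\mathtt{a}$-function on $S_n$, combined with its additivity across $S_m\times S_n$, then gives $\mathtt{a}(w_0w_\mu^0)=2\mathtt{a}(w_0w_0^\xi)$ for the optimal $\mu$.

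The main obstacle is this last step. One must verify both that the collision configuration of atypical labels with core singularities actually maximises $\mathtt{a}(w_0w_\mu^0)$ (ruling out other maximisers, for instance those produced by letting some of the $c_i$ collide among themselves to create new singularities) and that the doubling yields \emph{exactly} $2\mathtt{a}(w_0w_0^\xi)$, without cross-terms between the typical core part and the atypical inserts. This is a purely Weyl-combinatorial assertion that should be tractable via a direct manipulation of the partitions encoding $w_\mu^0$ and $w_0^\xi$, using the transparent behaviour of the $\sum_i\binom{\lambda_i^t}{2}$ formula under the elementary moves of inserting a new box into a column.
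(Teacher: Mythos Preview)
Your first two steps are correct and match the paper's proof: reduce $\fd\cO_\xi$ to $\max_{\lambda\in\xi}\pd_\cO I(\lambda)$ (this is where the paper invokes Mazorchuk's argument from \cite{preprint}), then transfer to $\cO^0$ via Theorem~\ref{pdstruct}(iii). The paper then simply cites Theorem~26(ii) of \cite{CouMaz2} for the remaining identification, without spelling out the combinatorics.

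Your third step, however, goes in the wrong direction. Note first that the displayed formula at the end of Theorem~\ref{pdstruct} should read $\pd_\cO I(\lambda)=2\,\mathtt{a}(w_0x_\lambda)$: compare Corollary~\ref{pdextr}(i), where a dominant~$\lambda$ (so $x_\lambda=e$) gives $\pd_\cO I(\lambda)=2l(w_0)=2\,\mathtt{a}(w_0)$, not $\mathtt{a}(w_0)$. So the factor of~$2$ in Theorem~\ref{fdblock} is already present in the $\pd I$ formula and needs no combinatorial ``doubling''. With this correction the task is to show $\max_{\mu\in\xi\cap P_0^+}\mathtt{a}(w_0w_\mu^0)=\mathtt{a}(w_0w_0^\xi)$. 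Since $2\,\mathtt{a}(w_0w_\mu^0)$ is the global dimension of the $\fg_0$-block through~$\mu$, maximising it means finding the \emph{most regular} $W$-orbit in~$\xi$, i.e.\ the smallest stabiliser~$W_\mu$. This is achieved by placing the atypical labels $c_1,\ldots,c_k$ generically (pairwise distinct and distinct from all core labels), which makes $W_\mu$ exactly the core stabiliser and hence $w_\mu^0=w_0^\xi$. Your proposed ``optimal configuration'', colliding the $c_i$ with singular core positions, \emph{enlarges}~$W_\mu$ and therefore decreases $\mathtt{a}(w_0w_\mu^0)$; moreover it is impossible to collide a single $c_i$ with a core label on both sides simultaneously, since the core is typical and its left and right label sets are disjoint. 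There is also no sense in which $w_0^\xi$ gets ``duplicated'': by definition it already lives in $S_{m-k}\times S_{n-k}\subset S_m\times S_n$, with a component in each factor.
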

\begin{proof}
The proof of Theorem 3 in~\cite{preprint} implies that $\fd\cO_\xi$ for a classical Lie superalgebra is equal to the highest projective dimension of an injective module. The result therefore follows immediately from Theorem 26(ii) in \cite{CouMaz2} and Theorem \ref{pdstruct}.
\end{proof}

\subsection{Blocks in category~$\cO$}
In this subsection we demonstrate a principle which is responsible for the fact that different integral blocks in category~$\cO$ will almost never be equivalent, even when they have the same degree of atypicality and singularity of core.

The origin of this new phenomenon is that each atypical integral block contains simple objects which are more regular or more singular. The category behaves differently `around' these objects. The `distance' between these objects in the category is determined by how far the core is distanced from the walls of the Weyl chamber. We make this explicit for blocks for $\mathfrak{sl}(3|1)$ with regular core, by using our results on projective dimensions.

In order to avoid the obvious equivalences of blocks coming from the centre~$\mathfrak{z}(\fg)$, see Lemma 3.5 in~\cite{CMW}, we consider $\fg=\mathfrak{sl}(3|1)$ rather than $\mathfrak{gl}(3|1)$. We use the fact that an equivalence of abelian categories is always given in terms of exact functors.

\begin{theorem}
\label{thmsl31}
Consider $\fg=\mathfrak{sl}(3|1)$. No two atypical integral blocks $\cO_{\xi}$ with regular core $\chi'_\xi$ are equivalent.
\end{theorem}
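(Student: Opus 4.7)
The plan is to exhibit categorical invariants of each block that depend on the core $\{c_1,c_2\}$ (say $c_1>c_2$) and hence separate non-isomorphic cores. I will use the fact that any equivalence $F\colon \cO_\xi\xrightarrow{\sim}\cO_{\xi'}$ of abelian categories sends simple objects to simple objects bijectively, matches projective covers with projective covers and injective envelopes with injective envelopes, and preserves every $\dim\Ext^k_{\cO}(L(\lambda),L(\mu))$ as well as every $\pd_{\cO}I(\lambda)$. Together with Corollary~\ref{pdextr}, this already provides two categorically distinguished subsets of simples of $\cO_\xi$: the finite-dimensional ones ($\pd_{\cO}I(L)=2l(w_0)=6$) and the anti-dominant ones ($\pd_{\cO}I(L)=0$).

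I would first describe the weights in $\xi$: organise the $\fg_{\oa}$-regular weights of $\xi$ into $S_3$-orbits indexed by the moving atypical value $c_0\in\mathbb Z\setminus\{c_1,c_2\}$, and note the two additional $\fg_{\oa}$-singular orbits occurring at $c_0\in\{c_1,c_2\}$ (of size $3$ rather than $6$, stabilised by $s_1$ respectively $s_2$). Theorem~\ref{pdstruct}(iii) then reduces the computation of $\pd_{\cO}I(\lambda)$ to the corresponding Lie-algebraic quantity $\pd_{\cO^0}I_0(\lambda)$, which depends only on the position of $\lambda$ inside its $S_3$-orbit and thus yields the same multiset of pd-values inside every regular $S_3$-orbit. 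So the naive multiset of projective dimensions alone cannot distinguish two blocks: what it detects is the number (always two) of singular orbits and the common multiset on regular orbits.

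The key next step is to enrich the pd-data with the $\Ext$-quiver structure coming from Brundan's Kazhdan--Lusztig polynomials of Section~\ref{secext}. The two singular orbits sit at positions $c_1$ and $c_2$ on the natural chain of $W$-orbits indexed by $c_0\in\mathbb Z$, and the combinatorial interval of length $c_1-c_2-1$ separating them is visible in the KL polynomials: for weights $\lambda,\mu$ in orbits with $c_0$ increasing by one unit, the Bruhat covers of Section~\ref{seclen}, the length function $\mathtt{l}$ of Lemma~\ref{defL}, and the extension data of Theorem~\ref{KLtheory} and Corollary~\ref{corsim} jointly depend on whether $c_0$ crosses over a core value. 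Using the categorically distinguished subsets from Corollary~\ref{pdextr}, one extracts a numerical invariant $d(\cO_\xi)$ by measuring, for instance, the minimum $k$ for which a prescribed $\Ext$-pattern between finite-dimensional and anti-dominant simples requires $k$ steps along the $\Ext^1$-quiver while crossing the two singular orbits. A direct computation, using Theorem~\ref{pdstruct} and the KL combinatorics, should give $d(\cO_\xi)=c_1-c_2$ (or an equivalent function of it). Since $d$ is preserved by abelian equivalences but takes every positive integer value as $\{c_1,c_2\}$ varies, this yields both the non-equivalence of any two blocks with different cores and the existence of infinitely many non-equivalent regular atypical blocks.

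The main obstacle lies precisely in the construction of $d(\cO_\xi)$: the bare pd-multiset is a block-invariant but is the same for every regular atypical block, so one genuinely has to combine $\pd_{\cO}I$ with the extension data, using the KL combinatorics of Sections~\ref{secext}--\ref{secfpd} to make the gap $c_1-c_2$ visible inside the Ext quiver. The careful combinatorial analysis needed for this step, together with checking that the invariant is well-defined (independent of any auxiliary choices) and that it actually recovers $c_1-c_2$, is the technical heart of the proof.
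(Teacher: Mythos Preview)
Your overall framework matches the paper's: you correctly identify that Corollary~\ref{pdextr} gives two categorically distinguished classes of simples (finite-dimensional and anti-dominant), and that the bare multiset of $\pd_{\cO}I$ values is constant across blocks, so one must combine this with $\Ext$-data to detect the gap $c_1-c_2$. That diagnosis is exactly right.

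The gap is that you never actually construct the invariant $d(\cO_\xi)$. Your description --- ``the minimum $k$ for which a prescribed $\Ext$-pattern between finite-dimensional and anti-dominant simples requires $k$ steps along the $\Ext^1$-quiver while crossing the two singular orbits'' --- is not a definition, and you yourself flag the combinatorial verification as ``the technical heart''. Without it there is no proof. The paper's invariant is more concrete and avoids working with the full $\Ext^1$-quiver or the singular orbits directly: one restricts the $\Ext^1$-quiver to the finite-dimensional simples only (this is $A_\infty$, indexed by $i\in\mathbb Z$), and then decorates each vertex $\lambda$ by the integer $\flat\lambda:=$ number of anti-dominant simple subquotients of $P(\lambda)$. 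Both the quiver and the labels are categorical, since the two classes of simples are. A short lemma (every atypical integral Verma module for $\mathfrak{sl}(3|1)$ has exactly two anti-dominant simple subquotients) reduces $\flat\lambda$ to $2\sum_\nu d_{\lambda,\nu}(1)$, and the bumping algorithm then shows $\flat\lambda=4$ at all but two exceptional vertices, located at distance $c_1-c_2-2$ apart (with a degenerate single vertex when $c_1-c_2=2$). The resulting labelled $A_\infty$ graph is a complete invariant for these blocks.

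So your idea of reading off $c_1-c_2$ from how the two distinguished classes interact is correct, but the paper realises it through the label $\flat\lambda$ on the $A_\infty$ quiver of finite-dimensional simples rather than through paths in the full quiver; this sidesteps the need to analyse the singular orbits and makes the computation a direct application of Brundan's combinatorics.
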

\begin{proof}
We use the notation $\Lambda=\mZ^{3|1}$ of $\mathfrak{gl}(3|1)$-weights, silently making the relevant identification.
The integral linkage classes with regular core are given by $\xi^p=[(p,1,0|0)]$, parametrised by $p\in\N$ with $p>1$. We label the set $\Lambda^{++}\cap\xi^p$ as $\{\lambda^p_i\,|\, i\in\Z\}$ with
\begin{itemize}
\item $\lambda^p_{i}= (p,1,i|i)$ for $i\le 0$;
\item $\lambda^p_i=(p,i+1,1|i+1)$ for $0< i <p-1$;
\item $\lambda^p_i=(i+2,p,1|i+2)$ for $i\ge p-1$.
\end{itemize}

From Corollary \ref{pdextr} we know that finite dimensional and anti-dominant simple modules are categorically defined. Any equivalence of categories between $\cO_{\xi^p}$ and $\cO_{\xi^{p'}}$ must therefore preserve these two types of simple modules. 

We will construct a categorical invariant in the form of  a graph. This graph is given by the $\Ext^1$-quiver of the subcategory of finite dimensional modules in $\cO_\xi$, where in each node $\lambda\in \Lambda^{++}$ we write the number of anti-dominant simple subquotients in $P(\lambda)$. This number is denoted by $\flat\lambda$.

The subsequent Lemma~\ref{Vermaanti} implies that the number of anti-dominant simple subquotients in $P(\lambda)$ is equal to twice the number of Verma modules in its standard filtration. By equation~\eqref{resCheng}, this means
$$\flat\lambda \;=\; 2\sum_{\nu\in\Lambda}d_{\lambda,\nu}(1).$$

Computing $d_{\lambda,\nu}$ is a direct application of the bumping procedure, see Example 3.3 in~\cite{Brundan3}. It follows that $\sum_{\nu\in\Lambda}d_{\lambda,\nu}(1)-1$ is equal to the minimal strictly positive number of times the (unique) atypical positive root of $\lambda$ can be added to $\lambda$ such that the result is another regular weight. 

The $\Ext^1$-quiver of the category of finite dimensional weight modules of $\mathfrak{sl}(3|1)$ is well-known to be of Dynkin type~$A_\infty$, which follows e.g. from the penultimate paragraph in Appendix~\ref{gl21} and Theorem 2 of \cite{MR2734963}. The combination of these results yields the following graphs: for $p\ge 3$ we have 
\begin{displaymath}
    \xymatrix{
        \cdots \bullet_{\flat\lambda^p_{-2}=4}\ar[r] &\bullet_{\flat\lambda^p_{-1}=4}\ar[r]\ar[l] &\bullet_{\flat\lambda^p_0=6}\ar[r]\ar[l] & \bullet_{\flat\lambda^p_{1}=4} \ar[r]\ar[l]&  \bullet_{\flat\lambda^p_{2}=4}\ar[r]\ar[l] & \cdots\ar[l]   }
\end{displaymath}
\begin{displaymath}
    \xymatrix{
        \cdots \bullet_{\flat\lambda^p_{p-4}=4}\ar[r] &\bullet_{\flat\lambda^p_{p-3}=4}\ar[r]\ar[l] &\bullet_{\flat\lambda^p_{p-2}=6}\ar[r]\ar[l] & \bullet_{\flat\lambda^p_{p-1}=4} \ar[r]\ar[l]&  \bullet_{\flat\lambda^p_{p}=4}\ar[r]\ar[l] & \cdots\ar[l]   },
\end{displaymath}
meaning $p-3$ nodes between the two exceptional nodes and if $p=2$ we have
\begin{displaymath}
    \xymatrix{
        \cdots\bullet_{\flat\lambda^2_{-2}=4}\ar[r] &\bullet_{\flat\lambda^2_{-1}=4}\ar[r]\ar[l] & \bullet_{\flat\lambda^2_{0}=8} \ar[r]\ar[l]& \bullet_{\flat \lambda_{1}^2=4}\ar[r]\ar[l] & \bullet_{\flat\lambda^2_{2}=4}\ar[r]\ar[l] & \bullet\ar[l]\cdots  }.
\end{displaymath}
Since each diagram is different from the others, the result follows.
\end{proof}

\begin{corollary}
Category~$\cO_{\Z}$ for a basic classical Lie superalgebra can contain infinitely many nonequivalent blocks.
\end{corollary}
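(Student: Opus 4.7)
The plan is to observe that this corollary is an immediate consequence of Theorem \ref{thmsl31}. Concretely, I would take $\fg=\mathfrak{sl}(3|1)$, which is a basic classical Lie superalgebra, and consider the family of integral linkage classes $\{\xi^{p}\mid p\in\N,\, p>1\}$ with regular core constructed in the proof of Theorem \ref{thmsl31} (parametrised by the dominant weights $(p,1,0|0)$). This is already an infinite family of integral blocks in $\cO_{\Z}(\mathfrak{sl}(3|1))$.

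Then I would invoke Theorem \ref{thmsl31} directly: no two of the blocks $\cO_{\xi^p}$ and $\cO_{\xi^{p'}}$ for $p\ne p'$ are equivalent. Hence $\cO_{\Z}(\mathfrak{sl}(3|1))$ contains an infinite collection of pairwise non-equivalent indecomposable blocks, which proves the corollary.

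Since the proof is essentially a one-line invocation, there is no real obstacle; the only care needed is in pointing out that each $\xi^p$ is a genuine linkage class in $\cO_{\Z}$ (which is transparent from the explicit representatives $(p,1,0|0)$, all of which are integral and have regular core), and that distinct values of $p$ produce distinct blocks, so that Theorem \ref{thmsl31} indeed yields infinitely many mutually non-equivalent blocks rather than finitely many repeated ones.
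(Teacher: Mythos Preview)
Your proposal is correct and matches the paper's approach exactly: the corollary is stated immediately after Theorem \ref{thmsl31} without a separate proof, as it is an immediate consequence of that theorem applied to $\mathfrak{sl}(3|1)$ and the infinite family $\{\xi^p\mid p>1\}$.
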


\begin{remark}
An alternative categorical invariant to $\flat \lambda$ would be to take $[P(\lambda):L(\lambda)]$. By BGG reciprocity and the fact that in the $\mathfrak{sl}(3|1)$ case the standard filtration of $P(\lambda)$ is multiplicity free (which follows from computation), $[P(\lambda):L(\lambda)]$ corresponds to the number of Verma modules in the standard filtration of $P(\lambda)$, so $\flat\lambda=2[P(\lambda):L(\lambda)]$.
\end{remark}
\begin{lemma}
\label{Vermaanti}
Any atypical integral Verma module of $\mathfrak{sl}(3|1)$ contains exactly two simple subquotients which have an anti-dominant highest weight.
\end{lemma}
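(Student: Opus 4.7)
Proof plan. Via equation~\eqref{resCheng}, the lemma is equivalent to the numerical identity
\[
\sum_{\mu\text{ anti-dominant}} d_{\mu,\lambda}(1) \;=\; 2
\]
for every atypical integral $\lambda\in\mathfrak{sl}(3|1)$. The strategy is to identify the two contributing anti-dominant weights explicitly and to argue that no other anti-dominant weight in the linkage class contributes to the canonical basis expansion.

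The first contributing weight is $\mu_1:=w_0\cdot\lambda_{\mathrm{dom}}$, the anti-dominant member of the Weyl orbit of $\lambda$ (with $\lambda_{\mathrm{dom}}$ the dominant representative of $W\cdot\lambda$). Its multiplicity $[M(\lambda):L(\mu_1)]=1$ follows directly from Lemma~\ref{KLorbit}: the intra-orbit multiplicity is identified with the classical $\mathfrak{gl}(3)$-multiplicity $[M_0(\lambda_{\mathrm{dom}}):L_0(\mu_1)]=1$. The second weight, $\mu_2$, is the anti-dominant element of the $W$-orbit of $\lambda-\gamma_0$, where $\gamma_0\in\Delta_1^+$ denotes the unique positive atypical root satisfying $(\lambda+\rho,\gamma_0)=0$; its multiplicity $1$ would be established by computing the canonical basis element $\dot{b}_{\mu_2}$ via Brundan's bumping procedure (Section~3 of \cite{Brundan3}) and reading off the coefficient of $\dot{v}_\lambda$. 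Morally, $\mu_2$ arises because the Bruhat relation $\lambda-\gamma_0\preceq\lambda$ produces a copy of $L(\mu_2)$ ``one step down'' from $L(\mu_1)$ through the odd reflection.

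The main obstacle is the vanishing $d_{\mu,\lambda}(1)=0$ for every further anti-dominant $\mu$ in the linkage class of $\lambda$---in particular for the iterated odd-reflection descendants $w_0\cdot(\lambda-k\gamma_0)_{\mathrm{dom}}$ with $k\ge 2$ and for more elaborate Bruhat descendants obtained by mixing odd and Weyl reflections. This would again be verified by the bumping procedure: for any such $\mu$ one checks that the monomial expansion of $\dot{b}_\mu$ does not contain $\dot{v}_\lambda$. Since atypicality for $\mathfrak{sl}(3|1)$ is at most $1$ and $W=S_3$, the combinatorial enumeration is finite and tractable, reducing the verification to a small list of explicit cases in the canonical basis algorithm.
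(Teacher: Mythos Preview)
Your approach is genuinely different from the paper's, and as written it is a plan rather than a proof. The paper argues structurally: $M(\lambda)$ has a Kac filtration in which $K(\nu)$ appears with multiplicity $[M_0(\lambda):L_0(\nu)]$, and a Kac module $K(\nu)$ can contain an anti-dominant simple subquotient only when $\nu$ itself is anti-dominant. Since each $M_0(\lambda)$ has exactly one anti-dominant simple constituent, this reduces the lemma to the case where $\lambda$ is anti-dominant and $M(\lambda)=K(\lambda)$. For that case the paper uses the $z$-grading: $K(\lambda)$ occupies four $z$-eigenvalues while any anti-dominant atypical simple for $\mathfrak{sl}(3|1)$ occupies three (by \cite{CouMus}), forcing any anti-dominant constituent $L(\nu)$ with $\nu\ne\lambda$ to have $\nu\in W\cdot(\lambda-\gamma)$. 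A one-dimensional weight-space count at $\lambda-2\rho_1$ gives multiplicity $\le 1$, and the top and socle of $K(\lambda)$ being anti-dominant gives at least two constituents in total. No case analysis or bumping computation is used.

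Your sketch has two concrete gaps. First, Lemma~\ref{KLorbit} does not justify $[M(\lambda):L(\mu_1)]=1$: that lemma identifies $\Ext^j_{\cO}(M(\mu),L(\lambda))$ with its classical analogue for $\mu,\lambda$ in the same $W$-orbit, i.e.\ it compares the $p$-polynomials of equation~\eqref{defp}, not the $d$-polynomials. Composition multiplicities $[M(\lambda):L(\mu_1)]=d_{\mu_1,\lambda}(1)$ come from the inverse matrix, and inversion does not restrict to a single $W$-orbit because the linkage class is strictly larger than the orbit. Second, the vanishing step is asserted but not carried out: the phrase ``the combinatorial enumeration is finite and tractable'' conceals that the lemma is a statement for \emph{every} atypical integral $\lambda$, an infinite family, and you offer no uniform pattern in the bumping algorithm that handles them all at once. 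Executing your plan would amount to a full computation of the canonical basis in every atypical block of $\mathfrak{sl}(3|1)$, substantially more work than the paper's structural argument and not something that can be waved through.
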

\begin{proof}
Take $\mu\in P_0$. Any Verma module $M(\mu)\simeq U(\fg)\otimes_{U(\fg_0+\fg_1)} M_0(\mu)$ has a Kac filtration. Moreover, the number of times $K(\lambda)$ appears in this filtration is equal to $[M_0(\mu): L_0(\lambda)]$. Since $\Lambda\fg_{-1}$ is a finite dimensional $\fg_0$-module, the only possibility for $K(\lambda)$ to have an anti-dominant simple subquotient is if $\lambda$ is anti-dominant itself. Now every Verma module $M_0(\mu)$ has exactly one anti-dominant simple subquotient, it hence suffices to prove the lemma for anti-dominant Verma modules $M(\mu)=K(\mu)$.

By construction $K(\mu)$ admits $4$ different eigenvalues of $z$ in equation~\eqref{gradel}. Lemma~6.10(i) in~\cite{CouMus} implies that any anti-dominant atypical simple module for $\mathfrak{sl}(3|1)$ admits $3$ different such eigenvalues. This implies that any anti-dominant simple subquotient $L(\nu)$ of $K(\mu)$ with $\nu\not=\mu$ must satisfy~$\nu\in W\cdot(\mu-\gamma)$ with $\gamma\in\Delta^+_1$ atypical for $\mu$. Note that $W\cdot(\mu-\gamma)$ does not depend on the atypical root $\gamma$, in case there is more than one. This hence leaves only one possibility besides $\mu$, {\it viz.} the unique anti-dominant weight in $W\cdot(\mu-\gamma)$, which we denote by $\nu$. We claim that $[K(\mu):L(\nu)]\le 1$. This follows from looking at weight spaces corresponding to the weight $\mu-2\rho_1$. In $K(\mu)$, this has dimension one, in $L(\mu)$ dimension zero and in $L(\nu)$ dimension one. The fact that there appears at least two anti-dominant simple modules follows from the fact that both the socle and top of $K(\mu)$ must be anti-dominant.
\end{proof}

\begin{remark}
Classically the equivalences between regular integral blocks are often given by translation functors, see Section 7.8 in~\cite{MR2428237}. The same holds for finite dimensional modules for Lie superalgebras. It is interesting to note how this fails for category~$\cO$ for superalgebras. The functor $F_p$ maps the block corresponding to $(p,1,0|0)$ to the one corresponding to $(p+1,1,0|0)$. According to Theorem 2.4 in~\cite{Ku}, this maps every simple module to a simple module with only three exceptions: $F_p$ maps the singular modules $L(p,p,1|p)$, $L(p,1,p|p)$ and $L(1,p,p|p)$ to indecomposable modules which are not simple.  

The principal used in the proof of Theorem \ref{thmsl31}, that the singular objects appear at different positions in the two blocks, is also responsible for the problematic behaviour of the translation functor. This principle namely causes translation onto the walls for some modules and translation out of the wall for other.
\end{remark}


\section{Complexity in category~$\cO$}
\label{seccomp}
In this section we introduce the notion of complexity in category~$\cO$ for basic classical Lie superalgebras, as the rate of polynomial growth of a minimal projective resolution of a module. We prove that this is well-defined, {\it i.e.} it is finite for every module. Then we study the relation between degree of atypicality and complexity of Verma and simple modules for $\mathfrak{gl}(m|n)$. Similar results for the category~$\cF$ have been obtained by Boe, Kujawa and Nakano in~\cite{BKN1, BKN2}.

\subsection{Definition and basic properties}

The usual notion of complexity, as introduced by Alperin, measures the rate of growth of the dimension in a minimal projective resolution. Since the projective objects in category~$\cO$ are infinite dimensional we need to consider instead the number of indecomposable projective objects. This variation has also been studied for the category of finite dimensional modules of $\mathfrak{gl}(m|n)$ in Section 9 in~\cite{BKN2} and is (contrary to the original approach) a categorical invariant.

\begin{definition}\label{defC}
For $M\in\cO$ we define $c_{\cO}(M)$, the complexity of $M$ in category~$\cO$, as
$$c_{\cO}(M)=r\left(\sum_{\mu\in\fh^\ast}\dim\Ext^\bullet_{\cO}(M,L(\mu))\right).$$
The rate of growth $r(c^\bullet)$ of a sequence of numbers $c^\bullet$ is defined as the smallest non-negative integer $k$ such that there is a constant $C>0$ for which $ c^j\le C j^{k-1}$ for all $j>0$. In case the $c^j$ are not finite or no such integer exists, we set $r(c^\bullet)=\infty$.
\end{definition}
By definition, the complexity of a module is zero if and only if it has finite projective dimension. Immediate from the definition we have the following properties.
\begin{lemma}
\label{lemcompseq}
Consider a short exact sequence $A_1\hookrightarrow A_2\tto A_3$ in category~$\cO$, then
$$c_{\cO}(A_i)\le\max\{c_{\cO}(A_j),c_{\cO}(A_k)\}$$
for any permutation $\{i,j,k\}$ of $\{1,2,3\}$.
\end{lemma}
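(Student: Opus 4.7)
The plan is to reduce the statement to a comparison of dimension sequences via the long exact sequence in $\Ext$. Applying the functor $\Hom_{\cO}(-, L(\mu))$ to $A_1 \hookrightarrow A_2 \tto A_3$ yields the long exact sequence
\begin{equation*}
\cdots \to \Ext_{\cO}^{j-1}(A_1, L(\mu)) \to \Ext_{\cO}^{j}(A_3, L(\mu)) \to \Ext_{\cO}^{j}(A_2, L(\mu)) \to \Ext_{\cO}^{j}(A_1, L(\mu)) \to \Ext_{\cO}^{j+1}(A_3, L(\mu)) \to \cdots
\end{equation*}
Writing $c^j(M) := \sum_{\mu \in \fh^\ast} \dim \Ext^j_{\cO}(M, L(\mu))$ and summing the exact-sequence inequalities over $\mu$ (which is licit, as for each fixed $j$ only finitely many $\mu$ contribute, since the $j$-th term of a minimal projective resolution of an object in $\cO$ is a finite direct sum of indecomposable projectives inside its block) one extracts the three inequalities
\begin{align*}
c^{j}(A_2) &\le c^{j}(A_1) + c^{j}(A_3),\\
c^{j}(A_1) &\le c^{j}(A_2) + c^{j+1}(A_3),\\
c^{j}(A_3) &\le c^{j}(A_2) + c^{j-1}(A_1),
\end{align*}
valid in every homological degree $j$.

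Next I would invoke two elementary properties of the rate of growth $r$ from Definition \ref{defC}: (a) if $a^j \le b^j + d^j$ for every $j$, then $r(a^\bullet) \le \max\{r(b^\bullet), r(d^\bullet)\}$; and (b) shifting a sequence by a fixed integer does not alter its rate of growth, so $r((c^{j\pm 1}(M))_{j}) = r(c^\bullet(M)) = c_{\cO}(M)$. Combining these with the three displayed inequalities yields
\begin{equation*}
c_{\cO}(A_2) \le \max\{c_{\cO}(A_1), c_{\cO}(A_3)\}, \quad c_{\cO}(A_1) \le \max\{c_{\cO}(A_2), c_{\cO}(A_3)\}, \quad c_{\cO}(A_3) \le \max\{c_{\cO}(A_2), c_{\cO}(A_1)\},
\end{equation*}
which is the statement of the lemma for each of the three permutations.

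There is essentially no obstacle: the only point that requires a moment of care is the finiteness of $c^j(M)$ for each $j$, so that summing the pointwise inequalities over $\mu$ remains well-defined. This finiteness is standard in category $\cO$, following from the block decomposition together with the observation that, for a finitely generated $M$, the top of each term of a minimal projective resolution is a finite semisimple module inside a single block. With that in hand, the argument is purely formal manipulation of dimension sequences.
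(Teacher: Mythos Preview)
Your argument is correct and is precisely the routine long-exact-sequence computation the paper has in mind; indeed the paper does not give a proof at all, simply declaring the lemma ``immediate from the definition.'' One tiny quibble: in your last paragraph you say the top of each term of a minimal projective resolution lies ``inside a single block,'' which need not hold if $M$ itself spans several blocks, but since $M$ decomposes as a finite direct sum over blocks the finiteness of $c^j(M)$ follows anyway.
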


As main results of this subsection we prove that this notion of complexity is well-defined for category~$\cO$ for basic classical Lie superalgebras and that translation functors cannot increase complexity.
\begin{proposition}
For any $M\in\cO$, the value $c_{\cO}(M)$ is finite dimensional, more precisely
$$c_{\cO}(M)\le\dim\fg_{\ob}.$$
\end{proposition}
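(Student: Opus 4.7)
The plan is to exhibit a (not necessarily minimal) projective resolution $P^\bullet \to M$ in $\cO$ whose $j$-th term $P^{-j}$ has a number of indecomposable summands bounded by a polynomial in $j$ of degree at most $\dim\fg_{\ob} - 1$. Since any projective resolution contains the minimal one as a direct summand, and the minimal resolution has exactly $\sum_\mu \dim\Ext^j_{\cO}(M,L(\mu))$ indecomposable summands at position $j$, such a bound forces $c_\cO(M) \le \dim \fg_{\ob}$.

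The resolution is assembled as the total complex of a double complex with two ingredients. First, we use the relative super Chevalley--Eilenberg resolution of $M$ for the pair $(\fg,\fg_{\oa})$:
$$\cdots \to \Ind\bigl(S^k(\fg_{\ob}) \otimes \Res M\bigr) \to \cdots \to \Ind(\Res M) \to M \to 0,$$
where $S^k(\fg_{\ob})$ denotes the ordinary (ungraded) $k$-th symmetric power of $\fg_{\ob}$; this is precisely the super-exterior power appearing in the relative Chevalley--Eilenberg complex for a purely odd quotient, has dimension $\binom{k+d-1}{d-1}$ with $d := \dim \fg_{\ob}$, and grows polynomially in $k$ of degree $d-1$. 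Exactness follows from the PBW isomorphism $U(\fg) \simeq U(\fg_{\oa}) \otimes \Lambda(\fg_{\ob})$ of left $U(\fg_{\oa})$-modules in the standard way. Since its terms are only $(\fg,\fg_{\oa})$-relatively projective, we further fix a minimal projective resolution $Q^\bullet \to \Res M$ in $\cO^0$---of length at most $N := 2\len(w_0)$ by the classical finite global dimension of $\cO^0$ for the reductive Lie algebra $\fg_{\oa}$---tensor it with the finite-dimensional $\fg_{\oa}$-module $S^k(\fg_{\ob})$ (preserving both projectivity and exactness), and apply the exact functor $\Ind$. The total complex is the desired $P^\bullet \to M$.

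At position $j$ we obtain $P^{-j} = \bigoplus_{k+p=j,\, 0 \le p \le N} \Ind(Q^{-p} \otimes S^k(\fg_{\ob}))$. Writing each $Q^{-p}$ as a fixed finite sum of indecomposables $P_0(\lambda)$ with $\lambda$ in a finite set $\Xi$ independent of $k$, Frobenius reciprocity yields
$$\sum_\mu \dim \Hom_{\cO}\bigl(\Ind(P_0(\lambda) \otimes S^k(\fg_{\ob})), L(\mu)\bigr) \,=\, \sum_\mu \bigl[S^k(\fg_{\ob})^* \otimes \Res L(\mu) : L_0(\lambda)\bigr].$$
Since $L(\mu)$ is a subquotient of $M(\mu)$ and $\Res M(\mu)$ is, by PBW on the negative nilradical, an $M_0$-type module tensored with the finite-dimensional $\fg_{\oa}$-module $\Lambda(\fn^-_{\ob})$, the right-hand side is bounded by $\dim S^k(\fg_{\ob})$ times a constant that is uniform in $\lambda\in\Xi$: the relevant $\fg_{\oa}$-composition multiplicities reduce to Kazhdan--Lusztig multiplicities of Verma modules for $\fg_{\oa}$, which are uniformly bounded for a fixed reductive Lie algebra. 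Summing over $k + p = j$ with $p \le N$ then produces a total of order $O(j^{d-1})$, which is the required bound.

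The main obstacle is precisely this uniform Kazhdan--Lusztig-type control, namely the boundedness of $\sum_\mu [\Res M(\mu): L_0(\lambda)]$ independently of $\lambda$, which itself reduces via the PBW decomposition to the boundedness of $\sum_{\alpha}[M_0(\alpha) : L_0(\lambda')]$ over all $\lambda'$ in a finite weight-shifted set. This uniform estimate rests on the finiteness of the Weyl group of $\fg_{\oa}$ combined with boundedness of KL polynomial coefficients across all integral Weyl subgroups of $\fg_{\oa}$. The remaining ingredients---exactness of the super Chevalley--Eilenberg complex, compatibility of $\Ind$ with totalization, and polynomial growth of $\dim S^k(\fg_{\ob})$---are formal.
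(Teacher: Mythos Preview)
Your approach is correct and essentially the same as the paper's: both use the relative Chevalley--Eilenberg resolution for the pair $(\fg,\fg_{\oa})$, the finite global dimension of $\cO^0$, and a uniform bound on the number of indecomposable projective summands appearing after tensoring with a finite-dimensional module and inducing (the paper isolates this last step as Lemmas~\ref{bound2} and~\ref{bound3} in the appendix, which amount precisely to your ``uniform Kazhdan--Lusztig-type control''). The only difference is packaging: the paper runs an induction on $\pd_{\cO^0}\Res M$---using that the kernel of a projective cover in $\cO$ restricts to something of strictly smaller projective dimension in $\cO^0$, and handling the base case $\pd_{\cO^0}\Res M=0$ directly since then the Chevalley--Eilenberg complex is already a projective resolution in $\cO$---whereas you build a total complex. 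The paper's inductive formulation sidesteps the one point you pass over, namely that the Chevalley--Eilenberg differential uses the $\fg$-action on $M$ and does not literally act on $\Ind(S^k(\fg_{\ob})\otimes Q^{-p})$; to get an honest double complex you must lift the maps between the terms $\Ind(S^k(\fg_{\ob})\otimes\Res M)$ to chain maps between their column resolutions (a Cartan--Eilenberg or iterated-horseshoe construction), which is routine since the $p$-direction is bounded by $2\ell(w_0)$.
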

\begin{proof}
We prove this by induction on the (finite) projective dimension of $\Res M$ in category~$\cO^0$. Assume that the property holds for any $K\in\cO$ with $\pd_{\cO^0}\Res K< p$. Denote the projective cover of an $M\in\cO$, with $\pd_{\cO^0}\Res M= p$, by $P$ and the kernel of the morphism $P\tto M$ by $N$. Since $\pd_{\cO^0}\Res N< p$ and $c_{\cO}(P)=0$, the induction step and Lemma~\ref{lemcompseq} imply that $$c_{\cO}(M)\le c_{\cO}(N)\le \dim\fg_{\ob}.$$

It remains to be proved that $c_{\cO}(M)\le\dim\fg_{\ob}$ in case $\Res M$ is projective in $\cO^0$. We consider the Chevalley-Eilenberg resolution of $\mC$ for $(\fg,\fg_{\oa})$-relative homological algebra. It was proved explicitly in Proposition 2.4.1 of~\cite{BKN1} that this is a $(\fg,\fg_{\oa})$-projective resolution of $\mC$. Tensoring this resolution with $M$ yields an exact complex
$$\cdots\to U(\fg)\otimes_{U(\fg_{\oa})}(S^j(\fg_{\ob})\otimes \Res M) \to\cdots \to U(\fg)\otimes_{\fg_{\oa}}(\fg_{\ob}\otimes\Res M)\to M\to 0.$$
Since $\Res M$ is projective in $\cO^0$, this is a projective resolution in $\cO$ of $M$. Applying Frobenius reciprocity then implies
$$\sum_{\mu\in\fh^\ast}\dim \Ext^j_{\cO}(M,L(\mu))\le \sum_{\mu\in\fh^\ast}\dim\Hom_{\fg_{\oa}}(S^j(\fg_{\ob})\otimes \Res M, \Res L(\mu)).$$
Lemma~\ref{bound2} applied to $\fg_{\oa}$ and Lemma~\ref{bound3} then allow to conclude
$$\sum_{\mu\in\fh^\ast}\dim \Ext^j_{\cO}(M,L(\mu))\le C_{\fg_{\oa}}\,\widetilde{C}_{\fg}\,q\, \dim S^j(\fg_{\ob}) .$$
with $q$ the number of indecomposable projective modules of $\cO^0$ in $\Res M$. The result thus follows from the fact that the polynomial grow rate of $\dim S^j(\fg_{\ob})$ is $\dim\fg_{\ob}$.
\end{proof}

\begin{proposition}
\label{propT}
Consider any finite dimensional module $V$ and translation functor 
$$T^{\chi,\chi'}_V:\cO_{\chi}\to \cO_{\chi'}\,\,:\quad M\in\cO_{\chi}\mapsto T^{\chi,\chi'}_V(M)= (M\otimes V)_{\chi'}\in\cO_{\chi'}.$$
Then we have $c_{\cO}(T^{\chi,\chi'}_V(M))\le c_{\cO}(M)$.
\end{proposition}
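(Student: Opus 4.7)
The plan is to use that $T:=T^{\chi,\chi'}_V$ is exact and admits the biadjoint $T^*:=T^{\chi',\chi}_{V^*}$, obtained by combining the hom--tensor adjunction $\Hom_{\fg}(V\otimes A,B)\simeq\Hom_{\fg}(A,V^*\otimes B)$ with the exactness of block projection. Since $T$ is exact with exact right adjoint $T^*$, it sends projectives to projectives. First I would take a minimal projective resolution $P_\bullet\to M$ in $\cO_\chi$, for which $\dim\Ext^j_{\cO}(M,L(\nu))=\dim\Hom_{\cO}(P_j,L(\nu))$ for every simple $L(\nu)$. Applying $T$ yields a (not necessarily minimal) projective resolution $T(P_\bullet)\to T(M)$ in $\cO_{\chi'}$, and since the minimal projective resolution of $T(M)$ is a direct summand of $T(P_\bullet)$ one obtains
\[
\sum_\lambda \dim\Ext^j_{\cO}(T(M),L(\lambda))\;\le\;\sum_\lambda \dim\Hom_{\cO}(T(P_j),L(\lambda)).
\]

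Next I would use the adjunction together with the exactness of $\Hom(P_j,-)$ applied to a composition series of the finite length module $T^*L(\lambda)$ to rewrite the right hand side as
\[
\sum_{\lambda,\nu}[T^*L(\lambda):L(\nu)]\,\dim\Hom_{\cO}(P_j,L(\nu))\;=\;\sum_\nu a_\nu\,\dim\Ext^j_{\cO}(M,L(\nu)),
\]
where $a_\nu:=\sum_\lambda[T^*L(\lambda):L(\nu)]$. Reapplying the adjunction identifies $a_\nu$ with the number of indecomposable direct summands of the projective module $T(P(\nu))$. A uniform bound $a_\nu\le C$ depending only on $V$ and the pair of blocks then gives $\sum_\lambda\dim\Ext^j_{\cO}(T(M),L(\lambda))\le C\sum_\nu\dim\Ext^j_{\cO}(M,L(\nu))$, and the proposition follows at once from Definition~\ref{defC}.

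The main obstacle is precisely establishing this uniform bound on $a_\nu$. The Verma flag of $V\otimes P(\nu)$ has length $\dim V\cdot(P(\nu):\textrm{Verma})$, which depends on $\nu$, so a naive global estimate is insufficient. The key point is that the projection to $\cO_{\chi'}$ retains only those Verma factors $M(\mu+\gamma)$ of $V\otimes P(\nu)$ for which $\chi_{\mu+\gamma}=\chi'$, where $\mu$ runs over the standard filtration of $P(\nu)$ (so $\chi_\mu=\chi$) and $\gamma$ over the weights of $V$. Since the central character condition $\chi_{\mu+\gamma}=\chi'$ is determined by the fixed pair $(\chi,\chi')$, for each $\mu$ it singles out a subset of the weights of $V$ whose cardinality is bounded independently of $\mu$; combined with BGG reciprocity, turning the resulting standard filtration count into a count of indecomposable summands, this should yield the required uniform estimate on $a_\nu$.
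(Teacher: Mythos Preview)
Your overall strategy---take a minimal projective resolution of $M$, apply the exact functor $T$, and bound the number of indecomposable projective summands in each term---is exactly the paper's approach, and your reduction of the problem to a uniform bound $a_\nu\le C$ on the number of indecomposable summands of $T(P(\nu))$ is correct and matches what the paper needs.

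However, your proposed route to this bound has a genuine gap. Counting Verma factors in $T(P(\nu))$ yields at most $c\cdot\sum_\mu(P(\nu):M(\mu))$ for some $c$ depending only on $V$ and~$(\chi,\chi')$; but by BGG reciprocity $\sum_\mu(P(\nu):M(\mu))=\sum_\mu[M(\mu):L(\nu)]$, and this quantity is unbounded as $\nu$ varies (already for semisimple Lie algebras it grows with the length of~$\nu$ in its Weyl group orbit). BGG reciprocity does not turn a standard-filtration length into a summand count in any way that restores uniformity: the number of summands of a projective is the length of its top, not of its Verma flag. The paper obtains the bound by a different argument (Lemma~\ref{bound2}): one rewrites $a_\nu=\sum_\lambda[L(\lambda)\otimes V^\ast:L(\nu)]$, bounds this by $\sum_\lambda[\Res(M(\lambda)\otimes V^\ast):L_{\oa}(\nu)]$, and then controls the latter via the size of the Weyl group and the maximal length of a $\fg_{\oa}$-Verma module, yielding $a_\nu\le C_{\fg}\dim V$ uniformly in~$\nu$.
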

\begin{proof}
Consider a minimal projective resolution of $M$. Its rate of polynomial growth is~$c_{\cO}(M)$. This projective resolution is mapped by the exact functor $T^{\chi,\chi'}_V$ to a (not necessarily minimal) projective resolution of $T^{\chi,\chi'}_V(M)$. The polynomial rate of that resolution is smaller or equal to $c_{\cO}(M)$, by Lemma~\ref{bound2}.
\end{proof}

\begin{corollary}
Consider a translation functor $T=T^{\chi,\chi'}_V$ with adjoint $\widetilde T=T^{\chi',\chi}_{V^\ast}$ and $M\in\cO_{\chi}$. If for $M':=TM$ we have $\widetilde T M'\simeq M$, then $c_{\cO}(M)=c_{\cO}(M').$
\end{corollary}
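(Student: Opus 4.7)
The plan is to reduce this to two applications of Proposition \ref{propT}. First, I would apply Proposition \ref{propT} directly to $T = T^{\chi,\chi'}_V$ and $M \in \cO_\chi$, yielding
\[
c_\cO(M') \;=\; c_\cO(TM) \;\le\; c_\cO(M).
\]
Next, I would observe that the adjoint $\widetilde T = T^{\chi',\chi}_{V^\ast}$ is itself a translation functor of the form considered in Proposition \ref{propT}: since $V$ is finite dimensional, so is $V^\ast$, and $\widetilde T$ is the composition of tensoring with $V^\ast$ followed by projection onto $\cO_\chi$. Therefore Proposition \ref{propT}, applied to $\widetilde T$ and the object $M' \in \cO_{\chi'}$, gives
\[
c_\cO(\widetilde T M') \;\le\; c_\cO(M').
\]

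By the hypothesis $\widetilde T M' \simeq M$, the left-hand side equals $c_\cO(M)$, so we obtain the reverse inequality $c_\cO(M) \le c_\cO(M')$, and combining with the first inequality yields the desired equality. There is no real obstacle here: the argument is essentially a sandwich obtained from the monotonicity statement of Proposition \ref{propT}, together with the elementary observation that the adjoint of a translation functor falls under the same proposition. The only thing worth checking carefully is that the hypothesis is used in the right place — namely, that we do not need to know $T\widetilde T M' \simeq M'$ or any other compatibility beyond the given isomorphism $\widetilde T M' \simeq M$.
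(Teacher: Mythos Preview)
Your argument is correct and is exactly the intended one: the paper states this corollary without proof, as it follows immediately from two applications of Proposition~\ref{propT}, one to $T$ and one to its adjoint $\widetilde T$, using the hypothesis $\widetilde T M'\simeq M$ for the reverse inequality.
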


\subsection{Complexity of Verma modules for $\mathfrak{gl}(m|n)$}

By the equivalences of categories in \cite{CMW}, it suffices to compute the complexity for Verma modules in integral blocks.  Their complexity is in principle determined by Brundan's KL polynomials. Using equation~\eqref{defp}, for any $\lambda\in\Lambda$ we introduce the notation
\begin{equation}\label{KLVcompl}p_{\lambda}^j=\sum_{\nu\in\Lambda}\dim\Ext_{\cO}^j(M(\lambda),L(\nu))=\frac{1}{j!} \sum_{\nu\in\Lambda} \left(\frac{\partial^j}{\partial q^j}p_{\lambda,\nu}(q)\right)_{q=0}.\end{equation}

\begin{theorem}
\label{complVerma}
There are constants $C_{k}$, such that for any $\lambda\in\Lambda$ with $\sharp[\lambda]=k$ we have
\begin{equation}\label{eqthm}p_{\lambda}^j\;\le\;C_{k}\, j^{k-1},\qquad \forall j>0.
\end{equation}
The complexity of a Verma module satisfies $c_{\cO}(M(\lambda))=\sharp[\lambda]$ if $\lambda\in\Lambda$ is regular and $c_{\cO}(M(\lambda))\le\sharp[\lambda]$ if $\lambda\in\Lambda$ is singular.
\end{theorem}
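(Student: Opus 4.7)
Set $k=\sharp[\lambda]$. The strategy is to first prove the quantitative estimate~\eqref{eqthm}, which immediately forces $c_\cO(M(\lambda))\le k$ by Definition~\ref{defC} and hence covers both the singular case and the upper half of the regular case, and then to produce a matching lower bound when $\lambda$ is regular.

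For the upper bound, Theorem~\ref{KLtheory} together with Lemma~\ref{vancentre} restrict the $\nu$ that contribute to $p^j_\lambda$ to those satisfying $\nu\preceq\lambda$, $\len(\lambda,\nu)\ge j$ and $|\lambda(z)-\nu(z)|\le j+\dim\fg_1$; in particular the labels of such $\nu$ lie in an interval $I_j\subset\mZ$ whose size grows linearly in $j$. Applying~\eqref{extsubquo} together with the equivalence $\cO_{I_j}\simeq\cO'_{I_j}$ transfers the computation of $p^j_\lambda$ to a sum of KL polynomial coefficients in a parabolic category~$\cO$ for a classical general linear Lie algebra. Under $\phi_{I_j}$, only the $k$ atypical labels of $\lambda$ (which appear as the matched pairs between the two sides) are free to move in the Bruhat order, while the typical labels are rigidly locked. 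Consequently the number of $\nu\in\Lambda$ at $\len$-distance $j$ from $\lambda$ is $O(j^{k-1})$, and each relevant integer KL coefficient is uniformly bounded, producing a constant $C_k$ with $p^j_\lambda\le C_kj^{k-1}$. The same bound can also be obtained directly from Brundan's bumping algorithm for the canonical basis: the expansion $\dot v_\lambda=\sum_\nu p_{\lambda,\nu}(-q)\dot b_\nu$ is built from elementary moves acting only on the $k$ atypical positions, and contributions of total $q$-degree $j$ are indexed by compositions of $j$ into at most $k$ parts, of which there are $\binom{j+k-1}{k-1}=O(j^{k-1})$.

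For the equality in the regular case, let $\gamma_1,\dots,\gamma_k\in\Delta_{\ob}^+$ be the mutually orthogonal, linearly independent positive odd roots atypical to $\lambda$. Iterating the odd-reflection construction of Lemma~\ref{oneroot2} produces Verma embeddings $M(\lambda-\sum_i n_i\gamma_i)\hookrightarrow M(\lambda)$ for every tuple $(n_1,\dots,n_k)\in\N^k$. Combined with the Koszul property of~$\cO$ together with~\eqref{defp} and~\eqref{extsimple}, each such tuple produces a distinct $\nu\in\Lambda$ with $\Ext^j_\cO(M(\lambda),L(\nu))\ne 0$ for $j$ of appropriate parity, giving at least $\binom{\lfloor j/2\rfloor+k-1}{k-1}$ contributions to $p^j_\lambda$. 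Regularity is needed to ensure that different tuples yield genuinely different $\nu$; in the singular case a nontrivial Weyl-group stabiliser of $\lambda$ identifies tuples related by the stabiliser, which is precisely why only the upper bound survives there. The main obstacle is this independence step in the lower bound: one must track the atypical roots $\gamma_i$ carefully through the Bruhat order and confirm that the extensions produced are not accidentally cancelling, whereas the upper bound is essentially a combinatorial finiteness statement already implicit in the bumping algorithm.
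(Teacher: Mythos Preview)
Your proposal sketches the right shape (polynomial upper bound plus matching lower bound for regular weights), but both halves contain genuine gaps and the paper proceeds quite differently.

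\textbf{Upper bound.} Your argument rests on two unjustified claims. First, that ``each relevant integer KL coefficient is uniformly bounded'': after passing through $\phi_{I_j}$ you are computing parabolic KL polynomials for $\mathfrak{gl}(m+|I_j|n)$ with $|I_j|$ growing linearly in $j$, and KL coefficients for Lie algebras of growing rank are not uniformly bounded in general; you give no reason why the particular ones arising here should be. Second, the assertion that ``typical labels are rigidly locked'' in the Bruhat order is false---the Weyl group acts on all labels, typical or not (see the $(01|1)\prec(10|1)$ step in Appendix~\ref{gl21}). Even granting that the Weyl group contribution is finite, disentangling its interaction with the atypical moves to get a clean $O(j^{k-1})$ count is exactly the hard part, and the bumping algorithm computes the $d_{\mu,\lambda}$ rather than the inverse polynomials $p_{\lambda,\nu}$ you need. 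The paper avoids all of this by induction on the atypicality degree~$k$: one proves property {\bf P}$[k]$ from {\bf P}$[k-1]$ using the translation functors $E_a,F_a$ to fit $M(\lambda)$ into short exact sequences whose other terms are either translated Verma modules of atypicality~$k-1$ or Verma modules of atypicality~$k$ with simpler label configuration (Lemmata~\ref{newlem1}--\ref{newlem3} and~\ref{lemcVerma}). The bound on how translation affects the size of a projective resolution comes from Lemma~\ref{bound2}, not from any KL estimate.

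\textbf{Lower bound.} You invoke Lemma~\ref{oneroot2}, but that lemma concerns the associated variety $X_{M(\lambda)}$, not Verma embeddings; it does not produce inclusions $M(\lambda-\sum n_i\gamma_i)\hookrightarrow M(\lambda)$, and even if such embeddings existed they would not directly give the required nonvanishing of $\Ext^j_\cO(M(\lambda),L(\nu))$. The paper instead first reduces an arbitrary regular $\lambda=w\cdot\kappa$ to the dominant $\kappa\in\Lambda^{++}$ via twisting functors (Lemma~\ref{lemtwist}$(ii)$ gives $\Ext^j_\cO(M(w\cdot\kappa),L(\nu))\simeq\Ext^{j-l(w)}_\cO(M(\kappa),L(\nu))$ for $\nu\in\Lambda^{++}$), and then for dominant $\kappa$ uses Lemma~\ref{lemOF} to identify $\sum_{\nu\in\Lambda^{++}}\dim\Ext^j_\cO(M(\kappa),L(\nu))$ with the corresponding sum in $\cF$, which is exactly $\binom{k+j-1}{k-1}$ by Lemma~\ref{lemF}. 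This subsequence of $p^j_\lambda$ already has growth rate~$k$, giving the lower bound cleanly.
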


\begin{theorem}
\label{complK}
Any module $M\in \cO_\xi$ which is either $\fg_1$-free or $\fg_{-1}$-free has $c_{\cO}(M)\le \sharp\xi$.
\end{theorem}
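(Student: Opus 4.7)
The plan is to reduce the complexity bound for $\fg_{\pm 1}$-free modules to the Verma-module bound of Theorem~\ref{complVerma}, by constructing a finite resolution of each (dual) Kac module through a projective resolution of $L_0(\lambda)$ in $\cO^0$.

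For the $\fg_{-1}$-free case, Proposition~\ref{propflag} equips $M$ with a Kac flag, so by iterated use of Lemma~\ref{lemcompseq} it suffices to bound $c_{\cO}(K(\lambda))\le \sharp\xi$ for each $\lambda$ occurring in the flag (which necessarily lies in $\xi$, since the central character is preserved by subquotients). The plan is to write $K(\lambda)=U(\fg)\otimes_{U(\fp)}L_0(\lambda)$ with $\fp=\fg_0\oplus\fg_1$, take a finite projective resolution $P_0^{\bullet}\twoheadrightarrow L_0(\lambda)$ in $\cO^0$ (finite because $\cO^0$ has finite global dimension), and push it through the exact induction functor $U(\fg)\otimes_{U(\fp)}-$ (exact because $U(\fg)$ is free over $U(\fp)$) to obtain a finite resolution
\[ 0\to U(\fg)\otimes_{U(\fp)}P_0^m\to\cdots\to U(\fg)\otimes_{U(\fp)}P_0^0\to K(\lambda)\to 0 \]
in $\cO$. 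Each $P_0^i$ admits a Verma flag in $\cO^0$ whose subquotients $M_0(\mu)$ satisfy $\mu\in W\cdot\lambda$, and applying induction gives $U(\fg)\otimes_{U(\fp)}P_0^i$ a filtration by the Verma modules $M(\mu)=U(\fg)\otimes_{U(\fp)}M_0(\mu)$ via transitivity of induction (using $U(\fp)\otimes_{U(\fb)}\C_\mu\simeq M_0(\mu)$). Since $W\cdot\lambda\subset [\lambda]=\xi$ one has $\sharp [\mu]=\sharp\xi$, and Theorem~\ref{complVerma} yields $c_{\cO}(M(\mu))\le\sharp\xi$. Applying Lemma~\ref{lemcompseq} first within each induced module along its Verma filtration, then along the resolution above, and finally along the original Kac flag of $M$ yields $c_{\cO}(M)\le\sharp\xi$.

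For the $\fg_1$-free case the argument is parallel: Proposition~\ref{propflag} now produces a dual Kac flag, and one replaces $\fp$ by $\fq=\fg_0\oplus\fg_{-1}$ and uses $\overline{K}(\lambda)=U(\fg)\otimes_{U(\fq)}L_0(\lambda)$. The induced resolution is now filtered by Verma modules $U(\fg)\otimes_{U(\fb')}\C_\mu$ for the anti-distinguished Borel $\fb'=\fb_0\oplus\fg_{-1}$; identifying these with distinguished Verma modules through the isomorphism $\mathfrak{gl}(m|n)\simeq\mathfrak{gl}(n|m)$ (which swaps the two Borels while preserving the atypicality of the linkage class) makes Theorem~\ref{complVerma} applicable, providing again the bound $\le\sharp\xi$ on their complexity.

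The main technical point is the bookkeeping in the middle step: confirming that induction transfers the Verma filtration on each $P_0^i$ to a genuine Verma filtration of the induced module via transitivity of induction, and that the atypicality invariant $\sharp$ is propagated correctly, both through the embedding $W\cdot\lambda\subset[\lambda]$ in the $\cO^0$-resolution and through the identification of anti-distinguished Verma modules with distinguished ones in the $\fg_1$-free case. Once this is in place, everything else is a direct iteration of Lemma~\ref{lemcompseq}.
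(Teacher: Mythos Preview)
Your argument is correct and arrives at the same conclusion via a genuinely different route from the paper. The paper bounds $c_{\cO}(K(\lambda))$ by induction on the $\fg_0$-Bruhat order: for anti-dominant $\mu$ one has $K(\mu)=M(\mu)$ and Theorem~\ref{complVerma} applies directly, while for general $\lambda$ the kernel $N$ of $M(\lambda)\twoheadrightarrow K(\lambda)$ has a Kac flag with strictly smaller subquotients, so the induction hypothesis plus Lemma~\ref{lemcompseq} gives $c_{\cO}(N)\le\sharp\xi$, whence $c_{\cO}(K(\lambda))\le\sharp\xi$. You instead resolve $K(\lambda)$ directly by inducing a finite projective resolution of $L_0(\lambda)$ from $\cO^0$, obtaining a finite complex of Verma-filtered modules. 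Both approaches feed the same input (Theorem~\ref{complVerma}) through repeated applications of Lemma~\ref{lemcompseq}; the paper's induction avoids the transitivity-of-induction bookkeeping, while your resolution makes the reduction to Verma modules more transparent and uniform. For the $\fg_1$-free case the paper simply remarks that the same argument works for the anti-distinguished Borel, which is exactly your use of the isomorphism $\mathfrak{gl}(m|n)\simeq\mathfrak{gl}(n|m)$.
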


The remainder of this subsection is devoted to the proof of these theorems, but first we observe that the corresponding property for category~$\cF$ as derived in~\cite{BKN2} can be made even more precise.

\begin{lemma}
\label{lemF}
For any $\lambda\in\Lambda^{++}$ with $\sharp[\lambda]=k$, we have
$$\sum_{\nu\in\Lambda^{++}}\dim\Ext^j_{\cF}(K(\lambda),L(\nu))\,=\,\binom{k+j-1}{k-1}\,=\,\frac{1}{(k-1)!}(j^{k-1}+\frac{1}{2}k(k-1)j^{k-2}+\cdots).$$
\end{lemma}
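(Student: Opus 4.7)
The plan is to observe that $\sum_{j\ge 0}\binom{k+j-1}{k-1}q^j=(1-q)^{-k}$, so the lemma is equivalent to the generating function identity
$$P_\lambda(q)\;:=\;\sum_{\nu\in\Lambda^{++}}\sum_{j\ge 0}q^j\dim\Ext^j_{\cF}(K(\lambda),L(\nu))\;=\;\frac{1}{(1-q)^k}.$$

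First I would apply Lemma~\ref{lemOF}: for $\lambda,\nu\in\Lambda^{++}$ we have $\dim\Ext^j_{\cF}(K(\lambda),L(\nu))=\dim\Ext^j_{\cO}(M(\lambda),L(\nu))$, which by~\eqref{defp} is the coefficient of $q^j$ in the Kazhdan--Lusztig polynomial $p_{\lambda,\nu}(q)$. Hence $P_\lambda(q)=\sum_{\nu\in\Lambda^{++}}p_{\lambda,\nu}(q)$, and the problem is transferred to the tensor space $\dot{\mathbb V}^{\otimes m}\otimes\dot{\mathbb W}^{\otimes n}$ via Brundan's identification of these KL polynomials with the ones arising in the transition matrix from the canonical basis to the monomial basis.

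Next I would carry out an induction on the atypicality $k$. The base case $k=0$ is the typical case, where $K(\lambda)=L(\lambda)$ is projective and injective in $\cF$, so the only nontrivial term is $\dim\Ext^0(L(\lambda),L(\lambda))=1$, matching $(1-q)^0=1$. For the inductive step, I would pick one of the $k$ atypical roots $\gamma$ of $\lambda$ and use Brundan's explicit bumping algorithm (Section~3 of~\cite{Brundan3}) governing the expansion $\dot v_\lambda=\sum_\nu p_{\lambda,\nu}(-q)\dot b_\nu$. At the level of weight diagrams, the slide-moves associated with $\gamma$ commute with those associated with the remaining $k-1$ atypical roots, because distinct atypical pairs occupy disjoint regions of the diagram and the slides only exchange labels within their own region. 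This factors the sum as $P_\lambda(q)=\tfrac{1}{1-q}\cdot P_{\lambda'}(q)$, where $\lambda'$ has atypicality $k-1$, and the induction hypothesis concludes.

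Extracting the coefficient of $q^j$ in $(1-q)^{-k}$ then yields $\binom{k+j-1}{k-1}$, which is the claimed identity. The main obstacle is the inductive step: one must justify rigorously that the slide-moves at different atypical caps are truly independent, and that every sequence of such moves produces a weight which remains in $\Lambda^{++}$ (so it contributes to the sum). This independence is visible at the level of Brundan's diagrammatics but requires a careful check that no cross-cap interaction occurs during the bumping procedure, and that the resulting slides at a single atypical cap stratify into a chain contributing $1+q+q^2+\cdots$ to the generating function, with each step landing on a dominant weight.
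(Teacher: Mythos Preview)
Your generating-function reformulation is correct, and the induction scheme you outline is sound in spirit; however, you are essentially re-deriving a result that already exists in the literature. The paper's proof is a one-liner: it cites Theorem~4.51 and Corollary~3.39(ii) of~\cite{Brundan}, which together give directly that
\[
\sum_{\nu\in\Lambda^{++}}\dim\Ext^j_{\cF}(K(\lambda),L(\nu))\;=\;\sharp\{\theta\in\N^k\,:\,|\theta|=j\}\;=\;\binom{k+j-1}{k-1}.
\]
In other words, Brundan has already shown that the dominant $\nu$ contributing to the expansion of $\dot v_\lambda$ are parametrised by $\N^k$, with the $\theta$-th weight appearing with polynomial $q^{|\theta|}$. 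This is precisely the ``independence of the $k$ slides'' that you identify as the obstacle in your inductive step.

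So your route is not wrong, but it is a different organisation: rather than citing the finished result, you are sketching how one might prove it. The detour through Lemma~\ref{lemOF} and the $\cO$-polynomials is unnecessary, since the KL polynomials for~$\cF$ are computed directly in~\cite{Brundan}. More importantly, the heart of your inductive step --- that the slide at one atypical cap contributes a factor $(1-q)^{-1}$ independently of the others --- is exactly the nontrivial combinatorial content of Corollary~3.39(ii), and you have correctly flagged that this needs a careful argument (the caps do not in general occupy ``disjoint regions'' of the number line, so the independence is not automatic from the picture). If you want to make your argument self-contained, you would need to reproduce that part of Brundan's analysis; otherwise, simply cite the result as the paper does.
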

\begin{proof}
Theorem 4.51 and Corollary 3.39(ii) in~\cite{Brundan} imply that
$$\sum_{\nu\in\Lambda^{++}}\dim\Ext^j_{\cF}(K(\lambda),L(\nu))\,=\,\sharp\{\theta\in \N^{k}\,|\,\, |\theta|=j \},$$
which proves the statement.
\end{proof}

For the subsequent proofs, we divide the atypical weights into four mutually exclusive types. For $\lambda\in\Lambda$, we set $a_\lambda$ equal to the highest label which appears on both sides.
\begin{enumerate}[(a)]
\item There is no label in $\lambda$ higher than $a_\lambda$.
\item There is a label in $\lambda$ higher than $a_\lambda$, but no label equal to $a_\lambda+1$.
\item There is a label equal to $a_\lambda+1$, but only one occurrence of $a_\lambda$ on each side.
\item There is a label equal to $a_\lambda+1$, as well as multiple occurrences of $a_\lambda$ on some side.
\end{enumerate}
We also set $v(\lambda)$ equal to the number of labels in $\lambda$ strictly higher than $a_\lambda$. So $v(\lambda)=0$ iff $\lambda$ satisfies (a).

Furthermore, we denote by {\bf P}$[k]$ for $0\le k\le\min(m,n)$ the property that there is a constant $C_{k}$, such that \eqref{eqthm} is true for all $\lambda\in\Lambda$ with $\sharp[\lambda]=k$. We will freely use the constant $C:=C_\fg$ from Lemma~\ref{bound2}.

\begin{lemma} 
\label{newlem1}
Assume that property {\bf P}$[k-1]$ holds and consider $\lambda\in\Lambda$ with $\sharp[\lambda]=k$.
\begin{enumerate}
\item If $\lambda$ satisfies $($a$)$, we have
$$p^j_\lambda\le (m+n)^2CC_{k-1}\sum_{l=0}^j (j-l)^{k-2}\le (m+n)^2CC_{k-1}j^{k-1}.$$
\item If $\lambda$ satisfies $($b$)$, denote the lowest label in $\lambda$ strictly higher than $a_\lambda$ by $b_\lambda$, set $d:=b_\lambda-a_\lambda-1>0$. Then we have 
$$p^j_\lambda\le (m+n)^2CC_{k-1}j^{k-1}+\sum_{i=1}^yp^{j-d}_{\lambda_{(i)}},$$
for some $y<m+n$ with $\lambda_{(i)}\in\Lambda$ satisfying $($c$)$, $v(\lambda_{(i)})=v(\lambda)$ and $\sharp[\lambda_{(i)}]=k$.
 \end{enumerate}
\end{lemma}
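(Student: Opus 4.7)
The proof proceeds by induction, relating $M(\lambda)$ to Verma modules of atypicality $k-1$ via the translation functors $E_i$ and $F_i$ from Section~\ref{prelO}, and invoking the induction hypothesis $\mathbf{P}[k-1]$. The factor $(m+n)^2C$ in the final bound will come from Lemma~\ref{bound2} (used once for a translation functor and once for its adjoint) together with the fact that the standard filtrations of $M(\lambda)\otimes U$ and $M(\lambda)\otimes U^\ast$ each have at most $m+n$ factors.

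For case~(1): Since every label of $\lambda$ is at most $a_\lambda$ and $a_\lambda$ appears on both sides, the ``topmost'' atypical pair sits at $a_\lambda$. I would apply the translation functor $F_{a_\lambda}$, realised as the projection of $M(\lambda)\otimes U$ onto the neighbouring block with atypicality $k-1$; by the standard filtration of $M(\lambda)\otimes U$ together with a central-character argument, $F_{a_\lambda}M(\lambda)$ has a Verma flag whose factors are Verma modules $M(\lambda')$ with $\lambda'$ obtained from $\lambda$ by replacing one occurrence of $a_\lambda$ with $a_\lambda+1$. The key observation is that since no other label of $\lambda$ equals $a_\lambda+1$, no new atypical pair is created and each such $\lambda'$ has atypicality exactly $k-1$. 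Considering the composition $E_{a_\lambda}\circ F_{a_\lambda}$ and the long exact sequence of $\Ext^\bullet(-,L(\nu))$, together with adjunction and summation over $\nu$, one derives an estimate of the form
\[
p^j_\lambda \;\le\; (m+n)^2 C\sum_{l=0}^{j} p^{j-l}_{\lambda'}
\]
for finitely many $\lambda'$ of atypicality $k-1$; the convolution over $l$ arises because iterating the long exact sequence shifts the cohomological degree by one at each step. Substituting $p^{j-l}_{\lambda'}\le C_{k-1}(j-l)^{k-2}$ from $\mathbf{P}[k-1]$ yields~(1).

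For case~(2): Let $i_0$ be a position in $\lambda$ carrying the label $b_\lambda$. I would shift this label downward from $b_\lambda$ to $a_\lambda+1$ in $d=b_\lambda-a_\lambda-1$ steps, by applying the translation functors $E_{b_\lambda-1},E_{b_\lambda-2},\ldots,E_{a_\lambda+1}$ in succession. At each step the standard filtration of the translated module splits into a ``main'' factor, whose label at $i_0$ has decreased by one and which therefore remains of type~(b) with $d$ reduced by one, together with at most $m+n-1$ ``secondary'' factors of atypicality $k-1$ (corresponding to bumping the label at some other position). Summing the contributions of the secondary factors over the $d$ iterations and bounding each via $\mathbf{P}[k-1]$ produces the summand $(m+n)^2CC_{k-1}\,j^{k-1}$. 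After $d$ iterations the label at $i_0$ equals $a_\lambda+1$, and the resulting Verma modules $M(\lambda_{(i)})$ (with $1\le i\le y<m+n$, accounting for the possible left/right positions of $i_0$) are of type~(c) with $v(\lambda_{(i)})=v(\lambda)$ and $\sharp[\lambda_{(i)}]=k$; they contribute $\sum_{i=1}^y p^{j-d}_{\lambda_{(i)}}$, the shift $j\mapsto j-d$ recording the $d$ applications of the long exact sequence.

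The main technical obstacle is the precise combinatorial analysis of the standard Verma filtrations of the translated modules, and in particular the verification that in case~(a) \emph{every} factor of $F_{a_\lambda}M(\lambda)$ lies in atypicality exactly $k-1$ (a statement that genuinely needs the hypothesis that no label of $\lambda$ exceeds $a_\lambda$), and that in case~(b) exactly one factor at each step stays of type~(b) with the number of secondary atypicality-$k-1$ branches bounded by $m+n-1$. The derivation of the convolution form $\sum_{l=0}^j (j-l)^{k-2}$ from iterating the long exact sequence, while natural, must be set up so that the constant $(m+n)^2C$ genuinely absorbs all intermediate counts uniformly in~$j$.
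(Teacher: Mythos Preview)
Your proposal runs the key translation argument in the wrong direction. In the paper, one first constructs a weight $\lambda'$ of atypicality $k-1$ by raising \emph{one} copy of $a_\lambda$ on the \emph{small} side to $a_\lambda+1$, then applies a translation functor $T$ to $M(\lambda')$ so that $M(\lambda)$ appears as the \emph{top quotient} of the Verma filtration of $TM(\lambda')$. The short exact sequence
\[
0 \to \widetilde M \to TM(\lambda') \to M(\lambda) \to 0
\]
then gives $p^j_\lambda \le (m+n)CC_{k-1}j^{k-2} + \sum_i p^{j-1}_{\lambda^{(i)}}$, where the first term uses $\mathbf{P}[k-1]$ via Lemma~\ref{bound2}, and the $\lambda^{(i)}$ (of atypicality $k$, still satisfying (a) or (b) with $a_\lambda$ increased by one) come from the remaining layers $\widetilde M$. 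Iterating this recursion $j$ times (for (a)) or $d$ times (for (b)) produces the sum $\sum_l (j-l)^{k-2}$.

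You instead apply $F_{a_\lambda}$ (or a sequence of $E_i$'s) to $M(\lambda)$ itself. Knowing the Verma factors of $F_{a_\lambda}M(\lambda)$ only bounds $p^j_{F_{a_\lambda}M(\lambda)}$, not $p^j_\lambda$: to bound $p^j_\lambda$ one must exhibit $M(\lambda)$ as a sub or quotient of a controlled module, not the other way around. Your appeal to ``$E_{a_\lambda}\circ F_{a_\lambda}$ and the long exact sequence'' does not fix this. One could try to salvage (1) by arguing that $M(\lambda)$ is a direct summand of $E_aF_aM(\lambda)$ via the categorified $\mathfrak{sl}_2$-relation $EF\cong FE\oplus\mathrm{id}^{\oplus h}$, but this depends on the sign of the weight $h$ of the block, is not what you wrote, and in any case would yield a bound $\sim j^{k-2}$ rather than the convolution $\sum_l(j-l)^{k-2}$ you state. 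In (2) the same issue arises: lowering the label $b_\lambda$ by applying $E_{b_\lambda-1},\dots$ to $M(\lambda)$ gives information about the translated module, not about $p^j_\lambda$; the degree shift $j\mapsto j-d$ you invoke has no source once the short exact sequence has $M(\lambda)$ on the wrong side. The fix is precisely the paper's move: start from a $\lambda'$ of lower atypicality and translate \emph{towards} $\lambda$.
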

\begin{proof}
We consider $\lambda\in\Lambda$ with $\sharp [\lambda]=k$ and assume it satisfies either $($a$)$ or $($b$)$. Set $a:=a_\lambda$. We refer to the side with strictly most occurrences of $a$ as the big side and the other as the small side. If there is an equal number of $a$ on each side, we choose the big and small side randomly. Denote the number of $a$'s appearing on the big side by~$y$.

Fix one occurrence of $a$ on the small side. We create $\lambda'\in\Lambda$ by replacing that label by $a+1$. As $\lambda$ satisfies $(a)$ or $(b)$, by construction $\lambda'$ has degree of atypicality $k-1$. If the small side is the left-hand side we set $T=E_a$, otherwise $T=F_a$. Then $TM(\lambda')$ has a standard filtration of length $y+1$, where~$\lambda$ is the lowest weight appearing. The $y$ other highest weights, which we denote by $\{\lambda^{(i)}\,|\, i=1,\cdots, y\}$, are obtained from $\lambda$ by raising one of the occurrences of $a$ on the big side and the fixed occurrence of $a$ on the small side. Thus we can define a module $\widetilde M\in\cO$ by the short exact sequence
\begin{equation}\label{sesproof}0\to \widetilde M \to T M(\lambda')\to M(\lambda)\to 0.\end{equation}
By considering the long exact sequence obtained by applying $\oplus_{\nu}\Hom_{\cO}(-,L(\nu))$ to \eqref{sesproof} in combination with {\bf P}$[k-1]$ and Lemma~\ref{bound2}, we find 
\begin{equation}\label{neweqnew}p^j_\lambda\le \sum_{i=1}^yp^{j-1}_{\lambda^{(i)}}+(m+n)CC_{k-1} j^{k-2},\end{equation}
where~$m+n$ is the dimension of the tautological module for $\mathfrak{gl}(m|n)$.

If $\lambda$ satisfies (a), so do the $\lambda^{(i)}$. So we can apply the above procedure on each of the $\lambda^{(i)}$, with the added simplification that the highest label in $\lambda^{(i)}$ appears only once on each side. Hence the analogue of $y$ is equal to $1$ in the following steps. Applying this $j-1$ times and using the estimate $y\le m+n$ proves $(1)$.

Now assume that $\lambda$ satisfies (b) and recall the constant $d$ introduced in the statement of part (2) of the lemma. If $d=1$, then the $\lambda^{(i)}$ satisfy~$($c$)$ and the claim in part (2) follows immediately from equation~\eqref{neweqnew}. If $d>1$, then the $\lambda^{(i)}$ satisfy~$($b$)$ and we can apply the procedure again on them. Hence we can repeat the procedure $d$ times, where again only the first time we will need a constant~$y$ bigger than $1$. This yields
$$p^j_\lambda\le (m+n)^2CC_{k-1}\sum_{l=0}^{d-1} (j-l)^{k-2}+ \sum_{i=1}^yp^{j-d}_{\lambda_{(i)}}$$
with $\lambda_{(i)}\in\Lambda$ obtained from $\lambda$ by adding $d$ to our fixed occurrence of $a$ on the small side and adding $d$ to the $i$th occurrence of $a$ on the big side. By construction, the $\lambda_{(i)}$ satisfy~(c). This completes the proof of part $(2)$.
\end{proof}

\begin{lemma}
\label{newlem2}
Assume that $\lambda\in\Lambda$ satisfies $(c)$ and $\sharp[\lambda]=k$, then we have
$$p^j_\lambda\le (m+n)C \left(p_{\lambda'}^j+p_{\lambda''}^{j-1}\right),$$
with $\lambda',\lambda''\in\Lambda$ satisfying $\sharp[\lambda']=\sharp[\lambda'']=k$, $v(\lambda')< v(\lambda)$ and $v(\lambda'')<v(\lambda)$.
\end{lemma}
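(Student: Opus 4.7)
The plan is to mimic the strategy of Lemma~\ref{newlem1}: apply a suitable translation functor to a Verma module of higher highest weight, and read off a two-term short exact sequence from its standard filtration. Set $a=a_\lambda$ and assume, by symmetry, that the label $a+1$ occurs on the left side of $\lambda$; fix one such occurrence at left position $i$, and denote by $s$ the unique left position carrying label $a$ and by $l$ the unique right position carrying label $a$.

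Define $\lambda'\in\Lambda$ by raising the right label of $\lambda$ at position $l$ from $a$ to $a+1$, and define $\lambda''\in\Lambda$ by further raising the left label of $\lambda'$ at position $s$ from $a$ to $a+1$. In both $\lambda'$ and $\lambda''$ the original $a$-pair has been destroyed and replaced by a fresh $a+1$-pair, while the matchings at labels smaller than $a$ are unchanged; hence $a_{\lambda'}=a_{\lambda''}=a+1$ and $\sharp[\lambda']=\sharp[\lambda'']=k$. Since the original occurrence of $a+1$ at position $i$ of $\lambda$ was counted in $v(\lambda)$ but no longer contributes to $v(\lambda')$ or $v(\lambda'')$, one also has $v(\lambda'),v(\lambda'')<v(\lambda)$.

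Now apply $F_a$ to $M(\lambda')$. As a summand of $-\otimes U$, its action yields a standard filtration whose sections correspond to those weights of $U$ that land in the target block, namely $\varepsilon_s$ (raising the unique left $a$ of $\lambda'$ at position $s$ to $a+1$, producing $M(\lambda'')$) and $\delta_l$ (lowering the unique right $a+1$ of $\lambda'$ at position $l$ to $a$, producing $M(\lambda)$). Since $\varepsilon_s-\delta_l$ is a positive root, the standard-filtration convention used in Lemma~\ref{newlem1} (lower weight as quotient) places $M(\lambda'')$ as submodule and $M(\lambda)$ as quotient:
$$0\to M(\lambda'')\to F_aM(\lambda')\to M(\lambda)\to 0.$$
Applying $\bigoplus_\nu\Hom_\cO(-,L(\nu))$ and summing the resulting long exact sequence gives
$$p^j_\lambda\,\le\,\sum_\nu\dim\Ext^j_\cO(F_aM(\lambda'),L(\nu))\,+\,p^{j-1}_{\lambda''}.$$
Biadjunction of $(F_a,E_a)$ together with Lemma~\ref{bound2} bounds the first summand by $(m+n)C\,p^j_{\lambda'}$; since $(m+n)C\ge 1$, the coefficient of $p^{j-1}_{\lambda''}$ is absorbed into the final estimate.

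The main subtlety lies in the bookkeeping that guarantees both replacements preserve the atypicality $k$ while strictly reducing $v$. This uses case~$(c)$ in an essential way: because $a+1$ occurs on only one side of $\lambda$ and $a$ is unique on each side, the old $a$-pair of $\lambda$ is cleanly traded for a new $a+1$-pair in both $\lambda'$ and $\lambda''$, without affecting any other matching or any label strictly above $a+1$. Once these properties are in hand, everything reduces to a single application of the translation-functor machinery already developed in the proof of Lemma~\ref{newlem1}.
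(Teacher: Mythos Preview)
Your proof is correct and follows essentially the same approach as the paper's: you construct the same $\lambda'$ and $\lambda''$, obtain the same short exact sequence $0\to M(\lambda'')\to F_aM(\lambda')\to M(\lambda)\to 0$ (with $E_a$ in the symmetric case), and deduce the same inequality $p^j_\lambda\le (m+n)C\,p^j_{\lambda'}+p^{j-1}_{\lambda''}$ via the long exact sequence and Lemma~\ref{bound2}. Your verification that $\sharp[\lambda']=\sharp[\lambda'']=k$ and $v(\lambda'),v(\lambda'')<v(\lambda)$ is more explicit than the paper's ``follow from construction'', but the argument is the same.
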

\begin{proof}
We define $\lambda'$ as obtained from $\lambda$ by raising the occurrence of $a_\lambda$ on the side where no $a_\lambda+1$ appears by $1$ and $\lambda''$ as obtained from $\lambda$ by raising both occurrences of $a_\lambda$ by one. By definition there is a short exact sequence
$$0\to M(\lambda'')\to T M(\lambda')\to M(\lambda)\to 0$$
with $T=E_a$ if $a_\lambda+1$ appears on the right and $T=F_a$ otherwise. The corresponding long exact sequence and Lemma~\ref{bound2} then imply
$$p^j_\lambda\le (m+n)C p_{\lambda'}^j+p_{\lambda''}^{j-1}.$$
The properties of $\lambda',\lambda''$ follow from construction.
\end{proof}

\begin{lemma}
\label{newlem3}
Assume that $\lambda\in\Lambda$ with $\sharp[\lambda]=k$ satisfies $(d)$. Then we have
$$p_\lambda^j\le (m+n)^{m+n}C^{m+n} p^j_{\lambda_{+}}, $$
for some $\lambda_+\in\Lambda$ satisfying $($b$)$, $\sharp[\lambda_+]=k$ and $v(\lambda_+)=v(\lambda)$.
\end{lemma}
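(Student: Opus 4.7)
The strategy is to remove the offending label $a_\lambda+1$ by simultaneously shifting the entire consecutive cluster of labels above $a_\lambda$ up by one, until a gap is created. Let $s$ be the largest integer for which $a_\lambda+1,\ldots,a_\lambda+s$ all occur as labels of $\lambda$; by definition $1\leq s\leq v(\lambda)\leq m+n$. Define $\lambda_+$ by replacing every occurrence of the value $a_\lambda+i$ in $\lambda$ by $a_\lambda+i+1$ for $i=1,\ldots,s$. One checks directly that $a_{\lambda_+}=a_\lambda$ (the labels at and below $a_\lambda$ are untouched and still provide a matched pair at $a_\lambda$), that $a_\lambda+1$ does not appear in $\lambda_+$ while $a_\lambda+s+1$ does (so $\lambda_+$ is of type (b)), that $v(\lambda_+)=v(\lambda)$, and that $\sharp[\lambda_+]=k$, since matchings at values $\leq a_\lambda$ are unchanged and matchings above $a_\lambda$ are rigidly shifted.

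To compare complexities, I would perform the shift in $s$ stages: set $\lambda^{(0)}:=\lambda$ and inductively define $\lambda^{(i)}$ from $\lambda^{(i-1)}$ by raising only the occurrences of the value $a_\lambda+s-i+1$ by one, so that $\lambda^{(s)}=\lambda_+$. Processing top-down ensures that no collision occurs at any intermediate stage, so each $\lambda^{(i)}$ has the same $v$ and atypicality as $\lambda$. At step $i$, choose a translation functor $T_i$ of the form $E_{a_\lambda+s-i+1}$ or $F_{a_\lambda+s-i+1}$ (or a short composition when the shift affects both sides) so that $T_iM(\lambda^{(i)})$ carries a Verma flag containing $M(\lambda^{(i-1)})$ as a factor; this follows from the crystal/bumping description underlying \eqref{GrothTens} and \eqref{resCheng}. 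Extracting $M(\lambda^{(i-1)})$ via the corresponding short exact sequence, passing to the long $\Ext$-sequence, and invoking Lemma~\ref{bound2} yields
$$p^{j}_{\lambda^{(i-1)}}\;\leq\;(m+n)\,C\cdot p^{j}_{\lambda^{(i)}},$$
where the auxiliary Verma factors appearing on the other side of the sequence have strictly smaller $v$ and are therefore already controlled by the cases (a), (b), (c) through Lemmata \ref{newlem1}-\ref{newlem2}. Iterating the estimate over the $s\leq m+n$ stages gives $p^{j}_{\lambda}\leq ((m+n)C)^{s}\,p^{j}_{\lambda_+}\leq (m+n)^{m+n}C^{m+n}\,p^{j}_{\lambda_+}$, which is the desired bound.

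The main obstacle is verifying the precise structure of the Verma flag of $T_iM(\lambda^{(i)})$: one must identify $M(\lambda^{(i-1)})$ as a genuine subquotient and confirm that the remaining factors have strictly smaller $v$ than $\lambda$, so that they can be folded into the inductive framework rather than forcing the argument back into type (d). The top-down ordering of shifts, together with the explicit combinatorics of the Chevalley generators $e_b,f_b$ acting on the monomial basis of $\mathbb V^{\otimes m}\otimes\mathbb W^{\otimes n}$, is what makes this bookkeeping work.
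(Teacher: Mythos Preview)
Your overall strategy---use a composite of translation functors to relate $M(\lambda)$ to a Verma module of type~(b) and then invoke Lemma~\ref{bound2}---matches the paper's. However, the execution differs, and your version has a genuine gap.

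The paper defines $\lambda_+$ by adding~$1$ to \emph{every} label strictly larger than~$a_\lambda$ (not only those in the consecutive block $a_\lambda+1,\dots,a_\lambda+s$). Since every value $>a_\lambda$ occurs on only one side, one can order the $v(\lambda)$ elementary functors $E_i,F_i$ so that at each step the value being lowered by~$1$ has no ``partner'' on the opposite side and no pre-existing copy one step below. The effect is that \emph{all} Verma factors of $T M(\lambda_+)$ coincide with $M(\lambda)$; since $\Ext^1(M(\lambda),M(\lambda))=0$ in a highest weight category, one gets $T M(\lambda_+)\cong M(\lambda)^{\oplus d}$ with $1\le d\le v(\lambda)!$. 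Lemma~\ref{bound2} applied $v(\lambda)$ times then gives the stated bound directly, with no auxiliary factors to control.

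Your argument breaks when some value $a_\lambda+i$ occurs with multiplicity $p>1$. Take for instance $\lambda=(1,1,0\,|\,0,0)$, so $a_\lambda=0$, $s=1$, $\lambda^{(1)}=\lambda_+=(2,2,0\,|\,0,0)$. A single $E_1$ applied to $M(\lambda^{(1)})$ yields a Verma flag with factors $M(1,2,0\,|\,0,0)$ and $M(2,1,0\,|\,0,0)$, \emph{neither} of which is $M(\lambda^{(0)})=M(1,1,0\,|\,0,0)$. So $M(\lambda^{(i-1)})$ is not a subquotient after one step, and the short exact sequence you describe does not exist. Moreover, these intermediate weights still have $v=2=v(\lambda)$ and are still of type~(d), so your assertion that the remaining factors ``have strictly smaller $v$ and are therefore already controlled by the cases (a), (b), (c)'' is false in general.

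The repair is to take $T_i$ to be the $p_i$-fold composite (where $p_i$ is the multiplicity of the value being shifted). Then, by the same one-sidedness observation used in the paper, \emph{all} Verma factors of $T_iM(\lambda^{(i)})$ equal $M(\lambda^{(i-1)})$, and the inequality becomes $p^j_{\lambda^{(i-1)}}\le ((m+n)C)^{p_i}\,p^j_{\lambda^{(i)}}$. Summing the $p_i$ over your $s$ stages gives at most $v(\lambda)\le m+n$ elementary functors, recovering the paper's bound. In other words, once you fix the multiplicity issue you essentially reproduce the paper's argument, except restricted to the consecutive block of labels.
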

\begin{proof}
We define $\lambda_+$ as obtained from $\lambda$ by adding $1$ to every label strictly bigger than~$a_\lambda$. By composing the appropriate $E_i$ and $F_i$ ($v(\lambda)$ in total) into a translation functor $T$ we have $M(\lambda)^{\oplus d}=T M(\lambda_+)$ for some number $d$ with $1\le d\le v(\lambda)!$. Lemma~\ref{bound2} then implies
$$p_\lambda^j\le (m+n)^{v(\lambda)}C^{v(\lambda)} p^j_{\lambda_{+}}, $$
which proves the lemma.
\end{proof}

\begin{lemma}
\label{lemcVerma}
Assume that {\bf P}$[k-1]$ holds, with $1\le k\le \min\{m,n\}$, then also {\bf P}$[k]$ holds.
\end{lemma}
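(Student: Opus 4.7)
The plan is to fix $k$ with $1 \le k \le \min\{m,n\}$ and, assuming $\mathbf{P}[k-1]$, establish $\mathbf{P}[k]$ by a secondary induction on $v(\lambda)$ within the class of $\lambda \in \Lambda$ having $\sharp[\lambda] = k$. Since $v(\lambda) \le m+n-k$, this inner induction runs through only finitely many layers and will produce a constant $C_k$ depending only on $m$, $n$, $k$, and $C_{k-1}$.

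For the base case $v(\lambda) = 0$, the weight $\lambda$ is automatically of type (a), so Lemma~\ref{newlem1}(1) directly yields $p^j_\lambda \le (m+n)^2 C C_{k-1} j^{k-1}$, which determines a constant $D_0$. For the inductive step, suppose a constant $D_{v-1}$ has been produced with $p^j_\mu \le D_{v-1} j^{k-1}$ for all $\mu$ satisfying $\sharp[\mu] = k$ and $v(\mu) < v$, and consider $\lambda$ with $v(\lambda) = v \ge 1$. The three preparatory lemmas are now chained in the order (d)$\to$(b)$\to$(c)$\to$(lower $v$): if $\lambda$ is of type (d), Lemma~\ref{newlem3} bounds $p^j_\lambda$ by a fixed multiple of $p^j_{\lambda_+}$ for a weight $\lambda_+$ of type (b) with $v(\lambda_+)=v$; if $\lambda$ is of type (b), Lemma~\ref{newlem1}(2) bounds $p^j_\lambda$ by the polynomial term $(m+n)^2 C C_{k-1} j^{k-1}$ plus at most $m+n-1$ terms of the form $p^{j-d}_{\lambda_{(i)}}$ with $\lambda_{(i)}$ of type (c) and $v(\lambda_{(i)})=v$; if $\lambda$ is of type (c), Lemma~\ref{newlem2} bounds $p^j_\lambda$ by $(m+n)C\bigl(p^j_{\lambda'} + p^{j-1}_{\lambda''}\bigr)$ with $v(\lambda'), v(\lambda'') < v$, to which the inner inductive hypothesis applies and yields $p^j_{\lambda'}, p^{j-1}_{\lambda''} \le D_{v-1} j^{k-1}$. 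Composing the three estimates produces $p^j_\lambda \le D_v j^{k-1}$ for a constant $D_v$ depending only on $m$, $n$, $k$, $C_{k-1}$, and $D_{v-1}$. Setting $C_k := \max\{D_v : 0 \le v \le m+n-k\}$ then gives $\mathbf{P}[k]$.

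The main obstacle is the cohomological-degree bookkeeping: the shifts $j \mapsto j-d$ in Lemma~\ref{newlem1}(2) and $j \mapsto j-1$ in Lemma~\ref{newlem2} have to be absorbed into a clean bound of the form $(\text{const})\cdot j^{k-1}$ uniformly in $\lambda$. For $j \ge d+1$ this uses the trivial estimate $(j-d)^{k-1} \le j^{k-1}$; the finitely many small values $j \le d$ contribute only finite quantities $p^j_\lambda$ (the coefficient of $q^j$ in $\sum_\nu p_{\lambda,\nu}(q)$ involves only finitely many $\nu$), and these can be absorbed into $D_v$ by enlarging the constant. Since the chain of reductions has length bounded in terms of $m+n$ alone, the final constant $C_k$ so produced is finite, as required.
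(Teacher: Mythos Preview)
Your argument is essentially the paper's own proof: secondary induction on $v(\lambda)$, with the base case handled by Lemma~\ref{newlem1}(1) and the inductive step chaining Lemmata~\ref{newlem3}, \ref{newlem1}(2), and \ref{newlem2} in exactly the way you describe (the paper presents the three cases in the order (c), (b), (d), but the logic is identical).

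One small imprecision in your final paragraph deserves correction. You write that for $j \le d$ the ``finitely many small values'' $p^j_\lambda$ are finite and can be absorbed into $D_v$ by enlarging the constant. This is not quite an argument: $d = d(\lambda)$ is unbounded as $\lambda$ varies, so there is no fixed finite set of $j$'s to absorb, and you would need a uniform bound on $p^j_\lambda$ over all such $\lambda$ and $j$, which is precisely what you are trying to prove. The correct (and simpler) fix is to observe directly from Lemma~\ref{newlem1}(2) that for $j < d$ the shifted terms $p^{j-d}_{\lambda_{(i)}}$ vanish, and for $j = d$ they equal $p^0_{\lambda_{(i)}} = 1$, so in either case $\sum_i p^{j-d}_{\lambda_{(i)}} \le m+n \le (m+n)\,j^{k-1}$; this is uniform in $\lambda$ and merges cleanly with the bound for $j > d$.
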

\begin{proof}
We will prove by induction on $v\in [0,m+n-2k]$ that there is a constant $C^{(v)}_{k}$ such that if $\lambda$ is of atypicality degree $k$ and $v(\lambda)\le v$, then $p_\lambda^j\le C_{k}^{(v)} j^{k-1}$ holds for all $j>0$. This proves the lemma for $C_{k}=C_{k}^{(m+n-2k)}$.

If $v(\lambda)=0$, then $\lambda$ satisfies (a), so Lemma~\ref{newlem1}(1) implies this result with $C^{(0)}_{k}=(m+n)^2CC_{k-1}$. Now assume that the property holds for all $v$ up to $\widehat{v}$ and consider $\lambda\in\Lambda$ with $\sharp[\lambda]=k$ and $v(\lambda)=\widehat{v}+1$.

\begin{enumerate}[(i)]
\item If $\lambda$ satisfies (c), then the induction hypothesis and Lemma~\ref{newlem2} imply
$$p^j_\lambda\le D_1 j^{k-1}\qquad\mbox{with}\quad D_1:=2(m+n)CC^{(\widehat v)}_{k}.$$
\item If $\lambda$ satisfies (b), then (i) and Lemma~\ref{newlem1}(2) imply
$$p^j_\lambda\le (m+n)^2CC_{k-1}j^{k-1}+(m+n)D_1 (j-d)^{k-1}\le D_2 j^{k-1}$$
for $D_2:=(m+n)D_1+(m+n)^2CC_{k-1} $.
\item If $\lambda$ satisfies (d), then (ii) and Lemma~\ref{newlem3} imply
$$p^j_\lambda\le D_3 j^{k-1}\qquad\mbox{with}\quad D_3:=(m+n)^{m+n}C^{m+n}D_2.$$
\end{enumerate}
Thus we can take $C_{k}^{(\widehat v+1)}=D_3$, which concludes the proof.
\end{proof}

\begin{proof}[Proof of Theorem \ref{complVerma}]
First we note that there is a constant $C_{0}$, such that property {\bf P}[0] holds. This follows from the equivalence of this question to the one in blocks of $\cO_{\Z}^0$, see e.g. Lemma \ref{KLorbit}, since there are finitely many non-equivalent blocks each containing finitely many Verma modules. Lemma~\ref{lemcVerma} then iteratively proves the first statement in the theorem and thus also $c_{\cO}(M(\lambda))\le \sharp[\lambda]$.

Now we establish the equality for regular weights. First we take $\kappa\in\Lambda^{++}$ and use Lemma~\ref{lemOF} to obtain 
\begin{eqnarray*}
\sum_{\nu\in\Lambda^{++}}\dim\Ext_{\cO}^j(M(\kappa),L(\nu))=\sum_{\nu\in\Lambda^{++}}\Ext^j_{\cF}(K(\kappa), L(\nu)).
\end{eqnarray*}
This has polynomial growth rate $\sharp[\kappa]$ by Lemma~\ref{lemF}. Now for any $\kappa\in\Lambda^{++}$ and $w\in W$ we consider the subsequence of \eqref{KLVcompl}
$$\sum_{\nu\in\Lambda^{++}}\dim\Ext^j_{\cO}(M(w\kappa), L(\nu))= \sum_{\nu\in\Lambda^{++}}\dim\Ext^{j-l(w)}_{\cO}(M(\kappa), L(\nu)),$$
where the equality follows from Lemma~\ref{lemtwist}$(ii)$. This proves that $p^j_{w\kappa}$ has polynomial growth rate at least $\sharp [w\kappa]$.
\end{proof}

\begin{proof}[Proof of Theorem \ref{complK}]
First we prove that $c_{\cO}(K(\lambda))\le \sharp[\lambda]$ for any $\lambda\in \Lambda$. For an anti-dominant $\mu$, we have $M(\mu)=K(\mu)$, so the result follows from Theorem \ref{complVerma}. Then we use (finite) induction by considering the Bruhat order $\prec_0$ for $\mathfrak{g}_0$ on $P_0$. Assume that $c_{\cO}(K(\nu))\le k$ for all $\nu\prec_0\lambda$ with $\sharp[\lambda]=k$. Then the module $N$ defined by the exact sequence
$$0\to N\to M(\lambda)\to K(\lambda)\to 0,$$
has a filtration by $K(\nu)$ with $\nu\prec_0\lambda$. By the induction hypothesis and Lemma~\ref{lemcompseq} we have $c_{\cO}(N)\le \sharp[\lambda]$. Lemma~\ref{lemcompseq} and Theorem \ref{complVerma} then imply $c_{\cO}(K(\lambda))\le \sharp[\lambda]$.

Now take an arbitrary module which is $\fg_{-1}$-free. By Proposition \ref{propflag} this module has a filtration by Kac modules, the result thus follows from Lemma~\ref{lemcompseq}. All results are also valid for the anti-distinguished system of positive roots, which proves the claim for~$\fg_1$.
\end{proof}

\subsection{Complexity of simple modules for $\mathfrak{gl}(m|n)$} Also the complexity of simple modules is in principle determined by Brundan's KL polynomials, see Corollary \ref{corsim}. 

We investigate a relation between the complexity of a simple module and its $\fn$-cohomology. Therefore we introduce
$$c_{\fn}(M):=r(\dim H^\bullet(\fn,M)),$$
for $M\in\cO$, with $r$ as introduced in Definition \ref{defC}.
\begin{proposition}
\label{compKost}
For any $\lambda\in\Lambda$, we have
$$\max\{c_{\cO}(M(\lambda))\,,\,c_{\fn}(L(\lambda))\}\,\,\le\,\, c_{\cO}(L(\lambda))\,\,\le\,\,\sharp[\lambda]+c_{\fn}(L(\lambda)).$$
\end{proposition}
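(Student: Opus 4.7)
The plan is to extract both inequalities from formula \eqref{extsimple}, which expresses $\dim\Ext^j_{\cO}(L(\lambda),L(\mu))$ as a convolution of $\dim\Ext^i_{\cO}(M(\nu),L(\lambda))$ with $\dim\Ext^{j-i}_{\cO}(M(\nu),L(\mu))$. By \eqref{VerH} the first factor is $\dim H^i(\fn,L(\lambda))_\nu$, so summing over $\nu$ yields $\sum_\nu\dim\Ext^i_{\cO}(M(\nu),L(\lambda))=\dim H^i(\fn,L(\lambda))=:A^i$, which will be the bridge between the cohomological and categorical sides.

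For the lower bound $c_{\cO}(M(\lambda))\le c_{\cO}(L(\lambda))$ I would isolate in \eqref{extsimple} the single summand with $i=0,\nu=\lambda$; since $\dim\Hom_{\cO}(M(\lambda),L(\lambda))=1$, this forces $\dim\Ext^j_{\cO}(L(\lambda),L(\mu))\ge\dim\Ext^j_{\cO}(M(\lambda),L(\mu))$, and summing over $\mu$ gives the inequality on growth rates. For $c_{\fn}(L(\lambda))\le c_{\cO}(L(\lambda))$ I would instead isolate the $i=j$ layer; using $\sum_\mu\dim\Hom_{\cO}(M(\nu),L(\mu))=1$ one obtains
\[\sum_\mu\dim\Ext^j_{\cO}(L(\lambda),L(\mu))\ \ge\ \sum_\nu\dim\Ext^j_{\cO}(M(\nu),L(\lambda))\ =\ A^j,\]
which gives the second lower bound after comparing polynomial growth rates.

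For the upper bound, set $B^l_\nu:=p^l_\nu=\sum_\mu\dim\Ext^l_{\cO}(M(\nu),L(\mu))$. Only $\nu$ with $\sharp[\nu]=\sharp[\lambda]=:k$ contribute to \eqref{extsimple}, and Theorem~\ref{complVerma} provides a constant $C_k$, \emph{independent of $\nu$}, with $B^l_\nu\le C_k l^{k-1}$ for $l>0$. This uniformity in $\nu$ is the crucial ingredient: it lets me bound $B^{j-i}_\nu$ by $C_k(j-i)^{k-1}$ and then collapse the inner sum via $\sum_\nu\dim\Ext^i_{\cO}(M(\nu),L(\lambda))=A^i$, giving
\[\sum_\mu\dim\Ext^j_{\cO}(L(\lambda),L(\mu))\ \le\ A^j+C_k\sum_{i=0}^{j-1}A^i(j-i)^{k-1}.\]
With $c:=c_{\fn}(L(\lambda))$, so $A^i$ is bounded by a constant multiple of $i^{c-1}$ for $i>0$, a standard Beta-type convolution estimate bounds the right-hand side by a constant times $j^{k+c-1}$, yielding $c_{\cO}(L(\lambda))\le k+c=\sharp[\lambda]+c_{\fn}(L(\lambda))$.

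The main obstacle I anticipate is precisely the need for the uniformity of $C_k$ in $\nu$ supplied by Theorem~\ref{complVerma}; without it the collapse $\sum_\nu\dim\Ext^i_{\cO}(M(\nu),L(\lambda))=A^i$ would not be usable directly. The degenerate cases $k=0$ (typical $\lambda$, where $B^l_\nu$ is eventually zero) and $c=0$ (finite sequence $A^\bullet$) only simplify the convolution estimate, as the sum becomes finite-range in one of the indices, so these require no separate argument beyond a brief remark.
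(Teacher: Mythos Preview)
Your proposal is correct and follows essentially the same approach as the paper: both extract the lower bounds by isolating the extremal summands $i=0,\nu=\lambda$ and $i=j$ in \eqref{extsimple}, and both obtain the upper bound by combining the uniform estimate $p^l_\nu\le C_k l^{k-1}$ from Theorem~\ref{complVerma} with \eqref{VerH} and a convolution bound. Your explicit identification of the uniformity of $C_k$ in $\nu$ as the crucial point, and your separate handling of the $i=j$ term and the degenerate cases $k=0$, $c=0$, are in fact slightly more careful than the paper's own presentation.
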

\begin{proof}
Set $\sharp[\lambda]=k$, by equation~\eqref{extsimple} and Theorem \ref{complVerma} we have
$$\sum_{\mu\in\Lambda}\Ext^j_{\cO}(L(\lambda),L(\mu))\le\sum_{i=0}^j C_{k}(j-i)^{k-1}\sum_{\kappa}\dim\Ext^i_{\cO}(M(\kappa),L(\lambda)),$$
By equation~\eqref{VerH} and by setting $p=c_{\fn}(L(\lambda))$, there exists some constant $C$ for which
$$\sum_{\kappa\in\Lambda}\dim \Ext^i_{\cO}(M(\kappa),L(\lambda))\le C i^{p-1},\quad\qquad\forall i\in\N.$$
Combining the two above equations leads to
$$\sum_{\mu\in\Lambda}\Ext^j_{\cO}(L(\lambda),L(\mu))\le\sum_{i=0}^j C_{k}(j-i)^{k-1}Ci^{p-1}\le C C_{k} j^{k+p-1},$$
which implies the second inequality. 

By considering only the extremal terms in the summation~\eqref{extsimple}, we find
$$\dim\Ext^j_{\cO}(L(\lambda),L(\mu))\;\ge \;\max\left\{\dim \Ext^j_{\cO}(M(\lambda),L(\mu)),\dim \Ext^j_{\cO}(M(\mu),L(\lambda)) \right\}.$$
The first inequality in the claim then follows from equation~\eqref{VerH}.
\end{proof}

For finite dimensional simple modules we can improve the estimates.
\begin{proposition}
If $\kappa\in\Lambda^{++}$, we have 
$$2\sharp[\kappa]\,\le\, c_{\cO}(L(\kappa))\,\le\,\sharp[\kappa]+r\left(\sum_{\nu\in\Lambda^{++}}\dim\Ext^\bullet_{\cF}(K(\nu),L(\kappa))\right).$$
\end{proposition}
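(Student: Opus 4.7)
The plan is to establish the two inequalities separately, each relying on Proposition \ref{compKost} together with the corresponding machinery available in the subcategory $\cF$.

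For the upper bound, Proposition \ref{compKost} reduces the task to showing $c_\fn(L(\kappa)) \le r\bigl(\sum_{\nu \in \Lambda^{++}} \dim \Ext^\bullet_\cF(K(\nu), L(\kappa))\bigr)$. By equation \eqref{VerH}, $\dim H^j(\fn, L(\kappa)) = \sum_\mu \dim\Ext^j_\cO(M(\mu), L(\kappa))$, and only $\mu$ in the integral linkage class $[\kappa]$ contribute. I would split this linkage class into Weyl orbits under the dot action: orbits through singular dominant weights contribute zero by Lemma \ref{lemtwist}$(i)$, since $\kappa \in \Lambda^{++}$ is regular, while for each regular orbit with dominant representative $\nu \in \Lambda^{++}$, iterating Lemma \ref{lemtwist}$(ii)$ along a reduced expression for $w\in W$ gives $\dim\Ext^j_\cO(M(w\cdot\nu), L(\kappa)) = \dim\Ext^{j-l(w)}_\cO(M(\nu), L(\kappa))$. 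The extra summation over $w \in W$ only introduces finitely many bounded shifts of the cohomological index, which leaves polynomial growth rates unchanged. Finally Lemma \ref{lemOF} rewrites each $\dim\Ext^\bullet_\cO(M(\nu), L(\kappa))$ as $\dim\Ext^\bullet_\cF(K(\nu), L(\kappa))$ for $\nu \in \Lambda^{++}$, producing the claimed bound.

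For the lower bound, the starting point is equation \eqref{extsimple}. Restricting the inner summation to $\nu \in \Lambda^{++}$ only drops non-negative terms and yields
\begin{equation*}
\dim\Ext^j_\cO(L(\kappa), L(\mu)) \;\ge\; \sum_{i=0}^j \sum_{\nu \in \Lambda^{++}} \dim\Ext^i_\cO(M(\nu), L(\kappa))\, \dim\Ext^{j-i}_\cO(M(\nu), L(\mu)).
\end{equation*}
For $\kappa, \mu, \nu \in \Lambda^{++}$ Lemma \ref{lemOF} converts every factor into an Ext group in $\cF$. Since $\cF$ itself carries an abstract Kazhdan--Lusztig theory by Theorem 4.51$(i)$ of \cite{Brundan}, the $\cF$-analog of Corollary \ref{corsim} (whose proof via \cite{CPS2} carries over verbatim) identifies the right-hand side with $\dim\Ext^j_\cF(L(\kappa), L(\mu))$. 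Summing over $\mu \in \Lambda^{++}$ and using that the rate $r$ is monotone in the sequence, I would conclude $c_\cO(L(\kappa)) \ge c_\cF(L(\kappa))$, and the desired value $c_\cF(L(\kappa)) = 2\sharp[\kappa]$ is precisely the complexity calculation for $\cF$ obtained by Boe--Kujawa--Nakano in \cite{BKN1, BKN2}.

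The main obstacle is the lower bound: one must check carefully that restricting to $\nu \in \Lambda^{++}$ inside \eqref{extsimple} reproduces the full $\cF$-analog of Corollary \ref{corsim} rather than only a partial sum thereof, which hinges on the fact that $\Lambda^{++}$ is precisely the poset for $\cF$; the numerical estimate $2\sharp[\kappa]$ itself then depends on the nontrivial input from BKN's detecting-subalgebra cohomology computation in \cite{BKN1, BKN2}.
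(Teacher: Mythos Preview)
Your proposal is correct and follows essentially the same route as the paper's proof: the upper bound via Proposition~\ref{compKost} together with Lemma~\ref{lemtwist}$(i)$,$(ii)$ and Lemma~\ref{lemOF} to compute $c_\fn(L(\kappa))$, and the lower bound by restricting both summations in equation~\eqref{extsimple} to $\Lambda^{++}$, invoking Lemma~\ref{lemOF} and the abstract KL theory for $\cF$ (Theorem~4.51 and Corollary~4.52 of~\cite{Brundan}), and then appealing to Theorem~9.1.1 of~\cite{BKN2} for $c_\cF(L(\kappa))=2\sharp[\kappa]$. The only cosmetic difference is that the paper writes the upper-bound identity for $\dim H^j(\fn,L(\kappa))$ explicitly as a finite convolution with the length distribution $\sharp W(i)$, whereas you phrase the same computation as ``finitely many bounded shifts''.
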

\begin{proof}
Equation~\eqref{extsimple} gives the following lower bound for $\sum_{\mu\in\Lambda}\dim\Ext^j_{\cO}(L(\kappa),L(\mu))$:
\begin{equation}\label{subseq}\sum_{\lambda,\nu\in\Lambda^{++}}\sum_{i=0}^j\dim\Ext^{i}_{\cO}(M(\lambda),L(\kappa))\dim\Ext^{j-i}_{\cO}(M(\lambda),L(\nu)).\end{equation}
By Lemma~\ref{lemOF} and the abstract KL theory of $\cF$, see Theorem 4.51 and Corollary 4.52 in~\cite{Brundan}, we then find that the summation in \eqref{subseq} is equal to $\sum_{\nu\in\Lambda^{++}}\Ext^j_{\cF}(L(\kappa),L(\nu)).$
This has polynomial growth rate $2\sharp[\kappa]$ by Theorem 9.1.1 in~\cite{BKN2}, proving the first inequality.

By Lemma~\ref{lemtwist}$(i)$ and $(ii)$ and Lemma~\ref{lemOF} we have
$$\dim H^j(\fn,L(\kappa))=\sum_{i=0}^{l(w_0)}\left(\sharp W(i) \right)\sum_{\lambda\in\Lambda^{++}} \dim\Ext^{j-i}_{\cF}(K(\lambda),L(\kappa)),$$
with $\sharp W(i)$ the number of elements in $W$ of length $i$. This proves the second inequality by Proposition \ref{compKost}.
\end{proof}

We end this subsection with a conjecture.
\begin{conjecture}
\label{consimple}
For any $\lambda\in\Lambda$ we have $c_{\cO}(L(\lambda))=2\sharp[\lambda]$ and $c_{\cO}(M(\lambda))=\sharp[\lambda]$.
\end{conjecture}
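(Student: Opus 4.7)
The conjecture splits into two statements. For Verma modules the upper bound $c_\cO(M(\lambda))\le \sharp[\lambda]$ is already Theorem~\ref{complVerma}, so the content is the matching lower bound when $\lambda$ is singular. For simple modules, Proposition~\ref{compKost} reduces the upper bound $c_\cO(L(\lambda))\le 2\sharp[\lambda]$ to proving $c_\fn(L(\lambda))\le \sharp[\lambda]$ for every $\lambda$, while the lower bound $c_\cO(L(\lambda))\ge 2\sharp[\lambda]$ is currently established only for dominant $\lambda$ via Boe--Kujawa--Nakano and must be transported to arbitrary $\lambda$.

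For the Verma lower bound at a singular $\lambda$, I would use translation onto the wall. Pick a regular weight $\mu$ in the same $W$-orbit as $\lambda$ with a translation functor $T=T^{\chi_\mu,\chi_\lambda}_V$ satisfying $T M(\mu)\simeq M(\lambda)$ (such $\mu$ and $T$ exist in the integral case, and the general case reduces to the integral one through the equivalences of~\cite{CMW} combined with~\eqref{extsubquo}). Writing $\widetilde{T}$ for its adjoint, derived adjunction yields
$$\Ext^j_\cO(M(\lambda),L(\nu))\;\simeq\;\Ext^j_\cO(M(\mu),\widetilde{T}L(\nu)),$$
and $\widetilde{T}L(\nu)$ has Jordan--Hölder length bounded independently of $\nu$ (with factors indexed by the orbit of $\nu$ under the Weyl-group stabiliser of the wall). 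Summing over $\nu$ in the singular block and reorganising the right-hand side as a sum over a finite-index image inside the regular block transfers the polynomial lower bound of growth $\sharp[\mu]=\sharp[\lambda]$ from the regular case already treated in Theorem~\ref{complVerma}.

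For the upper bound on $c_\fn(L(\lambda))$, I would iterate the Hochschild--Serre spectral sequence \eqref{HSss} $H^p(\fn_0,H^q(\fg_1,L(\lambda)))\Rightarrow H^{p+q}(\fn,L(\lambda))$. The outer $\fn_0$-cohomology vanishes for $p>l(w_0)$, so the question reduces to a polynomial-in-$q$ estimate on the weight multiplicities of $H^q(\fg_1,L(\lambda))$ lying in a fixed $\fh$-weight. For dominant $\lambda$, $L(\lambda)$ is finite dimensional, Kostant's theorem applies, and the bound is immediate. For non-dominant $\lambda$, one can exhaust $L(\lambda)$ by dual Kac-type subquotients and use the long exact sequence in $\fg_1$-cohomology to express the relevant multiplicities as alternating sums of Kostant-type terms indexed by Brundan's KL combinatorics. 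For the simple-module lower bound at arbitrary $\lambda$, the plan is to propagate the dominant case along a reduced expression connecting $\lambda$ to a dominant representative of its orbit: Lemma~\ref{lemtwist} together with equation~\eqref{extsimple} shows that the generating series $\sum_\mu q^j\dim\Ext^j_\cO(L(\lambda),L(\mu))$ transforms by a polynomial $q$-shift under a simple reflection, hence preserves the polynomial growth rate, so the equality $c_\cO(L(\lambda))=c_\cO(L(\kappa))=2\sharp[\lambda]$ with $\kappa$ dominant follows by induction on Weyl group length.

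The main obstacle is the bound $c_\fn(L(\lambda))\le\sharp[\lambda]$ for non-dominant $\lambda$: this amounts to an infinite-dimensional Kostant-type statement for simple highest weight supermodules, for which no closed form is presently available. A parallel route worth pursuing is via the standard Koszulity of~$\cO$ from~\cite{BLW}: Koszul duality translates $c_\cO(L(\lambda))$ into a growth invariant of the indecomposable tilting module in the dual block, whose Verma multiplicities are controlled by the same polynomials $d_{\mu,\lambda}(q)$ but read on the other index, and the bumping algorithm of Section~3 in~\cite{Brundan3} gives explicit combinatorial control on those. The secondary difficulty is making the translation-on-the-wall argument for the Verma lower bound fully quantitative in the singular case, since the length of the $W$-stabiliser of $\lambda$ introduces a factor that must be shown not to degrade the polynomial growth rate.
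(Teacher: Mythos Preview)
This statement is presented in the paper as an open \emph{conjecture}; there is no proof to compare against. Your proposal is a programme rather than a proof, and you rightly flag the bound $c_{\fn}(L(\lambda))\le\sharp[\lambda]$ as the central obstacle---this is indeed the heart of the matter and is not resolved by anything in the paper.

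Two of the steps you treat as routine, however, also fail as written. For the simple-module lower bound at non-dominant $\lambda$, you invoke Lemma~\ref{lemtwist} and equation~\eqref{extsimple} to claim that $\sum_{\mu}\dim\Ext^j_\cO(L(\lambda),L(\mu))$ shifts by a $q$-polynomial under a simple reflection in $\lambda$. But Lemma~\ref{lemtwist} shifts only the \emph{Verma} index in $\Ext^\bullet_\cO(M(\nu),L(-))$, whereas in~\eqref{extsimple} the weight $\lambda$ sits in the simple-module slot of both factors; moving $\lambda$ would require control of twisting functors on simple modules, where $T_sL(\lambda)$ is generically not $L(s\cdot\lambda)$ and the induced effect on Ext-growth is unclear. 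For the Verma lower bound at singular $\lambda$, the adjunction $\Ext^j(M(\lambda),L(\nu))\simeq\Ext^j(M(\mu),\widetilde{T}L(\nu))$ is correct, but summing over $\nu$ and bounding via the composition factors of $\widetilde{T}L(\nu)$ yields $p^j_\lambda\le C\,p^j_\mu$, an \emph{upper} estimate, not a lower one: the long exact sequence allows cancellation, and in the other direction translation onto the wall $T$ annihilates precisely those regular simples whose highest weight is non-maximal in its stabiliser coset, with no guarantee that the simples carrying the growth of $p^j_\mu$ survive. The stabiliser issue you mention at the end is therefore a genuine gap, not a technicality.
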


If this conjecture is true we obtain in particular that for an integral block $\cO_{\xi}$
\begin{itemize}
\item a categorical interpretation of the singularity, by the finitistic global dimension $2\mathtt{a}(w_0w_0^{\xi})$, see Theorem \ref{fdblock}.
\item a categorical interpretation of the atypicality, by the global complexity $2\sharp\xi$, see Conjecture \ref{consimple}.
\end{itemize}

\subsection{Link between complexity and associated variety}

We note two explicit connections between complexity in category~$\cO$ for $\mathfrak{gl}(m|n)$ and the associated variety, which follow from Theorem \ref{finpdind}, Lemma~\ref{general}(2) and Theorem \ref{complK}.
\begin{proposition}
\begin{enumerate}
\item For any $M\in\cO$, we have $c_{\cO}(M)=0\Leftrightarrow X_M=\{0\}$.
\item If $M\in\cO$ is $\fg_{-1}$-free and admits a generalised central character of atypicality degree $k$, we have both $c_{\cO}(M)\le k$ and $|S|\le k$ for any $S\in \cS(M)$.
\end{enumerate}
\end{proposition}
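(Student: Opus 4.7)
The proposition is essentially a compilation of three earlier results, so the plan is short and the challenge is bookkeeping rather than any substantive new argument.

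For part (1), I would first unpack the definition of complexity. By Definition \ref{defC}, $c_{\cO}(M) = 0$ means the sequence $\sum_{\mu}\dim\Ext^j_{\cO}(M,L(\mu))$ has rate of growth zero, which (using the convention that such sums must be bounded by $Cj^{-1}$, forcing them to vanish for large $j$) is equivalent to $\pd_{\cO}M < \infty$. I would then invoke directly the equivalence $(i)\Leftrightarrow (ii)$ of Theorem \ref{finpdind}, which asserts precisely that $\pd_{\cO}M < \infty$ if and only if $X_M = \{0\}$ for $\fg$ in the list \eqref{listA}, and in particular for $\mathfrak{gl}(m|n)$.

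For part (2), observe that since $M$ admits a generalised central character $\chi$ of atypicality degree $k$, the module $M$ lies in a single block $\cO_{\xi}$ with $\sharp\xi = k$ (using the notational identification of the central character with the linkage class). The hypothesis that $M$ is $\fg_{-1}$-free then allows direct application of Theorem \ref{complK}, yielding $c_{\cO}(M) \le \sharp\xi = k$. The inequality $|S| \le k$ for any $S \in \cS(M)$ is precisely the content of Lemma \ref{general}(2), which requires no additional work.

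There is no genuine obstacle here: the content of the proposition is that the two equivalent characterisations in Theorem \ref{finpdind} can be upgraded to coarser numerical invariants (complexity and maximal rank of self-commuting elements), and both upgrades have already been carried out in the body of the section. The only minor subtlety is verifying that "rate of growth zero" in Definition \ref{defC} really does force finite projective dimension; this is immediate because the integer $k=0$ in the definition imposes $c^j \le C j^{-1}$ for $j > 0$, and since each $c^j$ is a non-negative integer this forces $c^j = 0$ for $j$ large, which is equivalent to $\pd_{\cO} M < \infty$.
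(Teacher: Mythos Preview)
Your proposal is correct and takes essentially the same approach as the paper, which simply states that the proposition follows from Theorem~\ref{finpdind}, Lemma~\ref{general}(2), and Theorem~\ref{complK}. You have unpacked precisely these three ingredients, and your observation that $c_{\cO}(M)=0$ is equivalent to $\pd_{\cO}M<\infty$ is already recorded in the paper immediately after Definition~\ref{defC}.
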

This results seem to suggest that there must be some deeper connection between complexity and the associated variety. Similar connections appear in~\cite{BKN2}.

\appendix

\section{Example: $\mathfrak{gl}(2|1)$}
\label{gl21}
Lusztig's canonical basis for the principal block in category~$\cO$ for $\mathfrak{gl}(2|1)$ (with distinguished system of positive roots) has been explicitly calculated in Section 9.5 of~\cite{Cheng}, see also Example 3.4 in~\cite{Brundan3}. The principal block is the block containing the trivial module, which has highest weight $\mu^0=(-1-2|-2)$. Instead we consider the equivalent block containing $(10|1)$.

We apply the procedure in Subsection \ref{seclen}, to obtain the length function. Consider $a<<0$ and $b>>0$, then
\begin{eqnarray*}
\phi_{[a,b]}(10|1)&=&(1,0,b+1,b,\cdots ,2,0,-1,\cdots ,a)\\
\phi_{[a,b]}(01|1)&=&(0,1,b+1,b,\cdots ,2,0,-1,\cdots ,a)\\
\phi_{[a,b]}(00|0)&=&(0,0,b+1,b,\cdots,1,-1,-2,\cdots,a)\\
\phi_{[a,b]}(0-1|-1)&=&(0,-1,b+1,b,\cdots,0,-2,-3,\cdots,a),
\end{eqnarray*}
where similar expression hold for all other $\mu\in[(00,0)]$. All the weights on the right-hand side do indeed belong to the same Weyl group orbit. The corresponding longest elements of the Weyl group $S_{b+2-a}$ are given by
\begin{eqnarray*}
&& (s_{2}s_3\cdots s_{b+1})   (s_1 s_2 \cdots  s_{b})s_{b+2}\\
&& s_1 (s_{2}s_3\cdots s_{b+1})   (s_1 s_2 \cdots  s_{b})s_{b+2}=(s_2s_3\cdots s_{b+1})(s_1s_2\cdots s_{b+2})\\
&& (s_2s_3\cdots s_{b+1})s_{b+2}(s_1s_2\cdots s_{b+2})   \\
&&   (s_2s_3\cdots s_{b+1})s_{b+2}s_{b+3}(s_1s_2\cdots s_{b+2}) ,
\end{eqnarray*}
where we used the standard convention for the notation of simple reflections. This also confirms that the Bruhat order for $\mathfrak{gl}(m|n)$ is indeed translated to that on the Weyl group under $\phi_{[a,b]}$.
We observe that the difference in lengths between these elements does not depend on the precise choice of $b$ and $a$, confirming Lemma~\ref{defL}. In particular we find
$$\mathtt{l}((10|1),(01|1))=1=\mathtt{l}((01|1),(00,0))=\mathtt{l}((00|0),(0-1|-1)).$$
The same type of calculation quickly reveals that the Bruhat order is transitively generated by relations $\mu\preceq\lambda$ where~$\mathtt{l}(\lambda,\mu)=1$. In other words, all coverings in the Bruhat order correspond to length 1. On the left of Figure~\ref{fig2}, we denote the elements of $[(10|1)]$, along with arrows between them, representing the coverings in the Bruhat order. By transitivity, the length function for arbitrary related weights can be read of from Figure~\ref{fig2}. This provides an example of Remark \ref{lengthnotsame}(2), since $\mathtt{l}((10|1),(0-1|-1))=3$, while $l(10|1)-l(0-1|-1)=1$ for Brundan's length $l$ on $\Lambda^{++}$ in Section $3$-$g$ of~\cite{Brundan}.

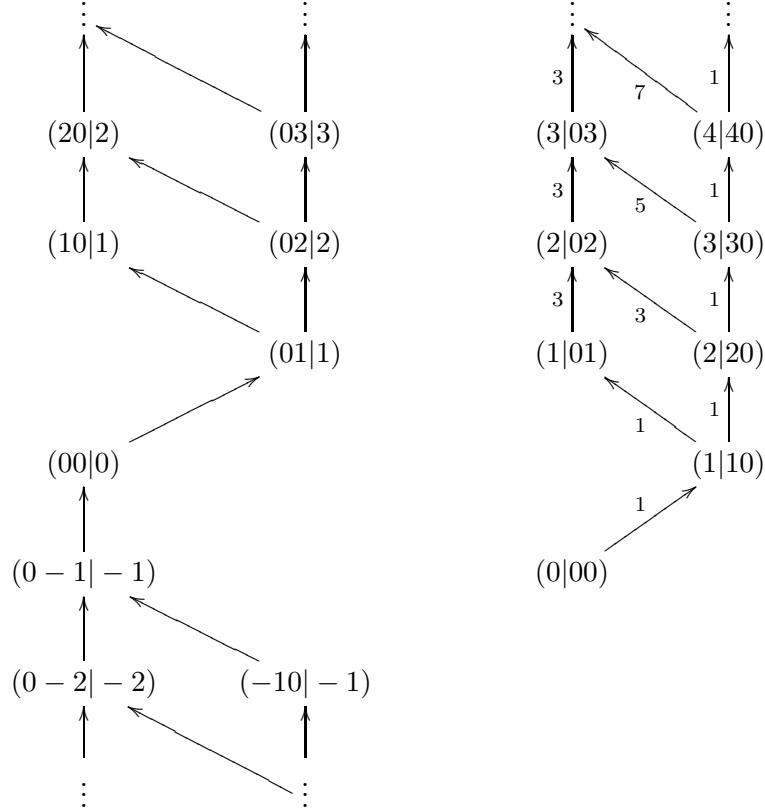
\begin{figure}\caption{The length function for $\mathfrak{gl}(2|1)$ and $\mathfrak{gl}(1|2)$}
\begin{displaymath}
    \xymatrix{ 
\vdots &\vdots &&\vdots &\vdots\\
(20|2)\ar[u] & (03|3)\ar[u]\ar[ul] &&(3|03)\ar[u]^3&(4|40)\ar[u]^1\ar[ul]^{7}\\
(10|1)\ar[u] & (02|2)\ar[u]\ar[ul]&&(2|02)\ar[u]^3&(3|30)\ar[u]^{1}\ar[ul]^5\\
& (01|1)\ar[u]\ar[ul]&& (1|01)\ar[u]^{3}&(2|20)\ar[u]^1\ar[ul]^3\\
(00|0)\ar[ur]& &&&(1|10)\ar[u]^1\ar[ul]^1 \\
(0-1|-1)\ar[u]& && (0|00)\ar[ur]^1\\
(0-2|-2)\ar[u]& (-10|-1)\ar[ul]\\
\vdots\ar[u] & \vdots\ar[u]\ar[ul]
   }
\end{displaymath}\label{fig2}
\end{figure}

Inverting the KL polynomials of Section 9.5 of~\cite{Cheng}, see also Example 3.4 in~\cite{Brundan3} show that the interesting KL polynomials for $\mathfrak{gl}(2|1)$, meaning around the singular point $(00|0)$, are given by
\begin{eqnarray*}
\dot{v}_{(10|1)}=\sum_{k=1}^\infty (-q)^{k-1} \dot{b}_{(k0|k)},&&\dot{v}_{(01|1)}=\sum_{k=1}^\infty (-q)^{k-1}\dot{b}_{(0k|k)}+\sum_{k=1}^\infty (-q)^k \dot{b}_{(k0|k)},\\
\dot{v}_{(00|0)}=\sum_{k=0}^\infty (-q)^k \dot{b}_{(0k|k)},&&\dot{v}_{(0-1|-1)}= \dot{b}_{(0-1|-1)}+\sum_{k=0}^\infty(-q)^{k+1} \dot{b}_{(0k|k)}+\sum_{k=1}^\infty(-q)^k \dot{b}_{(k0|k)},\\
&& \dot{v}_{(-10|-1)}=\dot{b}_{(-10|-1)}-q\dot{b}_{(0-1|-1)}+\sum_{k=1}^\infty (-q)^{k+1} \dot{b}_{(k0|k)}.
\end{eqnarray*}
In particular, these confirm Theorem \ref{KLtheory} and Lemmata \ref{vancentre} and \ref{lemtwist}.

The left column of the left graph in Figure \ref{fig2} gives the elements of $\Lambda^{++}$. The above KL polynomials also imply that the $\Ext^1$-quiver for the principal block in category~$\cO$ for $\mathfrak{gl}(2|1)$ is obtained by replacing all arrows in the left diagram by $\leftrightarrow$ and adding one $\leftrightarrow$ between $(10|1)$ and $(0-1|-1)$. In particular this implies the well-known property that the $\Ext^1$-quiver of the principal block of $\cF$ is of Dynkin type~$A_\infty$.

On the right of Figure~\ref{fig2} we give the corresponding information for the principal block of $\mathfrak{gl}(1|2)$ with distinguished Borel subalgebra. This case is naturally isomorphic to $\mathfrak{gl}(2|1)$ with anti-distinguished Borel subalgebra. The corresponding calculation for the length function now reveals the length function here does not lead to coverings with difference in length function equal to 1. The length function corresponding to the coverings is denoted on the arrows on the right in Figure~\ref{fig2}. The case $\mathfrak{gl}(1|2)$ clearly provides an example of Remark \ref{lengthnotsame}(1).

\section{Existence of upper bounds}

In this appendix a classical Lie superalgebra $\fg$ is always assumed to have even Cartan subalgebra $\fh$. We denote by $L_{\oa}(\nu)$ the simple $\fg_{\oa}$-module with highest weight $\nu\in\fh^\ast$.

\begin{lemma}
\label{bound2}
For any classical Lie superalgebra $\fg$, there exists a constant $C_{\fg}$, such that the number of indecomposable projective modules in 
$P(\lambda)\otimes V$ is bounded by $C_{\fg}\dim V$ for any $\lambda\in\fh^\ast$ and $\fg$-module~$V$.
\end{lemma}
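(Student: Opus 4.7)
The plan is to bound the number of indecomposable projective summands of $P(\lambda)\otimes V$ by the total length of a standard filtration. Since $P(\lambda)$ carries a Verma flag and each $M(\nu)\otimes V$ has a standard filtration of length $\dim V$ with sections $\{M(\nu+\kappa)\}_\kappa$ indexed by the weights of $V$, the module $P(\lambda)\otimes V$ inherits a standard filtration of total length $\bigl(\sum_\nu(P(\lambda):M(\nu))\bigr)\cdot\dim V$. Every indecomposable projective direct summand contributes at least one Verma to such a flag, so it suffices to produce a constant $C_\fg$ bounding $\sum_\nu(P(\lambda):M(\nu))$ uniformly in $\lambda\in\fh^\ast$.

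For the uniform bound I will induce from $\fg_{\oa}$. By Frobenius reciprocity (using exactness of $\Res$), the module $\Ind P_0(\lambda)$ is projective in $\cO$, and since the highest-weight vector of $L(\lambda)$ generates a $\fg_{\oa}$-submodule of $\Res L(\lambda)$ with $L_0(\lambda)$ as top, $P(\lambda)$ is a direct summand of $\Ind P_0(\lambda)$. Hence it suffices to bound the Verma length of $\Ind P_0(\lambda)$.

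The central computation is a character identity. Combining the PBW isomorphism $\Ind M_0(\nu)\simeq\Lambda\fg_{\ob}\otimes M_0(\nu)$ (as $\fh$-semisimple spaces) with the super-Verma character formula $\ch M(\nu)=e^\nu\prod_{\gamma\in\Delta_{\ob}^+}(1+e^{-\gamma})/\prod_{\alpha\in\Delta_{\oa}^+}(1-e^{-\alpha})$ yields, after cancellation,
\[
\ch\Ind M_0(\nu)\;=\;\ch\Lambda(\fn\cap\fg_{\ob})\cdot\ch M(\nu).
\]
Since $\Ind P_0(\lambda)$ is projective in $\cO$ it admits a Verma flag, whose multiplicities are determined by its $\fh$-character (as the formal characters $\{\ch M(\mu)\}_\mu$ are linearly independent). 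Combining the above identity with a Verma flag of $P_0(\lambda)$ in $\cO^0$ and summing gives
\[
\sum_\mu\bigl(\Ind P_0(\lambda):M(\mu)\bigr)\;=\;\Bigl(\sum_\nu(P_0(\lambda):M_0(\nu))\Bigr)\cdot 2^{\dim(\fn\cap\fg_{\ob})}.
\]

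Finally, in the classical reductive case the Verma length of any projective cover in $\cO^0$ is uniformly bounded in $\lambda$ by a constant $N_{\fg_{\oa}}$ depending only on $\fg_{\oa}$---a standard fact, the bound being a finite sum of values at $q=1$ of Kazhdan--Lusztig polynomials of the integral Weyl subgroup of $\fg_{\oa}$. Setting $C_\fg:=N_{\fg_{\oa}}\cdot 2^{\dim(\fn\cap\fg_{\ob})}$ then completes the argument. The main technical work will be verifying the character identity from the PBW decomposition and confirming that $\Ind P_0(\lambda)$ carries a genuine Verma flag realising these multiplicities; the uniform reductive bound $N_{\fg_{\oa}}$ is standard.
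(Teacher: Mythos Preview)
Your proof is correct and takes a genuinely different route from the paper's. The paper counts summands by the adjunction
\[
N_{\lambda,V}=\sum_{\mu}\dim\Hom_{\fg}(P(\lambda)\otimes V,L(\mu))=\sum_{\mu}[L(\mu)\otimes V^\ast:L(\lambda)],
\]
then bounds each multiplicity by passing to $\fg_{\oa}$: it replaces $L(\mu)$ by $M(\mu)$, uses that $\Res M(\mu)$ is filtered by even Verma modules $M_{\oa}(\mu+\kappa)$ with $\kappa$ ranging over weights of $\Lambda(\fn^-\cap\fg_{\ob})$, and finally uses that $[M_{\oa}(\nu):L_{\oa}(\lambda)]$ is nonzero only for $\nu\in W\cdot\lambda$. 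This yields $C_{\fg}=\sharp W\cdot d\cdot\dim\Lambda(\fn^-\cap\fg_{\ob})$, with $d$ the maximal length of a $\fg_{\oa}$-Verma module.

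You instead bound the number of summands by the total Verma-flag length of $P(\lambda)\otimes V$, reduce to bounding the Verma-flag length of $P(\lambda)$ itself, and obtain that bound by embedding $P(\lambda)$ as a summand of $\Ind P_0(\lambda)$ and computing the character of the induced module. Your argument is more structural, using only the highest-weight machinery (Verma flags of projectives, their preservation under tensoring and induction) together with the standard uniform bound $N_{\fg_{\oa}}$ on the Verma-flag length of projective covers in $\cO^0$. The paper's argument is more hands-on, working directly with composition multiplicities and restriction rather than flags and induction; it avoids the character identity you need but instead invokes the orbit condition for subquotients of $\fg_{\oa}$-Verma modules. The resulting constants are essentially the same: your $N_{\fg_{\oa}}$ is bounded by $\sharp W\cdot d$ via BGG reciprocity, and $2^{\dim(\fn\cap\fg_{\ob})}=\dim\Lambda(\fn^-\cap\fg_{\ob})$.
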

\begin{proof}
We set $N_{\lambda,V}:=\sum_{\mu\in\fh^\ast}\dim\Hom_{\fg}(P(\lambda)\otimes V,L(\mu)).$ This is the number of times $L(\lambda)$ appears as a subquotient in the product $L(\mu)\otimes V^\ast$. This is smaller than the number of times $L_{\oa}(\lambda)$ appears in $\Res(M(\mu)\otimes V^\ast)$ as a subquotient. We denote $\Res V^\ast$ simply by~$V^\ast$ and calculate
\begin{eqnarray*}
N_{\lambda,V}&\le &\sum_{\mu\in\fh^\ast}[M_{\oa}(\mu)\otimes \Lambda\fg_{-1}\otimes V^\ast\,:\, L_{\oa}(\lambda)]\\
&=&\sum_{\mu,\kappa\in\fh^\ast}\dim \left(\Lambda\fg_{-1}\otimes V^\ast\right)_{\kappa}[M_{\oa}(\mu+\kappa)\,:\, L_{\oa}(\lambda)].
\end{eqnarray*}
The terms in the sum on the right-hand side are zero unless $\lambda$ is in the Weyl group orbit of $\mu+\kappa$, so 
\begin{eqnarray*}
N_{\lambda,V}&\le&\sum_{w\in W}\sum_{\mu\in\fh^\ast}\dim \left(\Lambda\fg_{-1}\otimes V^\ast\right)_{w\circ \lambda-\mu}[M_{\oa}(w\circ\lambda)\,:\, L_{\oa}(\lambda)]\\
&\le & \sharp W\,d\,\dim(\Lambda\fg_{-1})\,\dim V
\end{eqnarray*}
with $\sharp W$ the order of the Weyl group and $d$ the maximal length of a $\fg_{\oa}$-Verma module.
\end{proof}

\begin{lemma}
\label{bound3}
For any classical Lie superalgebra $\fg$, there exists a constant $\widetilde{C}_{\fg}$, such that for an arbitrary~$\nu\in\fh^\ast$ we have
$$\sum_{\mu\in\fh^\ast}[\Res L(\mu): L_{\oa}(\nu)]\le\widetilde{C}_{\fg}.$$
\end{lemma}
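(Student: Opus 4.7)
The plan is to reduce the assertion to a uniform bound on Jordan--Hölder multiplicities for Verma modules of $\fg_{\oa}$ in $\cO^0$. First, since $L(\mu)$ is a quotient of the Verma module $M(\mu)$, one has the obvious bound
\[
\sum_{\mu\in\fh^\ast}[\Res L(\mu):L_{\oa}(\nu)]\;\le\;\sum_{\mu\in\fh^\ast}[\Res M(\mu):L_{\oa}(\nu)],
\]
so it suffices to control the Verma case.

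I will then compute $[\Res M(\mu):L_{\oa}(\nu)]$ by means of characters. Writing $\ofn_{\ob}:=\ofn\cap\fg_{\ob}$, the PBW character formula for $M(\mu)$ factors as
\[
\ch M(\mu)\;=\;\sum_{\lambda}\dim(\Lambda\ofn_{\ob})_{\lambda}\,\ch M_{\oa}(\mu-\lambda),
\]
where $\lambda$ runs over the finitely many $\fh$-weights of $\Lambda\ofn_{\ob}$. Since $U(\fg)$ is finitely generated over $U(\fg_{\oa})$, the module $\Res M(\mu)$ lies in $\cO^0$ and therefore has finite length, and since the characters of the simple objects $L_{\oa}(\nu)$ are linearly independent the above character identity upgrades to the composition-multiplicity identity
\[
[\Res M(\mu):L_{\oa}(\nu)]\;=\;\sum_{\lambda}\dim(\Lambda\ofn_{\ob})_{\lambda}\,[M_{\oa}(\mu-\lambda):L_{\oa}(\nu)].
\]

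For each fixed weight $\lambda$, classical linkage in $\cO^0$ forces $\mu-\lambda\in W\cdot\nu$ whenever $[M_{\oa}(\mu-\lambda):L_{\oa}(\nu)]\neq 0$, so at most $|W|$ values of $\mu$ contribute, and each individual multiplicity is bounded by a constant $d=d(\fg_{\oa})$ independent of the weights. Summing over $\lambda$ yields a total bound $d\cdot|W|\cdot\dim\Lambda\ofn_{\ob}$ depending only on $\fg$, which is the desired $\widetilde C_{\fg}$.

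The main technical point is securing the uniform bound $d$ on the multiplicities $[M_{\oa}(\kappa):L_{\oa}(\nu)]$ as both weights range over $\fh^\ast$. This rests on the fact that, up to equivalence, integral blocks of $\cO^0$ for the reductive $\fg_{\oa}$ fall into only finitely many types, indexed by the singularity of the central character, while every non-integral block is equivalent, by Soergel's results, to an integral block of a reductive subalgebra of $\fg_{\oa}$. Consequently only finitely many Kazhdan--Lusztig-type multiplicities enter the picture, and their maximum provides the required constant $d$.
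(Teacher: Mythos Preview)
Your proof is correct. The approach differs from the paper's in its packaging, though the underlying computation is the same. The paper invokes Frobenius reciprocity to rewrite $\sum_{\mu}[\Res L(\mu):L_{\oa}(\nu)]$ as the number of indecomposable projective summands of $\Ind P_{\oa}(\nu)$, then bounds this by the corresponding count for $\Res\Ind P_{\oa}(\nu)\simeq\Lambda\fg_{\ob}\otimes P_{\oa}(\nu)$ and appeals to Lemma~\ref{bound2} applied to $\fg_{\oa}$. You bypass the adjunction and work directly with the PBW character identity for $\Res M(\mu)$; this is precisely the computation hidden inside the proof of Lemma~\ref{bound2}, so your argument effectively unpacks and specialises that lemma rather than quoting it. Your route is slightly more elementary and yields the marginally sharper constant $d\,|W|\,\dim\Lambda\ofn_{\ob}$ in place of the paper's $d\,|W|\,\dim\Lambda\fg_{\ob}$. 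One cosmetic slip: with $\lambda$ ranging over weights of $\Lambda\ofn_{\ob}$ (sums of \emph{negative} odd roots), the character identity reads $\ch M(\mu)=\sum_{\lambda}\dim(\Lambda\ofn_{\ob})_{\lambda}\,\ch M_{\oa}(\mu+\lambda)$ rather than $\mu-\lambda$; this does not affect the estimate.
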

\begin{proof}
By Frobenius reciprocity, this sum is equal to 
$$\sum_{\mu\in\fh^\ast}\Hom_{\fg}(\Ind P_{\oa}(\nu), L(\mu)),$$
with $P_{\oa}(\nu)$ the projective cover of $L_{\oa}(\nu)$ in $\cO^0$. This sum is the number of indecomposable projective modules in $\cO$ in the decomposition of $\Ind P_{\oa}(\nu)$. The sim is hence smaller than the number of indecomposable projective modules in $\cO^0$ in the decomposition of \begin{equation}\label{resindP}\Res\Ind P_{\oa}(\nu)\simeq \Lambda\fg_{\ob}\otimes P_{\oa}(\nu).\end{equation}
The result therefore follows from Lemma~\ref{bound2} applied to $\fg_{\oa}$, with $\widetilde{C}_{\fg}=C_{\fg_{\oa}}\dim\Lambda\fg_{\ob}$.
\end{proof}

\subsection*{Acknowledgment}

KC is a Postdoctoral Fellow of the Research Foundation - Flanders (FWO). VS was partially supported by NSF grant
1303301. The authors would like to thank Volodymyr Mazorchuk and Jonathan Brundan for helpful discussions and
the referee for several useful comments.

\end{document}